\renewenvironment{proof}{{\bfseries Proof.}}{\qed}
\numberwithin{equation}{section} 
\newtheorem{theorem}{Theorem}[section] 
\newtheorem{proposition}[theorem]{Proposition} 
\newtheorem{lemma}[theorem]{Lemma} 
\theoremstyle{definition}
\newtheorem{remark}[theorem]{Remark} 
\newcommand*{\bigchi}{\mbox{\Large$\chi$}}
\def\D{\mathbb {D}}
\def\R{\mathbb {R}}
\def\C{\mathbb {C}}
\def\N{\mathbb {N}}
\def\H{\mathbb {H}}
\def\O{\mathbb {O}}
\def\E{\mathbb {E}}
\def\sgn{\mathbf {sgn}}
\def\Db{\mathbf {D}}
\def\Ab{\mathbf {A}}
\def\ib{\mathbf {i}}
\def\jb{\mathbf {j}}
\def\kb{\mathbf {k}}
\def\d{\mathbf{ d}}
\def\ZC{\mathcal {Z}}
\def\CC{\mathcal {C}}
\def\PC{\mathcal {P}}
\def\NC{\mathcal {N}}
\def\OC{\mathcal {O}}
\def\BC{\mathcal {B}}
\def\YC{\mathcal {Y}}
\def\EC{\mathcal {E}}
\def\HC{\mathcal {H}}
\def\SC{\mathcal {S}}
\def\DC{\mathcal {D}}
\def\AC{\mathcal {A}}
\def\g{\mathfrak {g}}
\def\h{\mathfrak {h}}
\def\p{\mathfrak {p}}
\def\k{\mathfrak {k}}
\def\s{\mathfrak {s}}
\def\u{\mathfrak {u}}
\def\o{\mathfrak {o}}
\def\l{\mathfrak {l}}
\def\z{\mathfrak {z}}
\def\m{\mathfrak {m}}
\def\<>{\langle \cdot ,\, \cdot \rangle}
\def\lto{\longrightarrow}
\begin{document} 
 
\title[Homotopy type of nilpotent orbits]{Homotopy type of the nilpotent orbits in classical 
Lie algebras}

\author[I. Biswas]{Indranil Biswas} 

\address{School of Mathematics, Tata Institute of 
Fundamental Research, Homi Bhabha Road, Mumbai 400005, India}

\email{indranil@math.tifr.res.in} 

\author[P. Chatterjee]{Pralay Chatterjee}

\address{The Institute of Mathematical Sciences, HBNI, CIT Campus, 
Tharamani, Chennai 600113, India}

\email{pralay@imsc.res.in} 

\author[C. Maity]{Chandan Maity}

\address{Indian Institute of Science Education and Research (IISER) Mohali,
 Knowledge City, Sector 81, S.A.S. Nagar 140306, Punjab, India}

\email{cmaity@iisermohali.ac.in, maity.chandan1@gmail.com} 

\subjclass[2010]{57T15, 17B08}

\keywords{Nilpotent orbit, classical groups, homogeneous spaces.}

\begin{abstract}
In \cite{BCM} homotopy types of nilpotent orbits are explicitly 
described in the case of real simple classical Lie algebras for which any maximal compact 
subgroup in the associated adjoint group is not semisimple. In this paper we extend the above 
description of homotopy type of nilpotent orbits to the remaining cases of real simple 
classical Lie algebras for which any maximal compact subgroup in the associated adjoint 
group is semisimple.
\end{abstract}

\maketitle

\tableofcontents

\section{Introduction}\label{sec-introduction}

Let $\g$ be a real simple Lie algebra, and let $G$ be the associated adjoint Lie group. An 
element $X \in \g$ is called {\it nilpotent} if ${\rm ad}(X): \g \longrightarrow \g$ is a 
nilpotent operator. For any nilpotent $X$, let $\OC_X \,:=\, \{{\rm Ad}(g)X \,\big\vert\,\, g \,\in 
\,G \}$ be the corresponding {\it nilpotent orbit} under the adjoint action of $G$ on $\g$. 
Such nilpotent orbits form a rich class of homogeneous spaces. In fact they lie on the interface 
of several disciplines in mathematics such as Lie theory, symplectic geometry, representation 
theory, algebraic geometry; see \cite{CoM}, \cite{M}. Nevertheless, surprisingly for a very 
long period there seems to have been hardly any literature on the topological invariants of 
these orbits, other than the description of fundamental group in the case of simple Lie 
algebras. The computation of the fundamental groups of such orbits is attributed to T. 
Springer and R. Steinberg \cite{SS} for the classical case and A. Alexeevski \cite{A} for the 
complex exceptional case; see \cite[Corollary 6.1.6, p. 91, pp. 128--134]{CoM}, \cite[pp. 
229--230]{M}. We also refer the reader to the works of D. King \cite{Ki} and E. Sommers 
\cite{So} in this regard.

It is only recent that attentions have been drawn to topological invariants of such orbits 
other than the fundamental group; see \cite{Ju} and \cite{Cr}. The first two authors
began their study on this topic in \cite{BC} where the second cohomology groups of the 
nilpotent orbits in all the complex simple Lie algebras were computed; see \cite[Theorems 5.4, 
5.5, 5.6, 5.11, 5.12]{BC}. Each adjoint orbit in a semisimple Lie algebra is equipped with the 
Kostant-Kirillov two form. The motivation for studying the second cohomology groups stemmed 
from, besides computing some invariant other than the fundamental group, the exactness 
criterion obtained in \cite[Proposition 1.2]{BC} for the Kostant-Kirillov form on adjoint 
orbits of arbitrary elements in the Lie algebra of a real semisimple Lie group with a 
semisimple maximal compact subgroup. It may be mentioned that \cite[Proposition 1.2]{BC} 
generalizes \cite[Theorem 1.2]{ABB} where the exactness criterion is obtained for the 
Kostant-Kirillov form on an adjoint orbit of a semisimple element in a complex semisimple Lie 
algebra. In \cite{CM} the second cohomology groups of nilpotent orbits are computed for most 
of the nilpotent orbits in non-compact non-complex exceptional Lie algebras. For the rest of 
cases of nilpotent orbits, which are not covered in the above computations, an upper bound for 
the dimension of the second cohomology group is obtained; see \cite[Theorems 3.2--3.13]{CM}.

To describe the results of this paper we need to recall our recent work in \cite{BCM}. In 
\cite{BCM}, considering all the non-complex and non-compact real classical Lie algebras, we 
have given a complete description of the second and the first cohomology groups of all the 
nilpotent orbits in terms of their standard parametrizations involving the (signed) Young 
diagram. In doing so, first generalizing \cite[Theorem 3.3]{BC} we obtained a computable 
description of the second and first cohomologies of a general connected homogeneous space in 
terms of the ambient Lie group and the associated quotienting (closed) subgroup. In the setting of 
\cite{BCM} it was convenient to assume that the simple real Lie group $G$ is the connected 
component of the $\R$-points of a $\R$-simple algebraic groups defined over $\R$.

Setting 
$\g\,:=\, {\rm Lie}\, G$, let $X \,\in \,\g$ be a non-zero nilpotent element, and let $\OC_X$ 
be its adjoint $G$-orbit. A Lie theoretic reformulation of the second and the first cohomology 
groups of $\OC_X$ was obtained in \cite[Theorem 4.2]{BCM}, incorporating a $\s\l_2 
(\R)$-triple containing $X$; the computations in \cite{CM} also use this result crucially. Let 
$\{X,\,H,\,Y\}$ be a $\s\l_2(\R)$-triple in $\g$, containing $X$, and let $\ZC_G (X,H,Y)$ be 
the centralizer of the triple $\{X,\,H,\,Y\}$ in $G$. Let $K$ be a maximal compact subgroup in 
$\ZC_G (X,H,Y)$, and $M$ be a maximal compact subgroup in $G$ containing $K$. Let $\m$ and 
$\z(\k)$ be the Lie algebras of $M$ and the center of $K$, respectively. Then \cite[Theorem 
4.2]{BCM} says that the computation of the second cohomology of the nilpotent orbits boils 
down to understanding the action of the component group $K/K^\circ$ on the subalgebra 
$\z(\k)\cap [\m,\,\m]$. Thus, when $M$ is semi-simple, this amounts to describing the action 
of $K/K^\circ$ on $\z(\k)$, and hence knowing the isomorphism class of $K$ is enough to 
compute the second cohomology group in this case.

However, when $M$ is not semisimple, it does not suffice
to know the isomorphism classes of $K$ and $M$, rather what is needed is an understand of
the embedding of $K$ in $M$. The case of ${\s\u}(p,q)$ dealt in \cite[\S~4.4]{BCM} constitutes 
a typical example of such a situation. Although the isomorphism class of $M$ is a standard 
known object in the case of a non-compact classical simple real Lie group, and the isomorphism 
class of $K$ can be obtained immediately using either the work of Springer and Steinberg 
\cite{SS} (see also \cite[Lemma 4.4]{BCM}), hardly anything can be concluded, from these 
isomorphism classes, on how $K$ is embedded in $M$. Consequently, one of the major objectives in 
\cite{BCM} was to compute this embedding explicitly for all the nilpotent orbits in the 
classical real simple Lie algebras $\g$ for which maximal compact subgroup of $G$ is not 
semisimple. This situation occurs in the cases when $\g$ is either $\s\u(p,q)$ or $\s\o^*(2n)$ 
or $ \s\p(n,\R)$.

It follows from a minor variation of a general fact due to Mostow (see Theorem \ref{mostow}, 
\cite[Theorem 3.2]{BC}) that $M/K$ naturally embeds in $G/\ZC_G (X)$ as a deformation retract. 
In particular, the compact submanifold $M/K$ of $G/\ZC_G (X)$ is in the same homotopy class as 
that of $G/\ZC_G (X)$; see Theorem \ref{mostow-corollary} for details. Thus computations in 
\cite{BCM} in fact yield compact sub-homogeneous spaces as convenient and optimal homotopy 
types of nilpotent orbits in the case when maximal compact subgroups of $G$ are not semisimple 
(we refer to \cite[Propositions 4.14, 4.30~and~ 4.36]{BCM}). It should be mentioned here that, for 
a certain technical reason, the homotopy types of the nilpotent orbits in $\s\o(p,q)$ were 
also described in \cite[Proposition 4.22]{BCM} under the following assumption on the partition 
associated to the parametrization: $\N_\d\,=\,\O_\d$; see \eqref{Nd-Ed-Od} for the definitions 
of $\N_\d$ and $\O_\d$.

The objective of this paper is to extend the computations and complete the project, initiated 
in \cite{BCM}, of describing optimal homotopy types of nilpotent orbits by giving explicit 
embedding of maximal compact subgroups $K$ of $\ZC_G (X)$ in $M$ for the remaining cases of 
simple Lie algebras $\g$ for which $M$ is semisimple; see Theorems \ref{homototy-type-sl-nc}, 
\ref{homototy-type-sl-nr}, \ref{homototy-type-sl-nh} \ref{max-cpt-so-nc-wrt-onb}, 
\ref{max-cpt-0-wrt-onb-sopq}, \ref{max-cpt-sp-nC-wrt-basis} and \ref{max-cpt-sp-pq-wrt-onb}.

The description of a suitable reductive part of the centralizer in $G$ of a nilpotent element 
in $\g$, when $\g$ is isomorphic to a complex simple Lie algebra or
it is isomorphic to one of the Lie algebras 
$\g\l_n(\R)$, $\u(p,q)$, $\o(p,q)$ and $\s\p(n,\R)$, is due to Springer and Steinberg 
\cite{SS}. Since we are unable to find such descriptions in the literature when the classical 
simple Lie algebras are matrix subalgebras with entries from $\H$, we 
record them here in the final section as an appendix.

The paper is organized as follows. In Section \ref{sec-notation} we fix some notation and terminology and we recall some necessary 
background. The explicit homotopy types of the nilpotent orbits are described in Section \ref{sec-homotpy-type-nil-orbts}; they are spread 
across Theorems \ref{homototy-type-sl-nc}, \ref{homototy-type-sl-nr}, \ref{homototy-type-sl-nh} \ref{max-cpt-so-nc-wrt-onb}, 
\ref{max-cpt-0-wrt-onb-sopq}, \ref{max-cpt-sp-nC-wrt-basis} and \ref{max-cpt-sp-pq-wrt-onb}.

\section{Notation and background}\label{sec-notation}

In this section we fix the notation, and recall some background material which will be used throughout. 
Subsequently, a few specialized notation are mentioned as and when they are needed. The notation and the 
background introduced in this section overlap with those in \cite[\S~2]{BCM} to some extent. However, for the sake 
of completeness and clarity of the exposition we also include them here.

Once and for all fix a square root of $-1$ and call it $\sqrt{-1}$. The Lie groups will be denoted by the capital 
letters, while the Lie algebra of a Lie group will be denoted by the corresponding lower case German letter, unless 
a different notation is explicitly mentioned. Sometimes, for notational convenience, the Lie algebra of a Lie group 
$G$ is also denoted by ${\rm Lie} (G)$. The connected component of $G$ containing the identity element is denoted 
by $G^{\circ}$, and the commutator subgroup of $G$ is denoted by $(G,G)$. For a subgroup $H$ of $G$, and a subset 
$S$ of $\g$, the {\it centralizer} of $S$ in $H$ is $$\ZC_{H} (S)\,:=\, \{h\,\in\, H \,\,\big\vert\,\,
{\rm Ad}(h)Y \,=\, Y\, \text{ for all 
}\ Y \,\in\, S \}\,.$$ Similarly, for a Lie subalgebra $\h \,\subset\, \g$, by $\z_\h (S)$ we denote the subalgebra $$\{X 
\,\in\, \h \,\,\big\vert\,\, [X,\,Y]\,=\,0\ \text{ for all }\ Y\,\in\, S \}\,.$$

Let $G$ be a semisimple Lie group. An element $X \,\in\, \g$ is called {\it nilpotent} if ${\rm ad}(X) \,\colon\, \g \,
\longrightarrow\, \g$ is a nilpotent operator. The {\it set of nilpotent elements} in $\g$ is denoted by ${\NC}_{\g}$.
For any $X \,\in\, 
\NC_\g$, define $$\OC_X \,:= \,\{{\rm Ad} (g)X\,\big\vert\,\, g \,\in\, G \}$$ to be the {\it nilpotent orbit} of $X$ in $\g$. The 
{\it set of all nilpotent orbits in $\g$} is denoted by ${\NC}(G)$.

\subsection{Classical Lie groups and their Lie algebras}
The notation $\D$ will stand for either $\R$ or $\C$ or $ \H$, unless mentioned otherwise. Let $V$ be a right 
vector space over $\D$. Let ${\rm End}_\D (V)$ be the right $\R$-algebra of {\it $\D$-linear endomorphisms} of $V$, 
and let ${\rm GL}(V)$ be the {\it group of invertible elements} of ${\rm End}_\D (V)$. For a $\D$-linear 
endomorphism $T \,\in\, {\rm End}_\D (V)$, and an ordered $\D$-basis $\BC$ of $V$, the {\it matrix of $T$ with 
respect to $\BC$} is denoted by $[T]_{\BC}$. When $\D$ is either $\R$ or $\C$, let
$$
{\rm tr} \,:\, {\rm End}_\D (V) \,\longrightarrow\, \D\, \ \ \text{ and }\ \
{\rm det} \,:\, {\rm End}_\D V \,\longrightarrow\, \D
$$
respectively be the usual {\it trace} and {\it determinant} maps.
When $\D \,=\, \R$ or $ \C$, define 
$$
{\rm SL}(V)\,:=\, \{ z \,\in \,{\rm GL}(V)\,\big\vert\,\, \text{det}(z)\,= \,1\} \ \ \text{ and } \ \ \s\l (V ) \,:=\,
\{ y \,\in\, {\rm End}_\D (V) \,\big\vert\,\, \text{tr}(y)\,=\, 0 \}\, .
$$ 
If $\D \,= \,\H$, then define
$$
{\rm SL}(V)\,:=\, \big({\rm GL} (V),\, \, {\rm GL}(V)\big) \quad 
\text{ and } \quad
\s\l (V )\,:=\, \big[ {\rm End}_\D(V),\, {\rm End}_\D(V)\big] \,.
$$

Let $\D$ be $\R$, $\C$ or $\H$, as above. Let $\sigma$ be either the identity map $\text{Id}$ or an {\it 
involution} of $\D$; i.e., $\sigma$ is a $\R$-linear map with $\sigma^2 \,=\, \text{Id}$ and
$\sigma (xy) \,=\, \sigma (y) \sigma (x)$ for all $x,\, y \,\in\, \D$. Let $\epsilon \,=\,
\pm 1$. Following \cite[\S~23.8, p. 264]{Bo} we call a map
$$\langle \cdot,\, \cdot \rangle \,\colon\, V \times V \,\longrightarrow\, \D$$
a $\epsilon$-$\sigma$ {\it Hermitian form} if
\begin{itemize}
\item $ \langle \cdot,\, \cdot \rangle $ is additive in each argument, 
\item $ \langle v,\, u \rangle \,= \, \epsilon \sigma( \langle u, v \rangle)$, and
\item $ \langle v \alpha,\, u \rangle \,=\, \sigma (\alpha) \langle v,\, u \rangle$ for all
$v,\,u \,\in\, V$ and for all $\alpha \,\in\, \D$.
\end{itemize}

A $\epsilon$-$\sigma$ Hermitian form $ \langle \cdot, \, \cdot \rangle $ is called {\it
non-degenerate} if $ \langle v,\, u \rangle \,=\,0 $ for all $v$ if and only if $u \,=\, 0$.
All $\epsilon$-$\sigma$ Hermitian forms considered here will be assumed to be non-degenerate.
For a $\epsilon$-$\sigma$ Hermitian form $\<>$, define 
$$
{\rm U} (V,\, \langle \cdot,\, \cdot \rangle ) \,:=\,
\{T \,\in\, {\rm GL}(V) \,\big\vert\,\, \langle Tv ,\, Tu \rangle \,= \, \langle v ,
\, u \rangle\ ~~ \forall~~ v,\,u \,\in \,V \}$$ and
$$\u (V,\, \langle \cdot, \,\cdot \rangle ) \,:=\, \{T\,\in\, {\rm End}_\D(V)\,\big\vert\,\, 
\langle Tv ,\,u \rangle + \langle v ,\, Tu \rangle \,= \,0\ ~~\forall~~ v,\,u \,\in \,V \}\, .$$ 
We next define 
$$
{\rm SU} (V, \langle \cdot, \cdot \rangle )\,: =\,
{\rm U} (V, \langle \cdot, \cdot \rangle ) \cap {\rm SL}(V)\ \ \text{ and } \ \
\s\u (V, \langle \cdot, \cdot \rangle ) \,:=\,
\u (V, \langle \cdot, \cdot \rangle ) \cap \s\l (V)\, .
$$ 

We define the {\it usual conjugations} $\sigma_c$ on $\C$ and on $\H$, respectively, by 
$\sigma_c\,\colon\,\C\,\lto\,\C$, $ (x_1 + \sqrt{-1}x_2 )\,\longmapsto\, x_1-\sqrt{-1}x_2$, and 
$$\sigma_c\,\colon\, \H\,\lto\, \H,\ \ (x_1 + \ib x_2 + \jb x_3 + \kb x_4 )\,\longmapsto\, x_1 - \ib x_2 - \jb x_3 
- \kb x_4\, ,$$ $x_i \,\in\, \R$ for $i\,=\, 1,\, \cdots ,\, 4$. As discussed in \cite[\S~2]{BCM}, without loss of 
any generality, we may consider the involution $\sigma_c$ instead of arbitrary involution $\sigma$ in a 
$\epsilon$-$\sigma$ Hermitian form $\<>$. Therefore, from now on we will restrict ourselves to the involution $\sigma_c$ on $\D$.

We next introduce certain standard nomenclature associated to the specific values of $\epsilon$ and $\sigma$. If
$\sigma\,=\, \sigma_c$ and $\epsilon \,=\,1$, then $\langle \cdot ,\, \cdot \rangle$
is called a {\it Hermitian} form.
When $\sigma \,= \,\sigma_c$ and $\epsilon \,=\,-1$, then $\langle \cdot ,\, \cdot \rangle$
is called a {\it skew-Hermitian} form. 
The form $\langle \cdot, \, \cdot \rangle$ is called {\it symmetric} if
$\sigma \,=\, \text{Id}$ and $\epsilon \,=\,1 $. Lastly, if
$\sigma \,=\, \text{Id}$ and $\epsilon \,=\, -1 $, then $\langle \cdot, \, \cdot \rangle$ 
is called a {\it symplectic} form.
If $\langle \cdot, \, \cdot \rangle$ is a symmetric form on $V$, define
$${\rm O} (V,\, \langle \cdot,\, \cdot \rangle)\,:=\, {\rm U} (V, \,\langle \cdot,\, \cdot \rangle )
\ \ \text{ and }\ \ 
\o (V,\, \langle \cdot,\, \cdot \rangle )\,:=\, \u (V,\, \langle \cdot,\, \cdot \rangle)\, .$$
Similarly, if $\langle \cdot, \, \cdot \rangle$ is a symplectic form on $V$, then define
$${\rm Sp} (V,\, \langle \cdot,\, \cdot \rangle)\,:=\, {\rm SU} (V,\, \langle \cdot,\, \cdot \rangle )
\ \ \text{ and }\ \ \s\p (V, \,\langle \cdot, \,\cdot \rangle) \,:=\, \s\u (V,\,
\langle \cdot,\, \cdot \rangle )\, .$$
When $\D\,=\,\H$ and $\<>$ is a skew-Hermitian form on $V$, define
$${\rm SO}^*(V,\, \langle \cdot,\, \cdot \rangle)\,:=\,{\rm SU}(V,\, \langle \cdot,\, \cdot \rangle )
\ \ \text{ and } \ \ \s\o^* (V,\, \langle \cdot,\, \cdot \rangle )\,:=\, \s\u (V,\, \langle \cdot,\,
\cdot \rangle )\, .$$

We next introduce some terminology associated to certain types of $\D$-basis of $V$.
For a symmetric or Hermitian form $\<>$ on $V$, an orthogonal basis $\AC$ of $V$
is called {\it standard orthogonal} if $\langle v,\, v \rangle \,=\, \pm 1$ for all $v \,\in \,\AC$. For a standard orthogonal basis ${\AC}$ of $V$,
set $$p \,:=\, \# \{ v \,\in\, \AC \,\,\big\vert\,\, \langle v, \,v\rangle \,> \, 0\} \ \ \text{ and }\ \
q \,:=\, \# \{ v \,\in \,\AC \,\,\big\vert\,\, \langle v, \,v\rangle \, <\, 0\}\, .$$ 
The pair $(p,\,q)$, which is independent of the choice
of the standard orthogonal basis ${\AC}$, is called the {\it signature} of $\<>$.

When $\D \,=\,\R$ or $\C$ and $\<>$ is symplectic, the dimension $\dim_\D V$ is an even. Let $2n \,=\, \dim_\D V$. In
this case an ordered basis $\BC \,:=\, (v_1,\, \cdots, \,v_n;\, v_{n+1}, \,
\cdots ,\, v_{2n})$ of $V$ is said to be {\it symplectic} if $\langle v_i,\, v_{n+i} \rangle
\,=\, 1$ for all $1\,\leq\, i \,\leq\, n$ and $\langle v_i,\, v_j \rangle \,=\, 0$ for all $j \,\neq\, n +i$. 
The ordered set $(v_1,\, \cdots ,\, v_n)$ is called the {\it positive part} of $\BC$ and it is denoted by
$\BC_+$. Similarly, the ordered set $(v_{n+1},\, \cdots ,\, v_{2n})$ is called the {\it negative part} of $\BC$, and it is denoted by $\BC_-$. 
The {\it complex structure on $V$ associated to the above
symplectic basis $\BC$} is defined to be the $\R$-linear map $$J_{\BC} \,:\, V
\,\longrightarrow\, V\, , \ \ v_i\,\longmapsto\, v_{n+i}\, ,\ \ v_{n+i}\,\longmapsto\, -v_{i}
\quad \forall~ \ 1\,\leq\, i \,\leq\, n\, .$$
If $\D \,=\,\H$, and $\<>$ is a skew-Hermitian form on $V$,
an orthogonal $\H$-basis $$\BC\,:=\, (v_1,\, \cdots,\, v_m)$$
of $V$ ($m \,:=\, \dim_\H V$) is said to be {\it standard orthogonal} if
$\langle v_r,\, v_r \rangle \,=\, \jb$ for all $1 \,\leq\, r \,\leq\, m$ and
$\langle v_r, \, v_s \rangle \,=\, 0$ for all $r \,\neq \,s$.

For $P\,=\, (p_{ij}) \,\in\, {\rm M}_{r\times s}(\D)$, let $P^t$ denote the {\it transpose} of $P$. If
$\D \,=\, \C$ or $\H$, then define $\overline{P} \,:=\, (\sigma_c(p_{ij}))$. Let
\begin{equation}\label{defn-I-pq-J-n}
{\rm I}_{p,q} \,:=\, \begin{pmatrix}
{\rm I}_p \\
& -{\rm I}_q
\end{pmatrix}\, , \quad
{\rm J}_n \,:=\, \begin{pmatrix}
& -{\rm I}_n \\
{\rm I}_n & 
\end{pmatrix}\,.
\end{equation}
The classical groups and Lie algebras that we will be working with are:
\begin{align*}
&{\rm SL}_n (\C)\,:=\, \{g\,\in\, {\rm GL}_n (\C)\,\big\vert\, \det (g) \,=\,1 \}, \qquad \,\,\,
{\s\l}_n (\C)\,:=\, \{ Y\,\in\, {\rm M}_n (\C)\,\big\vert\, \text{tr} ( Y)\, =\,0 \};
\\
&{\rm SL}_n (\R)\,:=\, \{g\,\in\, {\rm GL}_n (\R)\,\big\vert\, \det (g) \,=\,1\}, \qquad \,\,\,
{\s\l}_n (\R)\,:=\, \{ Y \,\in\, {\rm M}_n (\R)\,\big\vert\, \text{tr} ( Y) \,=\,0 \};
\\
&{\rm SL}_n (\H)\,:=\, \{g\,\in\, {\rm GL}_n (\H)\,\big\vert\, \text{Nrd}_{{\rm M}_n (\H)} (g)\,=\,1 \}, 
{\s\l}_n (\H)\,:=\, \{ Y\,\in\, {\rm M}_n (\H)\,\big\vert\, \text{Trd}_{{\rm M}_n (\H)} ( Y) \,=\,0 \};
\\
&{\rm SO} (n,\C)\,:=\, \{g\,\in\, {\rm SL}_{n}(\C)\,\big\vert\, g^t g\, =\, {\rm I}_{n} \},\qquad \, \,
{\s\o} (n,\C)\,:=\, \{ Y \,\in\, \s\l_{n}(\C)\,\big\vert\, Y^t + Y \,=\,0 \};
\\
&{\rm SO}(p,q)\,:=\, \{g \,\in\, {\rm SL}_{p+q}(\R)\,\big\vert\, g^t{\rm I}_{p,q} g\,=\, {\rm I}_{p,q} \},
{\s\o} (p,q)\,:=\, \{ Y \,\in\, \s\l_{p+q}(\R) \,\big\vert\, Y^t {\rm I}_{p,q} + {\rm I}_{p,q} Y \,=\,0 \};
\\
&{\rm Sp}(n,\C)\,:= \,\{g \,\in\, {\rm SL}_{2n}(\C)\,\big\vert\, g^t {\rm J}_{n} g \,=\,{\rm J}_{n} \}, \quad
{\s\p} (n,\C)\,:=\, \{ Y\,\in\, \s\l_{2n}(\C)\,\big\vert\, Y^t {\rm J}_{n} + {\rm J}_{n} Y \,=\,0 \};
\\
&{\rm Sp} (p,q)\,:=\, \{g\,\in\, {\rm SL}_{p+q}(\H)\,\big\vert\, \overline{g}^t{\rm I}_{p,q} g\,=\, {\rm I}_{p,q} \}, 
{\s\p} (p,q)\,:=\, \{Y\,\in\, \s\l_{p+q}(\H)\,\big\vert\, \overline{ Y}^t{\rm I}_{p,q} + {\rm I}_{p,q} Y \,=\,0 \}.
\end{align*}

For any group $H$, let $H^n_\Delta$ denote the diagonally embedded copy of $H$ 
in the $n$-fold direct product $H^n$. 
Let $V$ be a vector space over $\D$. Define $\mathfrak{d}_V \,: \,{\rm End}_\D (V) \,\longrightarrow\,
\D^*$ to be $\mathfrak{d}_V \,:=\, \det$ if $\D\,=\, \C$ or $\R$, and $\mathfrak{d}_V \,:=\,
{\rm Nrd}_{{\rm End}_\D V}$ if
$\D \,= \,\H$. Let now $V_i$, $1 \,\leq\, i \,\leq\, m$, be right vector spaces over $\D$.
As before, $\D$ is either $\R$ or $\C$ or $\H$. 
For every $1 \,\leq\, i \,\leq\, m$, let $H_i \,\subset \,{\rm GL} (V_i) $ be a 
matrix subgroup. Define the subgroup
$$
S\big( \prod_i H_i \big) \, :=\, \Big\{ (h_1,\, \cdots,\, h_m) \,\in\, \prod_{i=1}^m H_i \ \Bigm| \ 
\prod_i \mathfrak{d}_{V_i} (h_i) \,=\,1 \Big\}\, \subset\, \prod_{i=1}^m H_i\, .
$$

The following notation will allow us to write block-diagonal square matrices with many blocks in a convenient way.
For $r$-many square matrices $A_i \,\in\, {\rm M}_{m_i} (\D)$, $1 \,\leq\, i \,\leq\, r$, the block diagonal square
matrix of size $\sum m_i \times \sum m_i$, with $A_i$ as the $i$-th 
block in the diagonal, is denoted by $A_1 \oplus \cdots \oplus A_r$. This is also abbreviated as
$\oplus_{i =1}^r A_i$. Furthermore, if $B \,\in\, {\rm M}_m (\D)$ and $s$ is a positive integer,
then denote
$ B_\blacktriangle^s \,:=\, \underset{s\text{-many}}{\underbrace{B \oplus \cdots \oplus B}}$.

\subsection{Jacobson-Morozov Theorem}

For a Lie algebra $\g$ over $\R$, a subset $\{X,\,H,\,Y\} \,\subset\, \g$ is 
said to be a {\it $\s\l_2(\R)$-triple} if $X \,\neq\, 0$, $[H,\, X] \,=\, 2X$,\, $[H,\, Y] \,= \, -2Y$ and $[X,\, Y] 
\,=\,H$. We now recall a well-known result due to Jacobson and Morozov.

\begin{theorem}[{Jacobson-Morozov, cf.~\cite[Theorem~9.2.1]{CoM}}]\label{Jacobson-Morozov-alg}
Let $X\,\in\, \g$ be a non-zero nilpotent element in a real semisimple Lie algebra $\g$. Then there exist
$H,\,Y\, \in\, \g$ such that $\{X,\,H,\,Y\} \,\subset\, \g$ is a $\s\l_2(\R)$-triple.
\end{theorem}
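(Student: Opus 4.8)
The plan is to deduce the statement from two standard ingredients: a formal reduction (Morozov's lemma) that makes it enough to produce a single ``neutral'' element $H$, and the construction of that $H$ by means of the Killing form. Since $\g$ is real semisimple, its Killing form $B$ is nondegenerate (though possibly indefinite), so the whole argument can be run directly over $\R$ rather than over $\C$; alternatively one may complexify, invoke the complex Jacobson--Morozov theorem in $\g\otimes_\R\C$, and descend the resulting triple to $\g$ using that $X$ is fixed by the conjugation of $\g\otimes_\R\C$ over $\g$, together with the conjugacy of $\s\l_2(\R)$-triples through a fixed nilpotent.

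Granting a neutral element, i.e.\ an $H\,\in\,\g$ with $[H,\,X]\,=\,2X$ and $H\,\in\,{\rm Im}({\rm ad}(X))\,=\,[\g,\,X]$, I would complete the triple by the usual correction argument. Choose $Y_1$ with $[X,\,Y_1]\,=\,H$. The Jacobi identity and $[H,\,X]\,=\,2X$ give $[X,\,[H,\,Y_1]+2Y_1]\,=\,[H,\,[X,\,Y_1]]\,=\,[H,\,H]\,=\,0$, so $C\,:=\,[H,\,Y_1]+2Y_1$ lies in $\z_\g(X)\,=\,\ker({\rm ad}(X))$. A direct check shows that ${\rm ad}(H)$ preserves $\z_\g(X)$, so it suffices to solve $({\rm ad}(H)+2)D\,=\,-C$ with $D\,\in\,\z_\g(X)$; replacing $Y_1$ by $Y\,:=\,Y_1+D$ then yields $[X,\,Y]\,=\,H$ and $[H,\,Y]\,=\,-2Y$, hence the $\s\l_2(\R)$-triple $\{X,\,H,\,Y\}$. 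The solvability of the correction equation comes down to the fact that ${\rm ad}(H)$ has no eigenvalue $-2$ on $\z_\g(X)$, which is forced by the nilpotency of ${\rm ad}(X)$; I regard this part as routine once the neutral element is in hand.

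The main obstacle is the existence of the neutral element $H$, and this is exactly where nondegeneracy of $B$ is used. Since $B$ is ${\rm ad}$-invariant, ${\rm ad}(X)$ is $B$-skew, hence $({\rm ad}(X))^2$ is $B$-self-adjoint and ${\rm Im}(({\rm ad}(X))^2)\,=\,\ker(({\rm ad}(X))^2)^{\perp}$. The two requirements $[H,\,X]\,=\,2X$ and $H\,\in\,{\rm Im}({\rm ad}(X))$ are together equivalent to finding $Z$ with $({\rm ad}(X))^2 Z\,=\,-2X$ (then $H\,:=\,[X,\,Z]$ works), i.e.\ to $X\,\in\,{\rm Im}(({\rm ad}(X))^2)$, and therefore reduce to the orthogonality $B(X,\,Z)\,=\,0$ for every $Z\,\in\,\ker(({\rm ad}(X))^2)$. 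The weaker orthogonality $B(X,\,\z_\g(X))\,=\,0$ is immediate: if $[X,\,Z]\,=\,0$ then ${\rm ad}(X)$ and ${\rm ad}(Z)$ commute and ${\rm ad}(X)$ is nilpotent, so ${\rm ad}(X)\,{\rm ad}(Z)$ is nilpotent and $B(X,\,Z)\,=\,{\rm tr}({\rm ad}(X)\,{\rm ad}(Z))\,=\,0$; this already gives some $H_0$ with $[H_0,\,X]\,=\,2X$.

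Upgrading this to the sharp orthogonality against $\ker(({\rm ad}(X))^2)$, which is what guarantees $H$ can be chosen inside ${\rm Im}({\rm ad}(X))$, is the genuine technical core of the proof, since the simple commuting-nilpotent trace argument no longer applies directly. I expect this step --- rather than the formal completion of the second paragraph --- to be where the real work lies, and I would attack it through the interaction of the Killing form with the filtration defined by the nilpotent operator ${\rm ad}(X)$.
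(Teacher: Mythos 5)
Your overall architecture is the standard textbook one (and is consistent with what the paper itself does, which is simply to cite \cite[Theorem~9.2.1]{CoM} without reproducing a proof): reduce the theorem to producing a neutral element $H\,\in\,[\g,\,X]$ with $[H,\,X]\,=\,2X$, and then complete $H$ to a triple by the correction argument. Everything you actually verify is correct: the equivalence of the neutral-element problem with $X\,\in\,{\rm Im}\big(({\rm ad}\,X)^2\big)$, the identity ${\rm Im}\big(({\rm ad}\,X)^2\big)\,=\,\ker\big(({\rm ad}\,X)^2\big)^{\perp}$ coming from $B$-skewness of ${\rm ad}\,X$, the orthogonality $B(X,\,\z_\g(X))\,=\,0$ via the commuting-nilpotent trace argument, the fact that the defect $C\,=\,[H,\,Y_1]+2Y_1$ lies in $\z_\g(X)$ and that ${\rm ad}(H)$ preserves $\z_\g(X)$, and the remark that the real case causes no trouble because all the relevant kernels, images and orthogonals are defined over $\R$.

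However, the proposal has genuine gaps, located exactly at the two pivotal lemmas. First, the statement $B\big(X,\,\ker(({\rm ad}\,X)^2)\big)\,=\,0$, which you yourself identify as ``the genuine technical core,'' is never proved: announcing that you ``would attack it through the interaction of the Killing form with the filtration defined by ${\rm ad}(X)$'' is a declaration of intent, not an argument, and the trace computation that disposes of $B(X,\,\z_\g(X))\,=\,0$ does not extend to $\ker\big(({\rm ad}\,X)^2\big)$, since elements there need not commute with $X$. Second, the claim that ${\rm ad}(H)$ has no eigenvalue $-2$ on $\z_\g(X)$ is not routine, and it is not ``forced by the nilpotency of ${\rm ad}(X)$'' alone: in $\g\,=\,\s\l_2(\R)\times\s\l_2(\R)$ with $X\,=\,(e,\,0)$ and $H\,=\,(h,\,h')$, where $(e,\,h,\,f)$ and $(e',\,h',\,f')$ are standard bases of the two factors, one has $[H,\,X]\,=\,2X$ and ${\rm ad}(X)$ nilpotent, yet $(0,\,f')\,\in\,\z_\g(X)$ is an ${\rm ad}(H)$-eigenvector with eigenvalue $-2$. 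The hypothesis $H\,\in\,[\g,\,X]$ is indispensable here, and proving that it forces the eigenvalues of ${\rm ad}(H)$ on $\z_\g(X)$ to be non-negative is precisely the content of Morozov's lemma --- a genuine argument in its own right, not a corollary of nilpotency. Until these two lemmas are supplied, what you have is an accurate map of the standard proof rather than a proof.
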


\subsection{Finite dimensional \texorpdfstring{$\s\l_2(\R)$}{Lg}-modules}\label{notions-sl2-modules}

Given an endomorphism $T \,\in\, {\rm End}_\R (W)$, where $W$ is a $\R$-vector space, and any $\lambda \,\in\, \R$, set
$$W_{T, \lambda} \,:=\, \{ w \,\in\, W \,\,\big\vert\,\, T w \,= \,w \lambda \}\, .$$ 
Let $V$ be a right vector space of dimension $n$ over $\D$, where $\D$ is, as before, $\R$ or $ \C $ or $ \H$. 
Let $\{X,\,H,\,Y\} \,\subset\, \s\l (V )$ be a $\s\l_2(\R)$-triple. Note that $V$ is also
a $\R$-vector space using the inclusion $\R \,\hookrightarrow\, \D$. Hence $V$ is a module
over $ \text{ Span}_\R \{ X,\,H,\,Y\} \,\simeq\, \s\l_2(\R)$. 
For any positive integer $d$, let $M(d-1)$ denote the sum of all the $\R$-subspaces $A$ of $V$ such that
\begin{itemize}
\item $\dim_\R A \,= \,d$, and

\item $A$ is an irreducible $\text{Span}_\R \{ X,\,H,\,Y\}$-submodule of $V$.
\end{itemize} 
Then $M(d-1)$ is the {\it isotypical component} of $V$ containing all the irreducible submodules
of $V$ with highest weight $d-1$. Let
\begin{equation}\label{definition-L-d-1}
L(d-1)\,:= \,V_{Y,0} \cap M(d-1)\, .
\end{equation}
As the endomorphisms $X,\,H,\,Y$ of $V$ are $\D$-linear, 
the $\R$-subspaces $M(d-1)$, $V_{Y,0}$ and $L(d-1)$ of $V$ are also $\D$-subspaces. Let
\begin{equation}\label{dim-L-d-1}
t_{d} \,:=\, \dim_\D L(d-1)\, .
\end{equation}

\subsection{Preliminary results on topology of homogeneous spaces}\label{sub-sec-Prelm-rslt}

In this section we will recall some well-known results. The following one gives an equivalence of homotopy types 
between a non-compact homogeneous space and certain compact homogeneous space.

\begin{theorem}[\cite{Mo}]\label{mostow} 
Let $G$ be a connected Lie group, and let $H\,\subset\,G$ be a closed subgroup with finitely many connected 
components. Let $M$ be a maximal compact subgroup of $G$ such that $M \cap H$ is a maximal compact subgroup of $H$. 
Then the image of the natural embedding $M/ (M \cap H)\,\hookrightarrow\, G/H$ is a deformation retract of 
$G/H$. 
\end{theorem}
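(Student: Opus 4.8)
The plan is to reduce the statement to a standard fiberwise scaling retraction on an associated vector bundle, the bundle structure itself coming from the global Cartan decomposition of $G$ relative to its maximal compact subgroup $M$.

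First I would fix an ${\rm Ad}(M)$-invariant inner product on $\g\,=\,{\rm Lie}(G)$ (available since $M$ is compact, by averaging) and use it to write an orthogonal, ${\rm Ad}(M)$-invariant splitting $\g\,=\,\m\oplus\p$, where $\m\,=\,{\rm Lie}(M)$. The global Cartan decomposition for a connected Lie group with maximal compact subgroup $M$ then provides a diffeomorphism
$$
M\times\p\,\longrightarrow\, G\,,\qquad (m,\,Z)\,\longmapsto\, m\exp(Z)\,.
$$
The maximal-compactness hypothesis on $M\cap H$ would next be used to show that this decomposition is adapted to $H$: namely that $\h\,=\,(\h\cap\m)\oplus(\h\cap\p)$ and that $(M\cap H)\times(\h\cap\p)\to H$, $(k,\,Z)\mapsto k\exp(Z)$, is again a diffeomorphism. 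Choosing an ${\rm Ad}(M\cap H)$-invariant complement $W$ of $\h\cap\p$ inside $\p$, so that $\p\,=\,(\h\cap\p)\oplus W$ as $(M\cap H)$-modules, is the remaining algebraic input.

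With these decompositions in hand I would identify $G/H$ with the total space of the associated vector bundle $M\times_{M\cap H}W\to M/(M\cap H)$ via
$$
M\times_{M\cap H}W\,\longrightarrow\, G/H\,,\qquad [m,\,w]\,\longmapsto\, m\exp(w)H\,.
$$
This map is well defined because for $k\in M\cap H\subset H$ one has $k\exp({\rm Ad}(k^{-1})w)\,=\,\exp(w)k$, so the two descriptions agree in $G/H$; a dimension count matches the two sides, and the Cartan decompositions of $G$ and of $H$ give surjectivity and injectivity. Under this identification the zero section is exactly the image of $M/(M\cap H)$. The deformation retraction is then the fiberwise scaling
$$
r_t\big([m,\,w]\big)\,:=\,[m,\,(1-t)w]\,,\qquad t\,\in\,[0,\,1]\,,
$$
which is well defined since $M\cap H$ acts linearly on $W$; it satisfies $r_0\,=\,{\rm id}$, the map $r_1$ has image the zero section $M/(M\cap H)$, and $r_t$ fixes $M/(M\cap H)$ pointwise for every $t$.

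The main obstacle I anticipate lies entirely in the algebraic step: verifying that the Cartan decomposition of $G$ restricts to $H$, that is, $\h\,=\,(\h\cap\m)\oplus(\h\cap\p)$ together with the group-level product decomposition $H\,=\,(M\cap H)\exp(\h\cap\p)$. This is exactly where the hypothesis that $M\cap H$ be a \emph{maximal} compact subgroup of $H$ (and not merely a compact subgroup) is indispensable; without it $H$ need not be compatible with the chosen $\p$, the map to the associated bundle fails to be a bijection, and the fiberwise scaling does not descend to $G/H$. Establishing this compatibility is the substance of Mostow's structure theory \cite{Mo}, and everything else reduces to the routine verification that the scaling retraction is continuous, equals the identity at $t\,=\,0$, and has image $M/(M\cap H)$ while fixing it throughout.
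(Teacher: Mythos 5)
The paper does not actually prove this statement: its ``proof'' is a citation of Mostow \cite[Theorem 3.1, p.~260]{Mo} for connected $H$, together with the remark that Hochschild's theorem \cite[Theorem 3.1, p.~180]{H} lets the same argument run when $H$ has finitely many components. Your proposal is therefore an independent proof attempt, and it has two genuine gaps. First, your starting point --- that $(m,Z)\mapsto m\exp(Z)$ is a diffeomorphism $M\times\p\to G$ for an ${\rm Ad}(M)$-invariant complement $\p$ --- is a theorem about semisimple (more generally reductive) groups, not about arbitrary connected Lie groups, which is the generality of the statement. For a general connected $G$, the Malcev--Iwasawa theorem only gives a homeomorphism $G\approx M\times\R^k$ whose Euclidean factor is a product of one-parameter subgroups $\exp(\R X_1)\cdots\exp(\R X_k)$, not the exponential of a subspace; for instance, for $G=\widetilde{{\rm SL}_2(\R)}$ the maximal compact subgroup is trivial, so your claim would make $\exp\colon\g\to G$ a diffeomorphism, yet $\exp$ is not even surjective there.

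Second, and fatally even when $G$ is semisimple (the case the paper actually needs): the compatibility you defer to --- $\h=(\h\cap\m)\oplus(\h\cap\p)$ together with $H=(M\cap H)\exp(\h\cap\p)$ --- is \emph{not} implied by the hypothesis that $M\cap H$ is a maximal compact subgroup of $H$; it is false in general. Take $G={\rm SL}_2(\R)$, $M={\rm SO}(2)$, and $H=N=\big\{\begin{pmatrix}1&t\\0&1\end{pmatrix}\,\big\vert\, t\in\R\big\}$. Then $M\cap H=\{e\}$ is indeed a maximal compact subgroup of $H\cong\R$, but $\h\cap\m=\h\cap\p=0$, so $\h\neq(\h\cap\m)\oplus(\h\cap\p)$ and $H\neq(M\cap H)\exp(\h\cap\p)$; nevertheless the conclusion of the theorem holds, since $G/N\cong\R^2\setminus\{0\}$ retracts onto the circle $M/(M\cap N)$. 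So the decomposition you call ``the substance of Mostow's structure theory'' is not what Mostow proves: it characterizes subgroups stable under the Cartan involution (such as centralizers of $\s\l_2(\R)$-triples, after conjugation), and under the stated hypotheses your associated-bundle model of $G/H$ cannot even be set up. Mostow's actual argument constructs a fibering of $G/H$ with contractible fibers by quite different and longer means; only the final fiberwise-scaling step of your proposal survives once such a bundle structure is already in hand.
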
 

Theorem \ref{mostow} is proved in \cite[p. 260, Theorem 3.1]{Mo} for connected $H$.
However, as mentioned in \cite{BC}, using \cite[p. 180, Theorem 3.1]{H}, the proof as in \cite{Mo}
goes through when $H$ has finitely many connected components.

The following result explains why one needs to identify a maximal compact subgroup of the centralizer of a $\s\l_2 
(\R)$-triple as a subgroup of a maximal compact subgroup of the ambient simple real Lie group.
 
\begin{theorem}\label{mostow-corollary} 
Let $\g$ be a real simple Lie algebra and $G$ be the real (connected) adjoint group of $\g$.
Let $X \in \g$ be a nilpotent element, and $\{ X, \, H, \, Y \}$ be a $\s\l_2 (\R)$-triple in $\g$ containing $X$.
Let $K$ be a maximal compact subgroup of $\ZC_G (X,\,H,\,Y)$. Then $K$ is a maximal compact
subgroup of $\ZC_G (X)$. Moreover, if $M$ is a maximal compact subgroup of $G$ containing $K$, then
the compact homogeneous space $M/K$ embeds in $G/\ZC_G (X)$ as a deformation retract. 
\end{theorem}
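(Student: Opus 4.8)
The plan is to deduce Theorem \ref{mostow-corollary} from the abstract retraction result Theorem \ref{mostow} together with the structure theory of the centralizer of an $\s\l_2(\R)$-triple. The overall strategy breaks into two independent assertions: first, that a maximal compact subgroup $K$ of $\ZC_G(X,H,Y)$ is also a maximal compact subgroup of the larger group $\ZC_G(X)$; and second, that once this is known, the embedding $M/K \hookrightarrow G/\ZC_G(X)$ is a deformation retract. The second assertion is essentially a direct application of Theorem \ref{mostow} with $H := \ZC_G(X)$, so the real content is the first assertion, which will be the main obstacle.

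For the first assertion I would invoke the Jacobson--Morozov theory. The key classical fact is that the inclusion $\ZC_G(X,H,Y) \hookrightarrow \ZC_G(X)$ is a homotopy equivalence; more precisely, $\ZC_G(X)$ is a semidirect product of the reductive subgroup $\ZC_G(X,H,Y)$ with a connected unipotent normal subgroup $U$, namely the group whose Lie algebra is $\z_\g(X) \cap \bigl(\bigoplus_{i\geq 1}\g_i\bigr)$ in the grading of $\g$ by the $\mathrm{ad}(H)$-eigenvalues. First I would recall this Levi-type decomposition $\ZC_G(X) = \ZC_G(X,H,Y)\ltimes U$, which follows from Kostant--Mal'cev theory and the $\mathrm{ad}(H)$-grading: any element centralizing $X$ can be conjugated by an element of $U$ into $\ZC_G(X,H,Y)$. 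Since $U$ is contractible (being a connected unipotent, hence simply connected nilpotent, Lie group), any maximal compact subgroup of $\ZC_G(X)$ is conjugate into the reductive factor $\ZC_G(X,H,Y)$, and a maximal compact subgroup of the latter is then maximal compact in the former. This gives the equality of maximal compact subgroups, and in particular that $K$ is maximal compact in $\ZC_G(X)$.

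For the second assertion I would apply Theorem \ref{mostow} to $G$ (connected, since $G$ is the adjoint group of $\g$) and to the closed subgroup $H := \ZC_G(X)$. Two hypotheses must be checked: that $\ZC_G(X)$ has finitely many connected components, and that $M\cap \ZC_G(X)$ is a maximal compact subgroup of $\ZC_G(X)$. The finiteness of the component group is standard for centralizers of nilpotent elements in a real reductive group (it follows from the finiteness of the number of nilpotent orbits, or directly from the structure of $\ZC_G(X,H,Y)$, which is the group of real points of a reductive algebraic group and so has finitely many components, together with the connectedness of $U$). For the maximal-compactness of the intersection, I would choose the maximal compact subgroup $M$ of $G$ so that it contains the prescribed $K$; since $K$ is maximal compact in $\ZC_G(X)$ by the first part, and $K \subseteq M \cap \ZC_G(X) \subseteq \ZC_G(X)$ with $M\cap\ZC_G(X)$ compact, maximality of $K$ forces $K = M\cap\ZC_G(X)$, so the intersection is indeed maximal compact in $\ZC_G(X)$. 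Theorem \ref{mostow} then yields that $M/(M\cap \ZC_G(X)) = M/K$ embeds in $G/\ZC_G(X)$ as a deformation retract, which is the claim.

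The step I expect to require the most care is establishing the Levi decomposition $\ZC_G(X) = \ZC_G(X,H,Y)\ltimes U$ with $U$ contractible, and thereby that $K$ is maximal compact in $\ZC_G(X)$; the second half is then a routine verification of the hypotheses of Mostow's theorem. A subtlety worth flagging is the compatibility of the choice of $M$: one must verify that a maximal compact subgroup $M$ of $G$ can be chosen to contain the given $K$, which is automatic because any compact subgroup of a connected Lie group is contained in some maximal compact subgroup. With these pieces in place the two assertions combine to give the theorem.
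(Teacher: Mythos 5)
Your proposal is correct and takes essentially the same route as the paper: the paper's own proof simply cites the proof of \cite[Theorem 4.2]{BCM} for the first assertion --- which rests on exactly the Levi-type decomposition $\ZC_G(X) \,=\, \ZC_G(X,\,H,\,Y) \ltimes U$ (via Kostant's theorem on $\s\l_2(\R)$-triples containing $X$, with $U$ unipotent and contractible) that you spell out --- and then invokes Theorem \ref{mostow} for the deformation retract, just as you do after observing that $K \,=\, M \cap \ZC_G(X)$. The only difference is that you supply explicitly the details that the paper leaves to the citation.
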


\begin{proof}
It follows from the proof of \cite[Theorem 4.2]{BCM}. The last part follows from Theorem \ref{mostow}.
\end{proof}

\subsection{Partitions and (signed) Young diagrams}\label{sec-partition-Young-diagram}

An {\it ordered set of order $n$} is a $n$-tuple $(v_1,\, \cdots ,\,v_n)$, where $v_1,\, \cdots,\, v_n$ are elements of some set, such that $v_i \,\neq\, v_j$ if $i \,\neq\, j$.
If $w \,\in\, \{ v_1,\, \cdots ,\,v_n\}$, then write $w \,\in \,(v_1,\, \cdots ,\,v_n)$. 
A pair of ordered sets $(v_1,\, \cdots ,\,v_n)$ and $(w_1,\, \cdots ,\,w_m)$ is said to be 
{\it disjoint } if  $\{v_1,\, \cdots ,\,v_n\} \cap \{w_1,\, \cdots ,\,w_m\}=\emptyset$. 
For a pair of ordered sets
$(v_1,\, \cdots ,\,v_n)$ and $(w_1,\, \cdots ,\,w_m)$ which is disjoint, 
the ordered set $(v_1,\, \cdots ,\,v_n,\, w_1,\, \cdots ,\, w_m)$ will be denoted by
$$(v_1, \,\cdots ,\,v_n) \vee (w_1,\, \cdots ,\,w_m)\, .$$
Furthermore, for $k$-many ordered sets $(v^i_1, \,\cdots,\, v^i_{n_i})$, $1 \,\leq\, i\, \leq\, k$, which are pairwise disjoint, we
define the ordered set $(v^1_1, \cdots, v^1_{n_1})  \vee \cdots \vee (v^k_1,\, \cdots,\, v^k_{n_k})$ 
to be the following juxtaposition of ordered sets $(v^i_1,\, \cdots, \,v^i_{n_i})$ with increasing $i$:
$$
(v^1_1,\, \cdots,\, v^1_{n_1}) \vee \cdots \vee (v^k_1,\, \cdots,\, v^k_{n_k})
\,:=\, (v^1_1,\, \cdots,\, v^1_{n_1}, \,\cdots,\, v^k_1,\, \cdots,\, v^k_{n_k})\, .
$$

A {\it partition} of a positive integer $n$ is an object of the form $[ d_1^{t_{d_1}},\, \cdots ,\, d_s^{t_{d_s}} ]$ where $t_{d_i},\, d_i \,\in\, \N$, $ 1 \,\leq\, i
\,\leq\, s$, such that
$\sum_{i=1}^{s} t_{d_i} d_i \,=\, n$, $t_{d_i} \,\geq\, 1$ and $d_{i+1}\,> \,d_i \,>\, 0$ for all $i$; see \cite[\S~3.1, p.~30]{CoM}. Let $\PC(n)$ denote the {\it set of all partitions of $n$}. For a partition $\d\, =\, [ d_1^{t_{d_1}},\, \cdots ,\, d_s^{t_{d_s}} ]$ of $n$,
define 
\begin{equation}\label{Nd-Ed-Od}
{\N}_{\d} \,:=\, \{ d_i \,\,\big\vert\,\, 1 \,\leq\, i \,\leq\, s \}\, ,\ \
{\E}_{\d} \,:=\, {\N}_{\d}\cap (2\N)\, , \ \ 
{\O}_{\d} \,:=\, {\N}_{\d}\setminus {\E}_{\d} .
\end{equation}
Further define 
\begin{equation}\label{Od1-Od3}
{\O}^1_{\d} \,:=\, \{ d \,\big\vert\, d \,\in\, \O_\d, \ d \,\equiv\, 1 \,\pmod{4} \}\ \
\text{ and }\ \ {\O}^3_{\d} \,:=\, \{ d \,\big\vert\, d \,\in\, \O_\d, \ d \,\equiv\, 3\, \pmod{4} \} .
\end{equation}
Following \cite[Theorem 9.3.3]{CoM}, a partition
$\d$ of $n$ will be called {\it even} if ${\N}_{\d} \,=\, {\E}_{\d}$. Let $\PC_{\rm even} (n)$
be the subset of $\PC(n)$ consisting of all even partitions of $n$. We call a partition 
$\d$ of $n$ to be {\it very even} if
\begin{itemize}
\item $\d$ is even, and

\item $t_\eta$ is even for all $\eta \in {\N}_{\d}$.
\end{itemize}
Let $\PC_{\rm v. even} (n)$ be the subset of $\PC(n)$ consisting of all very even partitions of
$n$. Now define 
$$\PC_1(n) \,:=\, \{ \d \,\in\, \PC(n) \,\,\big\vert\,\,t_\eta ~\text{ is even for all }~ 
\eta \,\in\, \E_\d \}$$
and
$$
\PC_{-1}(n) \,:=\, \{\d \,\in\, \PC(n) \,\,\big\vert\,\,
 t_\theta ~\text{ is even for all }~ \theta \,\in\, \O_\d \}\, .$$
Clearly, we have $\PC_{\rm v. even} (n) \,\subset\, \PC_1(n)$.

We next define certain sets using collections of matrices with entries comprising of signs $\pm 1$, which are
easily seen to be in bijection with sets of equivalence classes of various types of signed Young diagrams.
These sets will be used in parametrizing the nilpotent orbits in the classical Lie algebras.

For a partition $\d \,\in\, \PC (n)$ and $d \,\in\, \N_\d$, we define the subset 
$\Ab_d \,\subset\, {\rm M }_{t_d \times d} (\C)$ of matrices $(m^d_{ij})$ with entries in the set $\{ \pm 1 \}$ as follows : 
\begin{equation}\label{A-d}
\Ab_d \,:=\, \{ (m^d_{ij}) \,\in\, {\rm M }_{t_d \times d} (\C)\,\big\vert\, (m^d_{ij})\
\text{ satisfies } \eqref{yd-def1} \text{ and } \eqref{yd-def2}~ \text{ below} \} \,.
\end{equation}

\begin{enumerate}[label = {{\bf Yd}.\arabic*}]
\item~ \label{yd-def1} There is an integer $ 0 \,\leq\, p_d \,\leq\, t_d$ such that
$$
 m^d_{i1} \,:=\, \begin{cases}
+1 & \text{ if } \ 1 \,\leq\, i \,\leq\, p_d\\
-1 & \text{ if } \ p_d \,<\, i \,\leq\, t_d.
\end{cases}
$$
\item~ \label{yd-def2} 
\begin{align*}
m^d_{ij} &:=\, (-1)^{j+1}m^d_{i1} \qquad \text{if } \ 1\,<\,j \,\leq \,d , \ d\,\in\, \E_\d \cup \O^1_\d; \\
m^d_{ij} &:= \begin{cases}
(-1)^{j+1}m^d_{i1} & \text{ if }\ 1<j \,\leq\, d-1 \\
-m^d_{i1} & \text{ if }\ j \,=\, d 
\end{cases},\, \, d \,\in\, \O^3_\d\, .
\end{align*}
\end{enumerate}

For any $(m^d_{ij}) \,\in\, \Ab_d$ set $${\rm sgn}_+ (m^d_{ij})\,:=\, \# \{ (i,\,j) \,\,\big\vert\,\, 1 \,\leq\, i \,\leq\, t_d,\
1 \,\leq\, j \,\leq \, d ,\ m^d_{ij} \,=\, +1 \}$$ and
$${\rm sgn}_-(m^d_{ij})\,:=\, \# \{ (i,\,j) \,\,\big\vert\,\, 1\,\leq\, i \,\leq\, t_d,\ 1 \,\leq \,j \,\leq \,d ,\
m^d_{ij} \,=\, -1 \}\, .$$
Let $\SC_\d(n) \,:= \, \Ab_{d_1} \times \cdots \times \Ab_{d_s}$.
For a pair of non-negative integers $(p,q)$ with $p+q=n$
we now define the subset $\SC_\d(p, q) \subset \SC_\d(n) $ by 
\begin{equation}\label{S-d-pq}
\SC_\d(p, q)\, :=\, \big\{ (M_{d_1},\, \dotsc ,\, M_{d_s})\,\in\, \SC_\d(n) \,\big\vert\,\, \sum_{i=1}^s
{\rm sgn}_+ M_{d_i}\,= \, p, \, \sum_{i=1}^s { \rm sgn}_- M_{d_i}\,=\, q \big\}.
\end{equation}
We also define
\begin{equation}\label{yd-Y-pq}
\YC(p,\,q) \,:=\, \{ (\d, \,\sgn ) \,\big\vert\,\, \d \,\in\, \PC (n),\ \sgn \,\in \,\SC_\d(p,\,q) \} \, .
\end{equation}

It is easy to see that there is a natural bijection between the set $\YC(p,\,q)$ and
the equivalence classes of signed Young diagrams of size $p+q$ with signature $(p,\,q)$.
Hence, we will call $\YC(p,\,q)$ the {\it set of equivalence classes of signed Young diagrams of size $p+q$
with signature $(p,\,q)$}.

For any $\d \,\in\, \PC(n)$ and $d \,\in\, \N_\d$, define the subset $\Ab_{d, 1} $ of $\Ab_d$ by
$$\Ab_{d, 1} \,:=\, \{(m^d_{ij}) \,\in \,\Ab_d \,\big\vert\,\, m^d_{i\, 1} = +1~ \ \forall\ 1 \,\leq\, i \,\leq\, t_d \}\, .$$
Further define $\SC^{\rm even}_{\d}(p,q) \,\subset\, \SC_\d (p,q)$ and $\SC^{\rm odd}_{\d}(n) \,\subset \,\SC_\d (n)$ by
\begin{equation}\label{S-d-pq-even}
\SC^{\rm even}_{\d}(p,\,q) \,:=\, \{ (M_{d_1},\, \dotsc ,\,M_{d_s}) \,\in \,\SC_\d (p,\,q) \,\big\vert\,\, 
M_\eta \in \Ab_{\eta, 1} ~\ \forall~\ \eta \,\in\, \E_\d \}
\end{equation}
and
\begin{equation}\label{S-d-pq-odd}
\SC^{\rm odd}_{\d}(n) \,:=\, \{ (M_{d_1},\, \dotsc ,\,M_{d_s}) \,\in\, \SC_\d (n) \,\,\big\vert\,\,
M_\theta \,\in\, \Ab_{\theta, 1} ~\ \forall~\ \theta \,\in\, \O_\d \}\, .
\end{equation}

For a pair $(p,q)$ of non-negative integers we define the sets $\YC^{\rm even} (p,q)$ and $\YC^{\rm even}_1 (p,q)$ by
\begin{equation}\label{yd-even-Y-pq}
\YC^{\rm even} (p,\,q)\,:=\, \{ (\d, \,\sgn)\,\big\vert\,\, \d \,\in\, \PC (n),~ \sgn \,\in\, \SC^{\rm even}_{\d}(p,\,q) \},
\end{equation}
\begin{equation}\label{yd-1-Y-pq}
\YC^{\rm even}_1(p,\,q)\,:=\, \{ (\d,\, \sgn) \,\big\vert\,\, \d \,\in\, \PC_1 (n),~ \sgn
\,\in\, \SC^{\rm even}_{\d}(p,\,q) \}.
\end{equation}
Similarly, for a non-negative integer $n$, set 
\begin{equation}\label{yd-odd-Y-pq}
\YC^{\rm odd}(n) \,:= \,\{ (\d, \,\sgn) \,\,\big\vert\,\, \d \,\in\, \PC(n),~\ \sgn \,\in\, \SC^{\rm odd}_{\d}(n) \}, 
\end{equation}
\begin{equation}\label{yd-odd-1-Y-pq}
\YC^{\rm odd}_{-1}(2n)\,:=\,\{ (\d,\,\sgn)\,\,\big\vert\,\, \d \,\in\, \PC_{-1}(2n),~\ \sgn \,\in\,\SC^{\rm odd}_{\d}(2n)\}\, .
\end{equation}

Let $\d \,\in\, \PC(n)$. 
For $\theta \,\in\, \O_\d$ and $M_\theta \,:= \,(m^\theta_{rs})\,\in\,\Ab_\theta $,
define $$l^+_{\theta , i} (M_\theta ) \,:= \,\# \{ j \,\big\vert\, m^\theta_{ij} \,=\, +1 \}\ \ \text{ and }\ \
l^-_{\theta , i} (M_\theta ) \,:=\, \# \{j \,\big\vert\, m^\theta_{ij} \,= \,-1 \}$$
for all $1 \,\leq\, i \,\leq\, t_\theta$; set
$$
\SC'_\d(p,\, q) :=
\Bigg\{ (M_{d_1}, \dotsc ,M_{d_s}) \in \SC^{\rm even}_\d (p,q) \biggm| \! \!
\begin{array}{cc}
\quad l^+_{\theta , i} (M_\theta ) \text{ is even }\forall \, \theta \in \O_\d,\ 1 \leq i \leq t_\theta \\
 \text{or } l^-_{\theta , i} (M_\theta ) \text{ is even } \forall \theta \in \O_\d,\ 1 \leq i \leq t_\theta \!
\end{array} 
\Bigg\}.
$$

\section{Homotopy types of the nilpotent orbits}\label{sec-homotpy-type-nil-orbts}

In this section we will explicitly write down homotopy types of the nilpotent orbits in classical Lie algebras as compact homogeneous spaces.

Let $V$ be a right $ \D $-vector space of dimension $n$, where $\D$ is, as before, $\R$ or $ \C $ or $ \H$. Let $\{X,\,H,\,Y\} \subset 
\s\l (V )$ be a $\s\l_2(\R)$-triple. Consider the non-zero irreducible $\text{Span}_\R \{ X,\, H, \,Y\}$-submodules of $V$. Let $\{d_1,\, 
\cdots, \, d_s\}$, with $d_1 \,<\, \cdots \,<\, d_s$, be the integers that occur as $\R$-dimension of such $\text{Span}_\R \{ 
X,\,H,\,Y\}$-modules. Then using \eqref{dim-L-d-1} and \cite[Lemma A.1 (2)]{BCM}, it follows that
$$ \sum_{i=1}^s t_{d_{i}} d_i \,=\,{\dim}_\D V\,=\, n\, .$$ 
Thus 
\begin{equation}\label{partition-symbol}
\d \,:=\, \big[d_1^{t_{d_1} },\, \cdots ,\, d_s^{t_{d_s} }\big] \,\in\, \PC(n)\, .
\end{equation}
 Consider $\N_\d$, $\E_\d $ and $\O_\d $ as defined in \eqref{Nd-Ed-Od}. We have
\begin{align}\label{isotypicalcomp}
V &=\,\bigoplus_{d \in \N_\d} M(d-1) \ \ \ \text{ and } \ \ L(d-1) \,=\, V_{Y,0} \cap V_{H, 1-d} \ \ \text{ for }
\ d \,\geq \,1\, .
\end{align} 

Let $(v^d_1,\, \dotsc ,\, v^d_{t_d})$
be the ordered $\D$-basis of $ L(d-1) $ as in \cite[Proposition A.2]{BCM} for $d \,\in\, \N_\d$. Then it follows that
\begin{equation}\label{old-ordered-basis-part}
\BC^l (d) \,:=\, (X^l v^d_1,\, \dotsc ,\,X^l v^d_{t_d})
\end{equation}
is an ordered $\D$-basis of $X^l L(d-1)$ for $0\,\leq\, l \,\leq\, d-1$ with $d\,\in\, \N_\d$; see \cite[Proposition A.2(2)]{BCM}. Define
\begin{equation}\label{old-ordered-basis}
\BC(d) \,:=\, \BC^0 (d) \vee \cdots \vee \BC^{d-1} (d) \ \forall\ d \,\in\, \N_\d\, ,\ \text{ and }\ \BC\,:=\,
\BC(d_1) \vee \cdots \vee \BC(d_s)\, .
\end{equation}

Let 
\begin{equation}\label{map-D-SL}
\begin{split}
 \Db_{{\rm SL}(V)} \colon {\rm M}_{t_{d_1}}(\D) \times \cdots \times {\rm M}_{t_{d_s}}(\D) & \lto {\rm M}_{n}(\D)\\
 \big(A_{d_1}\, , \dotsc\, , A_{d_s} \big) \, &\longmapsto \, \bigoplus_{j=1}^s \big( A_{d_j} \big)_\blacktriangle ^{d_j} \, 
\end{split}
 \end{equation}
be the $\R$-algebra embedding. Let
\begin{equation}\label{algebra-isom}
 \Lambda_\BC \,: \, {\rm End} (V) \,\longrightarrow\, {\rm M}_n(\D)
\end{equation}
be the isomorphism of $\R$-algebras with respect to the ordered basis $\BC$.
Next define the character when $\D= \R$ or $\C$ 
\begin{equation}\label{map-bigchi}
\begin{split}
\bigchi_\d \,\colon\, \prod_{d \in \N_\d} {\rm GL} (L(d-1))\,& \, \longrightarrow \, \,\D^*\\
 \big(g_{t_{d_1}}\,,\, \dotsc \,,\, g_{t_{d_s}}\big)\, & \longmapsto \,
\prod_{i=1}^s \big(\det g_{t_{d_i}}\big)^{d_i}.
\end{split}
\end{equation}

Let $\langle \cdot,\, \cdot \rangle \,:\, V \times V \,\longrightarrow\, \D \ $ be a $\epsilon$-$\sigma$ Hermitian form. Assume that
$\{X,\,H,\,Y\}$ be a $\s\l_2(\R)$-triple in $\s\u (V, \,\langle \cdot,\, \cdot \rangle )$. Define the form
\begin{equation}\label{new-form}
(\cdot ,\,\cdot)_{d} \,:\, L(d-1) \times L(d-1 )\,\longrightarrow\, \D\, ,\ \ \
(v,\, u)_d \,:=\, \langle v \,,\, X^{d-1} u \rangle
\end{equation}
as in \cite[p.~139]{CoM}.

\subsection {Homotopy types of the nilpotent orbits in \texorpdfstring{${\s\l}_n(\D)$}{Lg}}\label{sec-sl-nc}

Let $n$ be a positive integer. In this subsection we write down the homotopy types of the nilpotent orbits in ${\s\l}_n(\D)$ for $ \D\,=\, 
\C,\,\R,\,\H $.

First recall a standard parametrization of $\NC({\rm SL}_n (\D))$, the set of all nilpotent orbits in $\s\l_n(\D)$. Let $X
\,\in\, {\NC}_ {\s\l_n(\D)}$ be a nilpotent element. First assume that $X \,\neq\, 0$, and
let $\{X,\, H,\, Y\} \,\subset\, \s\l_n(\D)$ be a $\s\l_2(\R)$-triple. 
Let $V := \D^n$ be the right $\D$-vector space of column vectors. 
Let $\{c_1,\, \cdots,\, c_l\}$, with $c_1 \,<\, \cdots \,<\, c_l$,
be the finitely many ordered integers that occur as $\R$-dimension of non-zero irreducible
$\text{Span}_\R \{ X,H,Y\}$-submodules of $V$.
Recall that $M(c-1)$ is defined to be the isotypical component of $V$ containing all irreducible $\text{Span}_\R \{ X,H ,Y \}$-submodules of $V$ with highest weight $(c-1)$, and as in \eqref{definition-L-d-1}, we set $L(c-1)\,:= \,V_{Y,0} \cap M(c-1)$. Recall that the space $L(c_r-1)$ is a $\D$-subspace for $1\leq r \leq l$.
Let $$t_{c_r} \,:=\, \dim_\D L(c_r-1)$$ for $ 1\,\leq\, r \,\leq\, l$. Then as $\sum_{r=1}^l t_{c_r} c_r \,=\,n$, we have
$[c_1^{t_{c_1}}, \,\cdots,\,c_l^{t_{c_l}}]\,\in\, \PC(n)$. Define $\Psi_{{\rm SL}_n (\D)} ( \OC_{X})\,:=\, [c_1^{t_{c_1}},\, \cdots ,\,c_l^{t_{c_l}}]$. It is easy to see that $\Psi_{{\rm SL}_n (\D)} (\OC_X) \,\neq\, [ 1^n ]$ as $X \,\neq\, 0$.
By declaring $\Psi_{{\rm SL}_n (\D)} (\OC_0) \,= \,[ 1^n ]$ we obtain a map 
\begin{align}\label{parametrizing-map-SL-D}
\Psi_{{\rm SL}_n (\D)} \,\colon\, \NC ({\rm SL}_n (\D)) \,\longrightarrow\, \PC (n).
\end{align}
Now we will consider three cases $\D\,=\,\C,\, \R,\,\H$ separately.

\subsubsection{The case of $\s\l_n(\C)$}

First we assume that $ \D\,=\,\C$. Recall that the map as in \eqref{parametrizing-map-SL-D}
parametrizes the nilpotent orbits in $\s\l_n(\C)$.

\begin{theorem}[{\cite[Theorem 5.1.1]{CoM}}]\label{sl-C-parametrization}
The map $\Psi_{{\rm SL}_n (\C)}$ in \eqref{parametrizing-map-SL-D} is a bijection.
\end{theorem}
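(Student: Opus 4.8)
The plan is to reduce the statement to the classical Jordan canonical form classification of nilpotent ${\rm GL}_n(\C)$-orbits, after identifying the partition $\Psi_{{\rm SL}_n (\C)}(\OC_X)$ with the Jordan type of $X$. First I would make this dictionary precise: for a nonzero nilpotent $X$ sitting in a $\s\l_2(\R)$-triple $\{X,\,H,\,Y\}$, the representation theory of $\s\l_2(\R)$ decomposes $V = \C^n$ into the isotypical components $M(d-1)$, and on each irreducible summand of complex dimension $d$ the operator $X$ acts as a single nilpotent Jordan block of size $d$. Using \cite[Lemma A.1(2)]{BCM} and \cite[Proposition A.2]{BCM} to control the passage between the real module structure and the $\C$-linear structure, I would conclude that $t_{c_r} = \dim_\C L(c_r-1)$ is exactly the number of Jordan blocks of $X$ of size $c_r$. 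Hence the partition $[c_1^{t_{c_1}},\, \cdots,\, c_l^{t_{c_l}}]$ is precisely the Jordan type of $X$.

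Granting this dictionary, well-definedness is immediate: the multiplicity $t_d$ is determined by the ranks of the powers $X^{d-1}$, $X^{d}$ and $X^{d+1}$, which are invariant under conjugation and independent of the chosen triple, so $\Psi_{{\rm SL}_n (\C)}$ depends only on the orbit $\OC_X$. For surjectivity, given any $\d = [d_1^{t_{d_1}},\, \cdots,\, d_s^{t_{d_s}}] \in \PC(n)$ I would exhibit the block-diagonal nilpotent matrix whose diagonal blocks are $t_{d_i}$ copies of the size-$d_i$ nilpotent Jordan block; this matrix is strictly upper triangular, hence has trace zero and lies in $\s\l_n(\C)$, and by construction its image under $\Psi_{{\rm SL}_n (\C)}$ is $\d$, the case $\d = [1^n]$ being covered by the convention $\Psi_{{\rm SL}_n (\C)}(\OC_0) = [1^n]$.

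For injectivity, suppose $\Psi_{{\rm SL}_n (\C)}(\OC_X) = \Psi_{{\rm SL}_n (\C)}(\OC_{X'})$, so that $X$ and $X'$ have the same Jordan type. By uniqueness of Jordan canonical form over the algebraically closed field $\C$, there is $g \in {\rm GL}_n(\C)$ with ${\rm Ad}(g)X = X'$. It then remains to replace $g$ by an element of ${\rm SL}_n(\C)$: since scalar matrices act trivially under the adjoint action and $\C$ is algebraically closed, I may write $g = \lambda g'$ with $\lambda^n = \det g$ and $g' \in {\rm SL}_n(\C)$, whence ${\rm Ad}(g') X = {\rm Ad}(g) X = X'$ and therefore $\OC_X = \OC_{X'}$. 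Thus $\Psi_{{\rm SL}_n (\C)}$ is a bijection.

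The only genuinely delicate step is the first one, namely the identification of the representation-theoretic invariants $(c_r,\, t_{c_r})$ with the Jordan data of $X$; here the real-versus-complex bookkeeping --- each complex $d$-dimensional irreducible restricts to two copies of the real $d$-dimensional irreducible of $\text{Span}_\R \{X,\,H,\,Y\} \simeq \s\l_2(\R)$, so that the real dimension $c_r$ of a real irreducible equals the block size while $\dim_\C L(c_r-1)$ recovers the correct multiplicity --- must be handled with care, and this is exactly what the appendix results are invoked for. Everything else reduces to standard linear algebra over $\C$.
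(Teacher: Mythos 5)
Your proposal is correct and takes essentially the same route as the source the paper relies on for this statement: the paper gives no proof of its own (it simply cites \cite[Theorem 5.1.1]{CoM}), and the standard argument behind that citation is exactly yours --- identify $\Psi_{{\rm SL}_n(\C)}(\OC_X)$ with the Jordan type of $X$, invoke uniqueness of the Jordan canonical form over $\C$, and absorb the determinant of a ${\rm GL}_n(\C)$-conjugator into a scalar matrix (which acts trivially under ${\rm Ad}$) to obtain an ${\rm SL}_n(\C)$-conjugator, so that ${\rm GL}_n(\C)$- and ${\rm SL}_n(\C)$-orbits of nilpotents coincide. You also correctly handle the two points that genuinely need care in this paper's framework, namely the real-versus-complex bookkeeping showing that $t_{c_r}=\dim_\C L(c_r-1)$ counts the Jordan blocks of size $c_r$, and the independence of the resulting partition from the choice of $\s\l_2(\R)$-triple via the conjugation-invariant ranks of powers of $X$.
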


\begin{theorem}\label{homototy-type-sl-nc}
Let $X \,\in\, \NC_{{\s\l_n}(\C)}$,\, $ X\,\neq\, 0$, and $\Psi_{{\rm SL}_n(\C)}(\OC_X) \,=\, \d$. Let $\{X,\,H,\,Y\}$ be a
$\s\l_2(\R)$-triple in $\s\l_n(\C)$. Let $K$ be a maximal compact subgroup of $\ZC_{{\rm SL}_n(\C)}(X,\,H,\,Y)$. 
Let the maps $\Lambda_\BC,\, \Db_{{\rm SL}(V)}$ and $\bigchi_\d$ be defined as in \eqref{algebra-isom}, \eqref{map-D-SL} and \eqref{map-bigchi}, respectively.
Then $\Lambda_\BC(K)$ is given by
$$
\Lambda_\BC(K)\, = \, \big\{\, \Db_{{\rm SL}(V)}(g) \, \bigm| \, g \in \prod_{i=1}^s {\rm U}(t_{d_i}) \,, \, \bigchi_\d(g) =1 \, \big\}.
$$ 
Moreover, the nilpotent orbit $ \OC_X$ in $ \s\l_n(\C) $ is homotopic to $ {\rm SU}(n)/ \Lambda_\BC (K)$.
\end{theorem}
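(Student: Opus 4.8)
The plan is to reduce the whole statement to two ingredients: the explicit block structure of the reductive centralizer $\ZC_{{\rm SL}_n(\C)}(X,\,H,\,Y)$ read off in the adapted basis $\BC$, and Mostow's theorem (Theorem~\ref{mostow} and Theorem~\ref{mostow-corollary}). The real content is the computation of $\Lambda_\BC(K)$ and the verification that it sits inside ${\rm SU}(n)$; granting that, the homotopy statement follows at once.

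First I would identify the commutant of the triple inside ${\rm GL}(V)$. An element $g\,\in\,{\rm GL}(V)$ commuting with $X,\,H,\,Y$ preserves every isotypical component $M(d-1)$, and under the $\mathrm{Span}_\R\{X,H,Y\}$-isomorphism $M(d-1)\,\cong\, L(d-1)\otimes W_d$, with $W_d$ the $d$-dimensional irreducible $\s\l_2(\R)$-module, Schur's lemma forces $g$ to act as $A_d\otimes\mathrm{Id}$ for some $A_d\,\in\,{\rm GL}(L(d-1))\,\cong\,{\rm GL}_{t_d}(\C)$. Reading this in the ordered basis $\BC$ of \eqref{old-ordered-basis}, the identity $g(X^l v^d_j)\,=\,X^l g(v^d_j)\,=\,\sum_k (X^l v^d_k)(A_d)_{kj}$ shows that on each block $\BC^l(d)$ the matrix of $g$ equals $A_d$ and that $g$ does not mix distinct powers $l$; hence in $\BC$ the matrix of $g$ is $\bigoplus_{d}(A_d)^{d}_\blacktriangle$, i.e. $\Lambda_\BC(g)\,=\,\Db_{{\rm SL}(V)}(A_{d_1},\dots,A_{d_s})$, and as $g$ varies the tuple ranges over all of $\prod_i{\rm GL}_{t_{d_i}}(\C)$. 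Using $\det\!\big((A_{d_i})^{d_i}_\blacktriangle\big)\,=\,(\det A_{d_i})^{d_i}$, the condition $\det=1$ becomes exactly $\bigchi_\d(A)=1$, so $\Lambda_\BC(\ZC_{{\rm SL}_n(\C)}(X,H,Y))\,=\,\{\Db_{{\rm SL}(V)}(g)\mid g\in\prod_i{\rm GL}_{t_{d_i}}(\C),\ \bigchi_\d(g)=1\}$.

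Next I would pin down a maximal compact subgroup of $H\,:=\,\Lambda_\BC(\ZC_{{\rm SL}_n(\C)}(X,H,Y))$. Let $\theta(A)\,=\,(\overline{A}^{\,t})^{-1}$ be the standard Cartan involution of ${\rm SL}_n(\C)$, whose fixed-point set is ${\rm SU}(n)$. Since conjugate-transpose preserves the repeated block-diagonal shape, $\theta(\Db_{{\rm SL}(V)}(g))\,=\,\Db_{{\rm SL}(V)}(\dots,(\overline{g_i}^{\,t})^{-1},\dots)$, and a short computation gives $\bigchi_\d$ of this image equal to $(\overline{\bigchi_\d(g)})^{-1}$; hence $H$ is $\theta$-stable. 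As $H$ is the centralizer of an $\s\l_2(\R)$-triple it is reductive, so its Cartan decomposition is induced from that of ${\rm SL}_n(\C)$ and $H\cap{\rm SU}(n)$ is a maximal compact subgroup of $H$. Finally $\Db_{{\rm SL}(V)}(g)\,\in\,{\rm U}(n)$ if and only if each $g_i$ is unitary, so $H\cap{\rm SU}(n)$ is precisely $\{\Db_{{\rm SL}(V)}(g)\mid g\in\prod_i{\rm U}(t_{d_i}),\ \bigchi_\d(g)=1\}$. Since all maximal compact subgroups are conjugate, this identifies $\Lambda_\BC(K)$ for a suitable choice of $K$, which is all that is needed as the homotopy type is conjugation-invariant.

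For the last assertion, the displayed description makes it clear that $\Lambda_\BC(K)\,\subset\,{\rm SU}(n)$, so ${\rm SU}(n)$ is a maximal compact subgroup of ${\rm SL}_n(\C)$ containing the image of $K$. By Theorem~\ref{mostow-corollary}, $K$ is a maximal compact subgroup of $\ZC_{{\rm SL}_n(\C)}(X)$; applying Theorem~\ref{mostow} with $G\,=\,{\rm SL}_n(\C)$, $H\,=\,\ZC_{{\rm SL}_n(\C)}(X)$ and $M\,=\,{\rm SU}(n)$ then yields that $M/K\,\cong\,{\rm SU}(n)/\Lambda_\BC(K)$ embeds in $\OC_X\,\cong\,{\rm SL}_n(\C)/\ZC_{{\rm SL}_n(\C)}(X)$ as a deformation retract, whence $\OC_X$ is homotopy equivalent to ${\rm SU}(n)/\Lambda_\BC(K)$. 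I expect the main obstacle to be the first step: justifying carefully that in the specially adapted basis $\BC$ the commutant takes the repeated block-diagonal form produced by $\Db_{{\rm SL}(V)}$ (this is exactly where the precise properties of the basis from the appendix of \cite{BCM} enter), together with checking reductivity and $\theta$-stability cleanly enough that the maximal compact subgroup is literally the intersection $H\cap{\rm SU}(n)$.
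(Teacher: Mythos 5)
Your proposal is correct and follows essentially the same route as the paper: the paper's proof consists of citing \cite[Lemma 4.4(1)]{BCM} for the block-diagonal form of $\ZC_{{\rm SL}_n(\C)}(X,H,Y)$ in the adapted basis $\BC$ (which you reprove directly via Schur's lemma on the isotypical decomposition $M(d-1)\cong L(d-1)\otimes W_d$), then writing the maximal compact subgroup in that basis, and finally invoking Theorem \ref{mostow-corollary} together with the fact that maximal compact subgroups of ${\rm SL}_n(\C)$ are copies of ${\rm SU}(n)$. Your Cartan-involution ($\theta$-stability) argument and your remark that the identification of $\Lambda_\BC(K)$ holds for a suitable choice of $K$ (all choices being conjugate) simply make explicit justifications that the paper leaves implicit.
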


\begin{proof}
Let $V$ be a right $\C$-vector space of column vectors such that $ \dim_\C V=n$, and $\BC$ be an ordered basis of $V$ as in 
\eqref{old-ordered-basis}. The proof follows from \cite[Lemma 4.4(1)]{BCM} and by writing the matrices of the elements of the maximal 
compact subgroup $ K $ with respect to the ordered basis $\BC$ as in \eqref{old-ordered-basis}.

The second part follows from Theorem \ref{mostow-corollary} and the well-known fact that any maximal compact subgroup of ${\rm SL}_n(\C)$ 
is isomorphic to ${\rm SU}(n)$.
\end{proof}

\subsubsection{The case of $\s\l_n(\R)$}

Next we will consider $ \D\,=\,\R $. The following known result says that the map $\Psi_{{\rm SL}_n (\R)} $ as in 
\eqref{parametrizing-map-SL-D} is ``almost'' a parametrization of the nilpotent orbits in $\s\l_n(\R)$.

\begin{theorem}[{\cite[Theorem 9.3.3]{CoM}}]\label{sl-R-parametrization}
For the map $\Psi_{{\rm SL}_n (\R)}$ in \eqref{parametrizing-map-SL-D},
$$\# \Psi_{{\rm SL}_n (\R)}^{-1} (\d) \,=\,
\begin{cases}
1 & \text{ for all } \ \d \,\in\, \PC(n) \setminus \PC_{\rm even} (n)\\
2 & \text{ for all } \ \d \,\in\, \PC_{\rm even} (n).
\end{cases}$$ 
\end{theorem}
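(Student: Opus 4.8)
The plan is to fix a partition $\d=[d_1^{t_{d_1}},\dotsc,d_s^{t_{d_s}}]\in\PC(n)$ and count the $SL_n(\R)$-orbits that map to it, by realising the fibre $\Psi_{{\rm SL}_n(\R)}^{-1}(\d)$ as the set of $SL_n(\R)$-orbits sitting inside a single ${\rm GL}_n(\R)$-orbit. First I would recall that a nilpotent element of $\s\l_n(\R)$ is conjugate to another under ${\rm GL}_n(\R)$ exactly when the two have the same Jordan type, and that this Jordan type is precisely the partition recorded by $\Psi_{{\rm SL}_n(\R)}$ (the block lengths are the $d_i\in\N_\d$ and their multiplicities are $t_{d_i}=\dim_\R L(d_i-1)$). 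Hence $\Psi_{{\rm SL}_n(\R)}$ factors through the set of ${\rm GL}_n(\R)$-orbits, and $\Psi_{{\rm SL}_n(\R)}^{-1}(\d)$ is canonically identified with the set of $SL_n(\R)$-orbits contained in the ${\rm GL}_n(\R)$-orbit $\OC^{{\rm GL}}_X$ attached to $\d$.

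Next I would invoke the elementary orbit count for a normal subgroup: as ${\rm SL}_n(\R)$ is normal in ${\rm GL}_n(\R)$, the number of $SL_n(\R)$-orbits inside $\OC^{{\rm GL}}_X$ equals the number of double cosets ${\rm SL}_n(\R)\backslash {\rm GL}_n(\R)/\ZC_{{\rm GL}_n(\R)}(X)$, which, because the quotient ${\rm GL}_n(\R)/{\rm SL}_n(\R)$ is identified with $\R^*$ via $\det$, is just
\[
\big[\,{\rm GL}_n(\R):{\rm SL}_n(\R)\cdot\ZC_{{\rm GL}_n(\R)}(X)\,\big]
=\big[\,\R^*:\det\big(\ZC_{{\rm GL}_n(\R)}(X)\big)\,\big].
\]
So the whole problem is reduced to computing the image of the determinant on the centralizer.

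For that computation I would use the structure of the centralizer due to Springer and Steinberg \cite{SS} together with the ordered basis $\BC$ of \eqref{old-ordered-basis}. The centralizer $\ZC_{{\rm GL}_n(\R)}(X)$ is the semidirect product of its unipotent radical with the reductive part $\ZC_{{\rm GL}_n(\R)}(X,H,Y)\cong\prod_{i=1}^s{\rm GL}(L(d_i-1))\cong\prod_{i=1}^s{\rm GL}_{t_{d_i}}(\R)$; unipotent elements have determinant $1$, so $\det(\ZC_{{\rm GL}_n(\R)}(X))=\det(\ZC_{{\rm GL}_n(\R)}(X,H,Y))$. Writing the reductive part in the basis $\BC$ realises it as the image of $\Db_{{\rm SL}(V)}$ restricted to $\prod_i{\rm GL}_{t_{d_i}}(\R)$, and since $\big(A_{d_j}\big)_\blacktriangle^{d_j}$ is the $d_j$-fold block repetition of $A_{d_j}$ a direct computation gives $\det\circ\Db_{{\rm SL}(V)}=\bigchi_\d$. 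Using surjectivity of $\det$ on each ${\rm GL}_{t_{d_i}}(\R)$ this yields $\det(\ZC_{{\rm GL}_n(\R)}(X))=\big\{\prod_{i=1}^s x_i^{d_i}\bigm| x_i\in\R^*\big\}$.

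The final step is the arithmetic of $\R^*$: one has $(\R^*)^d=\R^*$ for $d$ odd and $(\R^*)^d=\R_{>0}$ for $d$ even, so the image is all of $\R^*$ as soon as some $d_i\in\N_\d$ is odd and equals $\R_{>0}$ exactly when $\N_\d=\E_\d$, i.e.\ when $\d$ is even. Since $[\R^*:\R^*]=1$ and $[\R^*:\R_{>0}]=2$, substituting into the index formula gives the two stated values. The main obstacle is the middle paragraph: one must justify carefully that the determinant restricted to the centralizer is exactly the character $\bigchi_\d$, which rests on the precise block pattern of the centralizer (the reductive factor acting by the same $g_i$ on each of the $d_i$ copies $X^lL(d_i-1)$) and on the triviality of determinant on the unipotent radical; once this is in place, the remainder is the elementary index computation in $\R^*$.
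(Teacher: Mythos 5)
Your proof is correct. There is, however, nothing in the paper to compare it against step by step: the paper gives no argument for this statement at all, quoting it directly from \cite[Theorem 9.3.3]{CoM}. Your argument is essentially the standard proof lying behind that citation, carried out in the paper's own notation: you identify the fibre $\Psi_{{\rm SL}_n(\R)}^{-1}(\d)$ with the set of ${\rm SL}_n(\R)$-orbits inside the single ${\rm GL}_n(\R)$-orbit of Jordan type $\d$ (the Jordan type being exactly the partition built from the isotypical multiplicities $t_{d_i}=\dim_\R L(d_i-1)$), use normality of ${\rm SL}_n(\R)$ in ${\rm GL}_n(\R)$ to turn the orbit count into the index $\big[\R^*:\det\big(\ZC_{{\rm GL}_n(\R)}(X)\big)\big]$, and then compute that image. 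The two nonelementary inputs you invoke are both legitimately available in the paper's toolkit: the decomposition of $\ZC_{{\rm GL}_n(\R)}(X)$ into its unipotent radical (on which $\det$ is trivial) and the reductive part $\ZC_{{\rm GL}_n(\R)}(X,H,Y)$ is \cite[Lemma 3.7.3]{CoM}, cited in the paper's appendix; and the realization of that reductive part, in the basis $\BC$ of \eqref{old-ordered-basis}, as $\Db_{{\rm SL}(V)}\big(\prod_{i}{\rm GL}_{t_{d_i}}(\R)\big)$ with $\det\circ\Db_{{\rm SL}(V)}=\bigchi_\d$ is precisely \cite[Lemma 4.4(1)]{BCM}, the same lemma the paper uses to prove Theorem \ref{homototy-type-sl-nr}. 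The closing arithmetic ($(\R^*)^{d}=\R^*$ for $d$ odd and $(\R^*)^{d}=\R_{>0}$ for $d$ even, whence index $1$ unless $\N_\d=\E_\d$, and index $2$ in that case) is correct. The only point you leave implicit is that every $\d\in\PC(n)$ actually occurs as a Jordan type, so each fibre is nonempty; this is immediate (take the block-diagonal Jordan matrix with blocks prescribed by $\d$), but it is the surjectivity statement that makes the displayed count read as stated.
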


\begin{theorem}\label{homototy-type-sl-nr}
Let $X\,\in\,\NC_{{\s\l_n}(\R)}$,\, $X\,\neq\, 0$ and $\Psi_{{\rm SL}_n(\R)}(\OC_X) \,=\, \d$. Let $\{X,\,H,\,Y\}$ be a $\s\l_2(\R)$-triple in 
$\s\l_n(\R)$. Let $K$ be a maximal compact subgroup of $\ZC_{{\rm SL}_n(\R)}(X,\,H,\,Y)$. Let the maps $\Lambda_\BC,\, \Db_{{\rm SL}(V)}$ and 
$\bigchi_\d$ be defined as in \eqref{algebra-isom}, \eqref{map-D-SL} and \eqref{map-bigchi}, respectively. Then $\Lambda_\BC(K)$ is given 
by
$$
\Lambda_\BC(K)\, = \, \big\{\, \Db_{{\rm SL}(V)}(g) \, \bigm| \, g \in \prod_{i=1}^s {\rm O}_{t_{{d_i}}} \,, \, \bigchi_\d(g) =1 \, \big\}.
$$ 
The nilpotent orbit $ \OC_X$ in $ \s\l_n(\R) $ is homotopic to $ {\rm SO}_n/ \Lambda_\BC (K)$.
\end{theorem}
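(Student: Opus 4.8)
The plan is to follow the template of the $\s\l_n(\C)$ case (Theorem~\ref{homototy-type-sl-nc}) almost verbatim, substituting the real structure theory at the two places where $\C$ entered, and to pay attention to the one genuinely new feature: a maximal compact subgroup ${\rm O}_m$ of ${\rm GL}_m(\R)$ is disconnected, so a sign/character constraint will appear that was invisible over $\C$ (where the corresponding factor ${\rm U}(m)$ is connected).

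First I would fix $V = \R^n$ together with the ordered basis $\BC$ of \eqref{old-ordered-basis} adapted to $\{X,\,H,\,Y\}$, and transport everything through the $\R$-algebra isomorphism $\Lambda_\BC$ of \eqref{algebra-isom}. Invoking \cite[Lemma 4.4]{BCM} (the real form of Springer--Steinberg), an endomorphism of $V$ commuting with $X,\,H,\,Y$ is determined by its restrictions to the multiplicity spaces $L(d-1)$, each of which may be an arbitrary element of ${\rm GL}(L(d-1)) \cong {\rm GL}_{t_d}(\R)$; with respect to $\BC$ this action is replicated $d$ times down the diagonal, which is precisely the embedding $\Db_{{\rm SL}(V)}$ of \eqref{map-D-SL}. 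Since $\det \Db_{{\rm SL}(V)}(g) = \prod_{i}(\det g_{t_{d_i}})^{d_i} = \bigchi_\d(g)$, imposing membership in ${\rm SL}_n(\R)$ amounts to $\bigchi_\d(g) = 1$, so that
\[
\Lambda_\BC\big(\ZC_{{\rm SL}_n(\R)}(X,H,Y)\big) \,=\, \big\{\, \Db_{{\rm SL}(V)}(g) \,\bigm|\, g \in \textstyle\prod_{i=1}^s {\rm GL}_{t_{d_i}}(\R),\ \bigchi_\d(g) = 1 \,\big\}.
\]

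To extract the maximal compact subgroup $K$ I would use the polar decomposition on each factor ${\rm GL}_{t_{d_i}}(\R) = {\rm O}_{t_{d_i}} \cdot \exp(\mathrm{Sym})$: writing $g_i = k_i p_i$ with $k_i \in {\rm O}_{t_{d_i}}$ and $p_i$ positive-definite symmetric, one has $\bigchi_\d(g) = \bigchi_\d(k)\,\bigchi_\d(p)$ with $\bigchi_\d(k) \in \{\pm 1\}$ and $\bigchi_\d(p) > 0$, so $\bigchi_\d(g) = 1$ forces $\bigchi_\d(k) = 1$ and $\bigchi_\d(p) = 1$ \emph{separately}. This exhibits a Cartan-type decomposition of the centralizer whose contractible (positive symmetric) part carries $\bigchi_\d = 1$ and whose compact part is exactly $\{k \in \prod_i {\rm O}_{t_{d_i}} : \bigchi_\d(k) = 1\}$, giving the displayed formula for $\Lambda_\BC(K)$. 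I expect this sign bookkeeping to be the main (if modest) obstacle: over $\C$ the character automatically equals $1$ on the connected factors ${\rm U}(m)$, whereas here one must verify both that the constraint $\bigchi_\d(k)=1$ genuinely survives on the orthogonal blocks and that no larger compact subgroup intrudes.

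Finally, for the homotopy assertion I would appeal to Theorem~\ref{mostow-corollary}: since $K$ is a maximal compact subgroup of $\ZC_{{\rm SL}_n(\R)}(X,H,Y)$, it is also one of $\ZC_{{\rm SL}_n(\R)}(X)$, and $\OC_X \cong {\rm SL}_n(\R)/\ZC_{{\rm SL}_n(\R)}(X)$ deformation retracts onto $M/K$ for a maximal compact subgroup $M$ of ${\rm SL}_n(\R)$ containing $K$. As any maximal compact subgroup of ${\rm SL}_n(\R)$ is isomorphic to ${\rm SO}_n$, and as $\det \Db_{{\rm SL}(V)}(g) = \bigchi_\d(g) = 1$ places $\Lambda_\BC(K)$ inside ${\rm SO}_n$ (rather than merely ${\rm O}_n$) once $\Lambda_\BC(M)$ is identified with the standard ${\rm SO}_n$, this yields $\OC_X \simeq {\rm SO}_n/\Lambda_\BC(K)$, as claimed.
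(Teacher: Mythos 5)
Your proposal is correct and follows essentially the same route as the paper: the centralizer description via \cite[Lemma 4.4(1)]{BCM}, matrices written in the basis $\BC$ of \eqref{old-ordered-basis} to get $\Db_{{\rm SL}(V)}$ and the constraint $\bigchi_\d = 1$, and then Theorem \ref{mostow-corollary} together with the identification of a maximal compact subgroup of ${\rm SL}_n(\R)$ with ${\rm SO}_n$. Your polar-decomposition/sign argument simply makes explicit the step the paper leaves implicit (that the compact part of $S\big(\prod_i {\rm GL}_{t_{d_i}}(\R)^{d_i}_\Delta\big)$ is exactly the orthogonal tuples with $\bigchi_\d = 1$), which is a welcome, but not divergent, elaboration.
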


\begin{proof}
Let $V$ be a right $\R$-vector space of column vectors such that $ \dim_\R V\,=\,n$, and $\BC$ be an ordered basis of $V$ as in 
\eqref{old-ordered-basis}. The proof follows from {\cite[Lemma 4.4(1)]{BCM}} and by writing the matrices of the elements of the maximal 
compact subgroup $K$ with respect to the ordered basis $ \BC$ as in \eqref{old-ordered-basis}.
	
The second part follows from Theorem \ref{mostow-corollary} and the well-known fact that any maximal compact subgroup of ${\rm SL}_n(\R)$ 
is isomorphic to ${\rm SO}_n$.
\end{proof}

\subsubsection{The case of $\s\l_n(\H)$}
Finally we will consider $\D\,=\,\H $.

\begin{theorem}[{\cite[Theorem 9.3.3]{CoM}}]\label{sl-H-parametrization}
The map $\Psi_{{\rm SL}_n (\H)}$ in \eqref{parametrizing-map-SL-D} is a bijection.
\end{theorem}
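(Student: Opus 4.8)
The plan is to reduce the classification of nilpotent ${\rm SL}_n(\H)$-orbits to the linear-algebraic classification of nilpotent $\H$-endomorphisms of $V=\H^n$ up to ${\rm GL}_n(\H)$-conjugacy, and to identify $\Psi_{{\rm SL}_n(\H)}(\OC_X)$ with the Jordan type of $X$. First I would check that the invariant is well defined: by Theorem \ref{Jacobson-Morozov-alg} every non-zero nilpotent $X$ lies in a triple $\{X,\,H,\,Y\}$, and the quantity $t_d=\dim_\H L(d-1)$ counts, by the weight-space description \eqref{isotypicalcomp}, exactly the number of size-$d$ Jordan blocks of $X$ on $V$. Indeed, the $\H$-span of an irreducible $\text{Span}_\R\{X,\,H,\,Y\}$-submodule of highest weight $d-1$ is a single size-$d$ Jordan block, so $t_d$ depends only on $X$ (not on the auxiliary $H,\,Y$) and is invariant under conjugation. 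Hence $\Psi_{{\rm SL}_n(\H)}(\OC_X)=[d_1^{t_{d_1}},\dots,d_s^{t_{d_s}}]$ is precisely the Jordan partition of $X$.

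For surjectivity, given $\d=[d_1^{t_{d_1}},\dots,d_s^{t_{d_s}}]\in\PC(n)$ with $\d\neq[1^n]$, I would build $X$ as the direct sum over $\H$ of $t_{d_i}$ Jordan blocks of size $d_i$ for each $i$; this fits in $V=\H^n$ since $\sum_i t_{d_i}d_i=n$, it is a nilpotent $\H$-linear endomorphism, and its reduced trace vanishes (the complexification is nilpotent), so $X\in\s\l_n(\H)$ and the previous paragraph gives $\Psi_{{\rm SL}_n(\H)}(\OC_X)=\d$; the case $[1^n]$ is $X=0$ by definition. For injectivity, if $\Psi_{{\rm SL}_n(\H)}(\OC_X)=\Psi_{{\rm SL}_n(\H)}(\OC_{X'})$ then $X$ and $X'$ have the same Jordan type over the division algebra $\H$, and the Jordan-normal-form theory over $\H$ (the structure theorem for finitely generated torsion modules over the principal ideal domain $\H[t]$) produces $g\in{\rm GL}_n(\H)$ with ${\rm Ad}(g)X=X'$.

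The hard part, and the point where the quaternionic case genuinely differs from $\s\l_n(\R)$ in Theorem \ref{sl-R-parametrization}, is upgrading this ${\rm GL}_n(\H)$-conjugacy to ${\rm SL}_n(\H)$-conjugacy so that no partition splits. The mechanism is that the reduced norm $\mathfrak{d}_V={\rm Nrd}_{{\rm M}_n(\H)}\colon{\rm GL}_n(\H)\longrightarrow\R^*$ takes values in $\R_{>0}$ and already surjects onto $\R_{>0}$ on real scalars, since ${\rm Nrd}_{{\rm M}_n(\H)}(tI)=t^{2n}$ for $t>0$. Thus for any $g\in{\rm GL}_n(\H)$ with ${\rm Nrd}_{{\rm M}_n(\H)}(g)=r>0$, choosing $t>0$ with $t^{2n}=r$ gives $h:=t^{-1}g\in{\rm SL}_n(\H)$, while the central scalar $tI$ acts trivially in the adjoint representation, so ${\rm Ad}(g)X={\rm Ad}(h)X$. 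Hence each ${\rm GL}_n(\H)$-orbit is a single ${\rm SL}_n(\H)$-orbit, which simultaneously converts the injectivity above into genuine ${\rm SL}_n(\H)$-conjugacy and prevents the doubling seen for even partitions in the real case (there $\det\colon{\rm GL}_n(\R)\to\R^*$ surjects onto all of $\R^*$ and the sign obstruction coming from the centralizer need not vanish). Together with well-definedness and surjectivity this shows $\Psi_{{\rm SL}_n(\H)}$ is a bijection; the only genuinely technical ingredient to verify with care is the Jordan-form theory over $\H$ underlying the ${\rm GL}_n(\H)$-classification.
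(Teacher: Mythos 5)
Your proof is correct, and it is worth noting that the paper itself gives no argument for this statement: Theorem \ref{sl-H-parametrization} is simply quoted from \cite[Theorem 9.3.3]{CoM}, so your write-up is a self-contained substitute for that citation rather than a parallel to an in-paper proof. Your route is the right one, and the place where you put the weight is exactly where it belongs: identifying $\Psi_{{\rm SL}_n(\H)}(\OC_X)$ with the Jordan type of $X$ over $\H$ (so it is independent of the auxiliary $H,\,Y$), getting surjectivity from block sums, getting ${\rm GL}_n(\H)$-conjugacy from Jordan theory over $\H$, and then collapsing ${\rm GL}_n(\H)$-conjugacy to ${\rm SL}_n(\H)$-conjugacy by rescaling the conjugating element by a positive real scalar, using ${\rm Nrd}_{{\rm M}_n(\H)}(t\,{\rm I})=t^{2n}$ and the fact that real scalars are central and act trivially in the adjoint representation. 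This last mechanism is precisely why the quaternionic answer is a bijection while Theorem \ref{sl-R-parametrization} exhibits doubling for even partitions: the obstruction living in $\R^*/{\rm Nrd}(\ZC_{{\rm GL}_n(\H)}(X))$ dies because ${\rm Nrd}$ only takes positive values and positive reals are $2n$-th powers of central elements. Two points should be tightened. First, $\H[t]$ is not commutative, so ``principal ideal domain'' must be read in the noncommutative (left/right PID) sense; it is cleaner to invoke Jordan normal form for a nilpotent endomorphism of a finite-dimensional vector space over an arbitrary division ring, which has an elementary, determinant-free proof by induction on the filtration $\ker X\,\subset\, \ker X^2\,\subset\,\cdots$, yielding both existence and uniqueness of the block sizes. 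Second, you use without comment that ${\rm Nrd}_{{\rm M}_n(\H)}$ takes values in $\R_{>0}$ on all of ${\rm GL}_n(\H)$; this is standard but deserves a line (for instance, ${\rm GL}_n(\H)$ is connected by polar decomposition and ${\rm Nrd}$ is a continuous homomorphism into $\R^*$, so its image lies in $\R_{>0}$), and it is also the fact that reconciles the paper's two definitions of ${\rm SL}_n(\H)$ as the commutator subgroup of ${\rm GL}_n(\H)$ and as the reduced-norm-one subgroup.
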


\begin{theorem}\label{homototy-type-sl-nh}
Let $X \,\in\, \NC_{{\s\l_n}(\H)}$,\, $ X\,\ne\, 0 $, and $\Psi_{{{\rm SL}_n(\H)}}(\OC_X) \,=\, \d$. Let $\{X,\,H,\,Y\}$ be a
$\s\l_2(\R)$-triple in $\s\l_n(\H)$. Let $K$ be a maximal compact subgroup of $\ZC_{{\rm SL}_n(\H)}(X,\,H,\,Y)$.
Let the maps $\Lambda_\BC$ and $ \Db_{{\rm SL}(V)}$ be defined as in \eqref{algebra-isom} and \eqref{map-D-SL}, respectively.
Then $\Lambda_\BC(K)$ is given by
$$
\Lambda_\BC(K)\, = \, \big\{\, \Db_{{\rm SL}(V)}(g) \, \bigm| \, g \in \prod_{i=1}^s {\rm Sp}(t_{d_i}) \, \big\}.
$$ 
Moreover, the nilpotent orbit $ \OC_X$ in $ \s\l_n(\H) $ is homotopic to $ {\rm Sp}(n)/ \Lambda_\BC (K)$.
\end{theorem}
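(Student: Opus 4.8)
The plan is to follow the same strategy as in the proofs of Theorems \ref{homototy-type-sl-nc} and \ref{homototy-type-sl-nr}, making the adjustments forced by the quaternionic structure. First I would take $V:=\H^n$ to be the right $\H$-vector space of column vectors, fix the $\s\l_2(\R)$-triple $\{X,H,Y\}$ inside $\s\l(V)=\s\l_n(\H)$, and form the ordered $\D$-basis $\BC$ of $V$ as in \eqref{old-ordered-basis}, adapted to the isotypical decomposition $V=\bigoplus_{d\in\N_\d}M(d-1)$ of \eqref{isotypicalcomp}. The reductive part of the centralizer $\ZC_{{\rm SL}_n(\H)}(X,H,Y)$ is then read off from the structural description of the centralizer of the triple (for the quaternionic case this is recorded in the appendix; cf.\ \cite[Lemma 4.4(1)]{BCM}): the block decomposition coming from the spaces $L(d-1)$ identifies this centralizer with the subgroup of $\prod_{i=1}^s{\rm GL}(L(d_i-1))\cong\prod_{i=1}^s{\rm GL}(t_{d_i},\H)$ cut out by the reduced-norm-one condition, the embedding into ${\rm M}_n(\H)$ being precisely $\Db_{{\rm SL}(V)}$ after transport by $\Lambda_\BC$.

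The next step is to pass to a maximal compact subgroup. A maximal compact subgroup of ${\rm GL}(t_{d_i},\H)$ is the compact symplectic group ${\rm Sp}(t_{d_i})$, so writing the matrices of the elements of $K$ with respect to $\BC$ yields exactly the block-diagonal elements $\Db_{{\rm SL}(V)}(g)$ with $g\in\prod_{i=1}^s{\rm Sp}(t_{d_i})$. The point at which the quaternionic case genuinely diverges from the complex and real cases is that no character condition survives: every $h\in{\rm Sp}(m)$ satisfies $\overline{h}^{\,t}h=I$, and since ${\rm Nrd}_{{\rm M}_m(\H)}$ is a continuous homomorphism ${\rm GL}(m,\H)\to(\R_{>0},\times)$ it is trivial on the compact connected group ${\rm Sp}(m)$, whence ${\rm Nrd}_{{\rm M}_m(\H)}(h)=1$ and ${\rm Sp}(m)\subset{\rm SL}(m,\H)$. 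Consequently the reduced-norm-one constraint on $\Db_{{\rm SL}(V)}(g)$ is automatically met on the factors ${\rm Sp}(t_{d_i})$. This is why the analogue of the factor $\bigchi_\d(g)=1$ present in Theorems \ref{homototy-type-sl-nc} and \ref{homototy-type-sl-nr} is absent here, and it accounts for the stated form of $\Lambda_\BC(K)$.

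For the second assertion I would invoke Theorem \ref{mostow-corollary}: since $K$ is a maximal compact subgroup of $\ZC_{{\rm SL}_n(\H)}(X,H,Y)$, it is also a maximal compact subgroup of $\ZC_{{\rm SL}_n(\H)}(X)$, and for any maximal compact subgroup $M$ of $G={\rm SL}_n(\H)$ containing $K$ the compact homogeneous space $M/K$ sits inside $\OC_X\cong G/\ZC_G(X)$ as a deformation retract. Using the well-known fact that a maximal compact subgroup of ${\rm SL}_n(\H)$ is isomorphic to ${\rm Sp}(n)$, this identifies the homotopy type of $\OC_X$ with ${\rm Sp}(n)/\Lambda_\BC(K)$.

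The main obstacle I anticipate is the bookkeeping in the first two steps rather than any conceptual difficulty: one must verify carefully, using the appendix description of the $\H$-centralizer, that the block structure with respect to $\BC$ really produces the map $\Db_{{\rm SL}(V)}$ and that ${\rm Sp}(t_{d_i})$ is the full maximal compact subgroup of each quaternionic factor ${\rm GL}(t_{d_i},\H)$, together with the short but essential verification that ${\rm Sp}(m)\subset{\rm SL}(m,\H)$ so that the reduced-norm condition is vacuous and no residual character appears.
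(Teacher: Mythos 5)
Your proposal is correct and follows essentially the same route as the paper's own (much terser) proof: fix the ordered basis $\BC$ of \eqref{old-ordered-basis}, use the centralizer description of \cite[Lemma 4.4]{BCM} (recorded in the appendix as Proposition \ref{centralizer-sl2-triple-sl-n-H}), write the elements of $K$ as block matrices via $\Lambda_\BC$ and $\Db_{{\rm SL}(V)}$, and conclude the homotopy statement from Theorem \ref{mostow-corollary} together with the fact that a maximal compact subgroup of ${\rm SL}_n(\H)$ is ${\rm Sp}(n)$. Your added verification that the reduced-norm condition is vacuous on $\prod_i {\rm Sp}(t_{d_i})$ — so that, unlike the cases $\D=\C,\R$, no character constraint $\bigchi_\d(g)=1$ survives — is a correct and useful elaboration of a point the paper leaves implicit.
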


\begin{proof}
Let $V$ be a right $\H$-vector space of column vectors such that $ \dim_\H V = n $, and $\BC$ be an ordered basis of $V$ as in 
\eqref{old-ordered-basis}. Now the proof follows from \cite[Lemma 4.4(1)]{BCM} and by writing the matrices of the elements of the maximal 
compact subgroup $ K $ with respect to the ordered basis $\BC$ as in \eqref{old-ordered-basis}.
	
The second part follows from Theorem \ref{mostow-corollary} and the well-known fact that any maximal compact subgroup of ${\rm SL}_n(\H)$ 
is isomorphic to ${\rm Sp}(n)$.
\end{proof}

\subsection{Homotopy types of the nilpotent orbits in \texorpdfstring{${\s\o}(n, \C)$}{Lg}}\label{sec-so-nc}

Let $n$ be a positive integer such that $n\ge 5$. The aim in this subsection is to write down the homotopy types of the nilpotent orbits 
in the simple Lie algebra ${\s\o}(n,\C)$ as compact homogeneous spaces. Throughout this subsection $\<>$ denotes the symmetric form on 
$\C^n$ defined by $\langle x,\, y \rangle \,:=\, x^t y$ for $x,\, y \,\in\, \C^n$.

We first recall a suitable parametrization of $\NC({\rm SO}(n,\C))$. 
Let $$\Psi_{{\rm SL}_n (\C)}\, :\,\NC ({\rm SL}_n(\C)) \,\longrightarrow\, \PC (n)$$ be the parametrization of
$\NC ({\rm SL}_n (\C))$; see Section \ref{sec-sl-nc} for details. 
Since ${\rm SO} (n,\C) \,\subset\, {\rm SL}_n (\C)$ (consequently as, the set of nilpotent elements
$\NC_{{\s\o}(n,\C)} \,\subset\, \NC_{\s\l_n(\C)}$)
we have the inclusion map
$\Theta_{{\rm SO}(n,\C)}\,:\, \NC ({\rm SO} (n,\C)) \,\longrightarrow\,\NC ( {\rm SL}_n (\C))$. Let
$$
\Psi_{{\rm SO}(n,\C)}\,:= \,\Psi_{{\rm SL}_n (\C)}\circ
\Theta_{{\rm SO}(n,\C)}\,\colon\, \NC ({\rm SO} (n,\C))
\,\longrightarrow\, \PC (n)
$$ be the composition.
Recall that $\Psi_{{\rm SO}(n,\C)} ( \NC ({\rm SO} (n,\C)))\,\subset \, \PC_1 (n)$, see \cite[Proposition A.6]{BCM}. Hence we have the following map:
$$\Psi_{{\rm SO}(n,\C)} \,\colon\, \NC ({\rm SO} (n,\C))
\,\longrightarrow\, \PC_1 (n) \,.
$$ 
The following known result says that the map $\Psi_{{\rm SO}(n,\C)} $ is ``almost'' a parametrization of the nilpotent orbits in $\s\o(n,\C)$. 

\begin{theorem}[{\cite[Theorem 5.1.2, Theorem 5.1.4]{CoM}}] \label{so-nC-parametrization}
For the above map 
$\Psi_{{\rm SO}(n,\C)}$,
$$ 
\# \Psi_{{\rm SO}(n,\C)}^{-1} (\d)\,=
\begin{cases}
2 & \text{ for all } \, \d \,\in \,\PC_{\rm v.even} (n) \\
1 & \text{ for all } \,\d \,\in\, \PC_1(n) \setminus\PC_{\rm v.even} (n) . 
\end{cases}$$ 
\end{theorem}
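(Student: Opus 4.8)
The plan is to deduce the count from the Jordan-type classification of Theorem \ref{sl-C-parametrization}, refining it in two stages — first cutting the image down to $\PC_1(n)$, then analyzing the splitting of orbits under passage from ${\rm O}(n,\C)$ to its identity component ${\rm SO}(n,\C)$. Fix $X \in \NC_{\s\o(n,\C)}$, an $\s\l_2(\R)$-triple $\{X,\,H,\,Y\}$ containing it, and set $\d := \Psi_{{\rm SO}(n,\C)}(\OC_X)$, the Jordan type of $X$ viewed in $\s\l_n(\C)$. Decompose $V = \C^n = \bigoplus_{d \in \N_\d} M(d-1)$ as in \eqref{isotypicalcomp} and consider the induced form $(\cdot,\,\cdot)_d$ on $L(d-1)$ from \eqref{new-form}. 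Using the symmetry of $\<>$ together with $\langle Xv,\, u\rangle = -\langle v,\, Xu\rangle$ (valid since $X \in \s\o(n,\C)$), one checks that $(\cdot,\,\cdot)_d$ is nondegenerate and $(-1)^{d-1}$-symmetric, hence symmetric for $d$ odd and alternating for $d$ even. Since a nondegenerate alternating form forces an even-dimensional underlying space, $t_d = \dim_\C L(d-1)$ is even whenever $d \in \E_\d$, i.e. $\d \in \PC_1(n)$. Conversely every $\d \in \PC_1(n)$ is realized, by assembling each $L(d-1)$ with a form of the type just described and tensoring with the standard $d$-dimensional $\s\l_2(\C)$-module (whose invariant form is symmetric for $d$ odd and alternating for $d$ even): the two parities always match, producing a symmetric space and a nilpotent in $\s\o(n,\C)$ of type $\d$. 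So the image of $\Psi_{{\rm SO}(n,\C)}$ is exactly $\PC_1(n)$.

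Next I would classify orbits under the full orthogonal group ${\rm O}(n,\C)$. Two nilpotents of the same Jordan type are ${\rm O}(n,\C)$-conjugate: one matches their isotypic decompositions and, on each multiplicity space $L(d-1)$, invokes that over $\C$ any two nondegenerate symmetric (resp. alternating) forms of equal rank are equivalent, so the resulting isometries assemble into an element of ${\rm O}(n,\C)$ carrying one triple to the other. Hence each $\d \in \PC_1(n)$ supports a single ${\rm O}(n,\C)$-orbit, and the only remaining issue is whether this orbit is one ${\rm SO}(n,\C)$-orbit or two. As ${\rm SO}(n,\C)$ is the index-two identity component of ${\rm O}(n,\C)$, a standard orbit-counting argument shows the orbit splits into two ${\rm SO}(n,\C)$-orbits if and only if $\ZC_{{\rm O}(n,\C)}(X) \subset {\rm SO}(n,\C)$, equivalently if and only if the determinant character $g \mapsto \det g$ is trivial on $\ZC_{{\rm O}(n,\C)}(X)$.

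The crux, and the step I expect to be the main obstacle, is computing this determinant character explicitly from the structure of the centralizer. The reductive part of $\ZC_{{\rm O}(n,\C)}(X)$ is $\prod_{d \in \O_\d} {\rm O}(t_d,\C) \times \prod_{d \in \E_\d} {\rm Sp}(t_d,\C)$, where a factor acts as $g_d$ on $L(d-1)$ and trivially on the $d$-dimensional $\s\l_2$-factor of $M(d-1)$; its determinant on $M(d-1)$ is thus $(\det g_d)^d$, and on all of $V$ it equals $\prod_{d \in \N_\d}(\det g_d)^d$, while the connected, determinant-one unipotent radical lies in ${\rm SO}(n,\C)$ and may be ignored. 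The symplectic factors have $\det g_d = 1$, and for $d \in \O_\d$ one has $(\det g_d)^d = \det g_d$; hence the determinant character reduces to $\prod_{d \in \O_\d}\det g_d$. This is identically $+1$ precisely when $\O_\d = \emptyset$, since any ${\rm O}(t_d,\C)$ with $t_d \geq 1$ contains an element of determinant $-1$. Combining $\O_\d = \emptyset$ (all parts even) with $\d \in \PC_1(n)$ (even parts of even multiplicity) forces every part of $\d$ to be even and of even multiplicity, i.e. $\d \in \PC_{\rm v.even}(n)$. Therefore the single ${\rm O}(n,\C)$-orbit over $\d$ splits into two ${\rm SO}(n,\C)$-orbits exactly when $\d$ is very even and remains a single orbit otherwise, yielding $\#\Psi_{{\rm SO}(n,\C)}^{-1}(\d) = 2$ for $\d \in \PC_{\rm v.even}(n)$ and $=1$ for $\d \in \PC_1(n)\setminus\PC_{\rm v.even}(n)$, as claimed.
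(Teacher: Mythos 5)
Your proposal is correct, but note that the paper itself offers no proof of this statement: it is quoted verbatim as a standard parametrization result from \cite[Theorems 5.1.2 and 5.1.4]{CoM}, so there is no internal argument to compare against. What you have written is essentially the classical proof that the cited reference gives. Your chain of reasoning is sound at every step: the form $(\cdot,\cdot)_d$ of \eqref{new-form} is indeed nondegenerate and $(-1)^{d-1}$-symmetric, forcing $t_d$ even for $d\in\E_\d$ and hence the image to lie in $\PC_1(n)$; surjectivity via tensoring a form on the multiplicity space with the invariant form on the irreducible $d$-dimensional $\s\l_2$-module (symmetric for $d$ odd, alternating for $d$ even, so the parities always multiply to symmetric) is the standard construction; the uniqueness of the ${\rm O}(n,\C)$-orbit per partition follows, as you say, from equivalence of nondegenerate symmetric (resp.\ alternating) forms over $\C$; and the index-two orbit-splitting criterion --- split iff $\ZC_{{\rm O}(n,\C)}(X)\subset{\rm SO}(n,\C)$, iff the determinant character kills the centralizer --- combined with the reductive part $\prod_{d\in\O_\d}{\rm O}(t_d,\C)\times\prod_{d\in\E_\d}{\rm Sp}(t_d,\C)$ correctly reduces the character to $\prod_{d\in\O_\d}\det g_d$, giving splitting exactly when $\O_\d=\emptyset$, which inside $\PC_1(n)$ is precisely the very even condition. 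The one dependency you should flag as an input of comparable depth to the statement itself is the Springer--Steinberg description of the reductive centralizer (cited in this paper as \cite[Theorem 6.1.3]{CoM} and, in the triple form, as \cite[Lemma 4.4]{BCM}); granting that, your argument is complete. (Also, ``producing a symmetric space'' should read ``producing a symmetric form.'')
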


Let $0 \neq X \,\in\, \NC_{\s\o(n, \C)}$ and $\{X,\,H,\, Y \}$ a $\s\l_2(\R)$-triple in $\s\o(n,\C)$. 
Let $V$ be a right $\C$-vector space of column vectors such that $ \dim_\C V\,=\,n$.
Recall that the form $(\cdot ,\, \cdot )_d$ on $L(d-1)$ (see \eqref{new-form} for the definition) is symmetric when $d\,\in\, \O_\d$ and symplectic when $d \in \E_\d$. Let $(v^d_1,\, \dotsc \,, v^d_{t_d})$ be a $\C$-basis of $L(d-1)$ as in \cite[Proposition A.6]{BCM}. We may further assume that
$(v^\theta_1,\, \dotsc \,, v^\theta_{t_\theta})$ is an orthonormal basis of $L(\theta - 1)$ for the form $(\cdot, \,\cdot)_\theta$ for all $\theta \in \O_\d$, i.e., 
\begin{equation} \label{onb-theta-so-nc}
(v_j^\theta,\, v_j^\theta)_\theta \, = \, 1 \ \text{ for } 1 \leq j \leq t_\theta \ \ \text{ and } \ \ (v_j^\theta,\, v_i^\theta)_\theta \, = \, 0 \ \text{ for } j \,\neq\, i. 
 \end{equation}
Similarly, for all $\eta \,\in \,\E_{\d}$ we may assume that $(v^\eta_1,\, \dotsc \,, v^\eta_{t_\eta})$ is a symplectic basis of $L(\eta -1)$. This is equivalent to say that 
\begin{equation}\label{onb-eta-so-nc}
(v_j^\eta,\, v_{t_\eta/2 + j}^\eta)_\eta \, = \, 1 \ \ \text{ for } 1 \,\leq\, j \,\leq\, t_{\eta}/2 \ \ \text{ and } \ \
(v_j^\eta,\, v_{i}^\eta)_\eta \, = \, 0 \ \ \text{ for } i \,\neq\, j + t_{\eta}/2 \,.
\end{equation}

Next we will construct an orthonormal basis of $M(d-1)$. 
Following \cite[Lemma A.9]{BCM}, for $\theta \,\in\, \O_\d$ and $l$ even, $0\,\leq\, l \,\leq\, \theta-1$, we define
\begin{equation}\label{onb-theta-l-even-so-nc}
{ w}^{\theta}_{jl}\,:=\,
\begin{cases}
\big( X^l v^{\theta}_j + X^{\theta-1-l} v^{\theta}_j \big)/{\sqrt{2}} & \text{ if }\ 0\,\leq\,l\,<\, (\theta-1)/2\\
X^l v^{\theta}_j & \text{ if }\ l \,= \,(\theta-1)/2 \\
\sqrt{-1} \big(X^{\theta-1-l} v^{\theta}_j - X^l v^{\theta}_j \big) /{\sqrt{2}} & \text{ if }\ (\theta-1)/2 \,<\,
l \,\leq \,\theta -1.
\end{cases}
\end{equation}
Similarly, for $l$ odd, $0\,\leq\, l \,\leq\, \theta-1$, let
\begin{equation}\label{onb-theta-l-odd-so-nc}
{ w}^{\theta}_{jl}\,:=\,
\begin{cases}
\sqrt{-1}\big( X^l v^{\theta}_j + X^{\theta-1-l} v^{\theta}_j \big) / {\sqrt{2}} & \text{ if }\ 0\,\leq\,l\,<\, (\theta-1)/2\\
\sqrt{-1} X^l v^{\theta}_j & \text{ if }\ l \,= \,(\theta-1)/2\\
\big(X^{\theta-1-l} v^{\theta}_j - X^l v^{\theta}_j \big) / {\sqrt{2}}& \text{ if }\ (\theta-1)/2 \,<\, l \,\leq \,\theta -1.
\end{cases}
\end{equation}

Using \cite[Lemma A.9(2)]{BCM} and the relation in \eqref{onb-theta-so-nc} we conclude that for all
$\theta \,\in\, \O_\d$, 
$$\{w^\theta_{jl} \,\,\big\vert\,\, 1\,\leq\, j \,\leq\, t_\theta,\ \, 0\,\leq\, l
\,\leq\, \theta -1 \}$$
is an orthonormal basis of $M(\theta -1)$ with respect to $\<>$. For each $0 \,\leq\, l \,\leq\, \theta -1$, set
\begin{equation}\label{V-l-theta-so-nc}
V^l(\theta) \,:=\, \text{Span}_\C\{w^\theta_{1l},\, \cdots \,, w^\theta_{t_\theta l} \} .
\end{equation}
The orthonormal ordered basis $(w^\theta_{1 l},\, \cdots ,\, w^\theta_{t_\theta l} )$ of $V^l(\theta ) $ with 
respect to $\<>$ is denoted by $\CC^l(\theta)$. Recall that $\BC^l(d)\,=\, (X^lv^d_1,\, \cdots ,\, X^lv^d_{t_d})$ 
is the ordered basis of $X^l L(d-1)$ for $0\,\leq\, l \,\leq\, d -1,\, d \,\in\, \N_\d$ as in 
\eqref{old-ordered-basis-part}.

\begin{lemma}\label{Z-so-nC-XHY} Let $ X $ be a nilpotent element in $ \s\o(n,\C) $ and $ \{X,H,Y\} $ be a $ \s\l_2 (\R)$-triple in $\s\o(n,\C) $ containing $X$.
The following holds:
$$
\ZC_{ {\rm SO} (n,\C) }(X,\,H,\,Y)\! =\! \left\{ g \in { {\rm SO} (n,\C)} \middle\vert \! 
\begin{array}{ccc}
g (V^l (\theta)) \subset \ V^l (\theta) \ \text{and } \vspace{.14cm}\\
\big[g|_{ V^l(\theta)}\big]_{{\CC}^l(\theta)} = \big[g |_{ V^0 (\theta)}\big]_{{\CC}^0 (\theta)} \ { \forall } \, \theta \in \O_\d, 0 \leq l < \theta ~; \vspace{.14cm}\\
g(X^l L(\eta-1)) \subset X^l L(\eta-1) \text{ and } \vspace{.14cm}\\
\! \big[g |_{ X^l L(\eta-1)} \big]_{{\BC}^l (\eta)}\! \!=\! \big[g |_{L(\eta-1)}\big]_{{\BC}^0 (\eta)}\ {\forall } \, \eta \in \E_\d,
0 \,\leq\, l\,<\, \eta \!
\end{array}
\!\! \right\}\!.
$$
\end{lemma}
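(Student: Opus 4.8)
The plan is to compute $\ZC_{{\rm SO}(n,\C)}(X,H,Y)$ in two stages: first identify the centralizer of the triple inside ${\rm GL}(V)$ by pure $\s\l_2$-representation theory, and then recognize the two families of conditions in the statement (for odd $\theta\in\O_\d$ and for even $\eta\in\E_\d$) as two equivalent bookkeeping presentations of that single module-theoretic condition. Since $\ZC_{{\rm SO}(n,\C)}(X,H,Y)=\ZC_{{\rm GL}(V)}(X,H,Y)\cap{\rm SO}(n,\C)$ and the set on the right-hand side of the asserted identity also sits inside ${\rm SO}(n,\C)$, it is enough to prove, for $g\in{\rm GL}(V)$ preserving the form $\langle \cdot,\,\cdot\rangle$, that $g$ commutes with $X$, $H$ and $Y$ if and only if the displayed conditions hold. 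An element $g$ centralizes the triple exactly when it is an automorphism of $V$ as a module over $\text{Span}_\R\{X,H,Y\}\cong\s\l_2(\R)$; by standard theory together with \cite[Lemma 4.4]{BCM}, this is equivalent to saying that $g$ preserves every isotypical component $M(d-1)$ and every weight space $X^lL(d-1)$ ($d\in\N_\d$, $0\le l\le d-1$), and that the matrix $\big[g|_{X^lL(d-1)}\big]_{\BC^l(d)}$ is independent of $l$. I will record this as the existence, for each $d\in\N_\d$, of a fixed matrix $A_d=(a^d_{ij})$ with $g(X^lv^d_j)=\sum_i X^lv^d_i\,a^d_{ij}$ for all $0\le l\le d-1$.

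For the even parts the identification is immediate: when $\eta\in\E_\d$ the weight spaces are exactly the $X^lL(\eta-1)$ with bases $\BC^l(\eta)$, so the characterization above is verbatim the stated even condition. For the converse on $M(\eta-1)$ I will check directly that $g(X^lv^\eta_j)=\sum_i X^lv^\eta_i a^\eta_{ij}$, with $A_\eta$ independent of $l$, forces $g$ to commute with $X$ (the raising operator $X^l\mapsto X^{l+1}$), with $H$ (preservation of weight spaces) and with $Y$ (the lowering operator, where the $l$-independence of $A_\eta$ is exactly what is needed).

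The crux is the odd parts, and here I will exploit the explicit linear combinations \eqref{onb-theta-l-even-so-nc} and \eqref{onb-theta-l-odd-so-nc}. For $\theta\in\O_\d$ and $l\neq(\theta-1)/2$, the vectors $w^\theta_{jl}$ and $w^\theta_{j,\,\theta-1-l}$ span the same plane as the opposite weight vectors $X^lv^\theta_j$ and $X^{\theta-1-l}v^\theta_j$, via an invertible change of basis whose coefficients (the fixed scalars $1/\sqrt2$ and $\sqrt{-1}$) are independent of $j$ and of the particular weight space; for $l=(\theta-1)/2$ one has $w^\theta_{j,(\theta-1)/2}=X^{(\theta-1)/2}v^\theta_j$ up to the scalar $\sqrt{-1}$. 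Because the matrix $A_\theta$ of the first paragraph is, by its very $l$-independence, the same on the two opposite weight spaces $X^lL(\theta-1)$ and $X^{\theta-1-l}L(\theta-1)$, substituting into \eqref{onb-theta-l-even-so-nc}--\eqref{onb-theta-l-odd-so-nc} yields $g(w^\theta_{jl})=\sum_i w^\theta_{il}\,a^\theta_{ij}$; by \eqref{V-l-theta-so-nc} this gives $g(V^l(\theta))\subset V^l(\theta)$ together with $\big[g|_{V^l(\theta)}\big]_{\CC^l(\theta)}=A_\theta=\big[g|_{V^0(\theta)}\big]_{\CC^0(\theta)}$, which is the forward implication. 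Conversely, inverting the same fixed change of basis turns the hypotheses $g(V^l(\theta))\subset V^l(\theta)$ with $l$-independent matrix back into $g(X^lv^\theta_j)=\sum_i X^lv^\theta_i\,a^\theta_{ij}$ for all $l$, i.e. the weight-space form on $M(\theta-1)$, whence $g$ centralizes the triple there as in the even case. Assembling the contributions of all $d\in\N_\d$ over the decomposition $V=\bigoplus_{d\in\N_\d}M(d-1)$ and intersecting both descriptions with ${\rm SO}(n,\C)$ finishes the argument.

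I expect the only genuinely delicate step to be the odd-part bookkeeping: verifying that the basis change mixing the weight spaces $X^lL(\theta-1)$ and $X^{\theta-1-l}L(\theta-1)$ neither creates nor destroys the constancy of $A_\theta$. This is precisely where the structure of \eqref{onb-theta-l-even-so-nc}--\eqref{onb-theta-l-odd-so-nc} (recorded as an orthonormal basis of $M(\theta-1)$ in \cite[Lemma A.9]{BCM}) is used, and it works only because a centralizing $g$ acts by one and the same $A_\theta$ on each of the two mixed weight spaces.
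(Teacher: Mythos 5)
Your proposal is correct and follows essentially the same route as the paper: both reduce to the standard weight-space characterization of the centralizer (via $\s\l_2(\R)$-representation theory and \cite[Lemma 4.4]{BCM}) and then observe that, for odd $\theta$, the conditions on the spaces $V^l(\theta)$ with bases $\CC^l(\theta)$ are equivalent to the corresponding conditions on the weight spaces $X^lL(\theta-1)$ with bases $\BC^l(\theta)$, precisely because the fixed change of basis \eqref{onb-theta-l-even-so-nc}--\eqref{onb-theta-l-odd-so-nc} mixes only the pair $X^lL(\theta-1)$, $X^{\theta-1-l}L(\theta-1)$, on which a centralizing $g$ acts by one and the same matrix. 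The paper merely states this equivalence as an observation; your write-up supplies the details it leaves implicit.
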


\begin{proof}
The proof of this lemma follows from the observation that if $\theta \,\in\, \O_\d$ is fixed, then for
$0 \,\leq\, l \,\leq \,\theta -1$ and $g \,\in\,\ZC_{ {\rm SO} (n,\C) }(X,\,H,\,Y)$ the following holds:
$$
g(X^l L(\theta-1))\,\subset \, X^l L(\theta-1) \text{ if and only if } g (V^l (\theta))\,\subset\, \ V^l (\theta) \,,
$$
and moreover, 
$$
\big[g |_{ X^l L(\theta-1)} \big]_{{\BC}^l (\theta)}\! \!=\! \big[g |_{L(\theta-1)}\big]_{{\BC}^0 (\theta)} \text{ if and only if } \big[g|_{ V^l(\theta)}\big]_{{\CC}^l(\theta)} = \big[g |_{ V^0 (\theta)}\big]_{{\CC}^0 (\theta)} \,.
$$
In fact, for any such $g$ as above, $[g |_{L(\theta-1)}\big]_{{\BC}^0 (\theta)}\, = \, [g |_{ V^0 (\theta)}]_{{\CC}^0 (\theta)} $.
\end{proof}

\begin{remark}\label{structure-of-centralizer-sonc}
{\rm 
We follow the notation as in Lemma \ref{Z-so-nC-XHY}.
Let $g \,\in\, \ZC_{ {\rm SO} (n, \C)}(X,\, H,\, Y)$. Let $\theta \,\in\, \O_\d$ and $\eta \,\in\, \E_\d$.
Then it follows from Lemma \ref{Z-so-nC-XHY} that $ g$ keeps the subspaces $V^0(\theta)$ and $L(\eta -1)$ invariant. 
Since the restriction of $\<> $ is a symmetric form on $ V^0(\theta) $ we have $g|_{ V^0(\theta)} \,\in\,
{\rm O}(V^0(\theta), \<>)$. 
Further recall that the form $ (\cdot,\cdot)_\eta $, as defined in \eqref{new-form}, is symplectic on $L(\eta-1)$, and $(gx,\,gy)_\eta
\,=\, (x,\,y)_\eta$ for all $x,\,y\,\in\, L(\eta-1)$, see \cite[Remark A.7]{BCM}. 
Thus $g |_{L(\eta-1)} \,\in\, {\rm Sp}(L(\eta-1),\, (\cdot,\, \cdot)_\eta)$.
}
\end{remark}

For $\eta\,\in \,\E_\d$, $0\,\leq\, l \,\leq\, \eta/2 -1$, set
\begin{equation}\label{W-eta-so-nc}
W^l(\eta)\,:=\, X^l L (\eta-1) + X^{\eta-1-l} L(\eta-1)\, .
\end{equation}
Next we will construct an orthonormal basis of $W^l$.
For $l$ even, $0\,\leq\, l \,\leq\, \eta/2-1$, let
 \begin{align}\label{definition-w-basis-sonC-even1}
 {w}^{\eta}_{jl} \,:=\,
\begin{cases}
\big( X^l v^{\eta}_j + X^{\eta-1-l} v^{\eta}_{t_\eta/2 + j} \big) / {\sqrt{2}} & \text{ if } \ 0\,<\,j \,\leq \, t_\eta/2\\
\big( X^l v^{\eta}_j - X^{\eta-1-l} v^{\eta}_{j- t_\eta/2 } \big) / {\sqrt{2}} & \text{ if } \ t_\eta/2\, < \,j \,\leq \, t_\eta\\
\sqrt{-1}\big( X^l v^{\eta}_{j- t_\eta} - X^{\eta-1-l} v^{\eta}_{j - t_\eta/2} \big) / {\sqrt{2}} & \text{ if } \ t_\eta \, < \,j \,\leq \, 3t_\eta/2\\
\sqrt{-1}\big( X^l v^{\eta}_{j- t_\eta}+ X^{\eta-1-l} v^{\eta}_{j - 3t_{\eta}/2} \big) / {\sqrt{2}} & \text{ if } \ 3t_\eta/2 \, < \,j \,\leq \, 2t_\eta \,.
\end{cases}
\end{align}
For $l$ odd, $0\leq l \leq \eta/2-1$, let
\begin{align}\label{definition-w-basis-sonC-even2}
{w}^{\eta}_{jl} \,:=\,
\begin{cases}
\big( X^l v^{\eta}_j - X^{\eta-1-l} v^{\eta}_{j + t_\eta/2 } \big) / {\sqrt{2}}& \text{ if } \ 0\,<\,j \,\leq \, t_\eta/2\\
\big( X^l v^{\eta}_j + X^{\eta-1-l} v^{\eta}_{j - t_\eta/2} \big) / {\sqrt{2}} & \text{ if } \ t_\eta/2\, < \,j \,\leq \, t_\eta\\
\sqrt{-1}\big( X^l v^{\eta}_{j- t_\eta}+X^{\eta-1-l} v^{\eta}_{j- t_\eta/2} \big) / {\sqrt{2}} & \text{ if } \ t_\eta \, < \,j \,\leq \, 3t_\eta/2 \\
\sqrt{-1}\big( X^l v^{\eta}_{j- t_\eta} - X^{\eta-1-l} v^{\eta}_{j - 3t_{\eta}/2} \big) / {\sqrt{2}} & \text{ if } \ 3t_\eta/2 \, < \,j \,\leq \, 2t_\eta \,.
\end{cases}
\end{align}
Using \eqref{onb-eta-so-nc} it follows that for all $\eta\,\in\, \E_\d$, $$\{w^\eta_{jl} \, \mid \, 1\leq j \leq 2t_\eta, \, 0\leq l \leq 
\eta/2 -1 \}$$ is an orthonormal basis of $M(\eta -1)$ with respect to $\<>$. The orthonormal ordered basis $(w^\eta_{1 l} \, ,\, \cdots \, 
,\, w^\eta_{2t_\eta l} )$ of $W^l(\eta)$ with respect to $\<>$ is denoted by $\DC^l(\eta)$.

The next two lemmas are standard fact where we recall, without proofs, explicit descriptions of a maximal compact 
subgroup in an orthogonal group and a maximal compact subgroup in a symplectic group. Let $V'$ be a $\C$-vector 
space, $\<>'$ be a non-degenerate symmetric form on $V'$ and $\BC'$ be an orthonormal basis of $V'$. We set
$$
K_{\BC'} \, := \, \{g \,\in\, {\rm SU}(V' ,\, \<>') \,\,\big\vert\,\, [g]_{\BC'} \,=\, \overline{[g]}_{\BC'} \}\,.
$$ 
\begin{lemma}\label{max-cpt-orthogonal-gps}
 Let $V',\, \<>'\, ,\, \BC'$ be as above. Then $K_{\BC'}$ is a maximal compact subgroup in $ {\rm SO}(V,\,\<>') $.
\end{lemma}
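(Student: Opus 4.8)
The plan is to transport everything to matrices via the $\R$-algebra isomorphism $\Lambda_{\BC'}$ of \eqref{algebra-isom} and then recognize $K_{\BC'}$ as the standard real special orthogonal group sitting inside its complexification. Since $\langle\cdot,\cdot\rangle'$ is symmetric we have ${\rm SU}(V',\langle\cdot,\cdot\rangle') = {\rm SO}(V',\langle\cdot,\cdot\rangle')$, so there is no ambiguity in the target group of the statement. Writing $m := \dim_\C V'$, the orthonormality of $\BC'$ means that the Gram matrix of $\langle\cdot,\cdot\rangle'$ in this basis is ${\rm I}_m$; hence for $v,u \in V'$ with coordinate vectors $x,y$ one has $\langle v,u\rangle' = x^t y$, and an operator $T$ preserves $\langle\cdot,\cdot\rangle'$ if and only if $[T]_{\BC'}^t[T]_{\BC'} = {\rm I}_m$. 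Therefore $\Lambda_{\BC'}$ restricts to a Lie group isomorphism of ${\rm SO}(V',\langle\cdot,\cdot\rangle')$ onto ${\rm SO}(m,\C) = \{A \in {\rm SL}_m(\C) \mid A^t A = {\rm I}_m\}$.

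Next I would pin down the image of $K_{\BC'}$. By definition $g \in K_{\BC'}$ lies in ${\rm SU}(V',\langle\cdot,\cdot\rangle')$ and satisfies $[g]_{\BC'} = \overline{[g]}_{\BC'}$, i.e. $[g]_{\BC'}$ is a real matrix; since entrywise conjugation is an $\R$-algebra automorphism of ${\rm M}_m(\C)$, this condition is preserved under products and inverses, so $K_{\BC'}$ is genuinely a subgroup. Consequently $\Lambda_{\BC'}(K_{\BC'}) = \{A \in {\rm SO}(m,\C) \mid A = \overline{A}\} = {\rm SO}(m,\C)\cap {\rm GL}_m(\R)$, which is exactly the compact real group ${\rm SO}_m$ of determinant-one real orthogonal matrices. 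In particular $K_{\BC'}$ is compact, being closed and bounded in ${\rm M}_m(\C)$. What remains is the one nontrivial point: that ${\rm SO}_m$ is a \emph{maximal} compact subgroup of ${\rm SO}(m,\C)$.

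For the maximality I would realize ${\rm SO}(m,\C)$ as a closed subgroup of ${\rm SL}_m(\C)$ stable under the Cartan involution $\theta(g) := (\overline{g}^t)^{-1}$, whose fixed-point group is the maximal compact subgroup ${\rm SU}(m)$ of ${\rm SL}_m(\C)$. The key computation is $\theta$-stability of ${\rm SO}(m,\C)$: if $g^t g = {\rm I}_m$, then $\theta(g)^t\theta(g) = (\overline{g}^t\overline{g})^{-1} = \overline{(g^t g)}^{-1} = {\rm I}_m$ and $\det\theta(g) = (\overline{\det g})^{-1} = 1$, so $\theta(g) \in {\rm SO}(m,\C)$. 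Invoking the standard fact that a closed $\theta$-stable subgroup $H$ of a reductive Lie group inherits a Cartan decomposition with $H \cap {\rm SU}(m)$ as a maximal compact subgroup, I conclude that ${\rm SO}(m,\C)\cap{\rm SU}(m)$ is maximal compact in ${\rm SO}(m,\C)$. A final short check identifies this intersection: $g^t g = {\rm I}_m = \overline{g}^t g$ forces $g^t = \overline{g}^t$, i.e. $g$ is real, so ${\rm SO}(m,\C)\cap{\rm SU}(m) = {\rm SO}_m = \Lambda_{\BC'}(K_{\BC'})$, which finishes the argument.

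The genuine content is the maximality statement; the passage to matrices and the identification of $K_{\BC'}$ with ${\rm SO}_m$ are routine linear algebra. The single step requiring care is the verification that ${\rm SO}(m,\C)$ is $\theta$-stable, since this is precisely what allows one to import maximality from the ambient pair $({\rm SL}_m(\C),\,{\rm SU}(m))$ rather than proving it from scratch; everything else is bookkeeping.
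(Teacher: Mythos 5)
Your proof is correct. Note, however, that the paper offers no proof of this lemma at all: it is explicitly recorded as a ``standard fact'' stated without proof, so there is no argument in the paper to compare yours against. Your route --- transporting everything to matrices via $\Lambda_{\BC'}$, identifying $\Lambda_{\BC'}(K_{\BC'})$ with ${\rm SO}_m$, and then deducing maximality by exhibiting ${\rm SO}(m,\C)$ as a closed subgroup of ${\rm SL}_m(\C)$ stable under the Cartan involution $\theta(g)=(\overline{g}^t)^{-1}$ --- is a standard and complete way to supply the missing argument, and your $\theta$-stability computation is the right pivot. One half-sentence is worth adding for completeness: the fact you invoke (that $H\cap {\rm SU}(m)$ is maximal compact in a $\theta$-stable closed subgroup $H$) requires $H$ to have finitely many connected components; this holds here since ${\rm SO}(m,\C)$ is a complex algebraic subgroup of ${\rm SL}_m(\C)$ (indeed it is connected), so the citation applies. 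With that remark included, the argument is airtight; an equally short alternative would be to verify maximality directly from the polar decomposition of ${\rm SO}(m,\C)$, namely that any compact subgroup containing ${\rm SO}_m$ lies in the fixed-point set of $\theta$ and hence equals ${\rm SO}_m$.
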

 
Let $\widetilde V$ be a $\C$-vector space, $\widetilde {\<>}$ be a non-degenerate symplectic form on $\widetilde V$ and $\widetilde \BC$ be a symplectic basis of $\widetilde V$. Let $J_{\widetilde \BC}$ be the complex structure on $\widetilde V$ associated to $\widetilde \BC$. Let $2m := \dim_\C \widetilde V$. We set 
$$
K_{\widetilde \BC} \, := \, \{g \in {\rm Sp}(\widetilde V, \widetilde {\<>}) \,\,\big\vert\,\, \overline{g} \, J_{\widetilde \BC}
\,= \, J_{\widetilde \BC} \, g \}\,.
$$ 
\begin{lemma}\label{max-cpt-symplectic-gps}
Let $\widetilde V,\, \widetilde{\<>} ,\,\widetilde \BC$ be as above. Then
\begin{enumerate}
\item $K_{\widetilde \BC}$ is a maximal compact subgroup in ${\rm Sp}(\widetilde V,\, \widetilde {\<>})$.

\item $K_{\widetilde \BC} \,=\, \big\{g \in {\rm Sp}(\widetilde V,\,\widetilde{ \<>}) \, \big| \, [g]_{\widetilde \BC} = \begin{pmatrix}
A & - \overline{B}\\
B & \overline {A}
\end{pmatrix}
\text{ where } A + \jb B \,\in\, {\rm Sp}(m) , \,\,A,\, B \,\in\, {\rm M}_m(\C) \big\}$.
 \end{enumerate}
\end{lemma}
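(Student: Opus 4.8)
The plan is to pass to matrices with respect to the symplectic basis $\widetilde\BC$ and thereby identify $K_{\widetilde\BC}$ with the standard realization of the compact group ${\rm Sp}(m)$ inside ${\rm Sp}(m,\C)$. Under the algebra isomorphism ${\rm End}(\widetilde V)\,\cong\,{\rm M}_{2m}(\C)$, $g\,\mapsto\,[g]_{\widetilde\BC}$, the group ${\rm Sp}(\widetilde V,\,\widetilde{\<>})$ becomes the matrix group ${\rm Sp}(m,\C)\,=\,\{g\mid g^t{\rm J}_m g={\rm J}_m\}$, and directly from the definition of the associated complex structure one has $[J_{\widetilde\BC}]_{\widetilde\BC}\,=\,{\rm J}_m$. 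Hence the defining condition $\overline{g}\,J_{\widetilde\BC}\,=\,J_{\widetilde\BC}\,g$ of $K_{\widetilde\BC}$ translates into the matrix identity $\overline{[g]}_{\widetilde\BC}\,{\rm J}_m\,=\,{\rm J}_m\,[g]_{\widetilde\BC}$, i.e. $\overline{[g]}_{\widetilde\BC}\,=\,{\rm J}_m\,[g]_{\widetilde\BC}\,{\rm J}_m^{-1}$. The goal is then to show that $K_{\widetilde\BC}\,=\,{\rm Sp}(m,\C)\cap{\rm U}(2m)$ and that this coincides with the image of ${\rm Sp}(m)$ under the usual embedding of $\H$-matrices, which simultaneously yields both assertions.

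For part (2) I would first solve the block equation. Writing $[g]_{\widetilde\BC}=\begin{pmatrix}A&C\\B&D\end{pmatrix}$ in $m\times m$ blocks and computing ${\rm J}_m\,[g]_{\widetilde\BC}\,{\rm J}_m^{-1}=\begin{pmatrix}D&-B\\-C&A\end{pmatrix}$, the identity $\overline{[g]}_{\widetilde\BC}={\rm J}_m[g]_{\widetilde\BC}{\rm J}_m^{-1}$ forces $D=\overline A$ and $C=-\overline B$, so exactly the block shape $\begin{pmatrix}A&-\overline B\\ B&\overline A\end{pmatrix}$ appears. Next I would invoke the standard $\R$-algebra embedding $\rho\colon{\rm M}_m(\H)\longrightarrow{\rm M}_{2m}(\C)$, $A+\jb B\longmapsto\begin{pmatrix}A&-\overline B\\ B&\overline A\end{pmatrix}$, whose image is precisely the set of matrices of this block shape; a short check shows $\rho$ is multiplicative and satisfies $\rho(\overline Q^{\,t})=\overline{\rho(Q)}^{\,t}$, so that $Q\in{\rm Sp}(m)$ (defined by $\overline Q^{\,t}Q=I_m$) if and only if $\rho(Q)\in{\rm U}(2m)$. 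Thus the block-shaped matrices lying in ${\rm U}(2m)$ are exactly $\rho({\rm Sp}(m))=\{\begin{pmatrix}A&-\overline B\\ B&\overline A\end{pmatrix}\mid A+\jb B\in{\rm Sp}(m)\}$. Finally, for $g\in{\rm Sp}(m,\C)$ one has ${\rm J}_m g^t{\rm J}_m^{-1}=g^{-1}$; combining this with $g^{-1}=\overline g^{\,t}$ (unitarity) shows that $g\in{\rm U}(2m)$ is equivalent to $\overline g\,{\rm J}_m={\rm J}_m g$, i.e. to the defining relation of $K_{\widetilde\BC}$. Therefore $K_{\widetilde\BC}={\rm Sp}(m,\C)\cap{\rm U}(2m)=\rho({\rm Sp}(m))$, which is the explicit description claimed in (2).

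For part (1) I would deduce maximal compactness from the identification $K_{\widetilde\BC}={\rm Sp}(m,\C)\cap{\rm U}(2m)$ obtained above. A direct computation (using ${\rm J}_m^{-1}=-{\rm J}_m$ and that ${\rm J}_m$ is real) shows that ${\rm Sp}(m,\C)$ is stable under the Cartan involution $g\mapsto\overline g^{\,t}$ of ${\rm GL}_{2m}(\C)$. Since ${\rm U}(2m)$ is a maximal compact subgroup of ${\rm GL}_{2m}(\C)$, the standard structure theory of self-adjoint linear groups (equivalently, Mostow's theorem as recalled in Theorem \ref{mostow}) gives that the intersection of a closed $\overline{(\,\cdot\,)}^{\,t}$-stable subgroup with ${\rm U}(2m)$ is a maximal compact subgroup of that subgroup; applied to ${\rm Sp}(m,\C)$ this yields that $K_{\widetilde\BC}$ is maximal compact in ${\rm Sp}(\widetilde V,\,\widetilde{\<>})$. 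Alternatively one may simply quote the classical fact that ${\rm Sp}(m)$ is a maximal compact subgroup of ${\rm Sp}(m,\C)$ and note that $\rho$ realizes $K_{\widetilde\BC}$ as exactly this copy.

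The only genuine difficulty is bookkeeping of conventions: one must pin down the identification $\H=\C\oplus\jb\C$ with $\jb z=\overline z\,\jb$, the resulting form of the embedding $\rho$, and the quaternionic conjugate-transpose, so that the computation $\rho(\overline Q^{\,t})=\overline{\rho(Q)}^{\,t}$ comes out with correct signs and matches the block matrix $\begin{pmatrix}A&-\overline B\\ B&\overline A\end{pmatrix}$ written in the statement. These verifications are elementary but sign-sensitive, and they are the steps most likely to hide an error; everything else is a short translation between the intrinsic condition $\overline g\,J_{\widetilde\BC}=J_{\widetilde\BC}g$ and the matrix relations for membership in ${\rm Sp}(m,\C)$ and ${\rm U}(2m)$.
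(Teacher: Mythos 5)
Your proof is correct, and there is nothing in the paper to compare it against: the authors state this lemma (together with Lemma \ref{max-cpt-orthogonal-gps}) explicitly as a standard fact, recalled \emph{without proof}, so your proposal supplies precisely the verification the paper omits. Your route is the standard one, and the sign-sensitive steps you flag all come out right under the paper's conventions: with ${\rm J}_m$ as in \eqref{defn-I-pq-J-n}, the Gram matrix of $\widetilde{\<>}$ in the symplectic basis is $-{\rm J}_m$, so ${\rm Sp}(\widetilde V,\,\widetilde{\<>})$ is identified with $\{g \mid g^t{\rm J}_m g = {\rm J}_m\}$; one checks $[J_{\widetilde\BC}]_{\widetilde\BC}={\rm J}_m$; the block computation correctly forces $D=\overline A$, $C=-\overline B$; and your embedding $\rho$ is exactly the paper's $\wp_{m,\H}$ from \eqref{embedding-H2C}, for which $\rho(\overline Q^{\,t})=\overline{\rho(Q)}^{\,t}$ holds because $\overline{A+\jb B}=\overline A-\jb B$ entrywise, hence $\overline{(A+\jb B)}^{\,t}=\overline A^{\,t}-\jb B^{\,t}$. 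The chain $K_{\widetilde\BC}={\rm Sp}(m,\C)\cap\{\text{block form}\}={\rm Sp}(m,\C)\cap{\rm U}(2m)=\rho({\rm Sp}(m))$ then gives both assertions at once (for the last equality one also notes that a unitary block-form matrix is automatically symplectic, since $X^t\overline X=I$ and $\overline X{\rm J}_m={\rm J}_m X$ imply $X^t{\rm J}_m X={\rm J}_m$; this one-line check, which you do not write out, is what makes $\rho({\rm Sp}(m))\subseteq{\rm Sp}(m,\C)$ and is implicitly contained in the classical fact you quote).

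The one blemish is the parenthetical citation in part (1): Theorem \ref{mostow} of the paper cannot play the role you assign to it, because that theorem \emph{presupposes} that $M\cap H$ is a maximal compact subgroup of $H$ rather than proving it. The result you actually need is Mostow's theorem on self-adjoint groups (a different theorem of Mostow from the one recalled in the paper): a closed subgroup of ${\rm GL}_{2m}(\C)$ with finitely many components that is stable under $g\mapsto\overline g^{\,t}$ meets ${\rm U}(2m)$ in a maximal compact subgroup. Alternatively, and more cleanly, your fallback suffices: quote the classical fact that ${\rm Sp}(m)\cong{\rm Sp}(m,\C)\cap{\rm U}(2m)$ is a maximal compact subgroup of ${\rm Sp}(m,\C)$, which together with your identification of $K_{\widetilde\BC}$ finishes part (1).
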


We next give the description of a suitable maximal compact subgroup of the group $\ZC_{ {\rm SO} (n, \, \C) } (X,\, H,\, Y)$ in terms of 
the subspaces $V^l(\theta), \, W^l(\eta)$ defined as in \eqref{V-l-theta-so-nc}, \eqref{W-eta-so-nc}, respectively.

\begin{lemma}\label{max-cpt-reductive-part-so-nC}
Let $K$ be the subgroup of $\ZC_{ {\rm SO} (n,\, \C) } (X,\, H,\, Y)$ consisting of elements $g$ in 
$\ZC_{ {\rm SO} (n,\, \C) } (X,\, H,\, Y)$ satisfying the following conditions:
\begin{enumerate}
\item For all $\theta \,\in\, \O_\d $ and $0 \,\leq\, l \,\leq\, \theta -1$ the inclusion $g ( V^l (\theta))
\,\subset\, V^l (\theta)$ holds.
\item For all $\theta \,\in\, \O_\d$, there exist $A(\theta) \,\in\, {\rm O}_{t_\theta}$ such that 
$$
\big[g|_{V^l(\theta)}\big]_{\CC^l(\theta)}\, = \, A(\theta) \quad \text{for } 0 \,\leq\, l \,\leq\, \theta -1 \,. 
$$ 
\item For all $\eta\,\in\, \E_\d$ and $ 0 \,\leq\, l \,\leq\, \eta/2 -1$ the inclusion \, $ g(W^l (\eta)) \,\subset\, W^l (\eta)$ holds.
\item For all $\eta \,\in \,\E_\d$, $0 \,\leq\, l \,\leq\, \eta/2-1$ there exist $A_1(\eta), \, A_2(\eta),\, B_1(\eta),\, B_2(\eta)
\,\in\, {\rm M}_{t_\eta /2}(\R)$ with $(A_1(\eta) + \ib A_2(\eta)) + \jb (B_1(\eta) + \ib B_2(\eta) ) \,\in\, {\rm Sp}(t_\eta/2) $ such that
$$
\big[ g|_{W^l (\eta)} \big]_{\DC^l(\eta)} \, = \,
\begin{pmatrix}
A_1(\eta) & - B_1(\eta) &- A_2(\eta) & -B_2(\eta)\\
B_1(\eta) & A_1(\eta) & -B_2(\eta)& A_2(\eta)\\
A_2(\eta) & B_2(\eta) & A_1(\eta) & - B_1(\eta)\\
B_2(\eta) & - A_2(\eta) & B_1(\eta)& A_1(\eta)
\end{pmatrix} \,.
$$
\end{enumerate}
Then $K$ is a maximal compact subgroup of $\ZC_{ {\rm SO} (n,\, \C)}(X,\, H,\, Y)$.
\end{lemma}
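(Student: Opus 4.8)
The plan is to transport the computation, via the restriction map furnished by Lemma \ref{Z-so-nC-XHY}, to a product of complex classical groups whose compact forms are already pinned down by Lemmas \ref{max-cpt-orthogonal-gps} and \ref{max-cpt-symplectic-gps}, and then to read off maximal compactness from the standard structure of such products. First I would introduce the homomorphism
$$
\Phi \colon \ZC_{{\rm SO}(n,\C)}(X,H,Y) \longrightarrow \prod_{\theta \in \O_\d} {\rm O}(V^0(\theta), \<>) \times \prod_{\eta \in \E_\d} {\rm Sp}(L(\eta-1),(\cdot,\cdot)_\eta), \quad g \longmapsto \big((g|_{V^0(\theta)})_\theta, (g|_{L(\eta-1)})_\eta\big).
$$
By Remark \ref{structure-of-centralizer-sonc} the map $\Phi$ is well defined, and by Lemma \ref{Z-so-nC-XHY} the restrictions $g|_{V^0(\theta)}$ and $g|_{L(\eta-1)}$ already determine $g$ on every $V^l(\theta)$ and every $X^l L(\eta-1)$, so $\Phi$ is injective. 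I would then record that its image is exactly the determinant-one subgroup $S\big(\prod_\theta {\rm O}(t_\theta,\C) \times \prod_\eta {\rm Sp}(t_\eta/2,\C)\big)$: every even $\eta$ contributes $\det(g|_{M(\eta-1)})=1$ since complex symplectic matrices have determinant $1$, while each odd $\theta$ contributes $(\det g|_{V^0(\theta)})^\theta = \det g|_{V^0(\theta)}$, so the single relation $\det g =1$ coming from $g \in {\rm SO}(n,\C)$ reads $\prod_{\theta \in \O_\d} \det(g|_{V^0(\theta)}) = 1$.

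Next I would identify $\Phi(K)$ explicitly. Conditions (1) and (3) of the statement merely repeat the subspace invariance already contained in Lemma \ref{Z-so-nC-XHY}, hence are automatic on the centralizer. Condition (2) says that $g|_{V^0(\theta)}$ has a real matrix in the orthonormal basis $\CC^0(\theta)$, i.e. $g|_{V^0(\theta)} \in {\rm O}_{t_\theta}$, which by Lemma \ref{max-cpt-orthogonal-gps} is a maximal compact subgroup of ${\rm O}(V^0(\theta),\<>) \cong {\rm O}(t_\theta,\C)$. The remaining task is to show that condition (4) is equivalent to $g|_{L(\eta-1)}$ lying in the compact symplectic group ${\rm Sp}(t_\eta/2)$, which by Lemma \ref{max-cpt-symplectic-gps} is a maximal compact subgroup of ${\rm Sp}(L(\eta-1),(\cdot,\cdot)_\eta) \cong {\rm Sp}(t_\eta/2,\C)$. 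Concretely, writing the quaternionic datum $A + \jb B \in {\rm Sp}(t_\eta/2)$ of Lemma \ref{max-cpt-symplectic-gps}(2) as $A = A_1 + \ib A_2$ and $B = B_1 + \ib B_2$ with real blocks, one checks through the change of basis \eqref{definition-w-basis-sonC-even1}--\eqref{definition-w-basis-sonC-even2} between the symplectic basis of $L(\eta-1)$ and the orthonormal basis $\DC^l(\eta)$ of $W^l(\eta)$ that the matrix of $g|_{W^l(\eta)}$ in $\DC^l(\eta)$ is precisely the displayed $4\times 4$ real block matrix, and conversely. This yields $\Phi(K) = S\big(\prod_\theta {\rm O}_{t_\theta} \times \prod_\eta {\rm Sp}(t_\eta/2)\big)$.

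Finally I would assemble the standard structural facts. The group $\prod_\theta {\rm O}_{t_\theta} \times \prod_\eta {\rm Sp}(t_\eta/2)$ is a maximal compact subgroup of $\prod_\theta {\rm O}(t_\theta,\C) \times \prod_\eta {\rm Sp}(t_\eta/2,\C)$, being a product of the maximal compacts given by Lemmas \ref{max-cpt-orthogonal-gps} and \ref{max-cpt-symplectic-gps}. Intersecting with the determinant-one subgroup preserves maximal compactness: the determinant character takes values in $\{\pm 1\}$, so this subgroup has index at most two, and $\prod_\theta {\rm O}_{t_\theta} \times \prod_\eta {\rm Sp}(t_\eta/2)$ already surjects onto the corresponding component group (any factor ${\rm O}_{t_\theta}$ contains a reflection of determinant $-1$); hence its intersection with the determinant-one subgroup meets every connected component and has the correct identity component, so it is maximal compact there. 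Transporting back through $\Phi^{-1}$ then shows that $K$ is a maximal compact subgroup of $\ZC_{{\rm SO}(n,\C)}(X,H,Y)$. I expect the main obstacle to be precisely the basis-change computation in the even case, namely verifying the equivalence of condition (4) with $g|_{L(\eta-1)} \in {\rm Sp}(t_\eta/2)$, since it requires simultaneously tracking the symplectic form, the complex structure $J_{\widetilde\BC}$ of Lemma \ref{max-cpt-symplectic-gps}, and the real-imaginary splitting of the quaternionic entries through the explicit formulas \eqref{definition-w-basis-sonC-even1}--\eqref{definition-w-basis-sonC-even2}.
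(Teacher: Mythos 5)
Your proposal is correct and takes essentially the same route as the paper: both rest on Lemma \ref{Z-so-nC-XHY} to reduce the centralizer to a product of the groups ${\rm O}(V^0(\theta),\langle\cdot,\cdot\rangle)$ and ${\rm Sp}(L(\eta-1),(\cdot,\cdot)_\eta)$, on Lemmas \ref{max-cpt-orthogonal-gps} and \ref{max-cpt-symplectic-gps} for the compact forms, and on the same block-matrix computation in the even case (your equivalence of condition (4) with $g|_{L(\eta-1)}$ lying in the compact symplectic group is precisely the paper's verification that conditions (3)--(4) amount to $\overline{g}|_{L(\eta-1)}\,J_{\BC^0(\eta)} = J_{\BC^0(\eta)}\,g|_{L(\eta-1)}$). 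The only difference is organizational: the paper introduces the comparison subgroup $K'$ intrinsically, declares its maximal compactness ``clear'' from the three quoted lemmas, and then proves $K=K'$, whereas you make explicit the two points hidden in that ``clear'' --- the identification of the image of the restriction map with the determinant-one subgroup, and the index-two/component-group argument showing that intersecting the product of compact forms with that subgroup preserves maximal compactness.
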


\begin{proof}
Let $K'$ be the subgroup consisting of all $g \,\in\, \ZC_{{\rm SO} (n, \, \C)}(X,\,H,\,Y)$ satisfying the conditions 
\eqref{max-cpt-odd-subspace-so-nc} and \eqref{max-cpt-even-subspace-so-nc} below:
\begin{align}
\big[g|_{ V^0(\theta)}\big]_{{\CC}^0(\theta)}\, &=\, \overline{\big[g|_{ V^0(\theta)}\big]}_{{\CC}^0(\theta)}\,, \quad\, \text{for all }\theta \in \O_\d\,; \label{max-cpt-odd-subspace-so-nc}\\
\overline{ g }|_{ L(\eta-1)} { J}_{\BC^0(\eta)} \, &= \, { J}_{\BC^0(\eta)} g |_{ L(\eta-1)}\,, \quad \text{for all }\eta \in \E_\d \,
\label{max-cpt-even-subspace-so-nc}.
\end{align}
In view of Lemma \ref{Z-so-nC-XHY}, Lemma \ref{max-cpt-orthogonal-gps} and Lemma \ref{max-cpt-symplectic-gps} it is clear that $K'$ is a 
maximal compact subgroup of $\ZC_{{\rm SO} (n, \, \C)}(X,\,H,\,Y)$. Thus to prove the lemma it is enough to show that 
$K \,=\, K'$. Let $g \in \ZC_{{\rm SO} (n, \, \C)}(X,\,H,\,Y)$. In view of Lemma \ref{max-cpt-orthogonal-gps} and Remark 
\ref{structure-of-centralizer-sonc} it is clear that $g$ satisfies (1), (2) in the statement of the lemma if and 
only if $g$ satisfies \eqref{max-cpt-odd-subspace-so-nc}. Clearly, $g$ satisfies (3) of the Lemma
\ref{max-cpt-reductive-part-so-nC}. Let $[g|_{L(\eta -1)}]_{\BC^0(\eta)}\,:= \,\begin{pmatrix}
A(\eta) & C(\eta)\\
B(\eta) & D(\eta)
\end{pmatrix}$.
Now suppose that $g$ satisfies \eqref{max-cpt-even-subspace-so-nc}. Then it follows that
$C(\eta)\,=\, - \overline{B(\eta)}$, $D(\eta)\,=\,\overline{A(\eta)}$ and $$A(\eta) + \jb B(\eta)
\,\in\, {\rm Sp}(t_\eta/2)\, .$$ Set $$A(\eta)\,:=\, A_1 (\eta) + \sqrt{-1} A_2(\eta)\ \ \text{ and }\ \ B(\eta)\,:=\,
B_1 (\eta) + \sqrt{-1} B_2(\eta)$$ for $A_1 (\eta),\, A_2 (\eta) ,\, B_1 (\eta),\, B_2 (\eta)
\,\in\, {\rm M}_{t_\eta /2}(\R)$. Then $g$ satisfies (4) of the Lemma \ref{max-cpt-reductive-part-so-nC}. Next
we assume that $g$ satisfies (3), (4) of Lemma \ref{max-cpt-reductive-part-so-nC}. We observe that 
$$
[g|_{L(\eta -1)}]_{\BC^0(\eta)}\,=\, \begin{pmatrix}
A_1 (\eta) + \sqrt{-1} A_2(\eta) & -B_1 (\eta) + \sqrt{-1} B_2(\eta) \\
B_1 (\eta) + \sqrt{-1} B_2(\eta) & A_1 (\eta) - \sqrt{-1} A_2(\eta)
\end{pmatrix}
$$
which proves that \eqref{max-cpt-even-subspace-so-nc} holds. This completes the proof. 
\end{proof}
 
Now we introduce some notation which will be required to state Theorem \ref{max-cpt-so-nc-wrt-onb}. For $\eta\,\in\, 
\E_\d$, set
$$
\DC(\eta) \,:=\, \DC^0 (\eta) \vee \cdots \vee \DC^{\eta/2-1} (\eta)\, ,
$$
and for $\theta\,\in\, \O_\d$, set 
$$
\CC(\theta)\,:=\, \CC^0 (\theta) \vee \cdots \vee \CC^{\theta-1} (\theta).
$$
Let $\alpha\,:=\, \# \E_\d $ and $\beta \,:=\, \# \O_\d$.
We enumerate
$\E_\d \,=\,\{ \eta_i\,\big\vert\,\, 1 \,\leq\, i \,\leq\, \alpha \}$ such that $\eta_i \,<\, \eta_{i+1}$, 
and similarly
$\O_\d \,=\,\{ \theta_j\,\big\vert\,\, 1 \,\leq\, j \,\leq\, \beta \}$ such that $\theta_j \,<\, \theta_{j+1}$.
Now define
$$
\EC \,:=\, \DC(\eta_1) \vee \cdots \vee \DC(\eta_{\alpha}) ; \ 
\text{ and } \OC \,:=\, \CC (\theta_1) \vee \cdots \vee \CC (\theta_{\beta}).
$$
Finally, define
\begin{equation}\label{orthogonal-basis-so-nc-final}
\HC \,:= \, \EC \vee \OC. 
\end{equation}

Define the $\R$-algebra embedding 
\begin{align}\label{embedding-C2R}
\wp_{m, \C} \,: \,{\rm M}_m(\C) \,\longrightarrow\, {\rm M}_{2m}(\R)\, ,\ \
S+ \sqrt{-1} T \,\longmapsto\, \begin{pmatrix}
	S & -T\\
	T & S
\end{pmatrix}	
\end{align}
where $S,\,T \,\in\, {\rm M}_m(\R)$.
Similarly, for $P,\,Q \,\in\, {\rm M}_m(\C)$ define the $\R$-algebra embedding 
\begin{align}\label{embedding-H2C}
\wp_{m, \H} \,: \,{\rm M}_m(\H) \,\longrightarrow\, {\rm M}_{2m}(\C)\, ,\, ~ \,
P + \jb Q \,\longmapsto\, \begin{pmatrix}
P & - \overline{Q}\\
Q & \overline{P}
\end{pmatrix} \,.
\end{align}
The $\R$-algebra $\prod_{i=1}^{\alpha} {\rm M}_{t_{\eta_i}/2}(\H) \times\prod_{k=1}^\beta {\rm M}_{t_{\theta_k}} (\R)$ is embedded into $ {\rm M}_n(\R)$ in the following way:
\begin{align}\label{map-D-sonC}
\Db_{{\rm SO}(n,\C)} \colon \prod_{i=1}^{\alpha} & {\rm M}_{t_{\eta_i}/2}(\H) \, \, \times \,\, \prod_{k=1}^\beta {\rm M}_{t_{\theta_k}} (\R) \,\, \longrightarrow \, \,{\rm M}_n(\R)\\
\big(A_{\eta_1},\, \dotsc, \, A_{\eta_\alpha}\, ; ~ C_{\theta_1} & \,,\, \dotsc \, , \, C_{\theta_\beta} \big) 
\longmapsto \, ~ \bigoplus_{i=1}^\alpha \Big(\wp_{t_{\eta_i} , \C} \big(\wp_{t_{\eta_i}/2,\H}(A_{\eta_i})\big)\Big)_\blacktriangle^{\eta_i/2} 
~\oplus ~ \bigoplus_{k=1}^\beta \big( {C_{\theta_k}}\big)_\blacktriangle^{\theta_k}\,. \nonumber
\end{align}

Note that the basis $ \HC $ as in $\eqref{orthogonal-basis-so-nc-final}$ is an orthonormal basis of $V$ with respect to $ \<> $. Let $\Lambda_\HC \,\colon\, {\rm End}_\C \C^{n} \,\longrightarrow\, {\rm M}_n (\C)$ be the isomorphism of $\C$-algebras induced by the ordered basis $\HC$. 

\begin{theorem}\label{max-cpt-so-nc-wrt-onb} 
Let $X \,\in\, \NC _{\s\o (n,\C)}$, $ X\ne 0 $, and $\Psi_{{\rm SO} (n,\C)}(\OC_X) \,=\, \d $.
Let $\alpha \,:=\, \# \E_\d $ and $\beta \,:=\, \# \O_\d$.
Let $\{X,\,H,\,Y\}$ be a $\s\l_2(\R)$-triple in $\s\o (n,\C)$.
Let $K$ be the maximal compact subgroup of $\ZC_{{\rm SO} (n,\C)} ( X,\, H,\, Y)$ as in Lemma
\ref{max-cpt-reductive-part-so-nC}. Let the map $\Db_{{\rm SO}(n,\C)} $ be defined as in \eqref{map-D-sonC}.
Then $\Lambda_\HC (K) \,\subset \,{\rm SO}_n$ is given by 
\begin{align*} 
\Lambda_\HC(K)\,=\,
 \Big\{ \Db_{{\rm SO}(n,\C)} (g) \, \bigm| \, g \in \prod_{i=1}^\alpha {\rm Sp}(t_{\eta_i}/2) \, \times\, {S( \prod_{j=1}^\beta {\rm O}(t_{\theta_j})) } \Big\}.
\end{align*}
 The nilpotent orbit $ \OC_X $ in $ \s\o(n,\C) $ is homotopic to $ {\rm SO}_n/ \Lambda_\HC(K) $.
\end{theorem}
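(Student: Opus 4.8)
The plan is to translate the coordinate-free description of the maximal compact subgroup $K$ from Lemma \ref{max-cpt-reductive-part-so-nC} into matrix form with respect to the global ordered orthonormal basis $\HC$, and then read off the image $\Lambda_\HC(K)$ directly. The proof is a bookkeeping argument, not a conceptual one: Lemma \ref{max-cpt-reductive-part-so-nC} already identifies $K$ as a maximal compact subgroup of $\ZC_{{\rm SO}(n,\C)}(X,H,Y)$, and Theorem \ref{mostow-corollary} together with the fact that ${\rm SO}_n$ is a maximal compact subgroup of ${\rm SO}(n,\C)$ will immediately yield the homotopy statement. The entire content of the theorem is therefore the explicit form of $\Lambda_\HC(K)$.

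First I would fix an element $g \in K$ and compute its matrix $\Lambda_\HC(g) = [g]_\HC$ block by block. The basis $\HC = \EC \vee \OC$ splits $V$ into the even part (spanned by the $\DC^l(\eta)$ for $\eta \in \E_\d$) and the odd part (spanned by the $\CC^l(\theta)$ for $\theta \in \O_\d$), and since $g$ preserves each $V^l(\theta)$ and each $W^l(\eta)$, the matrix is block-diagonal along these summands. On the odd blocks, condition (2) of Lemma \ref{max-cpt-reductive-part-so-nC} says $[g|_{V^l(\theta)}]_{\CC^l(\theta)} = A(\theta) \in {\rm O}_{t_\theta}$ is the \emph{same} matrix for all $0 \le l \le \theta-1$, so the contribution of $\theta$ to $[g]_\HC$ is exactly $\big(A(\theta)\big)_\blacktriangle^{\theta}$, matching the second summand of $\Db_{{\rm SO}(n,\C)}$ in \eqref{map-D-sonC}. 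On the even blocks, condition (4) gives the $4 \times 4$ block pattern in the $A_1,A_2,B_1,B_2$ matrices, again independent of $l$; recognizing that this $4(t_\eta/2)\times 4(t_\eta/2)$ matrix is precisely $\wp_{t_\eta,\C}\big(\wp_{t_\eta/2,\H}(A_\eta)\big)$ with $A_\eta = (A_1(\eta)+\ib A_2(\eta)) + \jb(B_1(\eta)+\ib B_2(\eta)) \in {\rm Sp}(t_\eta/2)$ is the key identification, and it matches the first summand of \eqref{map-D-sonC}. This shows $\Lambda_\HC(K) \subset \big\{\Db_{{\rm SO}(n,\C)}(g)\big\}$, and conversely every tuple in the stated product arises from some $g \in K$, so the two sets coincide.

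The one genuine point requiring care is the constraint $g \in {\rm SO}(n,\C)$ rather than merely ${\rm O}(n,\C)$, i.e.\ the determinant-one condition, which is what forces the factor $S\big(\prod_{j=1}^\beta {\rm O}(t_{\theta_j})\big)$ (a determinant constraint) on the odd part while leaving the even part as the full product $\prod_{i=1}^\alpha {\rm Sp}(t_{\eta_i}/2)$. I would verify that each symplectic block $\wp_{t_\eta,\C}\big(\wp_{t_\eta/2,\H}(A_\eta)\big)$ automatically has determinant $+1$ (this is standard: the image of ${\rm Sp}(m)$ under the composite embedding lands in ${\rm SU}(2m) \subset {\rm SL}_{4m}(\R)$), so the even factors impose no determinant condition; the total determinant then equals $\prod_j (\det C_{\theta_j})^{\theta_j}$. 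Since each $\theta_j$ is odd, $(\det C_{\theta_j})^{\theta_j} = \det C_{\theta_j} = \pm 1$, and the requirement $\det \Db_{{\rm SO}(n,\C)}(g) = 1$ becomes $\prod_j \det C_{\theta_j} = 1$, which is exactly the defining condition of $S\big(\prod_j {\rm O}(t_{\theta_j})\big)$.

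\textbf{The main obstacle} I anticipate is verifying the block-pattern identification on the even part cleanly: one must track how the explicit basis vectors $w^\eta_{jl}$ in \eqref{definition-w-basis-sonC-even1} and \eqref{definition-w-basis-sonC-even2} package the real and imaginary parts of the quaternionic entries, and confirm that the $4\times4$ matrix in condition (4) is genuinely $\wp_{t_\eta,\C}\circ\wp_{t_\eta/2,\H}$ applied to $A_\eta$ rather than some conjugate or transpose of it. This is where a sign or an index transposition could slip in, so I would check it against the definitions \eqref{embedding-C2R} and \eqref{embedding-H2C} directly. Once that identification is pinned down, the remainder—collecting blocks across all $\eta$ and $\theta$ and invoking Theorem \ref{mostow-corollary}—is routine.
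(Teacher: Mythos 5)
Your proposal is correct and takes essentially the same approach as the paper: the paper's proof consists precisely of writing the matrices of the elements of $K$ from Lemma \ref{max-cpt-reductive-part-so-nC} with respect to the ordered basis $\HC$ of \eqref{orthogonal-basis-so-nc-final}, and then invoking Theorem \ref{mostow-corollary} together with the fact that any maximal compact subgroup of ${\rm SO}(n,\C)$ is isomorphic to ${\rm SO}_n$. Your block-by-block computation --- in particular the identification of the $4\times 4$ block pattern of condition (4) with $\wp_{t_{\eta},\C}\circ\wp_{t_{\eta}/2,\H}$ applied to $A_\eta$, and the determinant bookkeeping (even blocks automatically of determinant one, odd exponents $\theta_j$ reducing $(\det C_{\theta_j})^{\theta_j}$ to $\det C_{\theta_j}$) that forces the factor $S\big(\prod_{j=1}^\beta {\rm O}(t_{\theta_j})\big)$ --- simply makes explicit the details the paper leaves to the reader.
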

 
\begin{proof}
This follows by writing the matrices of the elements of the maximal compact subgroup $ K $ in Lemma 
\ref{max-cpt-reductive-part-so-nC} with respect to the ordered basis $\HC$ as in 
\eqref{orthogonal-basis-so-nc-final}.
 	
The second part follows from Theorem \ref{mostow-corollary} and the fact that any maximal compact subgroup of 
${\rm SO}(n,\C)$ is isomorphic to ${\rm SO}_n$.
\end{proof}
 
\subsection{Homotopy types of the nilpotent orbits in \texorpdfstring{${\s\o}(p,q)$}{Lg}}\label{sec-so-pq}

Let $n$ be a positive integer and $(p, q)$ be a pair of non-negative integers such that $p + q \,=\,n\geq 5$. In 
this subsection we write down the homotopy types of the nilpotent orbits in ${\s\o}(p,q)$ under the adjoint action 
of $ {\rm SO}(p,q) ^\circ$. Throughout this subsection $\<>$ denotes the symmetric form on $\R^n$ defined by 
$\langle x,\, y \rangle \,:=\, x^t{\rm I}_{p,q} y$ where $x,\, y \,\in\, \R^n$ and ${\rm I}_{p,q}$ is as in 
\eqref{defn-I-pq-J-n}.

We first need to describe a suitable parametrization of $\NC ({\rm SO} (p,q)^\circ)$, the set of all nilpotent orbits in ${\s\o}(p,q)$ under the adjoint action of ${\rm SO} (p,q)^\circ$, see \cite[\S 4.5]{BCM}.
Let $$\Psi_{{\rm SL}_n (\R)} \,:\,\NC ({\rm SL}_n(\R)) \,\longrightarrow\,\PC (n)$$ be the parametrization of
$\NC ({\rm SL}_n (\R))$ as in Theorem \ref{sl-R-parametrization}.
Since ${\rm SO} (p,q) \,\subset\, {\rm SL}_n (\R)$ (consequently as, the set of nilpotent elements
$\NC_{{\s\o}(p,q)} \,\subset\, \NC_{\s\l_n(\R)}$)
we have the inclusion map
$$\Theta_{{\rm SO}(p,q)^\circ}\,:\, \NC ({\rm SO} (p,q)^\circ) \,\longrightarrow\,\NC ( {\rm SL}_n (\R))\, .$$ Let
$$
\Psi'_{{\rm SO}(p,q)^\circ}\,:= \,\Psi_{{\rm SL}_n (\R)}\circ
\Theta_{{\rm SO}(p,q)^\circ}\,\colon\, \NC ({\rm SO} (p,q)^\circ)
\,\longrightarrow\, \PC (n)
$$ be the composition.
Recall that $\Psi'_{{\rm SO}(p,q)^\circ} ( \NC ({\rm SO} (p,q)^\circ))\,\subset \, \PC_1 (n)$. 
Let $X \,\in\, {\s\o}(p,q)$ be a non-zero nilpotent element and $\OC_X$
the corresponding nilpotent orbit in $\s\o(p,q)$ under the adjoint action of 
${\rm SO} (p,q)^\circ$. Let $\{X,\, H,\, Y\} \,\subset\, {\s\o}(p,q)$ be a $\s\l_2(\R)$-triple.
Let $V:= \R^n$ be the right $\R$-vector space of column vectors.
Let $\{d_1,\, \cdots,\, d_s\}$, with $d_1 \,<\, \cdots \,<\, d_s$, be ordered finite set of natural numbers
that occur as dimension of non-zero irreducible 
$\text{Span}_\R \{ X,\,H,\,Y\}$-submodules of $V$. Recall that $M(d-1)$ is defined to be the isotypical component of $V$ containing all irreducible Span$_\R \{X,\,H,\,Y\}$-submodules of $V$ with highest weight $d-1$ and as in \eqref{definition-L-d-1} we set
$$L(d-1)\,:= \,V_{Y,0} \cap M(d-1).$$ Let $t_{d_r} \,:=\, \dim_\R L(d_r-1)$,
$1 \,\leq \,r \,\leq\, s$. Then 
$$\d\,:=\, [d_1^{t_{d_1}}, \,\cdots ,\, d_s^{t_{d_s}}]\,\in\, \PC_1(n),$$ and moreover,
$\Psi'_{{\rm SO}(p,q)^\circ} (\OC_X) \,=\, {\d}$.

We next assign $\sgn_{\OC_X} \in \SC^{\rm even}_{\d}(p,q)$ to each $\OC_X \,\in\,
\NC({\rm SO}(p,q)^\circ)$; see \eqref{S-d-pq-even} for the definition of
$\SC^{\rm even}_{\d}(p,q)$.
For each $d\,\in\, \N_\d$ (see \eqref{Nd-Ed-Od} for the definition of $\N_\d$) we will define a $t_d \times d$ matrix $(m^d_{ij})$ in
$ \Ab_{d}$ that depends only on the orbit $\OC_X$; see \eqref{A-d} for the definition of $\Ab_d$.
For this, recall that the form $$(\cdot,\,\cdot)_{d} \,\colon\, L(d-1) \times L(d-1) \,
\longrightarrow\, \R\, ,$$ defined in \eqref{new-form}, is symmetric or symplectic
according as $d$ is odd or even. 
Denoting the signature of $(\cdot,\,\cdot)_{d}$ by $(p_{d},\, q_{d})$ when $d\,\in\, \O_\d$, we now define
\begin{align*}
m^\eta_{i1} &:= +1 \qquad \text{if } \ 1 \,\leq\, i \,\leq\, t_{\eta}, \quad \eta \,\in\, \E_\d\,; \\
m^\theta_{i1} &:= \begin{cases}
+1 & \text{ if } \ 1 \,\leq\, i \,\leq\, p_{\theta} \\
-1 & \text{ if } \ p_\theta \,<\, i \leq t_\theta
\end{cases}\, , \theta \,\in\, \O_\d\,;
\end{align*}
and for $j \,>\,1$, define $(m^d_{ij})$ as follows:
\begin{align}\label{def-sign-alternate}
m^d_{ij} \, &:=\, (-1)^{j+1}m^d_{i1} \qquad \text{if $ 1<j \leq d , \ d\in \E_\d \cup \O^1_\d$};\\
m^\theta_{ij}\, &:= \begin{cases}
(-1)^{j+1}m^\theta_{i1} & \text{ if } \ 1\,<\,j \,\leq\, \theta-1\\
-m^\theta_{i1} & \text{ if } j \,=\, \theta
\end{cases} , \, \theta\,\in\, \O^3_\d.\label{def-sign-alternate-1}
\end{align}
Then the matrices $(m^d_{ij})$ clearly verify \eqref{yd-def2}. Set $$\sgn_{\OC_X} \,:=\,
((m^{d_1}_{ij}),\, \cdots,\, (m^{d_s}_{ij})).$$ It now follows from the last paragraph of \cite[Remark A.13]{BCM} and the above definition of $m^\eta_{i1}$ for $\eta \,\in\, \E_\d$ that $\sgn_{\OC_X}\,\in\, \SC^{\rm even}_{\d}(p,q)$.
Thus we have the map 
$$
\Psi_{{\rm SO} (p,q)^\circ} \,\colon\, \NC({\rm SO}(p,q)^\circ)\,
\longrightarrow\, \YC^{\rm even}_1(p,q)\, , \ \
\OC_X \,\longmapsto\, \big(\Psi'_{{\rm SO} (p,q)^\circ} (\OC_X),\, \sgn_{\OC_X}\big) \,;
$$
where $\YC^{\rm even}_1(p,q)$ is as in \eqref{yd-1-Y-pq}.
The map $\Psi_{{\rm SO} (p,q)^\circ}$ is surjective.
The following theorem is standard, see \cite[Theorem 9.3.4]{CoM}, \cite[Theorem 4.16]{BCM}.

\begin{theorem}\label{so-pq-parametrization}
For the above map $\Psi_{{\rm SO}(p,q)^\circ}$,
$$ \# \Psi_{{\rm SO}(p,q)^\circ}^{-1} (\d, \sgn) \,=
\begin{cases}
4 & \text{ for all } \, \d \,\in\,\PC_{\rm v.even} (n) \\
2 & \text{ for all } \, \d \,\in\, \PC_1(n) \setminus \PC_{\rm v.even} (n) , \, \sgn \in \SC'_{\d}(p,q)\\
1 & \text{ otherwise}. 
\end{cases}$$ 
\end{theorem}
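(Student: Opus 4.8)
The plan is to identify the fibre of $\Psi_{{\rm SO}(p,q)^\circ}$ over a pair $(\d,\sgn)$ with the number of ${\rm SO}(p,q)^\circ$-orbits contained in a single ${\rm O}(p,q)$-orbit, and then to count those by a component-group computation. First I would invoke the Springer--Steinberg classification (\cite{SS}, \cite[Theorem 9.3.4]{CoM}): the signed Young diagram $(\d,\sgn)$, with the even-row normalisation built into $\SC^{\rm even}_\d(p,q)$, is a complete invariant of the ${\rm O}(p,q)$-orbit of a nilpotent $X\in\s\o(p,q)$. Hence the induced map from ${\rm O}(p,q)$-orbits to $\YC^{\rm even}_1(p,q)$ is injective, and the fibre of $\Psi_{{\rm SO}(p,q)^\circ}$ over $(\d,\sgn)$ is exactly the set of ${\rm SO}(p,q)^\circ$-orbits inside the unique ${\rm O}(p,q)$-orbit $\OC^{\rm O}_X$ with that diagram. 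By orbit--stabiliser this cardinality is $[{\rm O}(p,q):{\rm SO}(p,q)^\circ\,\ZC_{{\rm O}(p,q)}(X)]=|\Gamma|/|\rho(\ZC_{{\rm O}(p,q)}(X))|$, where $\Gamma:={\rm O}(p,q)/{\rm SO}(p,q)^\circ$ and $\rho$ is the projection. Since a nonzero nilpotent forces $p,q>0$, we have $\Gamma\cong(\Z/2\Z)^2$, which I would coordinatise by the two characters $g\mapsto(\det g|_{V_+},\,\det g|_{V_-})$ for a $K$-invariant maximal definite decomposition $V=V_+\oplus V_-$ adapted to the isotypic decomposition and the string bases $\BC^l(d)$; then ${\rm SO}(p,q)^\circ$ is the common kernel and $\det=\det|_{V_+}\cdot\det|_{V_-}$.

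Next I would compute $\rho(\ZC_{{\rm O}(p,q)}(X))$ from the reductive part of the centralizer, which by \cite{SS} is $\prod_{\theta\in\O_\d}{\rm O}(p_\theta,q_\theta)\times\prod_{\eta\in\E_\d}{\rm Sp}(t_\eta,\R)$, acting diagonally across the levels $X^lL(d-1)$; here $(p_\theta,q_\theta)$ is the signature of $(\cdot,\cdot)_\theta$ on $L(\theta-1)$. The unipotent radical and the symplectic factors are connected, hence map trivially to the discrete group $\Gamma$, so $\rho(\ZC_{{\rm O}(p,q)}(X))$ is generated by the images of the odd-row orthogonal factors, and since each ${\rm O}(p_\theta,q_\theta)$ is generated by reflections it suffices to evaluate $\rho$ on one. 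For the reflection of $L(\theta-1)$ in a unit vector $v^\theta_i$ with $(v^\theta_i,v^\theta_i)_\theta>0$ (so $i\le p_\theta$), the induced element negates the whole string $v^\theta_i,Xv^\theta_i,\dots,X^{\theta-1}v^\theta_i$ and fixes the other strings; after orthonormalisation these $\theta$ vectors split into $l^+_{\theta,i}$ directions in $V_+$ and $l^-_{\theta,i}$ in $V_-$, whence $\rho$ sends it to $((-1)^{l^+_{\theta,i}},(-1)^{l^-_{\theta,i}})$. Using the parity dictionary coming from the sign rules \ref{yd-def2} (an odd string has an odd number of $+$-boxes when it starts with $+$ and an even number when it starts with $-$, and symmetrically for $-$-boxes, so $l^+_{\theta,i}$ is even exactly when $m^\theta_{i1}=-1$ and $l^-_{\theta,i}$ even exactly when $m^\theta_{i1}=+1$), a positive reflection maps to $(-1,+1)$ and a negative reflection to $(+1,-1)$.

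The three cases now drop out. If $\d\in\PC_{\rm v.even}(n)$ then $\O_\d=\emptyset$, only symplectic factors occur, $\rho$ is trivial, and the fibre is $|\Gamma|=4$. If $\d\in\PC_1(n)\setminus\PC_{\rm v.even}(n)$ then $\O_\d\neq\emptyset$ and every $t_\theta\ge 1$, so the image contains $(-1,+1)$ as soon as some odd form is positive somewhere ($p_\theta>0$) and contains $(+1,-1)$ as soon as some odd form is negative somewhere ($q_\theta>0$). Thus $\rho(\ZC_{{\rm O}(p,q)}(X))=\Gamma$, giving fibre $1$, unless all odd forms are definite of one common sign, in which case the image is an order-two subgroup and the fibre is $2$. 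Finally I would translate ``all odd forms positive definite'' (all $m^\theta_{i1}=+1$, i.e. $l^-_{\theta,i}$ even for all $\theta,i$) and ``all negative definite'' (all $m^\theta_{i1}=-1$, i.e. $l^+_{\theta,i}$ even for all $\theta,i$) into the two clauses defining $\SC'_\d(p,q)$; this shows the fibre equals $2$ precisely when $\sgn\in\SC'_\d(p,q)$ and $1$ otherwise, matching the stated count.

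The main obstacle is the middle step: pinning down the second character $\det|_{V_+}$ on the centralizer, which requires identifying the box-signs $m^\theta_{ij}$ with the signatures of $\<>$ on the one-dimensional pieces of the orthonormalised string bases, so that a reflection's contribution to $V_+$ is controlled by the parity of $l^+_{\theta,i}$. Once this identification is secured (it is implicit in the construction of the adapted orthonormal basis and in \cite[Remark A.13]{BCM}), the reflection computation and parity bookkeeping are routine. A shorter route is to invoke \cite[Theorem 9.3.4]{CoM} together with \cite[Theorem 4.16]{BCM}, which already record this orbit count; the argument above is the conceptual content behind those references.
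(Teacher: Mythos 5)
The paper does not actually prove Theorem \ref{so-pq-parametrization}: it records the statement as standard and cites \cite[Theorem 9.3.4]{CoM} and \cite[Theorem 4.16]{BCM}, which is precisely the ``shorter route'' you mention in your final paragraph. Your argument is the component-group computation that lies behind those citations, and it is correct. The reduction of the fibre to $|\Gamma|/|\rho(\ZC_{{\rm O}(p,q)}(X))|$ with $\Gamma={\rm O}(p,q)/{\rm SO}(p,q)^\circ\cong(\Z/2\Z)^2$ is sound (a nonzero nilpotent indeed forces $p,q>0$); the unipotent radical and the symplectic factors ${\rm Sp}(t_\eta,\R)$ are connected, so only the factors ${\rm O}(p_\theta,q_\theta)$, $\theta\in\O_\d$, can contribute; by Cartan--Dieudonn\'e it suffices to evaluate $\rho$ on reflections, and a reflection in a string vector lands in the component $\bigl((-1)^{l^+_{\theta,i}},(-1)^{l^-_{\theta,i}}\bigr)$. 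I checked your parity dictionary against \eqref{yd-def2}: for $\theta\equiv 1\pmod 4$ a row starting with $+$ has $l^+=(\theta+1)/2$ odd, and for $\theta\equiv 3\pmod 4$ (with the flipped last box) it has $l^+=(\theta-1)/2$, again odd; so in both cases $m^\theta_{i1}=+1$ iff $l^+_{\theta,i}$ is odd iff $l^-_{\theta,i}$ is even, exactly as you state, and positive (resp.\ negative) reflections map to $(-1,+1)$ (resp.\ $(+1,-1)$). The case analysis then matches the theorem: within $\PC_1(n)$ one has $\O_\d=\emptyset$ iff $\d$ is very even (image trivial, fibre $4$); the image has order $2$ exactly when all the forms $(\cdot,\cdot)_\theta$ are definite of one common sign, which via the dictionary is exactly the two clauses defining $\SC'_\d(p,q)$ (fibre $2$); otherwise the image is all of $\Gamma$ (fibre $1$). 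What your route buys is a self-contained proof where the paper has only a citation; what the citation buys is brevity and the avoidance of the bookkeeping below.

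Two points need care if this is written out. First, the characters $g\mapsto\det g|_{V_\pm}$ are literally defined only on elements preserving $V_+\oplus V_-$; for general $g\in{\rm O}(p,q)$ one should take the signs of the determinants of the diagonal blocks of $g$, or define the two characters on $\pi_0({\rm O}(p,q))$ through a maximal compact ${\rm O}(p)\times{\rm O}(q)$. This is harmless here, since every element you evaluate preserves an adapted decomposition. Second, the step you yourself flagged --- that the box signs $m^\theta_{ij}$ of the diagram record the signature of $\<>$ on the orthonormalised string through $v^\theta_i$ --- is genuinely needed to justify the displayed image $\bigl((-1)^{l^+_{\theta,i}},(-1)^{l^-_{\theta,i}}\bigr)$; it is supplied by the adapted bases of \S\ref{sec-so-pq} (the ordered sets $\CC^l_\pm(\theta)$ and $\HC_\pm$ built from \cite[Lemma A.9]{BCM}) together with \cite[Remark A.13]{BCM}, and the definitions \eqref{def-sign-alternate}, \eqref{def-sign-alternate-1}, so it can be quoted rather than reproved. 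With those two points secured, your argument is complete.
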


Let $0\,\not=\, X \,\in\, \NC _{\s\o (p,q)}$ and $\{X,\,H,\,Y\}\,\subset\, \s\o (p,q)$ a $\s\l_2(\R)$-triple. Let $$\Psi_{{\rm SO} 
(p,q)^\circ} (\OC_X) \,=\, \big( \d,\, \sgn_{\OC_X} \big).$$ Then $\Psi'_{{\rm SO} (p,q)^\circ} (\OC_X) \,=\, \d$. Let $V$ be a right 
$\R$-vector space of column vectors such that $ \dim_\R V\,=\,n$. Recall that the form $( \cdot ,\, \cdot )_d$ on $L(d-1)$ (see 
\eqref{new-form} for the definition) is symmetric for $d \in \O_\d$ and symplectic for $d \,\in\, \E_\d$. Let $(v^d_1,\, \dotsc \,, 
v^d_{t_d})$ be a $\R$-basis of $L(d-1)$ as in \cite[Proposition A.6]{BCM}. Recall that $\sgn_{\OC_X}$ determines the signature of $(\cdot, 
\,\cdot)_\theta$ on $L(\theta -1)$, $\theta \,\in\, \O_\d$; let $(p_\theta, q_\theta)$ be the signature of $(\cdot,\, \cdot)_\theta$. We 
may further assume that $(v^\theta_1,\, \dotsc \,, v^\theta_{t_\theta})$ is a standard orthogonal basis of $L(\theta - 1)$ for the form 
$(\cdot ,\, \cdot)_\theta$ for all $\theta \,\in\, \O_\d$, i.e.,
\begin{equation}\label{orthonormal-basis-odd-sopq}
( v^{\theta}_j, v^\theta_j)_{\theta} \,=\,
\begin{cases}
+1 & \text{ if } 1 \,\leq\, j \,\leq\, p_{\theta}\\
-1 & \text{ if } p_{\theta} \,<\, j \,\leq\, t_{\theta}. 
\end{cases}
\end{equation}
Similarly, we may assume that $(v^\eta_1,\, \dotsc \,, v^\eta_{t_\eta})$ is a symplectic basis of $L(\eta -1)$ for all $\eta
\,\in\, \E_{\d}$. This is equivalent to say that 
\begin{equation}\label{onb-eta-so-pq}
(v_j^\eta,\, v_{t_\eta/2 + j}^\eta)_\eta \, = \, 1 \ \ \text{ for } 1 \leq j \leq t_{\eta}/2 \ \ \text{ and } \ \ (v_j^\eta,\,
v_{i}^\eta)_\eta \, = \, 0 \ \ \text{ for } i \,\neq\, j + t_{\eta}/2 \,.
\end{equation}

For $\theta \,\in\, \O_\d$, let $\big\{{w}^\theta_{jl} \,\mid\, 1 \,\leq\, j \,\leq\, t_{\theta}, \, 0 \,\leq\, l \, \leq\, \theta-1 \big\}$ be the $\R$-basis of $M (\theta-1)$ as in \cite[Lemma A.9(2)]{BCM}. For each $0 \,\leq\, l \,\leq\, \theta-1$, define
$$
V^l (\theta) \,:=\, \text{ Span}_\R \{{w}^\theta_{1l}, \,\dots ,\, {w}^\theta_{t_{\theta}l} \}\, .
$$
The ordered basis $\big( {w}^\theta_{1l},\, \dots ,\, {w}^\theta_{t_{\theta}l} \big)$ of
$ V^l (\theta) $ is denoted by $\CC^l (\theta)$.

Next we will write down a general version of \cite[Lemma 4.18]{BCM} which will give a suitable description of reductive part of the centralizer of a nilpotent element in ${\s\o}(p,q) $. Recall that $\BC^l(d) = (X^lv^d_1 \,,\, \dotsc\, ,\, X^lv^d_{t_d})$ is the ordered basis of $X^l L(d-1)$ for $0\,\leq\, l \,\leq\, d -1,\, d
\,\in\, \N_\d$ as in \eqref{old-ordered-basis-part}.

\begin{lemma}\label{reductive-part-comp-so-pq}
Let $ X $ be a nilpotent element in $ \s\o(p,q) $ and $ \{X,\,H,\,Y\} $ be a $ \s\l_2 (\R)$-triple in $\s\o(p,q)$ containing $X$.
Then the following holds:
 $$
 \ZC_{ {\rm SO} (p,q) }(X,\,H,\,Y)\! =\! \left\{ g \in { {\rm SO} (p,q)} \middle\vert \! 
\begin{array}{ccc}
g (V^l (\theta)) \subset \ V^l (\theta) \ \text{and } \vspace{.14cm}\\
\big[g|_{ V^l(\theta)}\big]_{{\CC}^l(\theta)} = \big[g |_{ V^0 (\theta)}\big]_{{\CC}^0 (\theta)} \ { \forall } \, \theta \in \O_\d, 0 \leq l < \theta ~; \vspace{.14cm}\\
g(X^l L(\eta-1))\,\subset\, X^l L(\eta-1) \text{ and } \vspace{.14cm}\\
\! \big[g |_{ X^l L(\eta-1)} \big]_{{\BC}^l (\eta)}\! \!=\! \big[g |_{L(\eta-1)}\big]_{{\BC}^0 (\eta)}\ {\forall } \, \eta \,\in\, \E_\d,\,
0\, \leq\, l\,< \,\eta \!
\end{array}
\!\!   \right\}\!.
$$
\end{lemma}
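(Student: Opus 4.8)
The plan is to reduce membership in $\ZC_{{\rm SO}(p,q)}(X,\,H,\,Y)$ to the three commutation relations $gX\,=\,Xg$, $gH\,=\,Hg$, $gY\,=\,Yg$, and then to rewrite these in the geometric language of the subspaces $X^l L(\eta-1)$ and $V^l(\theta)$. First I would record the ``raw'' description: $g\,\in\,{\rm SO}(p,q)$ lies in $\ZC_{{\rm SO}(p,q)}(X,\,H,\,Y)$ if and only if $g$ commutes with $X$, $H$ and $Y$. As $g$ then commutes with the whole $\s\l_2(\R)$-action it preserves each isotypical component $M(d-1)$; commuting with $Y$ it preserves $V_{Y,0}$ and hence $L(d-1)\,=\,V_{Y,0}\cap M(d-1)$; and commuting with $X$ gives $g(X^l v)\,=\,X^l(gv)$ for every $v\,\in\,L(d-1)$, so $g$ preserves each $X^l L(d-1)$ and the matrix $[g|_{X^l L(d-1)}]_{\BC^l(d)}$ equals $[g|_{L(d-1)}]_{\BC^0(d)}$ independently of $l$. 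For $\eta\,\in\,\E_\d$ these are verbatim the conditions in the statement, so the even part needs nothing further.

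For the odd part I would use the explicit shape of the basis of \cite[Lemma A.9(2)]{BCM}: for $\theta\,\in\,\O_\d$ each $w^\theta_{jl}$ is a fixed $\R$-linear combination of $X^l v^\theta_j$ and $X^{\theta-1-l} v^\theta_j$ with the same index $j$, the combining coefficients being independent of $j$. Writing $A\,:=\,[g|_{L(\theta-1)}]_{\BC^0(\theta)}$ and using $g(X^m v^\theta_j)\,=\,\sum_k X^m v^\theta_k A_{kj}$ for $m\,=\,l$ and $m\,=\,\theta-1-l$, linearity yields
\begin{equation*}
g(w^\theta_{jl})\,=\,\sum_{k} w^\theta_{kl}\, A_{kj}\,.
\end{equation*}
Thus $g$ preserves each $V^l(\theta)$ and $[g|_{V^l(\theta)}]_{\CC^l(\theta)}\,=\,A\,=\,[g|_{V^0(\theta)}]_{\CC^0(\theta)}$ for every $l$; this establishes that every element of $\ZC_{{\rm SO}(p,q)}(X,\,H,\,Y)$ satisfies the listed conditions.

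For the converse I would run this backwards. Grouping the index $l$ with $\theta-1-l$, the change of basis carrying $(X^l v^\theta_\bullet,\,X^{\theta-1-l}v^\theta_\bullet)$ to $(w^\theta_{\bullet l},\,w^\theta_{\bullet,\theta-1-l})$ is, in the index $j$, a matrix all of whose blocks are scalar multiples of $I_{t_\theta}$; such a matrix commutes with ${\rm diag}(A,A)$. Hence the hypothesis that $g$ acts by the single matrix $A$ on every $V^l(\theta)$ is equivalent to the assertion that $g$ acts by $A$ on every $X^l L(\theta-1)$, which are the weight spaces of $M(\theta-1)$. Combining the two cases, the listed conditions force $g$ to preserve every weight space $X^l L(d-1)$ of $M(d-1)$ with an $l$-independent matrix: preservation of the weight spaces gives $gH\,=\,Hg$, while $l$-independence gives $g(X^l v)\,=\,X^l(gv)$ and hence $gX\,=\,Xg$ on each $M(d-1)$. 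Since on a finite-dimensional $\s\l_2(\R)$-module the simultaneous centralizer of $X$ and $H$ coincides with that of the full triple $\{X,\,H,\,Y\}$ (the standard rigidity of $\s\l_2(\R)$-modules), $g$ also commutes with $Y$, so $g\,\in\,\ZC_{{\rm SO}(p,q)}(X,\,H,\,Y)$.

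I expect the only genuine obstacle to be the bookkeeping in the odd case: checking that the pairing $l\,\leftrightarrow\,\theta-1-l$ built into the vectors $w^\theta_{jl}$ really does intertwine the two matrix conditions, and that the normalising scalars from \cite[Lemma A.9(2)]{BCM} leave the identity $g(w^\theta_{jl})\,=\,\sum_k w^\theta_{kl}A_{kj}$ intact. The step ``commutes with $X$ and $H$ $\Rightarrow$ commutes with $Y$'' is routine $\s\l_2(\R)$-representation theory and the even part is immediate, so the argument runs entirely parallel to that of Lemma \ref{Z-so-nC-XHY}, now over $\R$ and with the signature $(p,q)$ in place of $n$.
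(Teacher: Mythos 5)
Your proof is correct and follows essentially the same route as the paper: the paper omits the argument, citing its similarity to Lemma \ref{Z-so-nC-XHY}, whose proof rests precisely on your observation that the $j$-independent (block-scalar) change of basis between $\BC^l(\theta)\vee\BC^{\theta-1-l}(\theta)$ and $\CC^l(\theta)\vee\CC^{\theta-1-l}(\theta)$ makes the invariance-plus-matrix conditions in the two bases equivalent. You additionally spell out the standard reduction of membership in $\ZC_{{\rm SO}(p,q)}(X,\,H,\,Y)$ to $l$-independent action on the spaces $X^l L(d-1)$, including the $\s\l_2(\R)$-rigidity step recovering commutation with $Y$, which the paper leaves implicit.
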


\begin{proof}
We omit the proof as it is very similar to that of Lemma \ref{Z-so-nC-XHY}.
\end{proof}
 
We next impose orderings on the sets $\{v\,\in\, \CC^l (\theta)\,\mid\, \langle v,\, v \rangle > 0 \}$, $\{ v \,\in\, \CC^l (\theta)
\,\mid\, \langle v,\, v \rangle < 0 \}$. Define the ordered sets by $\CC^l_+ (\theta)\,$, $\CC^l_- (\theta)\,$, $\CC^l_+ (\zeta)$ and
$\CC^l_- (\zeta)$ as in \cite[(4.19), (4.20), (4.21), (4.22)]{BCM}, respectively according as $\theta \in \O^1_\d$ or $\zeta \in \O^3_\d$. 
For all $\theta \,\in\, \O_\d$ and $0 \,\leq\, l \,\leq\, \theta-1$, set
\begin{gather}
V^l_+ (\theta) \,:=\, \text{ Span}_\R \{ v \,\mid\, v \,\in\,\CC^l (\theta),\,\langle v,\,\,v \rangle
\,>\, 0 \},\nonumber\\
V^l_- (\theta) \,:=\, \text{ Span}_\R \{ v\,\mid\, v\, \in\, \CC^l (\theta),\,\,\langle v,\, v \rangle\, <\, 0 \}.\nonumber
\end{gather}
It is straightforward from \eqref{orthonormal-basis-odd-sopq}, and the orthogonality relations in \cite[Lemma A.9]{BCM}, that $\CC^l_+ 
(\theta)$ and $ \CC^l_- (\theta)$ are indeed ordered bases of $V^l_+ (\theta)$ and $V^l_- (\theta)$, respectively. For $\eta\,\in\, \E_\d, \, 
0 \,\leq\, l \,\le\, \eta/2 -1$, set
$$
W^l(\eta) \, : =\, X^lL(\eta -1) + X^{\eta -1-l} L(\eta -1).
$$
Now we will construct a standard orthogonal basis of $W^l$ as done in \eqref{definition-w-basis-sonC-even1}, 
\eqref{definition-w-basis-sonC-even2}. For $l$ even, $0\,\leq\, l \,\leq\, \eta/2-1$, let
$$
 {w}^{\eta}_{jl} \,:=\,
\begin{cases}
 \big( X^l v^{\eta}_j + X^{\eta-1-l} v^{\eta}_{t_\eta/2 + j} \big) / {\sqrt{2}}    & \text{ if } \ 1\,<\,j \,\leq \, t_\eta/2  \\
 \big( X^l v^{\eta}_j  -  X^{\eta-1-l} v^{\eta}_{j- t_\eta/2 } \big) / {\sqrt{2}}    & \text{ if } \ t_\eta/2\, < \,j \,\leq \, t_\eta  \\
 \big( X^l v^{\eta}_{j- t_\eta}  -  X^{\eta-1-l} v^{\eta}_{j - t_\eta/2} \big) / {\sqrt{2}}    & \text{ if } \  t_\eta \, < \,j \,\leq \, 3t_\eta/2  \\
 \big( X^l v^{\eta}_{j- t_\eta}  +  X^{\eta-1-l} v^{\eta}_{j - 3t_{\eta}/2} \big) / {\sqrt{2}}  & \text{ if } \  3t_\eta/2 \, < \,j \,\leq \, 2t_\eta \,.
 \end{cases}
$$
For $l$ odd, $0\,\leq\, l \,\leq\, \eta/2-1$, let
$$
{w}^{\eta}_{jl} \,:=\,
\begin{cases}
\big( X^l v^{\eta}_j  -  X^{\eta-1-l} v^{\eta}_{j + t_\eta/2 } \big) / {\sqrt{2}}     & \text{ if } \ 1\,<\,j \,\leq \, t_\eta/2  \\
\big( X^l v^{\eta}_j  +  X^{\eta-1-l} v^{\eta}_{j - t_\eta/2} \big) / {\sqrt{2}}  & \text{ if } \ t_\eta/2\, < \,j \,\leq \, t_\eta  \\
\big( X^l v^{\eta}_{j- t_\eta}+X^{\eta-1-l} v^{\eta}_{j- t_\eta/2} \big) / {\sqrt{2}} & \text{ if } \  t_\eta \, < \,j \,\leq \, 3t_\eta/2 \\
\big( X^l v^{\eta}_{j- t_\eta}-  X^{\eta-1-l} v^{\eta}_{j - 3t_{\eta}/2} \big) / {\sqrt{2}} & \text{ if } \  3t_\eta/2 \, < \,j \,\leq \, 2t_\eta \,.
\end{cases}
$$
Using \eqref{onb-eta-so-pq} it follows that  $\{w^\eta_{jl} \, \mid \, 1\,\leq\, j \,\leq\, 
2t_\eta, \, 0\,\leq\, l \,\leq\, \eta/2 -1 \}$ is a standard orthogonal basis of $M(\eta -1)$ with respect to $\<>$ for all $\eta \,\in \,\E_\d$. The 
ordered basis $(w^\eta_{1 l} \, ,\, \dotsc \, ,\, w^\eta_{2t_\eta l} )$ of $W^l(\eta)$ is denoted by 
$\DC^l(\eta)$.

\begin{remark}\label{structure-of-centralizer-sopq}
{\rm 
We follow the notation of Lemma \ref{reductive-part-comp-so-pq}.
Let $g \,\in\, \ZC_{ {\rm SO} (p,q)}(X,\, H,\, Y)$.  Let $\theta \,\in\, \O_\d$ and $\eta \,\in\, \E_\d$.   
Then it follows from Lemma \ref{reductive-part-comp-so-pq} that $ g$ keeps the subspaces $V^0(\theta)$ and $L(\eta -1)$  invariant. 
Since the restriction of $\<> $ is a symmetric form on $V^0(\theta)$ we have $g|_{ V^0(\theta)} \,\in\,
{\rm O}(V^0(\theta), \,\<>)$. 
Further recall that the form $ (\cdot,\,\cdot)_\eta $, as defined in \eqref{new-form}, is symplectic on $L(\eta-1)$, and
$(gx,\,gy)_\eta \,=\, (x,\,y)_\eta$ for all $x,\,y\,\in\, L(\eta-1)$; see \cite[Remark A.7]{BCM}. 
Thus $g |_{  L(\eta-1)} \,\in\, {\rm Sp}(L(\eta-1),\, (\cdot,\, \cdot)_\eta)$.
}
\end{remark}

In the next lemma, which generalizes \cite[Lemma 4.19]{BCM}, we specify a maximal compact subgroup of $\ZC_{ {\rm 
SO} (p,q) }(X,\,H,\,Y)$ which will be used in Theorem \ref{max-cpt-wrt-onb-sop}. The notation $(-1)^l$ stands for 
the sign `$+$' or the sign `$-$' according as $l$ is an even or odd integer. Recall that the $\R$-algebra 
embedding $\wp_{m, \C}$ is defined in \eqref{embedding-C2R}.
 
\begin{lemma}\label{max-cpt-centralizer-sopq}
Let $K$ be the subgroup of $\ZC_{ {\rm SO} (p,q) }(X,\,H,\,Y)$ consisting of elements $g$ in
$\ZC_{ {\rm SO} (p,q) }(X,\,H,\,Y)$ satisfying the following conditions:
\begin{enumerate}

\item $g ( V^l_+ (\theta)) \,\subset \, V^l_+ (\theta)$ and $g ( V^l_- (\theta)  ) \,
\subset \, V^l_-(\theta)$, for all  $ \theta \,\in\, \O_\d$ and $ 0 \,\leq\, l \,\leq\, \theta -1$.

\item When $ \theta \,\in\, \O^1_\d$,
$$
\Big[g |_{ V_+^0 (\theta) }\Big]_{{\CC}^0_+ (\theta)} \,= 
\begin{cases}                                                            
\Big[g |_{  V^l_{(-1)^{l}} (\theta)}\Big]_{{\CC}^l_{(-1)^{l}} (\theta)} & \text{ for all } 0 \leq l < (\theta-1)/2 \vspace{.15cm}\\
\Big[g |_{  V^{(\theta-1)/2}_{+} (\theta)}\Big]_{{\CC}^{(\theta-1)/2}_{+} (\theta)} \vspace{.15cm}  \\
\Big[g |_{  V^l_{(-1)^{l+1}} (\theta)}\Big]_{{\CC}^l_{(-1)^{l+1} } (\theta)} &  \text{ for all } (\theta-1)/2 < l \leq \theta-1,
\end{cases}                 
$$
$$
\Big[g |_{ V_-^0 (\theta) }\Big]_{{\CC}^0_- (\theta)} \,=
\begin{cases}
\Big[g |_{  V^l_{(-1)^{l+1}} (\theta)}\Big]_{{\CC}^l_{(-1)^{l+1} } (\theta)} &  \text{ for all } 0 \leq l < (\theta-1)/2 \vspace{.15cm}\\
\Big[g |_{  V^{(\theta-1)/2}_{-} (\theta)}\Big]_{{\CC}^{(\theta-1)/2}_{-} (\theta)} \vspace{.15cm}  \\
\Big[g |_{  V^l_{(-1)^{l}} (\theta)}\Big]_{{\CC}^l_{(-1)^{l}} (\theta)} &  \text{ for all } (\theta-1)/2 < l \leq \theta-1 .     
\end{cases}
$$ 

\item When $ \zeta \,\in\, \O^3_\d $,
$$
\Big[g |_{ V_+^0 (\zeta) }\Big]_{{\CC}^0_+ (\zeta)} = 
             \begin{cases}                                                            
                 \Big[g |_{  V^l_{(-1)^{l}} (\zeta)}\Big]_{{\CC}^l_{(-1)^{l}} (\zeta)} &  \text{ for all } 0 \leq l < (\zeta-1)/2 \vspace{.15cm}\\            
      \Big[g |_{  V^{(\zeta-1)/2}_{-} (\zeta)}\Big]_{{\CC}^{(\zeta-1)/2}_{-} (\zeta)} \vspace{.15cm}  \\
                 \Big[g |_{  V^l_{(-1)^{l+1}} (\zeta)}\Big]_{{\CC}^l_{(-1)^{l+1} } (\zeta)} & \text{ for all } (\zeta-1)/2 < l \leq \zeta-1,
                 \end{cases}                 
$$
$$
  \Big[g |_{ V_-^0 (\zeta) }\Big]_{{\CC}^0_- (\zeta)} \,=
     \begin{cases}
      \Big[g |_{  V^l_{(-1)^{l+1}} (\zeta)}\Big]_{{\CC}^l_{(-1)^{l+1} } (\zeta)} &  \text{ for all } 0 \leq l < (\zeta-1)/2 \vspace{.15cm}\\
      \Big[g |_{  V^{(\zeta-1)/2}_{+} (\zeta)}\Big]_{{\CC}^{(\zeta-1)/2}_{+} (\zeta)} \vspace{.15cm}  \\
            \Big[g |_{  V^l_{(-1)^{l}} (\zeta)}\Big]_{{\CC}^l_{(-1)^{l}} (\zeta)} &  \text{ for all } (\zeta-1)/2 < l \leq \zeta-1.    
     \end{cases}
$$

\item For all $\eta \,\in\, \E_\d$ and  $0\leq l \leq \eta/2-1$ the inclusion $g(W^l(\eta))\,\subseteq\, W^l(\eta)$ holds.
 
\item For all $\eta \,\in\, \E_\d$, $0\,\leq\, l \,\leq\, \eta/2-1$, there exists $A(\eta),\, B(\eta)$ such that $A(\eta)+ \sqrt{-1} 
B(\eta) \,\in\, {\rm U}(t_{\eta/2})$ such that
$$
 \big[g|_{W^l(\eta)}\big]_{\DC^l(\eta)} \, = \big(\wp_{t_\eta/2, \C}(A(\eta)+ \sqrt{-1} B(\eta)) \big)_\blacktriangle ^2
 =\begin{pmatrix}
                                               A(\eta) & - B(\eta)\\
                                               B(\eta) &   A(\eta)\\
                                               &&  A(\eta) & - B(\eta)\\
                                               && B(\eta) &   A(\eta)
                                              \end{pmatrix} \,.
 $$
\end{enumerate}
\end{lemma}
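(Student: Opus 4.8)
The plan is to mirror the proof of Lemma~\ref{max-cpt-reductive-part-so-nC}, the only genuinely new ingredient being the sign bookkeeping forced by the indefinite signature. First I would introduce an auxiliary subgroup $K'$ of $\ZC_{{\rm SO}(p,q)}(X,\,H,\,Y)$, defined intrinsically as the set of $g$ in the centralizer such that $g|_{V^0(\theta)}$ stabilizes the orthogonal decomposition $V^0(\theta)\,=\,V^0_+(\theta)\oplus V^0_-(\theta)$ into its positive- and negative-definite parts for every $\theta\in\O_\d$, and $g|_{L(\eta-1)}$ commutes with the complex structure $J_{\BC^0(\eta)}$ for every $\eta\in\E_\d$. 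By Remark~\ref{structure-of-centralizer-sopq} we have $g|_{V^0(\theta)}\in{\rm O}(V^0(\theta),\,\<>)\cong{\rm O}(p_\theta,q_\theta)$ and $g|_{L(\eta-1)}\in{\rm Sp}(L(\eta-1),\,(\cdot,\cdot)_\eta)\cong{\rm Sp}(t_\eta/2,\,\R)$. Since a maximal compact subgroup of ${\rm O}(p_\theta,q_\theta)$ is ${\rm O}(p_\theta)\times{\rm O}(q_\theta)$, the stabilizer of the definite decomposition, and a maximal compact subgroup of ${\rm Sp}(t_\eta/2,\,\R)$ is ${\rm U}(t_\eta/2)$, the stabilizer of a compatible complex structure, Lemma~\ref{reductive-part-comp-so-pq} — which via restriction identifies $\ZC_{{\rm SO}(p,q)}(X,\,H,\,Y)$ with a subgroup of $\prod_{\theta\in\O_\d}{\rm O}(V^0(\theta))\times\prod_{\eta\in\E_\d}{\rm Sp}(L(\eta-1))$ — shows that $K'$ is a maximal compact subgroup of $\ZC_{{\rm SO}(p,q)}(X,\,H,\,Y)$. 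It then suffices to prove that $K=K'$.

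For the even isotypical part the computation is the real specialization of the one in Lemma~\ref{max-cpt-reductive-part-so-nC}. Writing $[g|_{L(\eta-1)}]_{\BC^0(\eta)}$ in $(t_\eta/2)$-sized blocks, the relation $g|_{L(\eta-1)}\,J_{\BC^0(\eta)}\,=\,J_{\BC^0(\eta)}\,g|_{L(\eta-1)}$ is equivalent to this matrix being $\wp_{t_\eta/2,\C}(A(\eta)+\sqrt{-1}B(\eta))$ for some $A(\eta)+\sqrt{-1}B(\eta)\in{\rm U}(t_\eta/2)$; see \eqref{embedding-C2R}. The centralizer relations of Lemma~\ref{reductive-part-comp-so-pq} force $[g|_{X^lL(\eta-1)}]_{\BC^l(\eta)}=[g|_{L(\eta-1)}]_{\BC^0(\eta)}$ for all $l$, so re-expressing $g|_{W^l(\eta)}$ in the standard orthogonal basis $\DC^l(\eta)$ of $W^l(\eta)=X^lL(\eta-1)+X^{\eta-1-l}L(\eta-1)$ produces exactly the doubled block matrix $(\wp_{t_\eta/2,\C}(A(\eta)+\sqrt{-1}B(\eta)))^2_\blacktriangle$ of condition~(5). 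Hence conditions~(4) and~(5) are equivalent to the even-part defining condition of $K'$.

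The odd isotypical part is where the real work lies, and I expect the sign correspondences to be the main obstacle. By Lemma~\ref{reductive-part-comp-so-pq} an element $g$ of the centralizer preserves every $V^l(\theta)$ and satisfies $[g|_{V^l(\theta)}]_{\CC^l(\theta)}=[g|_{V^0(\theta)}]_{\CC^0(\theta)}$, so a single matrix governs the action on all levels. The crux is that the self-pairings $\langle w^\theta_{jl},\,w^\theta_{jl}\rangle$, read off from the orthogonality relations of \cite[Lemma~A.9]{BCM} and the normalization \eqref{orthonormal-basis-odd-sopq}, alternate in sign as $l$ grows, the alternation being governed by the entries $m^\theta_{i,l+1}$ of \eqref{def-sign-alternate} and \eqref{def-sign-alternate-1}; since the construction pairs level $l$ with level $\theta-1-l$, the sign is symmetric about the self-paired middle level $(\theta-1)/2$, and the congruence class of $\theta$ modulo $4$ — the distinction between $\O^1_\d$ and $\O^3_\d$ — controls the behaviour at that middle level. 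Consequently, under the single governing matrix, $V^0_+(\theta)$ is carried to $V^l_{(-1)^l}(\theta)$ for $l<(\theta-1)/2$, to $V^{(\theta-1)/2}_+(\theta)$ when $\theta\in\O^1_\d$ and to $V^{(\theta-1)/2}_-(\theta)$ when $\theta\in\O^3_\d$ at the middle, and to $V^l_{(-1)^{l+1}}(\theta)$ for $l>(\theta-1)/2$, with the mirrored statements for $V^0_-(\theta)$. Verifying these correspondences case by case — the two parities of $l$, the middle level, and the two congruence classes of $\theta$ modulo $4$ — shows that the intrinsic requirement that $g|_{V^0(\theta)}$ stabilize $V^0_\pm(\theta)$, together with level-independence of the governing matrix, is equivalent to conditions~(1)--(3). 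Combining the odd and even parts gives $K=K'$, and therefore $K$ is a maximal compact subgroup of $\ZC_{{\rm SO}(p,q)}(X,\,H,\,Y)$.
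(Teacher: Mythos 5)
Your overall architecture is the same as the paper's: you introduce an auxiliary subgroup $K'$ (odd part: stabilization of the definite pieces of $V^0(\theta)$; even part: commutation with $J_{\BC^0(\eta)}$), you get its maximality from Remark \ref{structure-of-centralizer-sopq} together with the standard maximal compact subgroups ${\rm O}(p_\theta)\times{\rm O}(q_\theta)$ of ${\rm O}(V^0(\theta),\<>)$ and ${\rm U}(t_\eta/2)$ of ${\rm Sp}(L(\eta-1),\,(\cdot,\cdot)_\eta)$, and you then prove $K=K'$, with the even-part block computation done exactly as in the paper's proof. (A harmless difference: the paper's $K'$ imposes stabilization of $V^l_\pm(\theta)$ at \emph{every} level $l$, yours only at $l=0$; these agree inside the centralizer because of the level-independence of the matrices in Lemma \ref{reductive-part-comp-so-pq}.)

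However, your treatment of the odd part --- which you yourself identify as where the real work lies, and which the paper disposes of by citing \cite[Lemma 4.19]{BCM} --- rests on a false description of the signs. You assert that the self-pairings $\langle w^\theta_{jl},\,w^\theta_{jl}\rangle$ are the Young-diagram entries $m^\theta_{j,\,l+1}$ of \eqref{def-sign-alternate}--\eqref{def-sign-alternate-1} read in order, and that they are ``symmetric about the middle level.'' This is impossible: for $l\neq(\theta-1)/2$ the vectors $w^\theta_{jl}$ and $w^\theta_{j,\,\theta-1-l}$ are (up to sign) the sum and the difference of the hyperbolic pair $X^l v^\theta_j$, $X^{\theta-1-l}v^\theta_j$, so their self-pairings are negatives of one another; and since $l$ and $\theta-1-l$ have the \emph{same} parity ($\theta-1$ being even), exactly one of the two mirrored levels carries the sign $(-1)^l( v^\theta_j, v^\theta_j)_\theta$ and the other carries $(-1)^{l+1}( v^\theta_j, v^\theta_j)_\theta$. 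This forced antisymmetry is precisely what makes the matching flip from $V^l_{(-1)^l}(\theta)$ below the middle to $V^l_{(-1)^{l+1}}(\theta)$ above it in conditions (2)--(3); had your ``symmetric'' description been correct, the matching would read $V^l_{(-1)^l}(\theta)$ for every $l$, contradicting the very statement you are proving. So the correspondence table you write down is the right one, but it does not follow from the mechanism you describe --- the case-by-case verification still has to be carried out from the actual orthogonality relations of \cite[Lemma A.9]{BCM} (or, as the paper does, by invoking \cite[Lemma 4.19]{BCM}).
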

 
\begin{proof}
Let $K'$ be the subgroup consisting of all $g \,\in\, \ZC_{{\rm SO} (p,q)}(X,\,H,\,Y)$ satisfying the conditions in
\eqref{max-cpt-odd-subspace-so-pq1}, \eqref{max-cpt-odd-subspace-so-pq} and \eqref{max-cpt-even-subspace-so-pq} below:
\begin{align}
 &  g ( V^l_+ (\theta))\, \subset \, V^l_+ (\theta) \, , \:  g ( V^l_- (\theta)) \subset \, V^l_-(\theta), \, \text{for all } \theta \in \O_\d,\,  0 \leq l < \theta   ; \label{max-cpt-odd-subspace-so-pq1}
 \\
  &\big[g |_{  V^l (\theta) }\big]_{{\CC}^l (\theta)} =   \big[g |_{ V^0 (\theta) }\big]_{{\CC}^0 (\theta)}  
\,,\qquad  \  \qquad \text{ for all } \theta\,\in\, \O_\d,\,  0 \leq l < \theta  \,; \label{max-cpt-odd-subspace-so-pq}
       \\
     & g |_{ L(\eta-1)}  \text{ commutes with }  { J}_{\BC^0(\eta)} \,, \quad \ \qquad \text{ for all } \eta \in \E_\d\,   \label{max-cpt-even-subspace-so-pq}.
   \end{align}
In view of Lemma \ref{reductive-part-comp-so-pq}, Remark \ref{structure-of-centralizer-sopq} and \cite[Lemma 4.34]{BCM} which is analogous 
to Lemma \ref{max-cpt-symplectic-gps}, it is straightforward that $K'$ is
actually a maximal compact subgroup of $\ZC_{{\rm SO} (p,q)}(X,\,H,\,Y)$. 
Thus to prove the lemma it is suffices to show that $K\,=\, K'$. Let $g\,\in\, \ZC_{{\rm SO} (p,q)}(X,\,H,\,Y)$. We omit the proof of the fact 
that $g$ satisfies \eqref{max-cpt-odd-subspace-so-pq1} and \eqref{max-cpt-odd-subspace-so-pq} if and only if $g$ satisfies (1), (2) and 
(3) in the statement of the lemma, as this follows from \cite[Lemma 4.19]{BCM} when $\theta \,\in\, \O_\d^1$ and $\zeta \,\in\, \O_\d^{3}$.
Let $$[g|_{L(\eta -1)}]_{\BC^0(\eta)} \,:=\, \begin{pmatrix}
A(\eta) & C(\eta)\\
B(\eta) & D(\eta)
\end{pmatrix}.$$
Next suppose that $g$ satisfies \eqref{max-cpt-even-subspace-so-pq}. Then it follows that $A(\eta)\,=\, D(\eta)$ and $B(\eta )\,=\, - C(\eta)$. 
Using \cite[Lemma 4.34]{BCM}  and Remark \ref{structure-of-centralizer-sopq}, we have $ A(\eta) + \sqrt{-1} B(\eta )\,\in\, {\rm U}(t_\eta/2)$. 
Now statement (4) of the lemma follows from the definition of $\DC^l(\eta)$. Also statement (5) of the lemma holds, as
$
 \big[g|_{W^l(\eta)}\big]_{\DC^l(\eta)}      =\begin{pmatrix}
                                               A(\eta) & - B(\eta)\\
                                               B(\eta) &   A(\eta)\\
                                                &&  A(\eta) & - B(\eta)\\
                                                 && B(\eta) &   A(\eta)
                                              \end{pmatrix}.$
 Lastly, we assume that $g$ satisfies statements (4) and (5) of  Lemma \ref{max-cpt-centralizer-sopq}. Then it follows that 
$$[g|_{L(\eta -1)}]_{\BC^0(\eta)}\, =\, \begin{pmatrix}
                                     A(\eta) & - B(\eta)\\
                                     B(\eta) &   A(\eta) 
                                     \end{pmatrix}.
$$
Now clearly \eqref{max-cpt-even-subspace-so-pq} holds for $g$ and this completes the proof of the lemma. 
\end{proof}

For $\eta\,\in\, \E_\d $, define
$$
\DC^l_+(\eta) \,:=\, (w_{1l}^\eta,\, \dots, \,w^\eta_{t_\eta l}) \quad \text{ and } \quad  \DC^l_-(\eta)
\,:=\, (w_{(t_\eta+1)\,l}^\eta,\, \dots,\, w^\eta_{2t_\eta l})\,.
$$
Set 
$$
\DC_+(\eta) \,:=\, \DC^0_+(\eta) \vee \cdots \vee \DC^{\eta/2-1}_+(\eta) \quad \text{and} \quad \DC_-(\eta)
\,:=\, \DC^0_-(\eta) \vee \cdots \vee \DC^{\eta/2-1}_-(\eta).
$$
When $\theta \,\in\, \O_\d$, define $\CC_+(\theta)$ and $\CC_-(\theta)$ as in \cite[(4.19), (4.20), (4.21), (4.22)]{BCM}. Set 
$$
\CC_+(\theta) \,:=\, \CC^0_+(\theta) \vee \cdots \vee \CC^{\theta-1}_+(\theta) \quad \text{and} \quad \CC_-(\theta)
\,:=\, \CC^0_-(\theta) \vee \cdots \vee \CC^{\theta -1}_-(\theta).
$$

Let $\alpha\, :=\, \# \E_\d $, $\beta \,:=\, \# \O^1_\d$ and $ \gamma \,:=\, \# \O^3_\d $.
We enumerate $\E_\d \,=\,\{ \eta_i \,\mid\, 1 \,\leq\, i \,\leq\, \alpha \}$ such that
$\eta_i \,< \,\eta_{i+1}$, and 
$\O^1_\d \,=\,\{ \theta_j \,\mid\, 1 \leq j \leq \beta \}$ such that
$\theta_j \,< \,\theta_{j+1}$; similarly enumerate $\O^3_\d \,=\,\{ \zeta_j \,\mid\, 1 \,\leq\, j
\,\leq\, \gamma \}$ such that $\zeta_j \,<\, \zeta_{j+1}$. Now define
$$
\EC_+ \,:=\, \DC_+ (\eta_1) \vee \cdots \vee \DC_+ (\eta_{\alpha})\, ; \ \  \,
\OC^1_+ \,:=\, \CC_+ (\theta_1) \vee \cdots \vee \CC_+ (\theta_{\beta})\, ; \ \ \,\OC^3_+ := \CC_+ (\zeta_1) \vee \cdots \vee \CC_+ (\zeta_{\gamma});
$$
$$
\EC_- \,:=\, \DC_- (\eta_1) \vee \cdots \vee \DC_- (\eta_{\alpha}) ; \ 
\OC^1_- \,:= \,\CC_-(\theta_1) \vee \cdots \vee \CC_-(\theta_{\beta})\, \text{ and } \,\OC^3_-
\,:=\, \CC_- (\zeta_1) \vee \cdots \vee \CC_-(\zeta_{\gamma}).
$$
Also we define
\begin{equation}\label{orthogonal-basis-final-sopq}
\HC_+ \,:= \,\EC_+ \vee \OC^1_+ \vee\OC^3_+ , \ \ \HC_- := \EC_- \vee \OC^1_- \vee\OC^3_- \ \text{ and } \
\HC \,:= \,\HC_+ \vee \HC_-. 
\end{equation}
It is clear that $ \HC$ is a standard orthogonal basis of $V$ such that $\HC_+\, =\, \{ v \,\in\, \HC \,\mid\, \langle v,\, v \rangle \,=\,1 \}$ and $\HC_- \,= \,\{ v \,\in\, \HC \,\mid\, \langle v,\, v \rangle \,=\,-1 \}$. In particular, $\# \HC_+ \,=\, p$ and $\# \HC_- \,=\,q$. 
Also, we have the following relations:
$$
 \sum_{i=1}^\alpha \frac{\eta_i}{2}t_{\eta_i} + \sum_{j=1}^\beta \big( \frac{\theta_j+1}{2}p_{\theta_j} +  \frac{\theta_j-1}{2}q_{\theta_j}\big)  + \sum_{k=1}^\gamma\big( \frac{\zeta_k-1}{2}p_{\zeta_k} +  \frac{\zeta_k+1}{2}q_{\zeta_k}\big) =p
$$
and 
$$
 \sum_{i=1}^\alpha \frac{\eta_i}{2}t_{\eta_i}  + \sum_{j=1}^\beta \big( \frac{\theta_j-1}{2}p_{\theta_j} +  \frac{\theta_j+1}{2}q_{\theta_j}\big)  + \sum_{k=1}^\gamma\big( \frac{\zeta_k+1}{2}p_{\zeta_k} +  \frac{\zeta_k-1}{2}q_{\zeta_k}\big) =q.
$$

The $\R$-algebra $$ \prod_{i= 1}^\alpha \big({\rm M}_{t_{\eta}/2}(\R) \times {\rm M}_{t_{\eta}/2}(\R) \big) \times
\prod_{j=1}^\beta \big( {\rm M}_{p_{\theta_j}} (\R) \times {\rm M}_{q_{\theta_j}} (\R) \big) \times \prod_{k=1}^\gamma
\big( {\rm M}_{p_{\zeta_k}} (\R) \times {\rm M}_{q_{\zeta_k}} (\R) \big)$$
is embedded in ${\rm M}_p(\R)$ and in ${\rm M}_q(\R)$ as follows:
\begin{align}\label{map-Dp-sopq}
\Db_{p} \,\colon\,  \prod_{i= 1}^\alpha \big({\rm M}_{t_{\eta}/2}(\R) \times {\rm M}_{t_{\eta}/2}(\R) \big) \times &
\prod_{j=1}^\beta \big( {\rm M}_{p_{\theta_j}} (\R) \times {\rm M}_{q_{\theta_j}} (\R) \big) \times \prod_{k=1}^\gamma
\big( {\rm M}_{p_{\zeta_k}} (\R) \times {\rm M}_{q_{\zeta_k}} (\R) \big)    \nonumber  \\
&\longrightarrow \,{\rm M}_{p}(\R)  
\end{align}
 \begin{align*}
 \big(A_{\eta_1}, &B_{\eta_1}, \dots , A_{\eta_\alpha}, B_{\eta_\alpha} \,;\,
 C_{\theta_1}, D_{\theta_1} , \dots,  C_{\theta_\beta}, D_{\theta_\beta}\,;\,
 E_{\zeta_1}, F_{\zeta_1}, \dots ,  E_{\zeta_\gamma}, F_{\zeta_\gamma}   \big) \,\longmapsto \\
  &  \bigoplus_{i=1}^\alpha  \wp_{t_\eta/2, \C}(A_i +\sqrt{-1} B_i)_\blacktriangle^{\eta/2}
   \oplus \bigoplus_{j=1}^\beta \Big( \big( C_{\theta_j} \oplus D_{\theta_j}\big)_\blacktriangle ^{\frac{\theta_j-1}{4}} \oplus C_{\theta_j} \oplus \big( C_{\theta_j} \oplus D_{\theta_j}\big)_\blacktriangle ^{\frac{\theta_j-1}{4}} \Big) \\ 
 \oplus & \bigoplus_{k=1}^\gamma \Big( \big( E_{\zeta_k}\oplus F_{\zeta_k} \big) _\blacktriangle ^{\frac{\zeta_k + 1}{4}}  \oplus \big( F_{\zeta_k}\oplus E_{\zeta_k} \big) _\blacktriangle ^{\frac{\zeta_k -3}{4}}  \oplus F_{\zeta_k} \Big)
  \end{align*}
and 
\begin{align}\label{map-Dq-sopq}
\Db_{q}\, \colon\, \prod_{i= 1}^\alpha \big({\rm M}_{t_{\eta}/2}(\R) \times {\rm M}_{t_{\eta}/2}(\R) \big) \times &
\prod_{j=1}^\beta \big( {\rm M}_{p_{\theta_j}} (\R) \times {\rm M}_{q_{\theta_j}} (\R) \big) \times \prod_{k=1}^\gamma \big( {\rm M}_{p_{\zeta_k}} (\R) \times {\rm M}_{q_{\zeta_k}} (\R) \big)   \nonumber   \\ 
& \longrightarrow \,{\rm M}_{q}(\R)
\end{align}
\begin{align*}
\big(A_{\eta_1}, &B_{\eta_1}, \dots , A_{\eta_\alpha}, B_{\eta_\alpha} \,;\,
C_{\theta_1}, D_{\theta_1} , \dots,  C_{\theta_\beta}, D_{\theta_\beta} \,;\,
E_{\zeta_1}, F_{\zeta_1}, \dots ,  E_{\zeta_\gamma}, F_{\zeta_\gamma} \big) \,\longmapsto \\
  &  \bigoplus_{i=1}^\alpha  \wp_{t_\eta/2, \C}(A_i +\sqrt{-1} B_i)_\blacktriangle^{\eta/2}
   \oplus \bigoplus_{j=1}^\beta \Big( \big( D_{\theta_j} \oplus C_{\theta_j}\big)_\blacktriangle ^{\frac{\theta_j-1}{4}} \oplus D_{\theta_j} \oplus \big( D_{\theta_j} \oplus C_{\theta_j}\big)_\blacktriangle ^{\frac{\theta_j-1}{4}} \Big) \\ 
 \oplus & \bigoplus_{k=1}^\gamma \Big( \big( F_{\zeta_k}\oplus E_{\zeta_k} \big) _\blacktriangle ^{\frac{\zeta_k + 1}{4}}  \oplus \big( E_{\zeta_k}\oplus F_{\zeta_k} \big) _\blacktriangle ^{\frac{\zeta_k -3}{4}}  \oplus E_{\zeta_k} \Big). 
 \end{align*}
Define two characters
\begin{align}\label{map-chi-p-sopq}
\bigchi_{p} \,\colon\,  \prod_{i=1}^\alpha  {\rm U}(t_{\eta_i}/2) \times
\prod_{j=1}^\beta \big( {\rm O}_{p_{\theta_j}} \times {\rm O}_{q_{\theta_j}}  \big) \times \prod_{k=1}^\gamma \big( {\rm O}_{p_{\zeta_k}}  \times {\rm O}_{q_{\zeta_k}}  \big) \longrightarrow \R\setminus\{0\}
\end{align}
\begin{align*}
\big(   A_{\eta_1} &,\,  \dots\,, A_{\eta_\alpha} ; C_{\theta_1}, D_{\theta_1} , \dots,  C_{\theta_\beta}, D_{\theta_\beta}; E_{\zeta_1}, F_{\zeta_1},  \dots ,  E_{\zeta_\gamma}, F_{\zeta_\gamma}  \big)  \\
&  \: \longmapsto \: \prod_{i=1}^\alpha \det  \wp_{t_{\eta_i}/2, \C} (A_{\eta_i} )^{\eta_i/2} \: \prod_{j=1}^\beta \det  C_{\theta_j} \:  \prod_{k=1}^\gamma \det  E_{\zeta_k}
\end{align*}
and
\begin{align}\label{map-chi-q-sopq}
\bigchi_{q} \, \colon\, \prod_{i=1}^\alpha {\rm U}(t_{\eta_i}/2)  \times \prod_{j=1}^\beta \big( {\rm O}_{p_{\theta_j}} \times {\rm O}_{q_{\theta_j}}  \big) \times \prod_{k=1}^\gamma \big( {\rm O}_{p_{\zeta_k}}  \times {\rm O}_{q_{\zeta_k}}  \big) \longrightarrow \R\setminus\{0\}
\end{align}
\begin{align*}
\big( A_{\eta_1}  &\,,\, \dots, \,A_{\eta_\alpha} \,;\, C_{\theta_1}, D_{\theta_1} , \dots, C_{\theta_\beta}, D_{\theta_\beta}; E_{\zeta_1}, F_{\zeta_1}, \dots ,  E_{\zeta_\gamma}, F_{\zeta_\gamma} \big) \\
& \longmapsto \:\,  \prod_{i=1}^\alpha \det \wp_{t_{\eta_i}/2, \C} (A_{\eta_i})^{\eta_i/2} \:  \prod_{j=1}^\beta \det D_{\theta_j} \: \prod_{k=1}^\gamma \det  F_{\zeta_k}\,.  
\end{align*}

Let $\Lambda_\HC \,\colon\, {\rm End}_\R \R^{n} \,\longrightarrow\, {\rm M}_n (\R)$ be the isomorphism of $\R$-algebras induced by the ordered basis $\HC$ as in \eqref{orthogonal-basis-final-sopq}. Let $M$ be the maximal compact subgroup of ${\rm SO}(p,q)$ which leaves invariant simultaneously the two subspaces spanned by $\HC_+$ and $\HC_-$. Clearly, $\Lambda_\HC (M) \,=\, {\rm S} ({\rm O} (p) \times {\rm O} (q))$.

\begin{theorem}\label{max-cpt-wrt-onb-sop} 
Let $X \,\in\, \NC_{\s\o(p,q)}$ and $\Psi_{{\rm SO}(p,q)^\circ}(\OC_X)\,=\,(\d, \,\sgn_{\OC_X})$.
Let $\alpha \,:=\, \# \E_\d$, $\beta \,:=\, \# \O^1_\d$ and $ \gamma \,:=\, \# \O^3_\d $. Let $\{X,\,H,\,Y\}\,\subset\, \s\o(p,q)$ be a
$\s\l_2(\R)$-triple, and let $(p_\theta,\, q_\theta)$ be the signature of the form $(\cdot, \cdot)_\theta$ for all $\theta \in \O_\d$. 
Let $K$ be the maximal compact subgroup of $\ZC_{{\rm SO} (p,q)} ( X, H, Y)$ as in Lemma \ref{max-cpt-centralizer-sopq}. 
Let  the maps $ \Db_p, \Db_q, \bigchi_p $ and $ \bigchi_q $ be defined as in \eqref{map-Dp-sopq}, \eqref{map-Dq-sopq}, \eqref{map-chi-p-sopq} and \eqref{map-chi-q-sopq}, respectively.  
Then  $\Lambda_\HC (K) \subset {\rm S}({\rm O}(p)\times {\rm O}(q))$ is given by
\begin{align*} 
\Lambda_\HC(K) = 
 \Bigg\{ \Db_p (g) \oplus \Db_q(g)  \Biggm| \!\!
                                         \begin{array}{c} \!\!
                                         g \in \prod_{i=1}^\alpha {\rm U}(t_{\eta_i}/2) \times                                        \prod_{j=1}^\beta \big( {\rm O}_{p_{\theta_j}} \times {\rm O}_{q_{\theta_j}} \big) 
                                         \times \prod_{k=1}^\gamma \big( {\rm O}_{p_{\zeta_k}} \times {\rm O}_{q_{\zeta_k}} \big) \vspace{.1cm}\! \\  \text{ and } \  ~ \bigchi_{p}(g) \bigchi_{q}(g) =1	
                                        \end{array} \!\! \! \Bigg\}.
\end{align*}
\end{theorem}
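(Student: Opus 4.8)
The plan is to prove the asserted equality by writing out the matrix of an arbitrary element $g$ of the maximal compact subgroup $K$ of $\ZC_{{\rm SO}(p,q)}(X,H,Y)$ produced in Lemma \ref{max-cpt-centralizer-sopq}, with respect to the standard orthogonal basis $\HC=\HC_+\vee\HC_-$ of \eqref{orthogonal-basis-final-sopq}. First I would check that every $g\in K$ preserves both $\text{Span}_\R\HC_+$ and $\text{Span}_\R\HC_-$. Condition (1) of the lemma gives $g(V^l_\pm(\theta))\subset V^l_\pm(\theta)$ for the odd parts, while the block shape of $[g|_{W^l(\eta)}]_{\DC^l(\eta)}$ in condition (5) is block diagonal in the coarse $2\times2$ splitting, so $g$ preserves $\text{Span}_\R\DC^l_+(\eta)$ and $\text{Span}_\R\DC^l_-(\eta)$ separately. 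Since $\HC_\pm$ is precisely the set of norm-$(\pm1)$ vectors assembled from these pieces, $g$ leaves the maximal positive- and negative-definite subspaces invariant; thus $g\in M$ and $\Lambda_\HC(g)=\Lambda_{\HC_+}(g)\oplus\Lambda_{\HC_-}(g)\in{\rm O}(p)\times{\rm O}(q)$, and as $g\in{\rm SO}(p,q)\subset{\rm SL}_n(\R)$ it has determinant $1$, placing $\Lambda_\HC(g)$ in ${\rm S}({\rm O}(p)\times{\rm O}(q))$.

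Next I would assemble $\Lambda_{\HC_+}(g)$ and $\Lambda_{\HC_-}(g)$ block by block along the three families $\E_\d$, $\O^1_\d$, $\O^3_\d$, respecting the orderings used to build $\HC_\pm$. For $\eta\in\E_\d$ the top-left $t_\eta\times t_\eta$ corner of condition (5) is $\wp_{t_\eta/2,\C}(A(\eta)+\sqrt{-1}B(\eta))$, whose complex form $A(\eta)+\sqrt{-1}B(\eta)\in{\rm U}(t_\eta/2)$ is the factor $A_{\eta_i}$ of the theorem's tuple; letting $l$ run over $0,\ldots,\eta/2-1$ yields the $\eta/2$-fold repetition $\wp_{t_\eta/2,\C}(A_{\eta_i})_\blacktriangle^{\eta/2}$ in $\Db_p$, and the bottom-right corner gives the identical summand in $\Db_q$. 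For $\theta\in\O^1_\d$, conditions (1)–(2) place the fixed orthogonal blocks $C_\theta\in{\rm O}_{p_\theta}$ and $D_\theta\in{\rm O}_{q_\theta}$ on the successive positive subspaces $V^l_+(\theta)$ in a parity pattern that is reflected about the middle index $l=(\theta-1)/2$ (even, since $\theta\equiv1\pmod4$); collecting these over $\CC_+(\theta)=\CC^0_+(\theta)\vee\cdots\vee\CC^{\theta-1}_+(\theta)$ reproduces exactly the summand $(C_{\theta}\oplus D_{\theta})_\blacktriangle^{(\theta-1)/4}\oplus C_{\theta}\oplus(C_{\theta}\oplus D_{\theta})_\blacktriangle^{(\theta-1)/4}$ of $\Db_p$, and the complementary assignment gives that of $\Db_q$. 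The case $\zeta\in\O^3_\d$ is handled the same way from condition (3); here the middle index $(\zeta-1)/2$ is odd and the extra sign flip distinguishing $\O^3_\d$ from $\O^1_\d$ is exactly what interchanges the roles of $E_\zeta,F_\zeta$ at the centre, producing the shifted blocks recorded in $\Db_p,\Db_q$. This identifies $\Lambda_{\HC_+}(g)=\Db_p(g)$, $\Lambda_{\HC_-}(g)=\Db_q(g)$, and shows $g\mapsto(A_{\eta_i};C_{\theta_j},D_{\theta_j};E_{\zeta_k},F_{\zeta_k})$ is a bijection of $K$ onto the indicated product group subject only to a determinant constraint.

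Finally I would pin down that constraint. Since $\Lambda_\HC(g)=\Db_p(g)\oplus\Db_q(g)$ is block diagonal, $\det\Lambda_\HC(g)=\det\Db_p(g)\cdot\det\Db_q(g)$, and $g\in{\rm SL}_n(\R)$ forces this to be $1$. Reading off the two determinants, the even parts combine to $\prod_i(\det\wp_{t_{\eta_i}/2,\C}(A_{\eta_i}))^{\eta_i}$, each $\theta\in\O^1_\d$ contributes $(\det C_{\theta}\det D_{\theta})^{\theta}$, and each $\zeta\in\O^3_\d$ contributes $(\det E_{\zeta}\det F_{\zeta})^{\zeta}$. Because the orthogonal factors have determinant $\pm1$ and $\theta,\zeta$ are odd, the high powers collapse, giving $\det\Lambda_\HC(g)=\prod_i(\det\wp_{t_{\eta_i}/2,\C}(A_{\eta_i}))^{\eta_i}\cdot\prod_j\det C_{\theta_j}\det D_{\theta_j}\cdot\prod_k\det E_{\zeta_k}\det F_{\zeta_k}=\bigchi_p(g)\bigchi_q(g)$. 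Hence $\det\Lambda_\HC(g)=1$ is equivalent to $\bigchi_p(g)\bigchi_q(g)=1$, which is the claimed description.

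I anticipate the main obstacle to be the combinatorial matching in the second step — especially tracking, for $\zeta\in\O^3_\d$, how the parity-dependent sign pattern distributes $E_\zeta,F_\zeta$ across $\HC_+$ and $\HC_-$ — together with verifying the determinant identity; both are the bookkeeping already established for a single block type in \cite[Lemma 4.19]{BCM} and repackaged in Lemma \ref{max-cpt-centralizer-sopq}, so the remaining work is to confirm that the global direct sum reproduces $\Db_p,\Db_q,\bigchi_p,\bigchi_q$ verbatim.
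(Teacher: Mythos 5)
Your proposal is correct and follows essentially the same route as the paper, whose proof is exactly the computation you carry out: expressing the elements of the maximal compact subgroup $K$ of Lemma \ref{max-cpt-centralizer-sopq} in the ordered basis $\HC$ of \eqref{orthogonal-basis-final-sopq} and matching the resulting blocks with $\Db_p$, $\Db_q$ and the determinant condition $\bigchi_p(g)\bigchi_q(g)=1$. Your block-by-block bookkeeping (including the $\O^1_\d$ versus $\O^3_\d$ parity patterns and the collapse of odd powers of $\pm 1$ determinants) is in fact more detailed than the paper's one-line argument.
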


\begin{proof}
This follows by writing the matrices of the elements of the maximal compact subgroup $K$ in Lemma \ref{max-cpt-centralizer-sopq} with 
respect to the ordered basis $\HC$ as in \eqref{orthogonal-basis-final-sopq}.
\end{proof}

Since ${\rm SO}(p,q)^\circ$ is normal in ${\rm SO}(p,q)$, so is $\ZC_{{\rm SO}(p,q)^\circ} (X,\,H,\,Y)$ in $\ZC_{{\rm SO} (p,q)}
(X,\,H,\,Y)$. Recall that $K$ is a  maximal compact subgroup in $\ZC_{{\rm SO} (p,q)}(X,\,H,\,Y)$. Thus it follows that 
$$
\widetilde{K}\,:=\, K \cap \ZC_{ {\rm SO}(p,q)^\circ}(X,\,H,\,Y) \,=\, K\cap {\rm SO} (p,q)^\circ 
$$ 
is a maximal compact subgroup of $\ZC_{{\rm SO}(p,q)^\circ}(X,\,H,\,Y)$.
In the next result we obtain an explicit description of $\Lambda_\HC (\widetilde{K})$ in ${\rm SO} (p)\times {\rm SO} (q)$.

\begin{theorem}\label{max-cpt-0-wrt-onb-sopq} 
Let $X \,\in\, \NC_{\s\o(p,q)}$ and $\Psi_{{\rm SO}(p,q)^\circ}(\OC_X) \,=\,(\d,\, \sgn_{\OC_X})$.
Let $\alpha \,:=\, \# \E_\d,\,  \beta \,:=\, \# \O^1_\d$ and $ \gamma \,:=\, \# \O^3_\d $. Let $\{X,H,Y\}\,\subset\, \s\o(p,q)$
be a $\s\l_2(\R)$-triple, and let $(p_\theta,\, q_\theta)$ be the signature of the form $(\cdot, \,\cdot)_\theta$ for all $\theta\,\in\, \O_\d$. 
Let $\widetilde{K}$ be the maximal compact subgroup of $\ZC_{{\rm SO} (p,q)^\circ} ( X,\, H,\, Y)$ as in the preceding paragraph. 
Let  the maps $ \Db_p, \,\Db_q,\, \bigchi_p $ and $ \bigchi_q $ be defined as in \eqref{map-Dp-sopq}, \eqref{map-Dq-sopq},
\eqref{map-chi-p-sopq} and \eqref{map-chi-q-sopq}, respectively. 
Then  $\Lambda_\HC (\widetilde{K}) \,\subset\, {\rm SO}(p)\times {\rm SO}(q)$ is given by
\begin{align*}  
 \Bigg\{ \Db_p (g) \oplus \Db_q(g)  \Biggm| 
                                         \begin{array}{c}
                                         g \in \prod_{i=1}^\alpha {\rm U}(t_{\eta_i}/2) \times                                
                                       \prod_{j=1}^\beta \big( {\rm O}_{p_{\theta_j}} \times {\rm O}_{q_{\theta_j}} \big) 
                                         \times \prod_{k=1}^\gamma \big( {\rm O}_{p_{\zeta_k}} \times {\rm O}_{q_{\zeta_k}} \big) \vspace{.1cm} \\  \text{ and } \  ~ \bigchi_{p}(g) =1 , \ \bigchi_{q}(g) =1
                                        \end{array} \Bigg\}.
\end{align*}

The nilpotent orbit $\OC_X$ in $\s\o(p,q)$ is homotopic to ${\rm SO}(p)\times {\rm SO}(q))/ \Lambda_\HC(\widetilde{K})$.
\end{theorem}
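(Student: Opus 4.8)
The plan is to derive this theorem from Theorem \ref{max-cpt-wrt-onb-sop} by intersecting the group $\Lambda_\HC(K)$, already computed there, with the identity component ${\rm SO}(p,q)^\circ$. Recall from the paragraph preceding the statement that $\widetilde{K} = K \cap {\rm SO}(p,q)^\circ$. Since $\Lambda_\HC$ is an isomorphism of $\R$-algebras, it is injective and therefore commutes with intersections; thus $\Lambda_\HC(\widetilde{K}) = \Lambda_\HC(K)\cap\Lambda_\HC({\rm SO}(p,q)^\circ)$, and the whole problem reduces to describing this intersection explicitly. Note first that Theorem \ref{max-cpt-wrt-onb-sop} places $\Lambda_\HC(K)$ inside ${\rm S}({\rm O}(p)\times{\rm O}(q)) = \Lambda_\HC(M)$, so by injectivity $K\subseteq M$.

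Next I would identify the maximal compact subgroup of ${\rm SO}(p,q)^\circ$. The group $M\cap{\rm SO}(p,q)^\circ$ is a maximal compact subgroup of the connected group ${\rm SO}(p,q)^\circ$, hence is itself connected; being contained in $M$ it must lie in, and by maximality equal, the identity component of $M$. Under $\Lambda_\HC$ the group $M$ becomes ${\rm S}({\rm O}(p)\times{\rm O}(q))$, which for $p,q\geq 1$ has exactly two connected components, its identity component being ${\rm SO}(p)\times{\rm SO}(q)$ (the other component consisting of blocks $A\oplus B$ with $\det A = \det B = -1$). Hence $\Lambda_\HC(M)\cap\Lambda_\HC({\rm SO}(p,q)^\circ) = {\rm SO}(p)\times{\rm SO}(q)$, and together with $\Lambda_\HC(K)\subseteq\Lambda_\HC(M)$ this yields $\Lambda_\HC(\widetilde{K}) = \Lambda_\HC(K)\cap({\rm SO}(p)\times{\rm SO}(q))$. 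Concretely, an element $\Db_p(g)\oplus\Db_q(g)$ of $\Lambda_\HC(K)$ lies in this intersection precisely when $\det\Db_p(g) = 1$ and $\det\Db_q(g) = 1$.

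The crux, which I expect to be the main obstacle --- not conceptually but in the bookkeeping --- is the pair of determinant identities $\det\Db_p(g) = \bigchi_p(g)$ and $\det\Db_q(g) = \bigchi_q(g)$. I would verify the first block by block from \eqref{map-Dp-sopq}. Each even ($\eta$) block contributes $\big(\det\wp_{t_\eta/2,\C}(A_\eta+\sqrt{-1}B_\eta)\big)^{\eta/2} = \big(|\det_\C(A_\eta+\sqrt{-1}B_\eta)|^2\big)^{\eta/2} = 1$ because $A_\eta+\sqrt{-1}B_\eta$ is unitary, matching the corresponding factor of $\bigchi_p$. A block $\theta\in\O^1_\d$ contributes $(\det C_\theta)^{(\theta+1)/2}(\det D_\theta)^{(\theta-1)/2}$; writing $\theta = 4m+1$ makes $(\theta+1)/2$ odd and $(\theta-1)/2$ even, so since $C_\theta,D_\theta\in{\rm O}$ have determinant $\pm 1$ this collapses to $\det C_\theta$, again matching $\bigchi_p$. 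A block $\zeta\in\O^3_\d$ contributes $(\det E_\zeta\det F_\zeta)^{(\zeta-1)/2}\det F_\zeta$; writing $\zeta = 4m+3$ makes $(\zeta-1)/2$ odd, so this collapses to $\det E_\zeta$, matching $\bigchi_p$. The computation of $\det\Db_q(g) = \bigchi_q(g)$ is identical after exchanging $C_\theta\leftrightarrow D_\theta$ and $E_\zeta\leftrightarrow F_\zeta$. It is exactly the mod-$4$ dichotomy defining $\O^1_\d$ and $\O^3_\d$ that forces these collapses, so the argument stands or falls on getting the parities right.

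Combining the two previous paragraphs, $\Db_p(g)\oplus\Db_q(g)\in{\rm SO}(p)\times{\rm SO}(q)$ if and only if $\bigchi_p(g) = 1$ and $\bigchi_q(g) = 1$, which is precisely the asserted description of $\Lambda_\HC(\widetilde{K})$. (On $\Lambda_\HC(K)$ one has $\bigchi_p(g)\bigchi_q(g) = 1$ by Theorem \ref{max-cpt-wrt-onb-sop}, so there the two conditions $\bigchi_p(g) = 1$ and $\bigchi_q(g) = 1$ are in fact equivalent; imposing both is harmless.) Finally, the homotopy statement is immediate from Theorem \ref{mostow-corollary}: $\widetilde{K}$ is a maximal compact subgroup of $\ZC_{{\rm SO}(p,q)^\circ}(X)$ and ${\rm SO}(p)\times{\rm SO}(q)$ is a maximal compact subgroup of ${\rm SO}(p,q)^\circ$ containing it, so $\OC_X$ deformation retracts onto $({\rm SO}(p)\times{\rm SO}(q))/\Lambda_\HC(\widetilde{K})$.
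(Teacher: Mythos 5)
Your proposal is correct and takes essentially the same route as the paper: write $\widetilde K = K\cap {\rm SO}(p,q)^\circ = K\cap M^\circ$, identify $\Lambda_\HC(M^\circ) = {\rm SO}(p)\times {\rm SO}(q)$ as the subgroup of ${\rm S}({\rm O}(p)\times{\rm O}(q))$ cut out by the two determinant conditions, and invoke Theorem \ref{mostow-corollary} for the homotopy statement. The only difference is presentational: the paper treats the identities $\det \Db_p(g)=\bigchi_p(g)$ and $\det \Db_q(g)=\bigchi_q(g)$ as implicit in the definitions \eqref{map-Dp-sopq}--\eqref{map-chi-q-sopq}, whereas you verify them block by block via the mod-$4$ parity argument, which is a worthwhile filling-in of detail rather than a new idea.
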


\begin{proof}
Let $V_+$ and $V_-$ be the $\R$-span of $\HC_+$ and $\HC_-$, respectively. Let $M$ be the maximal compact subgroup in
${\rm SO}(p,q)$ which simultaneously leaves the subspaces  $V_+$ and $V_-$  invariant. It is clear that $M^\circ$ is a maximal
compact subgroup of ${\rm SO}(p,q)^\circ$. Hence,
$$
M^\circ\, =\, {\rm SO}(p,q)^\circ \cap M \,= \,\{g\in {\rm SO}(p,q) \,\,\mid\, \det g|_{V_+}\,=\,1,\,\ \det g|_{V_-}\,=\,1 \}.
$$
As $K\,\subset\, M$, we have that $K\cap {\rm SO}(p,q)^\circ \,=\, K\cap M^\circ$.  
The first part of the proposition now follows. 
For the second part we use Theorem \ref{mostow-corollary}.
\end{proof}

\subsection{Homotopy types of the nilpotent orbits in \texorpdfstring{${\s\p}(n, \C)$}{Lg}}\label{sec-sp-nc}

Let $n$ be a positive integer. The aim in this subsection is to write down the homotopy types of the nilpotent 
orbits in the simple Lie algebra ${\s\p}(n,\C)$ as compact homogeneous spaces. Throughout this subsection $\<>$ 
denotes the symplectic form on $\C^{2n}$ defined by $$\langle x,\, y \rangle \,:=\, x^t{\rm J}_{n} y,\, \ x,\,y \,\in\, 
\C^{2n}\, ,$$ where ${\rm J}_{n}$ is as in \eqref{defn-I-pq-J-n}. 

We first recall a suitable parametrization of the nilpotent orbits $\NC ({\rm Sp} (n,\C))$. 
Let $$\Psi_{{\rm SL}_n (\C)}\,:\,\NC ({\rm SL}_n(\C)) \,\longrightarrow\,\PC (n)$$ be the  parametrization of
$\NC ({\rm SL}_n (\C))$; see Section \ref{sec-sl-nc} for details.
Since ${\rm Sp} (n,\C) \,\subset\, {\rm SL}_{2n} (\C)$ (consequently as, the set of nilpotent elements
$\NC_{{\s\p}(n,\C)} \,\subset\, \NC_{\s\l_{2n}(\C)}$)
we have the inclusion map
$$\Theta_{{\rm Sp}(n,\C)}\,:\, \NC ({\rm Sp} (n,\C)) \,\longrightarrow\,\NC ( {\rm SL}_{2n} (\C)).$$ Let
$$
\Psi_{{\rm Sp}(n,\C)}\,:= \,\Psi_{{\rm SL}_{2n} (\C)}\circ
\Theta_{{\rm Sp}(n,\C)}\,\colon\, \NC ({\rm Sp} (n,\C))
\,\longrightarrow\, \PC (2n)
$$ 
be the composition.
Recall that $$\Psi_{{\rm Sp}(n,\C)} (\NC ({\rm Sp} (n,\C)))\,\subset\, \PC_{-1} (2n)$$ 
(this follows form \cite[Proposition A.6]{BCM}). Hence we have the following parametrizing map :
$$
\Psi_{{\rm Sp}(n,\C)}\,\colon\, \NC ({\rm Sp} (n,\C))
\,\longrightarrow\, \PC_{-1} (2n) \,.
$$

\begin{theorem}[{\cite[Theorem 5.1.3]{CoM}}] \label{sp-nC-parametrization}
The above map $\Psi_{{\rm Sp}(n,\C)}$ is bijective.
\end{theorem}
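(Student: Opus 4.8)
The plan is to establish bijectivity by proving surjectivity onto $\PC_{-1}(2n)$ and injectivity separately, using the $\s\l_2(\R)$-module machinery of Section \ref{notions-sl2-modules} together with the elementary classification of nondegenerate symmetric and symplectic forms over $\C$. That the image of $\Psi_{{\rm Sp}(n,\C)}$ is contained in $\PC_{-1}(2n)$ has already been recorded before the theorem (via \cite[Proposition A.6]{BCM}): for $X \in \s\p(n,\C)$ the induced form $(\cdot,\,\cdot)_d$ of \eqref{new-form} satisfies $(u,\,v)_d = (-1)^d (v,\,u)_d$, so it is symplectic when $d$ is odd and symmetric when $d$ is even; nondegeneracy of a symplectic form forces $\dim_\C L(d-1) = t_d$ to be even for every odd $d$. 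Thus only surjectivity onto $\PC_{-1}(2n)$ and injectivity remain.

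For surjectivity, given $\delta = [d_1^{t_{d_1}},\, \cdots,\, d_s^{t_{d_s}}] \in \PC_{-1}(2n)$ I would build an explicit model. Let $R_d$ denote the irreducible $\s\l_2$-module of $\C$-dimension $d$, carrying its essentially unique invariant bilinear pairing, which is symmetric for odd $d$ and symplectic for even $d$. On each multiplicity space $\C^{t_d}$ place a nondegenerate symmetric form when $d$ is even and a nondegenerate symplectic form when $d$ is odd; the latter exists precisely because $t_d$ is even for odd $d$. Setting $V := \bigoplus_d (\C^{t_d} \otimes_\C R_d)$ with the tensor-product forms, and declaring distinct summands mutually orthogonal, produces a symplectic space whose total form is alternating (the product of the two types is $(-1)^{d-1}\cdot(-1)^d = -1$ in each summand). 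The nilpositive generator $X$ of the $\s\l_2$-action is then skew-adjoint, hence lies in $\s\p(V,\, \langle \cdot,\, \cdot \rangle)$ with Jordan type $\delta$, giving $\Psi_{{\rm Sp}(n,\C)}(\OC_X) = \delta$.

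For injectivity, suppose $X_1,\, X_2 \in \NC_{\s\p(n,\C)}$ satisfy $\Psi_{{\rm Sp}(n,\C)}(\OC_{X_1}) = \Psi_{{\rm Sp}(n,\C)}(\OC_{X_2}) = \delta$. By Theorem \ref{Jacobson-Morozov-alg} choose $\s\l_2(\R)$-triples $\{X_i,\,H_i,\,Y_i\}$ and form the decompositions $V = \bigoplus_d M_i(d-1)$ with spaces $L_i(d-1)$ and forms $(\cdot,\,\cdot)_d^{(i)}$. For each $d$ these are nondegenerate forms of the same type (symmetric for even $d$, symplectic for odd $d$) on $\C$-spaces of equal dimension $t_d$, and over $\C$ any two such forms are isometric. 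I would choose isometries $L_1(d-1) \to L_2(d-1)$ and extend them $\s\l_2$-equivariantly across each $M(d-1)$ using the normal-form bases of \cite[Proposition A.6]{BCM}, obtaining $g \in {\rm Sp}(V,\, \langle \cdot,\, \cdot \rangle)$ with ${\rm Ad}(g)X_1 = X_2$. Since ${\rm Sp}(V,\, \langle \cdot,\, \cdot \rangle) \subset {\rm SL}_{2n}(\C)$, this yields $\OC_{X_1} = \OC_{X_2}$, proving injectivity.

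The main obstacle is the final step: verifying that an isometry between the graded pieces $L_i(d-1)$ genuinely lifts to a symplectic intertwiner of the full $\s\l_2$-modules that conjugates $X_1$ to $X_2$. This is a Witt-type extension argument married to $\s\l_2$-equivariance, and it is exactly the point where the explicit bases of \cite[Proposition A.6]{BCM} are indispensable, since they reduce the global ${\rm Sp}(V,\, \langle \cdot,\, \cdot \rangle)$-conjugacy problem to the purely linear-algebraic classification of the forms on each $L(d-1)$.
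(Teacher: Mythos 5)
The paper offers no proof of this statement to compare against: Theorem \ref{sp-nC-parametrization} is imported verbatim from \cite[Theorem 5.1.3]{CoM}, with the containment of the image in $\PC_{-1}(2n)$ justified just before via \cite[Proposition A.6]{BCM}. Your proposal is therefore a reconstruction of the standard literature proof (Springer--Steinberg; \cite[\S 5.1]{CoM}), and its skeleton is sound and essentially the classical one: the parity identity $(u,v)_d=(-1)^d(v,u)_d$ forcing $t_d$ even for odd $d$; the tensor-product model $\bigoplus_d \C^{t_d}\otimes R_d$ with the product of a symmetric and an alternating form for surjectivity; and, for injectivity, isometries between the spaces $L_i(d-1)$ extended $\s\l_2$-equivariantly to a global symplectic transformation conjugating $X_1$ to $X_2$. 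What your route buys, relative to the paper's citation, is a self-contained argument using only the machinery the paper already sets up (the spaces $L(d-1)$, the forms \eqref{new-form}, and Theorem \ref{Jacobson-Morozov-alg}).

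Two loose ends are worth tightening. First, the step you flag as the ``main obstacle'' is not really an obstacle and does not need the explicit bases of \cite[Proposition A.6]{BCM}: for $u,v\in L(d-1)$, skew-adjointness of $X$ and $H$ gives $\langle X^l u, X^m v\rangle=(-1)^l\langle u, X^{l+m}v\rangle$, and this vanishes unless $l+m=d-1$ (the $H$-weights $1-d$ and $1-d+2(l+m)$ must sum to zero), in which case it equals $(-1)^l(u,v)_d$; combined with the orthogonality of distinct isotypic components under the invariant form, this shows directly that the map $X_1^l u\mapsto X_2^l\phi(u)$ is an isometry of $V$ intertwining $X_1$ and $X_2$, so the ``Witt-type extension'' is a two-line computation. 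Second, your closing inference ``Since ${\rm Sp}(V,\langle\cdot,\cdot\rangle)\subset{\rm SL}_{2n}(\C)$, this yields $\OC_{X_1}=\OC_{X_2}$'' is a non sequitur: the orbits in $\NC({\rm Sp}(n,\C))$ are orbits under ${\rm Sp}(n,\C)$ itself, so what injectivity needs is precisely that the conjugating element $g$ lie in ${\rm Sp}(V,\langle\cdot,\cdot\rangle)\cong{\rm Sp}(n,\C)$ --- which your construction provides --- and the inclusion into ${\rm SL}_{2n}(\C)$ plays no role.
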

  
Let $0\,\not=\, X \,\in\, \NC _{\s\p (n,\C)}$ and $\{X,\,H,\,Y\}$ be a $\s\l_2(\R)$-triple in $\s\p (n,\C)$. Recall that the form $( \cdot ,\, \cdot )_d$ on $L(d-1)$ is symmetric when $d \in \E_\d$ and symplectic when $d \in \O_\d$. Let $(v^d_1,\, \dotsc \,, v^d_{t_d})$ be an $\C$-basis of $L(d-1)$ as in \cite[Proposition A.6]{BCM}. It follows form \cite[Proposition A.6]{BCM} that $(v^\eta_1,\, \dotsc \,, v^\eta_{t_\eta})$ is an orthonormal basis of $L(\eta - 1)$ for the form $(\cdot , \cdot)_\eta$ for all $\eta \in \E_\d$, i.e., 
 \begin{equation} \label{onb-eta-sp-nc}
  (v_j^\eta, v_j^\eta)_\eta \, = \, 1 \  \text{ for } 1 \leq j \leq t_\eta \ \ \text{ and } \ \  (v_j^\eta, v_i^\eta)_\eta \, = \, 0  \  \text{ for } j \neq i. 
 \end{equation}
For all $\theta \,\in \,\O_\d$, as $(\cdot,\, \cdot)_\theta$ is a symplectic form, we may
assume that $$( v^{\theta}_1,\, \dots,\, v^{\theta}_{t_{\theta}/2};\, v^{\theta}_{t_{\theta}/2+1},
\,\dots,\,  v^{\theta}_{t_{\theta}})$$ is a symplectic basis of $L(\theta - 1)$.  
This is equivalent to saying that, for all $\theta \,\in\, \O_\d$, 
\begin{equation}\label{symplectic-basis-theta-sp-n-R}
 (v_j^\theta, v^\theta_{ t_\theta/2 + j})_\theta = 1 \text{ for } 1 \le j \le t_\theta/2 \text{ and }  (v_j^\theta, v^\theta_{i})_\theta = 0 \text{ for all } i \neq j + t_\theta/2.
\end{equation}
Now fixing $\theta \,\in\, \O^1_\d $, for all $ 1\,\leq\, j \,\leq\, t_\theta$, define 
\begin{equation}\label{orthogonal-basis-V-theta-sp-n-R}
{ w}^{\theta}_{jl}:=
\begin{cases}
         \big(X^l v^{\theta}_j + X^{\theta-1-l} v^{\theta}_j \big)/{\sqrt{2}}
          &  \text{ if } l \text{ is  even and } 0 \leq l <  (\theta-1)/2 \vspace{0.1cm}
            \\
            \big(X^l v^{\theta}_j + X^{\theta-1-l} v^{\theta}_j \big)/{\sqrt{-2}}
            &  \text{ if } l \text{ is  odd and } 0 \leq l < (\theta-1)/2 \vspace{0.1cm}
              \\
           X^l v^{\theta}_j   & \text{ if } l =  (\theta-1)/2 \vspace{0.1cm}
            \\
              \big(X^{\theta-1-l} v^{\theta}_j - X^l v^{\theta}_j \big) /{\sqrt{-2}}    & \text{ if } l \text{ is  even and } (\theta+1)/2 \leq l \leq (\theta-1) \vspace{0.1cm}
                  \\
             \big(X^{\theta-1-l} v^{\theta}_j - X^l v^{\theta}_j \big)/{\sqrt{2}} & \text{ if } l \text{ is  odd and } (\theta+1)/2 \leq l \leq (\theta-1) . 
\end{cases} 
\end{equation}

For $\zeta \,\in\, \O^3_\d $, for all $ 1\,\leq\, j \,\leq\, t_\zeta$, define 
\begin{equation}\label{orthogonal-basis-V-zeta-sp-n-R}
{ w}^{\zeta}_{jl}:=
\begin{cases}
         \big(X^l v^{\zeta}_j + X^{\zeta-1-l} v^{\zeta}_j \big)/{\sqrt{2}}
          &  \text{ if } l \text{ is  even and } 0 \leq l <  (\zeta-1)/2 \vspace{0.1cm}
            \\
            \big(X^l v^{\zeta}_j + X^{\zeta-1-l} v^{\zeta}_j \big)/{\sqrt{-2}}
            &  \text{ if } l \text{ is  odd and } 0 \leq l < (\zeta-1)/2 \vspace{0.1cm}
              \\
           X^l v^{\zeta}_j \sqrt{-1}  & \text{ if } l =  (\zeta-1)/2 \vspace{0.1cm}
            \\
              \big(X^{\zeta-1-l} v^{\zeta}_j - X^l v^{\zeta}_j \big) /{\sqrt{-2}}    & \text{ if } l \text{ is  even and } (\zeta+1)/2 \leq l \leq (\zeta-1) \vspace{0.1cm}
                  \\
             \big(X^{\zeta-1-l} v^{\zeta}_j - X^l v^{\zeta}_j \big)/{\sqrt{2}} & \text{ if } l \text{ is  odd and } (\zeta+1)/2 \leq l \leq (\zeta-1) . 
\end{cases} 
\end{equation}

For $\theta \,\in \,\O_\d$, $0 \,\le\, l \,\le\, \theta-1$, set 
\begin{equation}\label{defn-V-theta-sp-n-R}
 V^l(\theta)\,:=\, \text{Span}_\R\{ w^{\theta}_{jl} \,\mid\, 1\,\le\, j \,\le\, t_\theta \}\, .
\end{equation}
Using \eqref{symplectic-basis-theta-sp-n-R} we 
observe that for each $\theta \,\in \,\O_\d$ the space $M (\theta-1)$ is a direct sum of the 
subspaces $V^l (\theta)$,\, $ 0 \,\leq\, l\, \leq\, \theta-1$, which are mutually orthogonal with respect to $\<>$. For
$\theta \,\in\, \O_\d$, define
$$
\CC^l(\theta) \,:= \,\big( {w}^{\theta}_{_{1 \, l}},   \, \dotsc \, ,   {w}^{\theta}_{_{t_{\theta}/2 \, l}}  \big) \vee  \big(  {w}^{\theta}_{_{(t_{\theta}/2 +1)\, l}},  \, \dotsc \, ,   {w}^{\theta}_{_{t_{\theta} \, l}}  \big)\,.
$$
Then using \eqref{symplectic-basis-theta-sp-n-R}, \eqref{orthogonal-basis-V-theta-sp-n-R} and \eqref{orthogonal-basis-V-zeta-sp-n-R}  it follows that $\CC^l(\theta)$ is a symplectic basis for $V^l(\theta)$.  Recall that $$\BC^l(d)\,=\, (X^lv^d_1,\, \dots,\,   X^lv^d_{t_d})$$
is the ordered basis of $X^l L(d-1)$ for $0\,\leq\, l \,\leq\, d -1,\, d  \,\in\, \N_\d$ as in \eqref{old-ordered-basis-part}.

\begin{lemma}\label{Z-sp-nC-XHY}
 Let $ X $ be a nilpotent element in $ \s\p(n,\C) $ and $ \{X,\,H,\,Y\} $ be a $ \s\l_2 (\R)$-triple  in $ \s\p(n,\C) $   containing $X$.
Then the  following holds:
$$
 \ZC_{ {\rm Sp} (n,\C) }(X,\,H,\,Y)\! =\! \left\{ g \in { {\rm Sp} (n,\C)} \middle\vert \! 
                                        \begin{array}{ccc}
                             g (V^l (\theta)) \subset \ V^l (\theta) \  \text{and }    \vspace{.14cm}\\
                             \big[g|_{ V^l(\theta)}\big]_{{\CC}^l(\theta)} = \big[g |_{ V^0 (\theta)}\big]_{{\CC}^0 (\theta)} \text{for all } \theta \in \O_\d, 0 \leq l < \theta ~; \vspace{.14cm}\\
                               g(X^l L(\eta-1)) \subset  X^l L(\eta-1)  \text{ and }    \vspace{.14cm}\\
                          \!   \big[g |_{ X^l L(\eta-1)} \big]_{{\BC}^l (\eta)}\! \!=\! \big[g |_{  L(\eta-1)}\big]_{{\BC}^0 (\eta)}  \text{for all } \eta \in \E_\d,   0 \leq l< \eta \!
                                       \end{array}
                                           \right\}\!.
                                           $$                                           
\end{lemma}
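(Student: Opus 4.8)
The plan is to realise $\ZC_{{\rm Sp}(n,\C)}(X,\,H,\,Y)$ as the group of $\s\l_2(\R)$-module automorphisms of $V$ that lie in ${\rm Sp}(n,\C)$, and then to rewrite the defining conditions of such automorphisms in the two basis families appearing in the statement. First I would recall the elementary fact that an element $g\in {\rm GL}(V)$ commutes with $X,\,H,\,Y$ if and only if, for every $d\in\N_\d$ and every $0\le l\le d-1$, one has $g\big(X^lL(d-1)\big)\subset X^lL(d-1)$ together with the uniformity relation $\big[g|_{X^lL(d-1)}\big]_{\BC^l(d)}=\big[g|_{L(d-1)}\big]_{\BC^0(d)}$. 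Indeed, the invariance records that $g$ preserves the $H$-weight grading of $M(d-1)$, since $X^lL(d-1)$ is exactly the weight space of $M(d-1)$ obtained by applying $X^l$ to the lowest weight space $L(d-1)=V_{Y,0}\cap V_{H,1-d}$, while the uniformity records that $g$ commutes with $X$ along this weight string; and an operator of a finite-dimensional $\s\l_2(\R)$-module that preserves the weight grading and commutes with $X$ necessarily commutes with $Y$ as well, whence with the whole triple. Conversely any $g$ in the centralizer preserves $L(d-1)$ and satisfies $gX^l=X^lg$. Thus $\ZC_{{\rm Sp}(n,\C)}(X,\,H,\,Y)$ is precisely the set of $g\in{\rm Sp}(n,\C)$ satisfying these invariance and uniformity conditions for all $d\in\N_\d$. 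For the even parts $\eta\in\E_\d$ this is verbatim the second pair of conditions in the statement, so nothing further is required in that case.

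It remains to show that, for a fixed odd $\theta\in\O_\d$, the conditions phrased through $X^lL(\theta-1)$ and $\BC^l(\theta)$ are equivalent to those phrased through $V^l(\theta)$ and $\CC^l(\theta)$. This is exactly the reduction used in Lemma \ref{Z-so-nC-XHY}, and I would run the same argument here. The structural point is that each vector $w^\theta_{jl}$ in \eqref{orthogonal-basis-V-theta-sp-n-R}, \eqref{orthogonal-basis-V-zeta-sp-n-R} is a fixed linear combination of $X^lv^\theta_j$ and $X^{\theta-1-l}v^\theta_j$ whose scalar coefficients depend only on $l$ and on whether $\theta\in\O^1_\d$ or $\theta\in\O^3_\d$, and never on the multiplicity index $j$. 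Hence the passage from the basis family $\BC^l(\theta)$, $0\le l\le\theta-1$, to the family $\CC^l(\theta)$ is effected by an invertible transformation acting only in the degree variable $l$ — pairing each degree $l<(\theta-1)/2$ with its mirror $\theta-1-l$ and fixing the middle degree $(\theta-1)/2$ — while acting as the identity in the $t_\theta$-dimensional direction indexed by $j$.

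Granting this, both equivalences drop out. Since any $g$ in the centralizer commutes with $X$, it preserves every $X^{l'}L(\theta-1)$ once it preserves $L(\theta-1)$; as each $V^l(\theta)$ sits inside $X^lL(\theta-1)+X^{\theta-1-l}L(\theta-1)$ and is carved out by coefficients independent of $j$, the invariance of all $X^{l'}L(\theta-1)$ is equivalent to the invariance of all $V^l(\theta)$. Writing $A:=\big[g|_{L(\theta-1)}\big]_{\BC^0(\theta)}$, the uniformity $\big[g|_{X^lL(\theta-1)}\big]_{\BC^l(\theta)}=A$ transfers, through the $j$-independent change of basis, to $\big[g|_{V^l(\theta)}\big]_{\CC^l(\theta)}=A=\big[g|_{V^0(\theta)}\big]_{\CC^0(\theta)}$, and back again; in particular $\big[g|_{L(\theta-1)}\big]_{\BC^0(\theta)}=\big[g|_{V^0(\theta)}\big]_{\CC^0(\theta)}$. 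I expect the one genuinely computational point — the main obstacle — to be precisely this transfer of the uniform matrix: one must verify that the mixing factors $1/\sqrt2$, $1/\sqrt{-2}$, and the extra $\sqrt{-1}$ occurring for $\theta\in\O^3_\d$ at the middle degree cancel consistently across each paired degree $l$ and $\theta-1-l$, so that a single matrix $A$ governs $g$ in every $\CC^l(\theta)$. Combining the odd and even cases then yields the stated description of $\ZC_{{\rm Sp}(n,\C)}(X,\,H,\,Y)$.
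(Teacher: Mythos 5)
Your proposal is correct and is essentially the paper's own argument: the paper proves this lemma by declaring it ``similar to Lemma~\ref{Z-so-nC-XHY}'', whose proof is exactly your observation that the change of basis from the family $\BC^l(\theta)$ to the family $\CC^l(\theta)$ mixes only the mirror degrees $l$ and $\theta-1-l$ with scalar coefficients independent of the multiplicity index $j$, so that invariance together with uniformity transfers back and forth with a single matrix $A$, while the underlying description of $\ZC_{{\rm Sp}(n,\C)}(X,\,H,\,Y)$ by the $X^lL(d-1)$-invariance and uniformity conditions, which you prove from scratch, is left implicit in the paper (cf.\ \cite[Lemma 4.4]{BCM}). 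The only inessential difference is that the point you flag as the ``main obstacle'' is actually automatic: since the same matrix $A$ governs $g$ in every degree, the factors $1/\sqrt{2}$, $1/\sqrt{-2}$ and the extra $\sqrt{-1}$ simply factor out of $g\,w^\theta_{jl}$ on both sides of the computation and never need to cancel against one another.
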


\begin{proof} 
The proof is similar to that of Lemma \ref{Z-so-nC-XHY}.
\end{proof}

\begin{remark}\label{structure-of-centralizer-spnc}
{\rm 
	We follow the notation as in Lemma \ref{Z-sp-nC-XHY} .
	Let $g \,\in\, \ZC_{ {\rm Sp} (n, \C)}(X,\, H,\, Y)$.  Let $\theta \,\in\, \O_\d$ and $\eta \,\in\, \E_\d$.   
	Then it follows from Lemma \ref{Z-sp-nC-XHY}  that $ g$ keeps the subspaces   $ V^0(\theta) $ and $ L(\eta -1) $  invariant. 
	Since the restriction of $  \<> $ is a symplectic form on $ V^0(\theta) $ we have $g|_{ V^0(\theta)} \,\in\,
	{\rm Sp}(V^0(\theta), \<>)$. 
	Further recall that the form $ (\cdot,\cdot)_\eta $, as defined in \eqref{new-form}, is symmetric  on $L(\eta-1)$, and $(gx,gy)_\eta = (x,y)_\eta$ for all $x,y\in L(\eta-1)$, see \cite[Remark A.7]{BCM}. 
	Thus $g |_{  L(\eta-1)} \,\in\, {\rm  O}(L(\eta-1),\, (\cdot,\, \cdot)_\eta)$.
}
\end{remark}

For $\eta \,\in\, \E_\d$, $0\,\le\, l \,\le\, \eta/2 -1$, set
\begin{equation}\label{defn-W-eta-sp-n-R}
  W^l(\eta)\,:=\,  X^l L (\eta-1) + X^{\eta-1-l} L(\eta-1)\, .
\end{equation} 
We re-arrange the ordered basis $\BC^l (\eta) \vee \BC^{\eta-1-l} (\eta)$ of
$W^l(\eta)$ as follows:
\begin{equation}\label{symplectic-basis-D-eta}
\DC^l(\eta):= \begin{cases}
               \big(X^l v_1,\, \dotsc \,, X^l v_{t_\eta}\big) \vee \big(X^{\eta-1-l} v_{1},\, \dotsc \, , X^{\eta-1-l} v_{t_\eta}\big)   & \text{ if }l \text{  is even} \vspace{.25cm}\\
             
             \big(X^{\eta-1-l} v_{1}, \, \dotsc \,, X^{\eta-1-l} v_{t_\eta}\big)  \vee \big(X^l v_{1}, \, \dotsc \,, X^l v_{t_\eta}\big) & \text{ if } l \text{ is odd}.               
 \end{cases}
 \end{equation}
Using \eqref{onb-eta-sp-nc} it can be easily verified that $\DC^l(\eta)$ is a symplectic basis with respect to $\<>$.
 Let $J_{\CC^l(\theta)}$ be the complex structure on $V^l(\theta)$ associated to the basis 
$\CC^l(\theta)$ for $\theta \,\in\, \O_\d$,\, $0 \,\leq\, l \,\leq\, \theta-1$, and let $J_{\DC^l(\eta)}$ be 
the complex structure on $W^l(\eta)$ associated to the basis $\DC^l(\eta)$ for $\eta \,\in\, \E_\d, 
0 \,\leq\, l \,\leq\, \eta-1$.

In the next lemma we specify a maximal compact subgroup $\ZC_{ {\rm Sp} (n,\C) } (X,\,H,\,Y)$ which will be used in Theorem 
\ref{max-cpt-sp-nC-wrt-basis}.  Recall that the $ \R$-algebra embedding $ \wp_{m, \H}$ is defined in \eqref{embedding-H2C}.
 
\begin{lemma}\label{max-cpt-reductive-part-sp-nC}
Let $K$ be the subgroup of $\ZC_{ {\rm Sp} (n,\C) } (X,\,H,\,Y)$ consisting of elements $g$ in \\
$\ZC_{ {\rm Sp} (n,\C) } (X,\,H,\,Y)$ satisfying the following conditions:
 \begin{enumerate}
\item For all  $\theta \,\in\, \O_\d $ and $0 \,\leq\, l \,\leq\, \theta -1$ the inclusion   $g ( V^l (\theta))\,\subset\,V^l (\theta)$ holds.
\item For all $ \theta \,\in \,\O_\d$, there exist  $A_\theta,\, B_\theta \,\in\,
{\rm M}_{t_\theta/2}(\C)$ with $A_\theta + \jb B_\theta \in {\rm Sp}(t_\theta/2) $ such that 
  $$
  \big[g|_{V^l(\theta)}\big]_{\CC^l(\theta)}\, =\,
    \wp_{t_\theta/2,\H}( A_\theta + \jb B_\theta)    $$ 
\item  For all $\eta \,\in\, \E_\d$ and $0 \,\leq\, l \,\leq\, \eta -1$ the inclusion $g ( X^l L(\eta-1)) \,\subset\, X^l L(\eta-1)$ holds.
\item  For all $\eta \,\in\, \E_\d$, there exist $C_\eta \,\in\, {\rm O}_{t_\eta}$ such that  
       $ \big[ g|_{X^l L(\eta-1)} \big]_{\BC^l(\eta)} \,=\, C_\eta$.
\end{enumerate}
Then $K$ is a maximal compact subgroup of $\ZC_{ {\rm Sp} (n,\C)}(X,\,H,\,Y)$.
\end{lemma}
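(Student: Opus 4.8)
The plan is to follow the proof of Lemma \ref{max-cpt-reductive-part-so-nC} verbatim, interchanging the roles of the orthogonal and symplectic groups, since here the restriction of $\<>$ to $V^0(\theta)$ is symplectic (for $\theta\in\O_\d$) while $(\cdot,\,\cdot)_\eta$ on $L(\eta-1)$ is symmetric (for $\eta\in\E_\d$), by Remark \ref{structure-of-centralizer-spnc}. First I would introduce the auxiliary subgroup $K'$ of $\ZC_{{\rm Sp}(n,\C)}(X,H,Y)$ consisting of those $g$ whose restrictions to the primitive subspaces lie in the standard maximal compact subgroups of Lemmas \ref{max-cpt-orthogonal-gps} and \ref{max-cpt-symplectic-gps}; that is, $g$ satisfying
$$
\overline{g}|_{V^0(\theta)}\, J_{\CC^0(\theta)} \,=\, J_{\CC^0(\theta)}\, g|_{V^0(\theta)} \quad \text{for all } \theta\in\O_\d,
$$
$$
\big[g|_{L(\eta-1)}\big]_{\BC^0(\eta)} \,=\, \overline{\big[g|_{L(\eta-1)}\big]}_{\BC^0(\eta)} \quad \text{for all } \eta\in\E_\d.
$$

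Next I would show that $K'$ is a maximal compact subgroup. By Lemma \ref{Z-sp-nC-XHY} together with Remark \ref{structure-of-centralizer-spnc}, the restriction map identifies $\ZC_{{\rm Sp}(n,\C)}(X,H,Y)$ with the reductive product $\prod_{\theta\in\O_\d}{\rm Sp}(V^0(\theta),\<>)\times\prod_{\eta\in\E_\d}{\rm O}(L(\eta-1),(\cdot,\cdot)_\eta)$, and a maximal compact subgroup of a direct product is the product of maximal compact subgroups of the factors. Lemma \ref{max-cpt-symplectic-gps}(1) shows that the symplectic condition above cuts out a maximal compact subgroup $K_{\CC^0(\theta)}$ of the factor ${\rm Sp}(V^0(\theta),\<>)$, and Lemma \ref{max-cpt-orthogonal-gps} shows that the reality condition cuts out a maximal compact subgroup $K_{\BC^0(\eta)}$ of ${\rm O}(L(\eta-1),(\cdot,\cdot)_\eta)$; the lemma is stated for ${\rm SO}$, but the full orthogonal group is handled identically (the reality condition forces $C_\eta\in{\rm O}_{t_\eta}$, including the component of determinant $-1$, and this causes no conflict with $\det g=1$ since $g$ acts through $\eta$ repeated copies of $g|_{L(\eta-1)}$ on $M(\eta-1)$ and $\eta$ is even). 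Hence $K'$ is maximal compact.

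Finally I would verify $K=K'$ by converting the two intrinsic conditions into the block-matrix forms (2) and (4). For $\theta\in\O_\d$, Lemma \ref{max-cpt-symplectic-gps}(2) says $g|_{V^0(\theta)}\in K_{\CC^0(\theta)}$ if and only if $[g|_{V^0(\theta)}]_{\CC^0(\theta)}=\wp_{t_\theta/2,\H}(A_\theta+\jb B_\theta)$ with $A_\theta+\jb B_\theta\in{\rm Sp}(t_\theta/2)$, where $\wp_{\cdot,\H}$ is as in \eqref{embedding-H2C}; this is condition (2) at $l=0$, and Lemma \ref{Z-sp-nC-XHY} propagates the equality $[g|_{V^l(\theta)}]_{\CC^l(\theta)}=[g|_{V^0(\theta)}]_{\CC^0(\theta)}$ to all $0\le l<\theta$, yielding (1) and (2) in full. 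For $\eta\in\E_\d$, the reality condition says $C_\eta:=[g|_{L(\eta-1)}]_{\BC^0(\eta)}$ is a real, hence $C_\eta\in{\rm O}_{t_\eta}$, and Lemma \ref{Z-sp-nC-XHY} again gives $[g|_{X^l L(\eta-1)}]_{\BC^l(\eta)}=C_\eta$ for all $l$, yielding (3) and (4); the invariance statements (1) and (3) are immediate from membership in $\ZC_{{\rm Sp}(n,\C)}(X,H,Y)$. The only genuinely delicate point—the main obstacle—is this transcription between the intrinsic descriptions of the maximal compact subgroups (commutation with the complex structure $J_{\CC^0(\theta)}$ in the symplectic factors, reality of the matrix in the orthogonal factors) and the explicit block forms via $\wp_{\cdot,\H}$; everything else is a direct copy of Lemma \ref{max-cpt-reductive-part-so-nC} with orthogonal and symplectic interchanged.
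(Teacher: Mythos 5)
Your proof is correct, and its skeleton---introduce an auxiliary subgroup $K'$ cut out by intrinsic maximal-compactness conditions on the primitive subspaces, deduce that $K'$ is maximal compact from Lemma \ref{Z-sp-nC-XHY} together with the standard lemmas on compact forms, then prove $K=K'$ by transcribing those conditions into the block shapes (2) and (4)---is exactly the paper's. Where you genuinely diverge is the treatment of the even factors $\eta\in\E_\d$. You work on $L(\eta-1)$ with the symmetric form $(\cdot,\,\cdot)_\eta$ and its orthonormal basis $\BC^0(\eta)$, imposing the reality condition of Lemma \ref{max-cpt-orthogonal-gps}; since the relevant centralizer factor is the full disconnected group ${\rm O}(L(\eta-1),(\cdot,\cdot)_\eta)$ rather than ${\rm SO}$, you must extend that lemma from ${\rm SO}$ to ${\rm O}$, which you correctly flag and justify. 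The paper never invokes Lemma \ref{max-cpt-orthogonal-gps} in this proof: it stays inside the ambient symplectic geometry, forming $W^l(\eta)=X^lL(\eta-1)+X^{\eta-1-l}L(\eta-1)$ with its symplectic basis $\DC^l(\eta)$, imposing $\overline{g}|_{W^0(\eta)}\,J_{\DC^0(\eta)}=J_{\DC^0(\eta)}\,g|_{W^0(\eta)}$, and applying Lemma \ref{max-cpt-symplectic-gps} a second time; since $[g|_{W^0(\eta)}]_{\DC^0(\eta)}=C_\eta\oplus C_\eta$, that commutation again forces $C_\eta=\overline{C_\eta}$ and hence $C_\eta\in{\rm O}_{t_\eta}$. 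The two conditions therefore cut out the same subgroup $K'$, so the difference lies purely in how maximality of the even factors is certified: your route is the literal ``interchange orthogonal and symplectic'' copy of Lemma \ref{max-cpt-reductive-part-so-nC}, at the cost of the ${\rm O}$-versus-${\rm SO}$ remark, while the paper's detour through the connected group ${\rm Sp}(W^0(\eta),\<>)$ sidesteps all disconnectedness issues, at the cost of introducing the spaces $W^l(\eta)$ and bases $\DC^l(\eta)$---which it needs for Theorem \ref{max-cpt-sp-nC-wrt-basis} anyway.
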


\begin{proof}
Let $K'\, \subset\, \ZC_{ {\rm Sp} (n,\C) } (X,\,H,\,Y)$ be the subgroup consisting of all elements $g$ satisfying the conditions 
\eqref{commute-J-odd-spnc}, \eqref{g-on-orthogonal-even-subspace} and \eqref{commute-J-even-spnc} below :
\begin{align}
	& \overline{g}|_{ V^0(\theta)} J_{{\CC}^0 (\theta)} \, =\, J_{{\CC}^0 (\theta)} {g}|_{ V^0(\theta)}\,   , \hspace{1.5cm} \text{ for all } \theta \in \O_\d\,; \label{commute-J-odd-spnc} \\
&	g(W^l(\eta)) \subset W^l(\eta)  \,\text{ and } \,
	  \big[g |_{ W^l(\eta)} \big]_{{\DC}^l (\eta)}\! =  \big[g |_{ W^0(\eta)}\big]_{{\DC}^0 (\eta)} ,\, \text{ for all } \eta \in \E_\d,  
	0 \leq l< \eta \,;  \vspace{.3cm} \label{g-on-orthogonal-even-subspace}     \\  
	&  \overline{g}|_{ W^0(\eta)} J_{{\DC}^0 (\eta)} \, = \, J_{{\DC}^0 (\eta)} g|_{ W^0(\eta)} \,   , \hspace{1.5cm} \text{ for all } \eta \in \E_\d \, . \label{commute-J-even-spnc} 
\end{align}
Using Lemma \ref{Z-sp-nC-XHY} and Lemma \ref{max-cpt-symplectic-gps}(1) it is clear that
$K'$ is a maximal compact subgroup of $\ZC_{ {\rm Sp} (n,\C) }(X,\,H,\,Y)$. 
Hence to prove the lemma it suffices to show that $K \,= \,K'$. 
Let $g \,\in\,  {\rm Sp} (n,\C)$. Using Lemma \ref{max-cpt-symplectic-gps} (2) it is straightforward to check that $g$ satisfies
(1), (2) in the statement of the lemma if and only if $g$ satisfies \eqref{commute-J-odd-spnc}. Let $g\,\in\, \ZC_{ {\rm Sp} (n,\C)}(X,\,H,
\,Y)$ and  $$C_\eta\,:=\, \big[ g|_{ L(\eta-1)} \big]_{\BC^0(\eta)}\,=\, \big[ g|_{ X^lL(\eta-1)} \big]_{\BC^l(\eta)}.$$
Then we observe that $$[g|_{W^l(\eta)}]_{\DC^l(\eta)} \,=\, \begin{pmatrix}
 C_\eta\\
 & C_\eta
\end{pmatrix}.$$
Now suppose that $g \,\in\, {\rm Sp} (n,\C)$ and $g$ satisfies (3) and (4) in the statement
of the lemma. It is clear that \eqref{g-on-orthogonal-even-subspace} holds. Since $C_\eta
\,\in\, {\rm O}_{t_\eta}$, it follows that \eqref{commute-J-even-spnc} holds. 

Next assume that $g \,\in\, \ZC_{ {\rm Sp} (n,\C) } (X,\,H,\,Y)$ and satisfies  \eqref{g-on-orthogonal-even-subspace} and
\eqref{commute-J-even-spnc}. From \eqref{commute-J-even-spnc} it follows that $C_\eta\, =\, \overline{C_\eta}$.
Using Lemma \ref{max-cpt-symplectic-gps}(2), we have $C_\eta + \jb 0 \in {\rm Sp}(t_\eta)$. Thus $C_\eta\,\in\, {\rm O}_{t_\eta}$.
\end{proof}

We next introduce some notation which will be needed in Theorem \ref{max-cpt-sp-nC-wrt-basis}. Recall that the positive parts of the 
symplectic bases $\DC(\eta)$ and $\CC(\theta)$ are denoted by $\DC_+(\eta)$ and $\CC_+(\theta)$, respectively. Similarly the negative 
parts of $\DC(\eta)$ and $\CC(\theta)$ are denoted by $\DC_-(\eta)$ and $\CC_-(\theta)$, respectively.  For $\eta \,\in\, \E_\d$, set
$$
\DC_+ (\eta) \,:=\, \DC^0_+ (\eta) \vee \cdots \vee  \DC^{\eta/2-1}_+ (\eta) \ \text{ and } \ 
\DC_- (\eta) \,:=\, \DC^0_- (\eta) \vee \cdots \vee  \DC^{\eta/2 -1}_- (\eta).
$$
For $\theta \,\in\, \O_\d$, set
$$
\CC_+ (\theta) \,:=\, \CC^0_+ (\theta) \vee \cdots \vee  \CC^{\theta-1}_+ (\theta) \ \text{ and } \ 
\CC_- (\theta) \,:=\, \CC^0_- (\theta) \vee \cdots \vee  \CC^{\theta-1}_- (\theta).
$$
Let $\alpha\, :=\, \# \E_\d $, $\beta \,:=\, \# \O_\d$.
We enumerate $\E_\d \,=\,\{ \eta_i \,\mid\, 1 \,\leq\, i \,\leq\, \alpha \}$ such that
$\eta_i \,< \,\eta_{i+1}$, and 
$\O_\d \,=\,\{ \theta_j \,\mid\, 1 \leq j \leq \beta \}$ such that
$\theta_j \, < \,\theta_{j+1}$.
Now define
$$
\EC_+ \,:=\, \DC_+ (\eta_1) \vee \cdots \vee \DC_+ (\eta_{\alpha})\, ; \qquad
\OC_+ \,:=\, \CC_+ (\theta_1) \vee \cdots \vee \CC_+ (\theta_{\beta})\, .
$$
$$
\EC_-  \, := \, \DC_- (\eta_1) \vee \cdots \vee \DC_- (\eta_{\alpha}) ;  \text{ and } 
\OC_-\, :=\, \CC_-(\theta_1) \vee \cdots \vee \CC_-(\theta_{\beta}).
$$
Also we define
\begin{equation}\label{symplectic-basis-final}
\HC_+ \,:=\, \EC_+ \vee \OC_+ \, , \ \ \HC_- := \EC_- \vee \OC_-  \ \text{ and } \,
\HC \,:=\, \HC_+ \vee \HC_-. 
\end{equation}

As before, for a matrix $A\,=\, (a_{ij}) \,\in\, {\rm M}_r(\C)$, define $\overline{A}
\,:=\, (\overline{a}_{ij}) \,\in\, {\rm M}_r(\C)$.
Let
\begin{align}\label{map-D-SpnC}
\Db_{{\rm Sp}(n,\C)}\,\colon\, \prod_{i=1}^\alpha {\rm M}_{t_{\eta_i}} (\R)  \times \prod_{j=1}^\beta {\rm M}_{t_{\theta_j}/2} (\H)
\,\longrightarrow\, {\rm M}_n(\H)  
\end{align}
be the $\R$-algebra embedding defined by
\begin{align*}
\big(C_{\eta_1}, \,  \dotsc \, , C_{\eta_{\alpha}}\, ;\,  A_{\theta_1}\, , \dotsc\, , 
A_{\theta_\beta} \big) \, 
     \longmapsto \,  \bigoplus_{i=1}^\alpha  \big( C_{\eta_i}  \big)_\blacktriangle^{\eta_i /2} \oplus \bigoplus_{j=1}^\beta  \big( A_{\theta_j} \big)_\blacktriangle ^{\theta_j} \, .
\end{align*}

It is clear that the basis $\HC$ in \eqref{symplectic-basis-final} is a symplectic basis of $V$ with
respect to $\<>$. Let $\widetilde{\Lambda}_\HC \,\colon\, \{ x \,\in\, {\rm End}_\C \C^{2n}
\,\mid\, x J_\HC \,=\,J_\HC x  \} \,\longrightarrow\, {\rm M}_{n} (\H)$ be the isomorphism of
$\R$-algebras induced by the above symplectic basis $\HC$. Recall that $\widetilde{\Lambda}_\HC \,:\,
K_\HC \,\longrightarrow\, {\rm Sp} (n)$ is an isomorphism of Lie groups. 
 
\begin{theorem}\label{max-cpt-sp-nC-wrt-basis} 
Let $X \,\in\, \NC _{\s\p(n,\C)}$, $ X\,\ne\, 0$, and  $\Psi_{{\rm Sp} (n,\C)}(\OC_X) \,=\, \d$.
Let $\alpha \,:=\, \# \E_\d $ and $\beta \,:=\, \# \O_\d$. Let
$\{X,\,H,\,Y\}$ be a $\s\l_2(\R)$-triple in $\s\p (n,\C)$.
Let $K$ be the maximal compact subgroup of $\ZC_{{\rm Sp} (n,\C)} (X,\,H,\, Y)$ as in Lemma
\ref{max-cpt-reductive-part-sp-nC}, and 
the map $\Db_{{\rm Sp}(n,\C)}$ be defined as in \eqref{map-D-SpnC}. 
Then $\widetilde{\Lambda}_\HC(K) \,\subset\, {\rm Sp}(n)$ is given by
$$
\widetilde{\Lambda}_\HC(K)\,= \,\Big\{ \Db_{{\rm Sp}(n,\C)}(g) \, \, \bigm| \,\,  
                        g \in   \prod_{i=1}^\alpha  {\rm O}_{t_{\eta_i}}  \times \prod_{j=1}^\beta  {\rm Sp}(t_{\theta_j}/2)   \Big\}.
$$
Moreover, the nilpotent orbit $ \OC_X $ in $ \s\p(n,\C) $ is homotopic to $ {\rm Sp}(n) / \widetilde{ \Lambda}_\HC({K})$.
\end{theorem}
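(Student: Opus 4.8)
The plan is to establish the stated description of $\widetilde{\Lambda}_\HC(K)$ by a direct matrix computation with respect to the symplectic basis $\HC$ of \eqref{symplectic-basis-final}, and then to deduce the homotopy statement from Theorem \ref{mostow-corollary}. I would fix an element $g \,\in\, K$ and use the fact, from Lemma \ref{max-cpt-reductive-part-sp-nC}, that $g$ preserves every subspace $V^l(\theta)$ (for $\theta \,\in\, \O_\d$) and every $X^l L(\eta-1)$ (for $\eta \,\in\, \E_\d$). Since $\HC$ is the concatenation $\HC_+ \vee \HC_-$, where each of $\HC_\pm$ is built from the pieces $\DC_\pm(\eta)$ and $\CC_\pm(\theta)$ running over all relevant $l$, this invariance forces $[g]_\HC$ to be block-diagonal, with one block attached to each pair $(\theta,\,l)$ and each pair $(\eta,\,l)$.

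Next I would read off the individual blocks. For $\eta \,\in\, \E_\d$, conditions (3)--(4) of Lemma \ref{max-cpt-reductive-part-sp-nC} together with the computation in its proof give $[g|_{W^l(\eta)}]_{\DC^l(\eta)} \,=\, \mathrm{diag}(C_\eta,\, C_\eta)$ with $C_\eta \,\in\, {\rm O}_{t_\eta}$; as $l$ ranges over $0 \,\leq\, l \,\leq\, \eta/2 - 1$ this produces the multiplicity $\eta/2$. For $\theta \,\in\, \O_\d$, condition (2) gives $[g|_{V^l(\theta)}]_{\CC^l(\theta)} \,=\, \wp_{t_\theta/2,\H}(A_\theta + \jb B_\theta)$, independent of $l$, with $A_\theta + \jb B_\theta \,\in\, {\rm Sp}(t_\theta/2)$, and $l$ ranging over $0 \,\leq\, l \,\leq\, \theta - 1$ produces the multiplicity $\theta$.

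The key step is to transport these blocks through $\widetilde{\Lambda}_\HC$, which identifies the commutant of $J_\HC$ with ${\rm M}_n(\H)$ by inverting the embeddings $\wp_{\cdot,\H}$ of \eqref{embedding-H2C}, and which restricts to the isomorphism $K_\HC \,\longrightarrow\, {\rm Sp}(n)$. Under this identification the complex block $\wp_{t_\theta/2,\H}(A_\theta + \jb B_\theta)$ becomes the quaternionic matrix $A_\theta + \jb B_\theta \,\in\, {\rm Sp}(t_\theta/2)$, while the real orthogonal block $C_\eta$ (carrying zero quaternionic component, i.e. $C_\eta + \jb 0$) becomes $C_\eta \,\in\, {\rm O}_{t_\eta} \,\subset\, {\rm Sp}(t_\eta)$. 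Matching these against the multiplicities $\eta/2$ and $\theta$ in the definition of $\Db_{{\rm Sp}(n,\C)}$ in \eqref{map-D-SpnC}, I obtain $\widetilde{\Lambda}_\HC(g) \,=\, \Db_{{\rm Sp}(n,\C)}(C_{\eta_1},\, \dotsc,\, C_{\eta_\alpha};\, A_{\theta_1},\, \dotsc,\, A_{\theta_\beta})$ with $C_{\eta_i} \,\in\, {\rm O}_{t_{\eta_i}}$ and $A_{\theta_j} \,\in\, {\rm Sp}(t_{\theta_j}/2)$; conversely every such tuple arises this way, yielding the claimed equality.

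I expect the main obstacle to be the bookkeeping of the rearrangements. The symplectic basis $\DC^l(\eta)$ of \eqref{symplectic-basis-D-eta} is a reordering of $\BC^l(\eta) \vee \BC^{\eta-1-l}(\eta)$ with an even/odd-$l$ swap, the $\CC^l(\theta)$ are assembled from the vectors $w^\theta_{jl}$, and in forming $\HC$ one groups together all positive parts and then all negative parts across every $l$. I must verify that after this regrouping the matrix of $g$ stays exactly block-diagonal with the pattern above, and, crucially, that $J_\HC$ interacts with the positive/negative splitting so that $\widetilde{\Lambda}_\HC$ returns precisely $A_\theta + \jb B_\theta$ rather than a permuted or conjugated variant; this is where genuine care is needed, whereas the identification of blocks afterwards is mechanical. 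Finally, since any maximal compact subgroup of ${\rm Sp}(n,\C)$ is isomorphic to ${\rm Sp}(n)$ and $\HC$ is chosen so that $\widetilde{\Lambda}_\HC(K_\HC) \,=\, {\rm Sp}(n)$, Theorem \ref{mostow-corollary} shows that $\OC_X$ is homotopy equivalent to ${\rm Sp}(n)/\widetilde{\Lambda}_\HC(K)$.
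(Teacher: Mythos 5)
Your proposal is correct and follows essentially the same route as the paper's proof: the paper likewise establishes the description of $\widetilde{\Lambda}_\HC(K)$ by writing the matrices of the elements of $K$ from Lemma \ref{max-cpt-reductive-part-sp-nC} with respect to the ordered symplectic basis $\HC$ of \eqref{symplectic-basis-final} (your block-by-block bookkeeping, including the $\eta/2$ and $\theta$ multiplicities and the passage through $\wp_{\cdot,\H}$, is exactly the computation the paper leaves implicit), and then deduces the homotopy statement from Theorem \ref{mostow-corollary} together with the fact that maximal compact subgroups of ${\rm Sp}(n,\C)$ are isomorphic to ${\rm Sp}(n)$.
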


\begin{proof}
This follows by writing the matrices of the elements of the maximal compact subgroup $ K $ in Lemma
\ref{max-cpt-reductive-part-sp-nC} with respect to the ordered basis $\HC$ as in \eqref{symplectic-basis-final}.

The second part follows from Theorem \ref{mostow-corollary} and the well-known fact that any maximal compact subgroup of
${\rm Sp}(n,\C)$ is isomorphic to ${\rm Sp}(n)$.	
\end{proof}

\subsection{Homotopy types of the nilpotent orbits in \texorpdfstring{${\s\p}(p,q)$}{Lg}}\label{sec-sp-pq}

Let $n$ be a positive integer, and let $(p,\,q)$ be a pair of non-negative integers with $p + q \,=\,n$. In this 
subsection we write down the homotopy types of the nilpotent orbits in ${\s\p}(p,q)$ as compact homogeneous spaces. 
As we do not need to deal with compact groups, we will further assume that $p \,>\,0$ and $q\,>\,0$. Throughout 
this subsection $\<>$ denotes the Hermitian form on $\H^n$ defined by $\langle x,\, y \rangle \,:=\, 
\overline{x}^t{\rm I}_{p,q} y$, $x,\,y\, \in\, \H^n$, where ${\rm I}_{p,q}$ is as in \eqref{defn-I-pq-J-n}. We will 
follow notation as defined in \S \ref{sec-notation}.

First we will recall a parametrization of nilpotent orbits in $\s\p(p,q)$, see \cite[Section 4.8]{BCM}.
Let $$\Psi_{{\rm SL}_n (\H)} \,:\, \NC ({\rm SL}_n (\H)) \,\longrightarrow\, \PC (n)$$ be the
parametrization as in Theorem \ref{sl-H-parametrization}.
As ${\rm Sp} (p,q) \,\subset\, {\rm SL}_n (\H)$ (consequently, $\NC_{{\s\p}(p,q)} \,\subset\,
\NC_{\s\l_n(\H)}$)
we have the inclusion map $\Theta_{{\rm Sp}(p,q)} \,: \,\NC ({\rm Sp} (p,q)) \,
\longrightarrow\,\NC ( {\rm SL}_n (\H) )$. Let
$$\Psi'_{{\rm Sp}(p,q)}\,:=\, \Psi_{{\rm SL}_n (\H)} \circ \Theta_{{\rm Sp}(p,q)} \,
\colon\, \NC ({\rm Sp} (p,q))  \,\longrightarrow\, \PC (n)$$ be the composition.
Let $0\,\not=\, X \,\in\, {\s\p}(p,q)$ be a nilpotent element and $\OC_X$ be the corresponding nilpotent orbit in
${\s\p}(p,q)$. Let $\{X,\, H,\, Y\} \,\subset\, {\s\p}(p,q)$ be a $\s\l_2(\R)$-triple. We now use \cite[Proposition A.6, Remark A.8(3)]{BCM}.
Let $V := \H^n$ be the right $\H$-vector space of column vectors.
Let $\{d_1,\, \cdots,\, d_s\}$, with $d_1 \,<\, \cdots \,<\, d_s$, be a ordered finite subset of natural numbers that arise as $\R$-dimension of non-zero irreducible  $\text{Span}_\R \{ X,\,H,\,Y\}$-submodules of $V$.
Recall that $M(d-1)$ is defined to be the isotypical component of $V$ containing all irreducible $\text{Span}_\R \{ X,\,H,\,Y\}$-submodules of $V$ with highest weight $(d-1)$, and as in \eqref{definition-L-d-1}, we set $L(d-1)\,:= \,V_{Y,0} \cap M(d-1)$. Recall that the space $L(d_r-1)$ is a $\H$-subspace for $1\leq r \leq s$.
Let $t_{d_r} \,:=\, \dim_\H L(d_r-1)$ for 
$1\,\leq\, r \,\leq\, s$. Then 
$\d\,:=\, [d_1^{t_{d_1}},\, \cdots ,\,d_s^{t_{d_s}}]\,\in\, \PC(n)$, and  moreover,
$\Psi'_{{\rm Sp}(p,q)} (\OC_X) \,=\, \d$.

We next assign $\sgn_{\OC_X} \,\in\, \SC^{\rm even}_{\d}(p,q)$ to each $\OC_X \,\in\, \NC({\rm Sp}(p,q))$; see 
\eqref{S-d-pq-even} for the definition of $\SC^{\rm even}_{\d}(p,q)$.
For each $d \,\in\, \N_\d$ (see \eqref{Nd-Ed-Od} for the definition of $\N_\d$)  we will define a $t_d \times d$ matrix $(m^d_{ij})$ in $\Ab_d$ 
which depends only on the orbit $\OC_X$ containing $X$; see \eqref{A-d} for the definition of $\Ab_d$.
For this, recall that the form
$(\cdot,\,\cdot)_{d} \,\colon\, L(d-1) \times L(d-1) \,\longrightarrow\,
\H$ defined as in \eqref{new-form} is Hermitian or skew-Hermitian
according as $d$ is odd or even.
Denoting the signature of  $(\cdot,\, \cdot)_{\theta}$ by $(p_{\theta}, \,q_{\theta})$ 
when $\theta\,\in\, \O_\d$, we now define 
\begin{align*}
 m^\eta_{i1} & := +1 \qquad   \text{if } \  1 \leq i \leq t_{\eta}, \quad \eta \in \E_\d\,; \\
 m^\theta_{i1} &:= \begin{cases}
                   +1  & \text{ if } \,  1 \leq i \leq p_{\theta} \\
                   -1  & \text{ if } \, p_\theta < i \leq t_\theta  
                 \end{cases} ,\, \theta \in \O_\d\,;
               \end{align*}
and for $j \,>\,1$, define $(m^d_{ij})$ as in \eqref{def-sign-alternate} and
\eqref{def-sign-alternate-1}. 
Then the matrices $(m^d_{ij})$ clearly verify \eqref{yd-def2}. Set $\sgn_{\OC_X} \,:=\,
((m^{d_1}_{ij}),\, \cdots ,\, (m^{d_s}_{ij}))$.
It now follows from the last paragraph of \cite[Remark 5.21]{BCM}
that $\sgn_{\OC_X}\,\in\, \SC^{\rm even}_\d(p,q)$. Thus we have the map
$$
\Psi_{{\rm Sp} (p,q)}\,:\,\NC({\rm Sp}(p,q)) \,\longrightarrow\, \YC^{\rm even}(p,q)\, ,\ \
\OC_X \,\longmapsto\, \big(\Psi'_{{\rm Sp} (p,q)} (\OC_X),\, \sgn_{\OC_X}  \big)\, ;
$$
where $\YC^{\rm even}(p,q)$ is as in \eqref{yd-even-Y-pq}. The following theorem is standard, see \cite[Theorem 9.3.5]{CoM}, 
\cite[Theorem 4.38]{BCM}.
 
\begin{theorem}\label{sp-pq-parametrization}
The above map $\Psi_{{\rm Sp}(p,q)}$ is a bijection.
\end{theorem}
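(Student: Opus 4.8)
The plan is to show that $\Psi_{{\rm Sp}(p,q)}$ is a well-defined bijection by verifying, in turn, that the assigned data is an orbit invariant, that distinct orbits receive distinct data, and that every element of $\YC^{\rm even}(p,q)$ is attained. The whole argument rests on the structure of $V \,=\, \H^n$ as a module over $\text{Span}_\R\{X,\,H,\,Y\}$ carrying the invariant Hermitian form $\<>$, together with the isotypical decomposition $V \,=\, \bigoplus_{d\in\N_\d} M(d-1)$ and the forms $(\cdot,\,\cdot)_d$ of \eqref{new-form}; these are supplied by \cite[Proposition A.6, Remark A.8]{BCM}.

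First I would check well-definedness. The partition $\d$ is the Jordan type of $X$ and is manifestly an invariant of $\OC_X$. For the sign part the crucial point is that the isometry class of each form $(\cdot,\,\cdot)_\theta$ on $L(\theta-1)$ is independent of the chosen $\s\l_2(\R)$-triple: any two triples through $X$ are conjugate by an element of $\ZC_{{\rm Sp}(p,q)}(X)$, and such an element transports the data of one triple to the other isometrically. Here one uses that $(\cdot,\,\cdot)_\theta$ is Hermitian for $\theta\,\in\,\O_\d$ and skew-Hermitian for $\eta\,\in\,\E_\d$. Over $\H$ a non-degenerate skew-Hermitian form is determined up to isometry by its rank alone, so the even parts carry no signature; this forces $m^\eta_{i1}\,=\,+1$ for $\eta\,\in\,\E_\d$, i.e.\ $\sgn_{\OC_X}\,\in\,\SC^{\rm even}_\d(p,q)$, exactly as asserted in the text preceding the theorem.

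For surjectivity I would construct an explicit representative for each $(\d,\,\sgn)\,\in\,\YC^{\rm even}(p,q)$. I build $V$ as an orthogonal direct sum over $d\,\in\,\N_\d$ of $t_d$ copies of the $d$-dimensional irreducible $\s\l_2(\R)$-module, equipping $L(d-1)$ with a Hermitian form whose signature $(p_\theta,\,q_\theta)$ is dictated by $(m^\theta_{ij})$ when $d\,=\,\theta$ is odd, and with the standard skew-Hermitian form when $d\,=\,\eta$ is even. Transporting by powers of $X$ and reassembling as in \eqref{isotypicalcomp} produces a Hermitian form on $V$ of signature exactly $(p,\,q)$, precisely because $\sum_i {\rm sgn}_+ M_{d_i}\,=\,p$ and $\sum_i {\rm sgn}_- M_{d_i}\,=\,q$. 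The resulting $X$ is nilpotent in $\s\p(p,q)$ and satisfies $\Psi_{{\rm Sp}(p,q)}(\OC_X)\,=\,(\d,\,\sgn)$ by construction.

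Finally, injectivity is where the real work lies and is the step I expect to be the main obstacle. Suppose $X,\,X'$ give the same $(\d,\,\sgn)$. They then have the same Jordan type, so there is an $\H$-linear isomorphism matching their isotypical decompositions and intertwining the triples; the task is to promote this to an isometry of $\<>$ lying in ${\rm Sp}(p,q)$. Via a Witt-type extension theorem this reduces to matching the invariant forms piece by piece: on each odd part the Hermitian forms agree because they share the signature encoded by $\sgn$, and on each even part the skew-Hermitian forms agree by the rank-only classification noted above. Assembling these local isometries equivariantly with respect to the $\s\l_2(\R)$-action yields an element of ${\rm Sp}(p,q)$ conjugating $X$ to $X'$. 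The delicate points are confirming that the reduced-norm constraint defining ${\rm SL}_n(\H)$ does not obstruct the global isometry and that the signature is genuinely a complete invariant of Hermitian forms over $\H$; once these are in hand the theorem follows.
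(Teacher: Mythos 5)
The paper itself offers no proof of this theorem --- it is stated as standard with citations to \cite[Theorem 9.3.5]{CoM} and \cite[Theorem 4.38]{BCM} --- and your sketch reconstructs exactly the standard argument behind those citations: invariance of the forms $(\cdot,\cdot)_d$ via Kostant conjugacy of $\s\l_2(\R)$-triples under $\ZC_{{\rm Sp}(p,q)}(X)$, the classification of quaternionic Hermitian forms by signature and of quaternionic skew-Hermitian forms by rank alone, a Witt-type extension for injectivity, and an explicit model for surjectivity. This is correct; the only remarks worth adding are that your worry about the reduced-norm constraint is vacuous (any isometry of a nondegenerate Hermitian form over $\H$ automatically has reduced norm one), and that the connectedness of ${\rm Sp}(p,q)$ is precisely what makes the map a genuine bijection here, in contrast to the fiber-splitting that occurs for ${\rm SO}(p,q)^\circ$ in Theorem \ref{so-pq-parametrization}.
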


Let $0\,\not=\, X \,\in\, \NC _{\s\p (p,q)}$ and $\{X,\,H,\,Y\}$ a $\s\l_2(\R)$-triple in $\s\p (p,q)$. Let $\Psi_{{\rm Sp}(p,q)} (\OC_X) \,=\, \big( \d ,\, \sgn_{\OC_X} \big)$. Then $\Psi'_{{\rm Sp}(p,q)} (\OC_X) \,=\, \d$. 
Recall that $\sgn_{\OC_X}$ determines the signature of $(\cdot,\, \cdot)_\theta$ on $L(\theta -1)$ for all $\theta \,\in\, \O_\d$; let $(p_\theta, \,q_\theta)$ be the signature of $(\cdot,\, \cdot)_\theta$ on $L(\theta-1)$. Let $( v^{d}_1,\, \dots,\, v^d_{t_d} )$ be an ordered $\H$-basis of $L(d-1)$ as in \cite[Proposition A.6]{BCM}. It now follows from Proposition \cite[Proposition A.6 (3)(a)]{BCM} that $( v^d_1, \,\dots,\, v^d_{t_d})$ is an orthogonal basis of $L(d-1)$ for the form $(\cdot,\, \cdot)_d$ for all $d \,\in\, \N_\d$.
Since $(\cdot,\, \cdot)_\eta$ is skew-Hermitian for all $\eta \,\in \,\E_\d$, 
we may assume that for all $\eta \,\in\, \E_\d$ the orthogonal basis $( v^\eta_1,\, \dots,\, v^\eta_{t_\eta} )$ satisfies the following relations :  
\begin{equation}\label{orthonormal-basis-eta-sp-pq}
( v^{\eta}_j, v^\eta_j)_{\eta} = \jb \quad \text{ for all } \, 1 \leq j \leq t_\eta, \, \eta \in \E_\d.
\end{equation}
In view of the signature of $(\cdot,\, \cdot)_\theta$ we may also assume that
\begin{equation}\label{orthonormal-basis-theta-sp-pq}
(v^{\theta}_j, v^\theta_j)_{\theta}  =
 \begin{cases}
  +1  &  \text{ if } \;  1 \leq j \leq p_{\theta}   \\
  -1  &  \text{ if } \;  p_{\theta} < j \leq t_{\theta} 
 \end{cases}\,; \,\theta \,\in\, \O_\d\,.
\end{equation}
As a particular case of a more general construction given in \cite[Lemma A.11(1)]{BCM},  for $\eta\,\in\, \E_\d$ we define 
$$
   {w}_{jl}^{\eta}\,:=\,
\begin{cases}
 \big( {X^l v^{\eta}_j } \,+ \, X^{\eta-1-l} v^{\eta}_j \jb \big)/{\sqrt{2}}   & \text{ if } \ 0 \,\leq\, l \,<\, \eta/2   \vspace*{.2cm} \\
 \big( X^{\eta-1-l} v^{\eta}_j \, -\, X^l v^{\eta}_j \jb \big)/{\sqrt{2}}  & \text{ if }  \ \eta/2 \,\leq\, l\,\leq\, \eta -1\,.
\end{cases}
$$

Similarly, as in \cite[Lemma A.11(2)]{BCM}, for $\theta \,\in\, \O_\d$ we define,
$$
{  w}^{\theta}_{jl}\,:=\,
\begin{cases}
\big( X^l v^{\theta}_j \, + \, X^{\theta-1-l} v^{\theta}_j \big)/{\sqrt{2}} & \text{ if }\ 0 \,\leq\, l \,<\, (\theta-1)/2 \vspace*{.1cm}\\
  X^l v^{\theta}_j    & \text{ if }\  l \,= \,(\theta-1)/2 \vspace*{.1cm}\\
 \big( X^{\theta-1-l} v^{\theta}_j \, - \, X^l v^{\theta}_j \big)/{\sqrt{2}} &  \text{ if }\ (\theta-1)/2 \,<\, l \,\leq\, \theta -1.
\end{cases} 
$$

Let $\{w^{d}_{jl} \mid 1 \leq j \leq t_d, \, 0 \leq l \leq d-1 \}$ be the $\H$-basis of $M(d-1)$ constructed as above. For each $d \in \N_\d, \, 0 \leq l \leq d-1$ we set 
 $$
 V^l(d)\, := \, \text{Span}_\H \{w^d_{1l},\, \dotsc\,, w^d_{t_d l} \}. 
 $$
  The ordered basis $( w^d_{1l},\, \dots \,, w^d_{t_d l} )$ of $V^l(d)$ is denoted by $\CC^l(d) := (w^d_{1l},\, \dotsc\,, w^d_{t_d l} )$. Set
$$
V^l_+ (d) : = \text{ Span}_\H \{ v \mid  v \in  \CC^l (d),\langle v, v \rangle > 0 \}, \quad
V^l_- (d) : = \text{ Span}_\H \{ v \mid  v \in  \CC^l (d),\langle v, v \rangle < 0 \}.
$$
Now it is clear that for $\eta \in \E_\d$,
$$
V^l_+(\eta) \,: =\begin{cases}
                       V^l(\eta) &  \text{ if $l$ odd}\\
                       \phi     &  \text{ if $l$ even}
                      \end{cases}
\quad  \text{ and } \quad V^l_-(\eta) : =\begin{cases}
                          V^l(\eta) &  \text{ if $l$ even}\\
                           \phi     &  \text{ if $l$ odd} .
                           \end{cases}
$$
We next impose orderings on the sets $\{v \,\in\, \CC^l (d)\,\, \mid\,\, \langle v,\, v \rangle \,>\, 0 \}$,
$\{ v\,\in\, \CC^l (d)\,\mid\, \langle v, \,v \rangle \,<\, 0 \}$.
Define the ordered sets by 
$\CC^l_+ (\theta)$, $\CC^l_- (\theta)$, $\CC^l_+ (\zeta)$ and $\CC^l_- (\zeta)$ as in \cite[(4.19), (4.20), (4.21), (4.22)]{BCM}, respectively according as  $\theta \in \O^1_\d$ or $\zeta \in \O^3_\d$. 
Set 
$$
\CC^l_+(\eta) \,:=\,\begin{cases}
                       \CC^l(\eta) &  \text{ if $l$ odd}\\
                       \phi     &  \text{ if $l$ even}
                      \end{cases}
\quad  \text{ and } \quad \CC^l_-(\eta) : =\begin{cases}
                       \CC^l(\eta) &  \text{ if $l$ even}\\
                       \phi     &  \text{ if $l$ odd}\,.
                      \end{cases} 
$$
It is straightforward that $\CC^l_+ (d)$ and $ \CC^l_- (d)$ are indeed ordered bases of $V^l_+ (d)$ and $V^l_- (d)$, respectively.

Next we will write down  a suitable description of  reductive part of the centralizer of a nilpotent element in ${\s\p}(p,q) $. 

\begin{lemma}\label{reductive-part-comp-sp-pq}
Let $X$ be a nilpotent element in $\s\p(p,q)$ and $\{X,\,H,\,Y\}$ be a $\s\l_2(\R)$-triple   in $\s\p(p,q)$   containing $X$.
Then the following holds:
$$
\ZC_{ {\rm Sp} (p,q) }(X,H,Y)\! =\! \left\{ g \in {\rm Sp} (p,q) \middle\vert 
\begin{array}{ccc}
g (V^l (d)) \subset \ V^l (d) \  \text{and }    \vspace{.14cm}\\
\!\!	\big[g|_{ V^l(d)}\big]_{{\CC}^l(d)}\! = \big[g |_{ V^0 (d)}\big]_{{\CC}^0 (d)}  \text{ for  all } d \in \N_\d, 0 \leq l <d \!
\end{array}
\!\!   \right\}\!.
$$  
\end{lemma}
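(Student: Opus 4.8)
The plan is to follow the same two-step strategy used for the complex classical cases in Lemmas \ref{Z-so-nC-XHY} and \ref{Z-sp-nC-XHY}: first record the ``raw'' description of $\ZC_{{\rm Sp}(p,q)}(X,H,Y)$ in terms of the weight-space filtration $\{X^l L(d-1)\}$, and then transfer it to the subspaces $V^l(d)$ and the bases $\CC^l(d)$ by an explicit change of basis. Since the proof of Lemma \ref{reductive-part-comp-so-pq} was already dispatched by reducing to Lemma \ref{Z-so-nC-XHY}, I would aim for a similarly economical argument here, with the understanding that the quaternionic blocks are where the real content lies.

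For the first step I would argue purely from $\s\l_2(\R)$-representation theory. An element $g\in{\rm Sp}(p,q)$ lies in $\ZC_{{\rm Sp}(p,q)}(X,H,Y)$ exactly when it commutes with $H$, $X$ and $Y$; commuting with the whole triple makes $g$ a $\text{Span}_\R\{X,H,Y\}$-module endomorphism of $V$, so $g$ preserves each isotypical component $M(d-1)$. As $g$ commutes with $X$ and is $\H$-linear, $g(X^l v)=X^l(gv)$ for all $v\in L(d-1)$ and $0\le l\le d-1$, so $g$ is completely determined by the single $\H$-linear map $g|_{L(d-1)}$. Writing $a_d:=[g|_{L(d-1)}]_{\BC^0(d)}$, this is equivalent to the conditions $g(X^l L(d-1))\subset X^l L(d-1)$ together with $[g|_{X^l L(d-1)}]_{\BC^l(d)}=a_d$ for every $d\in\N_\d$ and $0\le l<d$, and conversely any such $g$ preserves the triple. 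This is the quaternionic analogue of the Springer--Steinberg description recorded in \cite[Proposition A.6]{BCM} and in the appendix of the present paper, which I would simply invoke.

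The second step is the genuine work. For each fixed $d$ and $l$ I would rewrite the conditions above in terms of $V^l(d)$ and $\CC^l(d)$, using that each $w^d_{jl}$ is an explicit fixed combination of $X^{l}v^d_j$ and $X^{d-1-l}v^d_j$ and that $g$ acts by the \emph{same} matrix $a_d$ on each of $X^{l}L(d-1)$ and $X^{d-1-l}L(d-1)$. For odd $d=\theta$ this is immediate: the coefficients combining the $X^{l'}v^\theta_j$ into $w^\theta_{jl}$ are real scalars, hence commute with the entries of $a_\theta$, and one reads off $[g|_{V^l(\theta)}]_{\CC^l(\theta)}=a_\theta$ at once. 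The main obstacle is the even case $d=\eta$, where $w^\eta_{jl}$ is built by right multiplication by the quaternion $\jb$, which does \emph{not} commute with the entries of $a_\eta$; here I would carry out the $\jb$-twisted linear algebra carefully, feeding in the normalization $(v^\eta_j,v^\eta_j)_\eta=\jb$ from \eqref{orthonormal-basis-eta-sp-pq} and the fact that $g|_{L(\eta-1)}$ is an isometry of the skew-Hermitian form $(\cdot,\cdot)_\eta$, to match the action of $g$ on the twisted basis with the prescribed block form. The non-commutativity of $\jb$ with $a_\eta$ is precisely what distinguishes the even quaternionic blocks from their complex counterparts in Lemmas \ref{Z-so-nC-XHY} and \ref{Z-sp-nC-XHY}, and it is the step I expect to demand the most care.
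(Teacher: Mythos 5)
Your first step and your treatment of the odd parts are correct, and together they are exactly what the paper's own one-line proof (a deferral to Lemma \ref{Z-so-nC-XHY}) amounts to. The problem is the even-part step you postponed: it is not a delicate computation awaiting care, it is impossible, because the inclusion you would need is false. Fix $\eta\in\E_\d$, let $g$ centralize the triple, and set $a:=[g|_{L(\eta-1)}]_{\BC^0(\eta)}$, so that $g(X^lv^\eta_j)=\sum_iX^lv^\eta_i\,a_{ij}$ for every $l$. Then, for $0\le l<\eta/2$,
$$
g(w^\eta_{jl})\,=\,\tfrac{1}{\sqrt{2}}\Big(\sum_i X^lv^\eta_i\,a_{ij}\,+\,\sum_i X^{\eta-1-l}v^\eta_i\,(a_{ij}\jb)\Big),
$$
whereas every element of $V^l(\eta)$ has coefficients of the paired form $(\mu_i,\ \jb\mu_i)$ on the $\H$-independent vectors $X^lv^\eta_i,\,X^{\eta-1-l}v^\eta_i$. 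Hence $g(V^l(\eta))\subset V^l(\eta)$ forces $a_{ij}\jb=\jb a_{ij}$ for every entry, i.e. $a\in{\rm M}_{t_\eta}(\R)+\jb\,{\rm M}_{t_\eta}(\R)$. Being an isometry of the skew-Hermitian form $(\cdot,\cdot)_\eta$ does not give this: by Lemma \ref{max-cpt-so*2n} (and the identity recorded just before \eqref{R-algebra-isomorphism-so*}), the elements of ${\rm SO}^*(L(\eta-1),(\cdot,\cdot)_\eta)$ whose matrices lie in ${\rm M}_{t_\eta}(\R)+\jb\,{\rm M}_{t_\eta}(\R)$ constitute precisely its maximal compact subgroup $\simeq{\rm U}(t_\eta)$, while the even-block factor of $\ZC_{{\rm Sp}(p,q)}(X,H,Y)$ is the whole noncompact group ${\rm SO}^*(2t_\eta)$ (Proposition \ref{centralizer-sl2-triple-sp-pq}). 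So whenever some $t_\eta\ge 2$ the centralizer is strictly larger than the right-hand side of the asserted equality: for the partition $[2^2]$ in $\s\p(2,2)$, the centralizing element with $a=\begin{pmatrix}\cosh t & \ib\sinh t\\ -\ib\sinh t & \cosh t\end{pmatrix}\in{\rm SO}^*(4)$, $t\neq 0$, does not preserve $V^0(2)$.

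The conclusion you should draw is that the even parts must be left in the untwisted form your first step produces. The statement that your Step 1 (plus the odd-part transfer) actually proves — and the one on which Lemma \ref{max-cpt-sppq} implicitly relies — replaces, for $\eta\in\E_\d$, the conditions on $V^l(\eta),\,\CC^l(\eta)$ by $g(X^lL(\eta-1))\subset X^lL(\eta-1)$ and $\big[g|_{X^lL(\eta-1)}\big]_{\BC^l(\eta)}=\big[g|_{L(\eta-1)}\big]_{\BC^0(\eta)}$, in exact parallel with Lemmas \ref{Z-so-nC-XHY}, \ref{Z-sp-nC-XHY} and \ref{reductive-part-comp-so-pq}, where twisted bases are used only for those parts on which the twisting scalars commute with the matrix entries. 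The $\jb$-twisted subspaces $V^l(\eta)$ enter one step later, in Lemma \ref{max-cpt-sppq}: requiring $g$ to preserve them is precisely the condition that cuts each ${\rm SO}^*(2t_\eta)$ down to its maximal compact ${\rm U}(t_\eta)$, i.e. it is part of the description of a maximal compact subgroup, not of the full centralizer. So rather than trying to complete the matching you deferred, you should prove the untwisted version of the lemma (which your outline already does) and note that the display as printed requires this emendation for the even parts.
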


\begin{proof}
We omit the proof as it is similar to that of Lemma \ref{Z-so-nC-XHY}.
\end{proof}

\begin{remark}\label{structure-of-centralizer-sppq}
{\rm 
We follow the notation as in Lemma \ref{reductive-part-comp-sp-pq}. Let $g \,\in\, \ZC_{ {\rm Sp} (p,q)}(X,\, H,\, Y)$.  Let $\theta \,\in\, \O_\d$ and $\eta \,\in\, \E_\d$.   
Then it follows from Lemma \ref{reductive-part-comp-sp-pq} that $ g$ keeps the subspace $ V^0(\theta)$ invariant. 
Since the restriction of $\<> $ is a hermitian form on $ V^0(\theta) $ we have $g|_{ V^0(\theta)} \,\in\, {\rm SU}(V^0(\theta),\, \<>)$. 
Further recall that the form $ (\cdot,\,\cdot)_\eta $, as defined in \eqref{new-form}, is skew-hermitian  on $L(\eta-1)$. Also
$g$ keeps the subspace $ L(\eta -1)$ invariant  and  $(gx,gy)_\eta \,=\, (x,\,y)_\eta$ for all $x,\,y\,\in\, L(\eta-1)$, see
\cite[Lemma 4.4(3) and Remark A.7]{BCM}. 
Thus $g |_{L(\eta-1)} \,\in\, {\rm  SO}^*(L(\eta-1),\, (\cdot,\, \cdot)_\eta)$.
}	
 \end{remark}

Let $W$ be a right $\H$-vector space, $\<>'$ be a non-degenerate skew-Hermitian form on $W$. Let $\dim_\H W \,=\, m$ and $\BC'
\,:=\, (v_1, \dotsc, v_m)$ be a standard orthogonal basis of $W$ such that $\langle v_r,\,v_r \rangle' \,=\, \jb$ for all
$1\,\leq\, r \,\leq\, m$.
Let ${\rm J}_{\BC'} \,\colon\, W \,\longrightarrow\, W$ be defined by ${\rm J}_{\BC'}(\sum_r v_r z_r)
\,:=\, \sum_r v_r \jb z_r$ for all column vectors $(z_1, \,\dotsc,\, z_m)^t \,\in\, \H^m$.
The next lemma is a standard fact where we recall an explicit description of maximal compact subgroups in the group ${\rm SO}^*(W, \<>')$. 
We set
\begin{align*}
K_{\BC'} :=& \big\{ g \in {\rm SO}^* (W, \<>') \bigm|   g {\rm J}_{\BC'} = {\rm J}_{\BC'} g  \big\}.
\end{align*}
\begin{lemma}[{\cite[Lemma 4.28]{BCM}}] \label{max-cpt-so*2n}
Let $W, \<>'$ and $\BC'$  be as above. Then
\begin{enumerate}
 \item $K_{\BC'}$ is a maximal compact subgroup in ${\rm SO}^*(W, \<>')$.
 
 \item $K_{\BC'} = \big\{g \in {\rm SL}(W) \bigm| [g]_{\BC'} = A+\jb B  \text{ where } A,B \in {\rm M}_m(\R), A+\sqrt{-1}B \in {\rm U}(m) \big\} $.
\end{enumerate}
\end{lemma}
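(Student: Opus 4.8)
The plan is to prove the two assertions in reverse order: first I establish the explicit matrix description in (2) by a direct computation in the basis $\BC'$, and then I deduce the maximality in (1) by realising $K_{\BC'}$ as the fixed-point set of a Cartan involution.

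First I would pass to matrices. Writing $G := [g]_{\BC'} \in {\rm M}_m(\H)$, note that $[{\rm J}_{\BC'}]_{\BC'} = \jb\,{\rm I}_m$, so the relation $g{\rm J}_{\BC'} = {\rm J}_{\BC'}g$ is equivalent to $G_{rs}\jb = \jb G_{rs}$ for all $r,s$. Since a quaternion commutes with $\jb$ exactly when it lies in $\R + \R\jb$, this says precisely that $G = A + \jb B$ with $A,\,B \in {\rm M}_m(\R)$. Since the Gram matrix of $\<>'$ in $\BC'$ is $\jb\,{\rm I}_m$, an element $g$ preserves $\<>'$ if and only if $\overline{G}^t(\jb\,{\rm I}_m)G = \jb\,{\rm I}_m$. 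Substituting $G = A + \jb B$ and using $\jb^2 = -1$ together with the fact that real matrices commute with $\jb$, I obtain
\[
\overline{G}^t\,\jb\,G \,=\, (B^tA - A^tB) \,+\, \jb\,(A^tA + B^tB).
\]
Hence preservation of $\<>'$ is equivalent to the two real relations $A^tA + B^tB = {\rm I}_m$ and $A^tB - B^tA = 0$, which together say exactly that $M^*M = {\rm I}_m$ for $M := A + \sqrt{-1}\,B \in {\rm M}_m(\C)$, i.e. $M \in {\rm U}(m)$.

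It then remains to observe that the condition $g \in {\rm SL}(W)$, namely ${\rm Nrd}(G) = 1$, is automatic. Indeed, for $G = A + \jb B$ with $A,\,B$ real one has $\wp_{m,\H}(G) = \left(\begin{smallmatrix} A & -B \\ B & A\end{smallmatrix}\right)$, and since the reduced norm of a quaternionic matrix is the complex determinant of its image under $\wp_{m,\H}$ (see \eqref{embedding-H2C}),
\[
{\rm Nrd}(G) \,=\, \det{}_{\C}\big(\wp_{m,\H}(G)\big) \,=\, \det(A + \sqrt{-1}B)\,\det(A - \sqrt{-1}B) \,=\, |\det M|^2 \,=\, 1
\]
whenever $M \in {\rm U}(m)$. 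Combining the three computations proves the equality in (2) and shows that $g \mapsto M$ is a group isomorphism $K_{\BC'} \xrightarrow{\,\sim\,} {\rm U}(m)$; in particular $K_{\BC'}$ is compact.

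For (1) I would use the Cartan involution $\theta(G) := (\overline{G}^t)^{-1}$ of ${\rm GL}_m(\H)$, whose fixed-point subgroup is the maximal compact subgroup ${\rm Sp}(m)$. The group ${\rm SO}^*(W,\<>')$ is $\theta$-stable: inverting the isometry relation $\overline{G}^t\jb G = \jb$ yields $G\jb\overline{G}^t = \jb$, from which $\theta(G)$ again preserves $\<>'$, while ${\rm Nrd}(\theta(G)) = {\rm Nrd}(G)^{-1} = 1$. Moreover, for $G \in {\rm SO}^*(W,\<>')$ the following are equivalent: $G$ is $\theta$-fixed, i.e. $\overline{G}^tG = {\rm I}_m$; $G$ commutes with $\jb$ (substitute $\overline{G}^t = G^{-1}$ into the isometry relation, and conversely use $\jb G = G\jb$ to cancel $\jb$ in it); and $G \in K_{\BC'}$. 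Thus $K_{\BC'} = {\rm SO}^*(W,\<>')^{\theta}$. Since ${\rm SO}^*(W,\<>')$ is a closed self-adjoint (hence reductive) subgroup of ${\rm GL}_m(\H)$ stable under $\theta$, the Cartan decomposition of ${\rm GL}_m(\H)$ restricts to it, $\theta$ becomes a Cartan involution of ${\rm SO}^*(W,\<>')$, and its fixed-point set $K_{\BC'} = {\rm SO}^*(W,\<>') \cap {\rm Sp}(m)$ is a maximal compact subgroup, proving (1). The main obstacle is precisely this last step: it is easy to see that $K_{\BC'}$ is a compact subgroup isomorphic to ${\rm U}(m)$, but to conclude that it is \emph{maximal} one must verify that $\theta$ restricts to a genuine Cartan involution rather than an arbitrary involution with compact fixed points; the cleanest route is to exploit the self-adjointness of ${\rm SO}^*(W,\<>')$ (closure under $G \mapsto \overline{G}^t$, which follows from $\theta$-stability together with inversion) and invoke the standard polar decomposition for self-adjoint closed subgroups of ${\rm GL}_m(\H)$. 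All remaining computations are routine quaternionic matrix algebra.
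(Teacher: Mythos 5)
Your proof is correct, but note that the paper itself supplies no argument here: this lemma is stated as a ``standard fact'' quoted verbatim from \cite[Lemma 4.28]{BCM}, so there is nothing internal to compare against line by line. Your treatment of part (2) is exactly the computation implicit in the surrounding text: the commutation $g{\rm J}_{\BC'}={\rm J}_{\BC'}g$ forces $[g]_{\BC'}=A+\jb B$ with $A,B$ real (a quaternion commutes with $\jb$ iff it lies in $\R+\R\jb$), the Gram-matrix identity $\overline{G}^t\,\jb\,G=(B^tA-A^tB)+\jb(A^tA+B^tB)$ converts isometry into $A+\sqrt{-1}B\in{\rm U}(m)$, and the reduced-norm computation via $\wp_{m,\H}$ shows the ${\rm SL}(W)$ condition is automatic; this is precisely what makes the paper's map $\Lambda'_{\BC'}$ of \eqref{R-algebra-isomorphism-so*} a group isomorphism $K_{\BC'}\cong{\rm U}(m)$, so your argument dovetails with how the lemma is actually used. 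For part (1), where a blind attempt could easily go wrong is exactly the point you flag: compactness of $K_{\BC'}$ (even abstract isomorphism with ${\rm U}(m)$) does not by itself give maximality. Your resolution --- showing ${\rm SO}^*(W,\<>')$ is stable under the anti-automorphism $G\mapsto\overline{G}^t$, identifying $K_{\BC'}$ with the fixed-point set ${\rm SO}^*(W,\<>')\cap{\rm Sp}(m)$ of the Cartan involution $\theta(G)=(\overline{G}^t)^{-1}$, and invoking the polar/Cartan decomposition for closed self-adjoint linear groups (which applies since ${\rm SO}^*(W,\<>')$ is a real algebraic, hence finite-component, subgroup) --- is a legitimate and self-contained route; the equivalences you list ($\theta$-fixed $\Leftrightarrow$ commutes with $\jb$ for isometries) check out. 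The only stylistic caveat is that the final step rests on the standard theorem on self-adjoint subgroups (Mostow; cf.\ also \cite[Theorem 3.1]{Mo} type results used elsewhere in the paper), which you should cite precisely rather than describe informally; with that reference supplied, the proof is complete.
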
 
  
 Recall that $\big\{x \in {\rm End}_\H W \mid x {\rm J}_{\BC'} = {\rm J}_{\BC'} x \big\} = \big\{ x \in {\rm End}_\H W \bigm| [x]_{\BC'} \in {\rm M}_m(\R) + \jb {\rm M}_m(\R)  \big\} $. 
 We now consider the $\R$-algebra isomorphism 
 \begin{equation}\label{R-algebra-isomorphism-so*}
  \Lambda'_{\BC'}  \colon \big\{x \in {\rm End}_\H W \mid x {\rm J}_{\BC'} = {\rm J}_{\BC'} x \big\} \longrightarrow {\rm M}_m(\C)
 \end{equation}
 by $\Lambda'_{\BC'}(x):= A + \sqrt{-1} B $ where $A,B \in {\rm M}_m(\R)$ are the unique elements such that $[x]_\BC = A + \jb B$.
In view of the above lemma it is clear that $\Lambda'_{\BC'}(K_{\BC'})={\rm U}(m)$ and hence $\Lambda'_{\BC'} \,\colon\, K_{\BC'}
\,\longrightarrow\, {\rm U}(m)$ is an isomorphism of Lie groups.

Let $\widetilde{W}$ be a right $\H$-vector space, $\widetilde{\<>}$ be a non-degenerate 
Hermitian form on $\widetilde{W}$ with signature $(\widetilde{p} ,\, \widetilde{q})$. Let 
$\widetilde{\BC} \,:=\, (v_1,\, \dots,\, v_{\widetilde p}, \,v_{\widetilde p +1 }, \,\dots,\, 
v_{\widetilde{p}+ \widetilde{q}})$ be a standard orthogonal basis of $W$ such that $$
 \widetilde{\langle v_r,\,v_r \rangle} \,= \begin{cases}
                              +1 & \text{ if }  1\,\leq\, r \,\leq \,\widetilde{p}\\                                                                                                                                                   
                              -1 & \text{ if } \widetilde{p} +1 \,\leq\, r \,\leq\, \widetilde{p}+
\widetilde{q} .                                                                                                                                                \end{cases}
$$
Let $\widetilde{W}_+ \,:=\,{\rm Span}_\H\{v_1,\, \dots, \,v_{\widetilde p}\} $ and $\widetilde{W}_-
\,:=\,{\rm Span}_\H\{v_{\widetilde{p} +1 },\, \dots,\, v_{\widetilde{p}+ \widetilde {q}}\}$.
The next lemma is a standard fact where we recall, without a proof, an explicit description of
maximal compact subgroups in the group ${\rm SU}(\widetilde{W},\, \widetilde{\<>})$. 
$$
\widetilde{K}_{\widetilde \BC} : = \{g \in {\rm SU}(\widetilde{W}, \widetilde{\<>}) \mid g(\widetilde{W}_+) \subset \widetilde{W}_+ \text{ and } g(\widetilde{W}_-) \subset \widetilde{W}_-  \}
$$

\begin{lemma}
 Let $\widetilde{W}, \,\widetilde{\<>},\, \widetilde{\BC}$ be as above. Then $\widetilde{K}_{\widetilde \BC} $ is a maximal compact subgroup of ${\rm SU}(\widetilde{W}, \widetilde{\<>})$. 
\end{lemma}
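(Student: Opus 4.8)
The plan is to realize the group concretely through the basis $\widetilde{\BC}$ and then to identify $\widetilde{K}_{\widetilde \BC}$ with the standard block-diagonal maximal compact subgroup of ${\rm Sp}(\widetilde p,\,\widetilde q)$. Writing endomorphisms of $\widetilde W$ as matrices with respect to $\widetilde{\BC}$, the form $\widetilde{\<>}$ becomes $\overline{x}^t {\rm I}_{\widetilde p,\widetilde q} y$, so that ${\rm U}(\widetilde W,\, \widetilde{\<>}) = \{g \in {\rm GL}_{\widetilde p + \widetilde q}(\H) \mid \overline{g}^t {\rm I}_{\widetilde p,\widetilde q} g = {\rm I}_{\widetilde p,\widetilde q}\}$. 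Since a quaternionic isometry of a non-degenerate Hermitian form automatically has reduced norm $1$ (the reduced norm is a continuous homomorphism to $\R_{>0}$, hence trivial on the connected semisimple group of isometries), the ${\rm SL}$-condition is automatic and ${\rm SU}(\widetilde W,\, \widetilde{\<>}) = {\rm U}(\widetilde W,\, \widetilde{\<>}) = {\rm Sp}(\widetilde p,\,\widetilde q)$ in the notation of \S\ref{sec-notation}.

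Next I would observe that $g \in {\rm SU}(\widetilde W,\, \widetilde{\<>})$ lies in $\widetilde{K}_{\widetilde \BC}$ precisely when it commutes with the signature operator $P$ defined by $P|_{\widetilde W_+} = {\rm Id}$ and $P|_{\widetilde W_-} = -{\rm Id}$ (whose matrix with respect to $\widetilde{\BC}$ is ${\rm I}_{\widetilde p,\widetilde q}$); equivalently, $g$ is block diagonal $\begin{pmatrix} A & 0 \\ 0 & D \end{pmatrix}$ relative to the decomposition $\widetilde W = \widetilde W_+ \oplus \widetilde W_-$. The isometry conditions then read $\overline{A}^t A = {\rm I}_{\widetilde p}$ and $\overline{D}^t D = {\rm I}_{\widetilde q}$, that is, $A \in {\rm Sp}(\widetilde p)$ and $D \in {\rm Sp}(\widetilde q)$. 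Hence $\widetilde{K}_{\widetilde \BC} \cong {\rm Sp}(\widetilde p) \times {\rm Sp}(\widetilde q)$, which is compact as a product of the compact groups ${\rm Sp}(\widetilde p)$ and ${\rm Sp}(\widetilde q)$.

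For maximality I would run the Cartan-involution argument exactly as in the ${\rm SO}^*$ case of Lemma \ref{max-cpt-so*2n} (\cite[Lemma 4.28]{BCM}). Consider $\theta \colon {\rm Sp}(\widetilde p,\,\widetilde q) \longrightarrow {\rm Sp}(\widetilde p,\,\widetilde q)$, $\theta(g) = (\overline{g}^t)^{-1}$; using $\overline{g}^t {\rm I}_{\widetilde p,\widetilde q} g = {\rm I}_{\widetilde p,\widetilde q}$ one checks that $\theta(g) = {\rm I}_{\widetilde p,\widetilde q}\, g\, {\rm I}_{\widetilde p,\widetilde q}$, so $\theta$ is a well-defined involutive automorphism (conjugation by ${\rm I}_{\widetilde p,\widetilde q} \in {\rm Sp}(\widetilde p,\,\widetilde q)$), and its fixed-point set consists exactly of the $g$ commuting with ${\rm I}_{\widetilde p,\widetilde q}$, namely $\widetilde{K}_{\widetilde \BC}$. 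This $\theta$ is the Cartan involution attached to the positive-definite Hermitian form $v,\, u \longmapsto \widetilde{\langle v,\, Pu\rangle}$; its differential decomposes $\s\p(\widetilde p,\widetilde q) = \k \oplus \p$ with $\k = {\rm Lie}(\widetilde{K}_{\widetilde \BC})$ and $\p$ the subspace of self-adjoint elements, and the global Cartan decomposition ${\rm Sp}(\widetilde p,\,\widetilde q) = \widetilde{K}_{\widetilde \BC}\,\exp(\p)$ holds via the polar decomposition of quaternionic matrices. Since $\exp|_\p$ is a diffeomorphism onto a closed subset meeting no nontrivial compact subgroup, any compact subgroup containing $\widetilde{K}_{\widetilde \BC}$ must coincide with it.

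The main obstacle is this final step: verifying that $\theta$ is genuinely a Cartan involution, i.e. establishing the global decomposition ${\rm Sp}(\widetilde p,\,\widetilde q) = \widetilde{K}_{\widetilde \BC}\exp(\p)$ together with the negative-definiteness of the relevant trace form on $\p$. This is routine for the classical quaternionic matrix groups and is carried out in the same manner as \cite[Lemma 4.28]{BCM}. Alternatively, and more economically, one may simply invoke the conjugacy of maximal compact subgroups of a reductive Lie group together with the standard fact that ${\rm Sp}(\widetilde p) \times {\rm Sp}(\widetilde q)$ is a maximal compact subgroup of ${\rm Sp}(\widetilde p,\,\widetilde q)$, which by the second paragraph is precisely $\widetilde{K}_{\widetilde \BC}$.
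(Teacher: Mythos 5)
Your proposal is correct, but there is nothing in the paper to compare it against: the authors explicitly introduce this lemma as ``a standard fact where we recall, without a proof, an explicit description of maximal compact subgroups in the group ${\rm SU}(\widetilde{W},\, \widetilde{\<>})$.'' So you are supplying an argument the paper deliberately omits. Your route is the natural one and all three steps are sound: (i) the identification ${\rm SU}(\widetilde W,\widetilde{\<>}) = {\rm U}(\widetilde W,\widetilde{\<>}) \cong {\rm Sp}(\widetilde p,\widetilde q)$ via the standard orthogonal basis $\widetilde\BC$; (ii) the identification of $\widetilde K_{\widetilde \BC}$ (equivalently, the centralizer of the signature operator) with the block-diagonal copy of ${\rm Sp}(\widetilde p)\times{\rm Sp}(\widetilde q)$; and (iii) maximality via the involution $\theta(g)=(\overline g^t)^{-1}$, which on ${\rm Sp}(\widetilde p,\widetilde q)$ is conjugation by ${\rm I}_{\widetilde p,\widetilde q}$ and has fixed-point set $\widetilde K_{\widetilde \BC}$. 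Two remarks. First, in step (i) you can avoid the appeal to connectedness and semisimplicity of the isometry group altogether: applying ${\rm Nrd}$ to $\overline g^t\,{\rm I}_{\widetilde p,\widetilde q}\,g = {\rm I}_{\widetilde p,\widetilde q}$ gives ${\rm Nrd}(g)^2=1$, and ${\rm Nrd}$ is positive on ${\rm GL}_n(\H)$, so ${\rm Nrd}(g)=1$ directly. Second, be aware that your ``more economical'' alternative at the end --- invoking the standard fact that ${\rm Sp}(\widetilde p)\times{\rm Sp}(\widetilde q)$ is maximal compact in ${\rm Sp}(\widetilde p,\widetilde q)$ --- is not really an alternative proof but exactly the citation the paper itself makes; the genuine mathematical content of your write-up is the Cartan-involution argument, whose remaining verification (the global decomposition $G=K\exp(\p)$ via quaternionic polar decomposition) is indeed routine and parallels \cite[Lemma 4.28]{BCM}, as you say.
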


In the next lemma we specify a maximal compact subgroup $\ZC_{ {\rm Sp} (p,q) } (X,\,H,\,Y)$ which will be used in Theorem 
\ref{max-cpt-sp-pq-wrt-onb}.

\begin{lemma}\label{max-cpt-sppq}
Let $K$ be the subgroup of $\ZC_{ {\rm Sp} (p,q) } (X,\,H,\,Y)$ consisting of all $g$ in \\
$\ZC_{{\rm Sp}(p,q)}(X,\,H,\,Y)$ satisfying the following conditions:

\begin{enumerate}
 \item $g (V^l_+ (d)) \,\subset\, \, V^l_+ (d)$ and $g ( V^l_- (d)  ) \,\subset \, V^l_-(d)$,  for all  $ d \,\in \,\N_\d \, 
\text{ and } \, 0 \,\leq\, l \,\leq\, d -1$.

\item For all $\eta \,\in\, \E_\d $, there exists $A_\eta, B_\eta \,\in\, {\rm M}_{t_\eta}(\R) $ with $A_\eta + \sqrt{-1} B_\eta \,\in
\, {\rm U}(t_\eta)$ such that
 $$\begin{array}{cc}
A_\eta + \jb B_\eta =  \Big[g |_{ V_-^0 (\eta) }\Big]_{{\CC}^0_- (\eta)} = \Big[g |_{  V^l_{(-1)^{l+1}} (\eta)}\Big]_{{\CC}^l_{(-1)^{l+1} } (\eta)}   
      \end{array}; \text{ for all }\, 0 \leq l \leq \eta-1.
 $$
  \item 
  For all $\theta \,\in\, \O^1_\d $, there exist $C_\theta \in {\rm Sp}(p_\theta), \, D_\theta \in {\rm Sp}(q_\theta)$ such that
$$
C_\theta = \Big[g |_{ V_+^0 (\theta) }\Big]_{{\CC}^0_+ (\theta)} = 
             \begin{cases}                                                            
                 \Big[g |_{  V^l_{(-1)^{l}} (\theta)}\Big]_{{\CC}^l_{(-1)^{l}} (\theta)} & \text{ for all } 0 \leq l < (\theta-1)/2 \vspace{.15cm}\\
            \Big[g |_{  V^{(\theta-1)/2}_{+} (\theta)}\Big]_{{\CC}^{(\theta-1)/2}_{+} (\theta)} \vspace{.15cm}  \\
                 \Big[g |_{  V^l_{(-1)^{l+1}} (\theta)}\Big]_{{\CC}^l_{(-1)^{l+1} } (\theta)} &  \text{ for all } (\theta-1)/2 < l \leq \theta-1,
                 \end{cases}                 
$$
$$
 D_\theta = \Big[g |_{ V_-^0 (\theta) }\Big]_{{\CC}^0_- (\theta)} =
     \begin{cases}
      \Big[g |_{  V^l_{(-1)^{l+1}} (\theta)}\Big]_{{\CC}^l_{(-1)^{l+1} } (\theta)} &  \text{ for all } 0 \leq l < (\theta-1)/2 \vspace{.15cm}\\
       \Big[g |_{  V^{(\theta-1)/2}_{-} (\theta)}\Big]_{{\CC}^{(\theta-1)/2}_{-} (\theta)} \vspace{.15cm}  \\
      \Big[g |_{  V^l_{(-1)^{l}} (\theta)}\Big]_{{\CC}^l_{(-1)^{l}} (\theta)} & \text{ for all } (\theta-1)/2 < l \leq \theta-1 .     
     \end{cases}
$$
\item For all $ \zeta \in \O^3_\d $, there exist $C_\zeta \in {\rm Sp}(p_\zeta), \, D_\zeta \in {\rm Sp}(q_\zeta)$ such that
$$
C_\zeta = 
\Big[g |_{ V_+^0 (\zeta) }\Big]_{{\CC}^0_+ (\zeta)} = 
             \begin{cases}                                                            
                 \Big[g |_{  V^l_{(-1)^{l}} (\zeta)}\Big]_{{\CC}^l_{(-1)^{l}} (\zeta)} &  \text{ for all } 0 \leq l < (\zeta-1)/2 \vspace{.15cm}\\            
      \Big[g |_{  V^{(\zeta-1)/2}_{-} (\zeta)}\Big]_{{\CC}^{(\zeta-1)/2}_{-} (\zeta)} \vspace{.15cm}  \\
                 \Big[g |_{  V^l_{(-1)^{l+1}} (\zeta)}\Big]_{{\CC}^l_{(-1)^{l+1} } (\zeta)} & \text{ for all } (\zeta-1)/2 < l \leq \zeta-1,
                 \end{cases}                 
$$
$$
D_\zeta =  \Big[g |_{ V_-^0 (\zeta) }\Big]_{{\CC}^0_- (\zeta)} =
     \begin{cases}
      \Big[g |_{  V^l_{(-1)^{l+1}} (\zeta)}\Big]_{{\CC}^l_{(-1)^{l+1} } (\zeta)} &  \text{ for all } 0 \leq l < (\zeta-1)/2 \vspace{.15cm}\\
      \Big[g |_{  V^{(\zeta-1)/2}_{+} (\zeta)}\Big]_{{\CC}^{(\zeta-1)/2}_{+} (\zeta)} \vspace{.15cm}  \\
            \Big[g |_{  V^l_{(-1)^{l}} (\zeta)}\Big]_{{\CC}^l_{(-1)^{l}} (\zeta)} &  \text{ for all } (\zeta-1)/2 < l \leq \zeta-1.    
     \end{cases}
$$ 
\end{enumerate}
Then $K$ is a maximal compact subgroup of $\ZC_{ {\rm Sp}(p,q)}(X,H,Y)$.
\end{lemma}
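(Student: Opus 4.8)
The plan is to mimic the proof of Lemma \ref{max-cpt-centralizer-sopq}: first produce a subgroup $K'$ of $\ZC_{{\rm Sp}(p,q)}(X,H,Y)$ defined by ``clean'' conditions on the restrictions $g|_{V^0(\theta)}$ and $g|_{L(\eta-1)}$ that is \emph{visibly} a maximal compact subgroup, and then check $K=K'$ by rewriting the clean conditions in the spread-out form (1)--(4). Concretely, I would let $K'$ consist of all $g\in\ZC_{{\rm Sp}(p,q)}(X,H,Y)$ such that $g(V^0_+(\theta))\subset V^0_+(\theta)$ and $g(V^0_-(\theta))\subset V^0_-(\theta)$ for every $\theta\in\O_\d$, and such that $g|_{L(\eta-1)}$ commutes with the complex structure $J_{\BC^0(\eta)}$ (right multiplication by $\jb$) for every $\eta\in\E_\d$; here $\BC^0(\eta)$ is a standard orthogonal basis for the skew-Hermitian form $(\cdot,\cdot)_\eta$ by \eqref{orthonormal-basis-eta-sp-pq}.

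By Remark \ref{structure-of-centralizer-sppq} the reductive centralizer splits as a product over $\N_\d$ of the groups ${\rm SU}(V^0(\theta),\<>)={\rm Sp}(p_\theta,q_\theta)$ for $\theta\in\O_\d$ and ${\rm SO}^*(L(\eta-1),(\cdot,\cdot)_\eta)$ for $\eta\in\E_\d$. On each odd factor, preserving $V^0_\pm(\theta)$ cuts out the maximal compact subgroup ${\rm Sp}(p_\theta)\times{\rm Sp}(q_\theta)$ by the preceding lemma on $\widetilde K_{\widetilde\BC}$; on each even factor, commuting with $J_{\BC^0(\eta)}$ cuts out the maximal compact subgroup $K_{\BC^0(\eta)}\cong{\rm U}(t_\eta)$ by Lemma \ref{max-cpt-so*2n}. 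Hence $K'$ is a maximal compact subgroup of $\ZC_{{\rm Sp}(p,q)}(X,H,Y)$, and it remains only to prove $K=K'$.

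For the even factors the mechanism is a short quaternionic computation. Writing $[g|_{L(\eta-1)}]_{\BC^0(\eta)}=(g_{kj})$ and using that $g$ is $\H$-linear and commutes with $X$, one gets $g(w^\eta_{j0})=\frac{1}{\sqrt 2}\sum_k\bigl(v^\eta_k g_{kj}+X^{\eta-1}v^\eta_k g_{kj}\jb\bigr)$. Since $v^\eta_k\in V_{H,1-\eta}$ and $X^{\eta-1}v^\eta_k\in V_{H,\eta-1}$ lie in distinct $H$-weight spaces and $g$ preserves weight spaces, matching components shows that $g(w^\eta_{j0})\in V^0(\eta)$ if and only if $g_{kj}\jb=\jb g_{kj}$ for all $k,j$, i.e.\ exactly when $g|_{L(\eta-1)}$ commutes with $J_{\BC^0(\eta)}$; and in that case $[g|_{V^0(\eta)}]_{\CC^0(\eta)}=(g_{kj})$. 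By Lemma \ref{max-cpt-so*2n}(2) this common matrix has the form $A_\eta+\jb B_\eta$ with $A_\eta+\sqrt{-1}B_\eta\in{\rm U}(t_\eta)$. Propagating across $l$ by the constancy clause of Lemma \ref{reductive-part-comp-sp-pq}, and noting $V^l_{(-1)^{l+1}}(\eta)=V^l(\eta)$ and $\CC^l_{(-1)^{l+1}}(\eta)=\CC^l(\eta)$, converts this into conditions (1) (for $d=\eta$) and (2), and conversely.

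For the odd factors I would argue that $g$ preserves $V^0_\pm(\theta)$ if and only if it satisfies (1) (for $d=\theta$) together with (3) when $\theta\in\O^1_\d$ and (4) when $\zeta\in\O^3_\d$; this is the quaternionic-Hermitian analogue of the orthogonal computation of \cite[Lemma 4.19]{BCM} reused in Lemma \ref{max-cpt-centralizer-sopq}, the blocks $C_\theta\in{\rm Sp}(p_\theta)$ and $D_\theta\in{\rm Sp}(q_\theta)$ being the restrictions $g|_{V^0_+(\theta)}$ and $g|_{V^0_-(\theta)}$. I expect the main obstacle to be the sign bookkeeping for these odd factors: one must track, as $l$ runs from $0$ to $\theta-1$, whether each $w^\theta_{jl}$ lands in the positive or the negative part of $\<>$, which flips with the parity of $l$ and with the residue of $\theta$ modulo $4$ (hence the $(-1)^l$ versus $(-1)^{l+1}$ patterns and the crossover at the middle index $l=(\theta-1)/2$ that distinguishes $\O^1_\d$ from $\O^3_\d$). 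Once this is reconciled with the orthogonality relations of the $w$-basis, the equality $K=K'$ follows and the lemma is proved.
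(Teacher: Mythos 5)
Your proposal is correct and takes essentially the same route as the paper: the paper's entire proof of this lemma is the remark that it is ``similar to that of Lemma \ref{max-cpt-centralizer-sopq}'', and your argument is precisely that template transported to the quaternionic setting --- an auxiliary group $K'$ cut out by invariance of $V^0_\pm(\theta)$ for $\theta\in\O_\d$ and commutation with ${\rm J}_{\BC^0(\eta)}$ on $L(\eta-1)$ for $\eta\in\E_\d$, identified as a maximal compact subgroup through the product structure of $\ZC_{{\rm Sp}(p,q)}(X,H,Y)$, Lemma \ref{max-cpt-so*2n} and the unlabeled lemma on $\widetilde{K}_{\widetilde{\BC}}$, followed by the verification $K=K'$.

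One caution, for which your own computation already contains the fix: do not ``propagate across $l$'' for $\eta\in\E_\d$ by citing the constancy clause of Lemma \ref{reductive-part-comp-sp-pq}. Read literally for even $d$, that lemma asserts that \emph{every} element of $\ZC_{{\rm Sp}(p,q)}(X,H,Y)$ preserves the $\H$-span $V^l(\eta)$; but your weight-space matching shows that preserving $V^0(\eta)$ forces the entries $g_{kj}$ to commute with $\jb$, so taken at face value that clause would make the even factor of the centralizer equal to ${\rm U}(t_\eta)$, contradicting Proposition \ref{centralizer-sl2-triple-sp-pq}, where that factor is the non-compact group ${\rm SO}^*(2t_\eta)$ once $t_\eta\geq 2$. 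In other words, for $d\in\E_\d$ the constancy clause is reliable only on the ${\rm J}_{\BC^0(\eta)}$-commuting elements --- exactly the elements you are handling --- and the clean way to obtain constancy across $l$ is to repeat, for each $l$, the same matching you carried out at $l=0$; it works verbatim for the vectors $w^\eta_{jl}$ and gives $\big[g|_{V^l(\eta)}\big]_{\CC^l(\eta)}=(g_{kj})$ for every $l$. With that substitution your proof is complete and self-contained; the odd-factor bookkeeping you describe (the $(-1)^l$ patterns and the crossover at $l=(\theta-1)/2$ separating $\O^1_\d$ from $\O^3_\d$) is indeed all that remains, and it goes through exactly as in \cite[Lemma 4.19]{BCM}.
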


\begin{proof}
The proof of the lemma is similar to that of Lemma \ref{max-cpt-centralizer-sopq}.
\end{proof}

We need some more  notation to state Theorem \ref{max-cpt-sp-pq-wrt-onb}. For $d \,\in\, \N_\d$, define
$$
\CC_+ (d) \,:=\, \CC^0_+ (d) \vee \cdots \vee  \CC^{d-1}_+ (d) \ \text{ and } \ 
\CC_- (d) \,:=\, \CC^0_- (d) \vee \cdots \vee  \CC^{d-1}_- (d).
$$
Let $\alpha \,:=\, \# \E_\d $,  $\beta \,:=\, \# \O^1_\d$ and $ \gamma \,:=\, \# \O^3_\d $.
We enumerate $\E_\d =\{ \eta_i \,\mid\, 1 \,\leq\, i \,\leq\, \alpha \}$ such that
$\eta_i \,<\, \eta_{i+1}$, 
$\O^1_\d \,=\,\{ \theta_j \,\mid\, 1 \,\leq\, j \,\leq\, \beta \}$ such that $\theta_j
\,<\, \theta_{j+1}$ and similarly
$\O^3_\d \,=\,\{ \zeta_j \,\mid\, 1 \,\leq \,j \,\leq\, \gamma \}$ such that
$\zeta_j \,<\, \zeta_{j+1}$. Now define
$$
\EC_+ := \CC_+ (\eta_1) \vee \cdots \vee \CC_+ (\eta_{\alpha})\, ; \ \  \,
\OC^1_+ := \CC_+ (\theta_1) \vee \cdots \vee \CC_+ (\theta_{\beta})\, ; \ \ \,\OC^3_+ := \CC_+ (\zeta_1) \vee \cdots \vee \CC_+ (\zeta_{\gamma});
$$
$$
\EC_- := \CC_- (\eta_1) \vee \cdots \vee \CC_- (\eta_{\alpha}) ; \ 
\OC^1_- := \CC_- (\theta_1) \vee \cdots \vee  \CC_- (\theta_{\beta})\, \text{ and } \,\OC^3_- := \CC_- (\zeta_1) \vee \cdots \vee  \CC_- (\zeta_{\gamma})  .
$$
Finally we define
\begin{equation}\label{orthogonal-basis-sp-pq-final}
 \HC_+ := \EC_+ \vee \OC^1_+ \vee\OC^3_+ , \ \ \HC_- := \EC_- \vee \OC^1_- \vee\OC^3_- \ \text{ and } \
\HC := \HC_+ \vee \HC_-.
\end{equation}
It is clear that $\HC$ is a standard orthogonal basis with
$\HC_+ \,=\, \{ v \,\in\, \HC \,\mid \langle v,\, v \rangle \,=\,1 \}$ and 
$\HC_- \,=\, \{ v \,\in\, \HC \,\mid\, \langle v,\, v \rangle \,=\,-1 \}$. In
particular, $\# \HC_+ \,=\, p$ and $\# \HC_- \,=\,q$.

For a complex matrix $A\in {\rm M}_m(\C)$, let ${\rm Re\,}A \in {\rm M}_m(\R)$ denote the real part of $A$ and ${\rm Im\,}A\in {\rm M}_m(\R)$ denote the imaginary part of $A$. Thus 
$A = {\rm Re\,}A + \sqrt{-1}\,{\rm Im\,}A $. The $\R$-algebra
$$
\prod_{i=1}^\alpha  {\rm M}_{t_{\eta_i}} (\C) \times \prod_{j=1}^\beta
\big( {\rm M}_{p_{\theta_j}} (\H)\times {\rm M}_{q_{\theta_j}} (\H) \big)
\times \prod_{k=1}^\gamma \big( {\rm M}_{p_{\zeta_k}} (\H) \times {\rm M}_{q_{\zeta_k}} (\H) \big)
$$
is embedded into ${\rm M}_p(\H)$ and ${\rm M}_q(\H)$ in the following two ways:
\begin{align}\label{map-Dp-sp-pq}
\Db_p \colon \prod_{i=1}^\alpha  &{\rm M}_{t_{\eta_i}} (\C)  \times  \prod_{j=1}^\beta  \big( {\rm M}_{p_{\theta_j}} (\H) \times {\rm M}_{q_{\theta_j}} (\H) \big) \times \prod_{k=1}^\gamma \big( {\rm M}_{p_{\zeta_k}} (\H) \times {\rm M}_{q_{\zeta_k}} (\H) \big) \, \longrightarrow \, {\rm M}_p(\H)  
\\
  \big(\,A_{\eta_1},\,\,  &\dotsc ,   A_{\eta_{\alpha}}\,; 
  \, C_{\theta_1},\, D_{\theta_1} ,\, \dotsc\, ,  C_{\theta_\beta},\, D_{\theta_\beta}\,;\, E_{\zeta_1},\,  F_{\zeta_1},\, \dotsc\, ,  E_{\zeta_\gamma},\, F_{\zeta_\gamma} \,  \big)  \nonumber  \\
 \longmapsto &\, \bigoplus_{i=1}^\alpha  \big( {\rm Re\,}A_{\eta_i} +\jb  \,{\rm Im\,}A_{\eta_i} \big)_\blacktriangle^{\eta_i /2}
 \oplus \bigoplus_{j=1}^\beta \Big( \big( C_{\theta_j} \oplus D_{\theta_j}\big)_\blacktriangle ^{\frac{\theta_j-1}{4}} \oplus C_{\theta_j} \oplus \big( C_{\theta_j} \oplus D_{\theta_j}\big)_\blacktriangle ^{\frac{\theta_j-1}{4}} \Big)    \nonumber \\
      \oplus &\, \bigoplus_{k=1}^\gamma \Big( \big( E_{\zeta_k}\oplus F_{\zeta_k} \big) _\blacktriangle ^{\frac{\zeta_k + 1}{4}}  \oplus \big( F_{\zeta_k}\oplus E_{\zeta_k} \big) _\blacktriangle ^{\frac{\zeta_k -3}{4}}  \oplus F_{\zeta_k} \Big) ,\,   \nonumber
\end{align}
and 
\begin{align}\label{map-Dq-sp-pq}
\Db_q \colon \prod_{i=1}^\alpha & {\rm M}_{t_{\eta_i}} (\C)  \times \prod_{j=1}^\beta   \big( {\rm M}_{p_{\theta_j}} (\H) \times {\rm M}_{q_{\theta_j}} (\H) \big) \times \prod_{k=1}^\gamma \big( {\rm M}_{p_{\zeta_k}} (\H) \times {\rm M}_{q_{\zeta_k}} (\H) \big) \, \longrightarrow \, {\rm M}_p(\H)  
\\
  \big(\,A_{\eta_1} \,\,,&  \dotsc ,   A_{\eta_{\alpha}}  \,; 
 \,  C_{\theta_1}, \,D_{\theta_1} ,\, \dotsc\, ,  C_{\theta_\beta}, \, D_{\theta_\beta}\,;\, E_{\zeta_1},\, F_{\zeta_1},\, \dotsc\, ,  E_{\zeta_\gamma}, \, F_{\zeta_\gamma} \,  \big)    \nonumber\\
 \longmapsto& \,\bigoplus_{i=1}^\alpha  \big( {\rm Re\,}A_{\eta_i} +\jb  \,{\rm Im\,}A_{\eta_i} \big)_\blacktriangle^{\eta_i /2}
 \oplus \bigoplus_{j=1}^\beta \Big( \big( D_{\theta_j} \oplus C_{\theta_j}\big)_\blacktriangle ^{\frac{\theta_j-1}{4}} \oplus D_{\theta_j} \oplus \big( D_{\theta_j} \oplus C_{\theta_j}\big)_\blacktriangle ^{\frac{\theta_j-1}{4}} \Big)    \nonumber\\
  \oplus & \bigoplus_{k=1}^\gamma \Big( \big( F_{\zeta_k}\oplus E_{\zeta_k} \big) _\blacktriangle ^{\frac{\zeta_k + 1}{4}}  \oplus \big( E_{\zeta_k}\oplus F_{\zeta_k} \big) _\blacktriangle ^{\frac{\zeta_k -3}{4}}  \oplus E_{\zeta_k} \Big)\nonumber .
\end{align}

 Let $\Lambda_\HC \,\colon\, {\rm End}_\H \H^{n} \,\longrightarrow\, {\rm M}_n (\H)$ be the isomorphism of $\R$-algebras induced by the ordered basis $\HC$ in \eqref{orthogonal-basis-sp-pq-final}. 
 Let $M$ be the maximal compact subgroup of ${\rm Sp}(p,q)$ which leaves invariant simultaneously the two subspace spanned by $\HC_+$ and $\HC_-$. Clearly, $\Lambda_\HC (M) = {\rm Sp} (p) \times {\rm Sp} (q)$. 

\begin{theorem}\label{max-cpt-sp-pq-wrt-onb} 
Let $X \in \NC _{\s\p (p,q)}$, $\Psi_{{\rm Sp} (p,q)}(\OC_X) \,=\, ( \d ,\, \sgn_{\OC_X} )$.
Let $\alpha \,:=\, \# \E_\d $,  $\beta \,:=\, \# \O^1_\d$ and $ \gamma \,:=\, \# \O^3_\d $.
Let $\{X,H,Y\}$ be a $\s\l_2(\R)$-triple in $\s\p (p,q)$ and 
$(p_\theta,\, q_\theta)$ be the signature of the form $(\cdot, \cdot)_\theta$, for all $\theta \,\in\, \O_\d$.
Let $K$ be the maximal compact subgroup of $\ZC_{{\rm Sp} (p,q)} ( X, H, Y)$ as in Lemma \ref{max-cpt-sppq}. 
Let  the maps $ \Db_p$ and $\Db_q$ be defined as in \eqref{map-Dp-sp-pq} and  \eqref{map-Dq-sp-pq}, respectively. 
Then  $\Lambda_\HC (K) \,\subset \, {\rm Sp} (p) \times {\rm Sp} (q)$ is given by 
\begin{align*} 
\Lambda_\HC(K)\,=\,
 \Bigg\{ \Db_p (g) \oplus \Db_q(g) \,  \Biggm| \, \begin{array}{c}
                       g \in \prod_{i=1}^\alpha {\rm U}(t_{\eta_i})  \times \prod_{j=1}^\beta \big( {\rm Sp}(p_{\theta_j}) \times {\rm Sp}(q_{\theta_j}) \big) \vspace{.12cm}\\
                     \times \prod_{k=1}^\gamma \big( {\rm Sp}(p_{\zeta_k}) \times {\rm Sp}(q_{\zeta_k})\big)
                                        \end{array} \Bigg\}.
\end{align*}
Moreover, the nilpotent orbit $\OC_X$ in $\s\p(p,q)$ is homotopic to $({\rm Sp}(p) \times{\rm Sp}(q))/\Lambda_\HC(K)$.
\end{theorem}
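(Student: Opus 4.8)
The plan is to prove the two assertions separately, following the template set in the proofs of Theorems \ref{max-cpt-sp-nC-wrt-basis} and \ref{max-cpt-wrt-onb-sop}. The first assertion is purely a matrix computation. I would fix an element $g$ of the group $K$ described in Lemma \ref{max-cpt-sppq} and compute $\Lambda_\HC(g) = [g]_\HC$ with respect to the ordered basis $\HC$ of \eqref{orthogonal-basis-sp-pq-final}. By condition (1) of Lemma \ref{max-cpt-sppq} the element $g$ preserves every $V^l_+(d)$ and $V^l_-(d)$, hence it leaves invariant the two $\H$-subspaces $V_+ := {\rm Span}_\H \HC_+$ and $V_- := {\rm Span}_\H \HC_-$; consequently $[g]_\HC$ is block-diagonal for the splitting $\HC = \HC_+ \vee \HC_-$, and it suffices to identify $[g|_{V_+}]_{\HC_+}$ with $\Db_p(g)$ and $[g|_{V_-}]_{\HC_-}$ with $\Db_q(g)$, where $g$ is recorded through its parametrizing data $\big(A_{\eta_i}; C_{\theta_j}, D_{\theta_j}; E_{\zeta_k}, F_{\zeta_k}\big)$ ranging over $\prod_i {\rm U}(t_{\eta_i}) \times \prod_j ({\rm Sp}(p_{\theta_j}) \times {\rm Sp}(q_{\theta_j})) \times \prod_k ({\rm Sp}(p_{\zeta_k}) \times {\rm Sp}(q_{\zeta_k}))$.

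The heart of the computation is the bookkeeping across the three families $\E_\d$, $\O^1_\d$ and $\O^3_\d$ separately. For $\eta \in \E_\d$, Remark \ref{structure-of-centralizer-sppq} together with Lemma \ref{max-cpt-so*2n} identifies $g|_{L(\eta-1)}$ with an element of ${\rm SO}^*(L(\eta-1), (\cdot,\cdot)_\eta)$ whose matrix in $\BC^0(\eta)$ is $A_\eta + \jb B_\eta$ with $A_\eta + \sqrt{-1}B_\eta \in {\rm U}(t_\eta)$; condition (2), via the relations $[g|_{V^l_{(-1)^{l+1}}(\eta)}]_{\CC^l_{(-1)^{l+1}}(\eta)} = A_\eta + \jb B_\eta$, propagates this block through all $\eta/2$ of the odd ($+$) levels and all $\eta/2$ of the even ($-$) levels, producing the $\eta_i/2$-fold blocks $({\rm Re}\,A_{\eta_i} + \jb\,{\rm Im}\,A_{\eta_i})_\blacktriangle^{\eta_i/2}$ that occur identically in $\Db_p$ and $\Db_q$. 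For $\theta \in \O^1_\d$ and $\zeta \in \O^3_\d$, Remark \ref{structure-of-centralizer-sppq} puts the restrictions into ${\rm Sp}(p_\bullet) \times {\rm Sp}(q_\bullet)$, and conditions (3)--(4) — through the signs $(-1)^l$ and $(-1)^{l+1}$ — dictate at each level $l$ whether the positive block $C_\bullet$ (resp. $E_\bullet$) or the negative block $D_\bullet$ (resp. $F_\bullet$) is recorded into $\HC_+$ versus $\HC_-$. Collecting these contributions in the order prescribed by \eqref{orthogonal-basis-sp-pq-final} yields exactly the block patterns of $\Db_p$ and $\Db_q$; the interchange of $E_{\zeta_k}$ and $F_{\zeta_k}$ in the middle summand of the $\O^3$-part is forced precisely by the distinguished middle level $l = (\zeta-1)/2$, where $w^\zeta_{j,(\zeta-1)/2} = X^{(\zeta-1)/2}v^\zeta_j\,\sqrt{-1}$ lies in the opposite sign-subspace from its $\O^1$-counterpart.

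For the homotopy assertion I would invoke Theorem \ref{mostow-corollary}. Let $G = {\rm Sp}(p,q)$ and let $M$ be the maximal compact subgroup of $G$ containing $K$ that simultaneously preserves $V_+$ and $V_-$; by construction $\Lambda_\HC(M) = {\rm Sp}(p) \times {\rm Sp}(q)$. By Theorem \ref{mostow-corollary} the orbit $\OC_X \cong G/\ZC_G(X)$ admits $M/K$ as a deformation retract, and since $\Lambda_\HC$ restricts to a Lie group isomorphism carrying $M$ onto ${\rm Sp}(p) \times {\rm Sp}(q)$ and $K$ onto $\Lambda_\HC(K)$, one obtains $M/K \cong ({\rm Sp}(p) \times {\rm Sp}(q))/\Lambda_\HC(K)$, giving the stated homotopy type.

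The main obstacle, and the only genuinely computational part, is the sign- and position-bookkeeping of the second paragraph: one must check that the parity-dependent choices $(-1)^l$ of Lemma \ref{max-cpt-sppq}, combined with the explicit expressions for $w^d_{jl}$, distribute the positive blocks ($C,E$) and the negative blocks ($D,F$) into precisely the slots of $\Db_p$ and $\Db_q$, with the $\O^1_\d$ and $\O^3_\d$ families differing only through the middle level. This is a careful but entirely mechanical verification, wholly analogous to the one carried out for $\s\o(p,q)$ in Lemma \ref{max-cpt-centralizer-sopq} and \cite[Lemma 4.19]{BCM}.
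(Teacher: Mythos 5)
Your proposal is correct and takes essentially the same approach as the paper: the paper's own proof of the first assertion is precisely the computation you describe (writing the matrices of the elements of $K$ from Lemma \ref{max-cpt-sppq} with respect to the ordered basis $\HC$ in \eqref{orthogonal-basis-sp-pq-final}), and the homotopy statement is obtained, exactly as you do, from Theorem \ref{mostow-corollary} together with the identification $\Lambda_\HC(M)\,=\,{\rm Sp}(p)\times{\rm Sp}(q)$ for the maximal compact subgroup $M$ preserving the spans of $\HC_+$ and $\HC_-$. One small slip in your aside on the middle level of the $\O^3_\d$-blocks: in the $\s\p(p,q)$ construction the middle vector is $w^{\zeta}_{j,(\zeta-1)/2}\,=\,X^{(\zeta-1)/2}v^{\zeta}_j$ with no $\sqrt{-1}$ factor (that factor belongs to the $\s\p(n,\C)$ case, \eqref{orthogonal-basis-V-zeta-sp-n-R}); the sign reversal forcing the interchange of $E_{\zeta_k}$ and $F_{\zeta_k}$ comes instead from $\langle X^m v,\, X^m v\rangle \,=\,(-1)^m (v,\,v)_\zeta$ with $m\,=\,(\zeta-1)/2$ odd when $\zeta\,\equiv\, 3 \pmod 4$.
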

 
\begin{proof}
This follows by writing the matrices of the elements of the maximal compact subgroup $K$ in Lemma 
\ref{max-cpt-sppq} with respect to the ordered basis $\HC$ as in \eqref{orthogonal-basis-sp-pq-final}.

The second part follows from Theorem \ref{mostow-corollary} and the well-known fact that any maximal compact subgroup of ${\rm Sp}(p,q)$ is isomorphic to ${\rm Sp}(p)\times {\rm Sp}(q)$.
\end{proof}

\section*{Appendix}
 Let $ G $ be a real Lie group with Lie algebra $\g$. We further assume that the Lie algebra $ \g $ is simple and of classical type.
Here we give the descriptions of the reductive part of the centralizers in $G$ of nilpotent elements in $\g$ when $\g$ is
a subalgebra of a matrix algebra over $\H$.  
When $\g$ is isomorphic to a complex simple Lie algebra or one of the Lie algebras  $\g\l_n(\R)$ or $\u(p,q)$ or $\o(p,q)$ or $\s\p(n,\R)$ such descriptions 
are due to  Springer and Steinberg; see \cite[1.8,p.E-85; 2.25,p.E-95]{SS} and \cite[Theorem 6.1.3]{CoM}.
However, when the classical simple Lie algebras are matrix subalgebras with entries from $\H$, we are unable to locate such descriptions in the existing literature. Further, as mentioned in \cite{BCM}, the noncommutativity of $\H$ creates technical difficulties 
and does not allow direct 
extensions of the results of \cite{SS} to the case of simple Lie algebras involving $\H$. 
In \cite{BCM}  the description of the reductive part, as above, was needed in the case when $\g$ simple matrix Lie algebra involving $\H$. The reasons mentioned above together with the foregoing
requirements in \cite{BCM} led us to doing the computations.
The first key point in our computations is the well-known fact
(see \cite[Lemma 3.7.3, p. 50]{CoM})
 that the centralizer of a $\s \l_2(\R)$-triple containing a nilpotent element constitutes a reductive part of the centralizer of the nilpotent element itself.
In view of the  Jacobson–Morozov theorem and the above result  we use \cite[Lemma 4.4]{BCM} which was deduced   applying the basics of $\s\l_2(\R)$-representation theory. 
The above method was indicated  in  \cite[Section 3.1–3.3]{M} and in \cite[Section 9.3]{CoM}.  The  advantage
of this method lies in the uniform manner it deals with all the cases of simple classical Lie algebras involving $\R, \C$ and $\H$. 
We record these computations below and these results should be viewed as complementary to those
in \cite[1.8,p.E-85; 2.25,p.E-95]{SS} and in \cite[Theorem 6.1.3]{CoM}.

In what follows we will use the notation as defined in \S \ref{sec-notation}, and in particular, we will employ
the symbols $\N_\d$, $\E_\d$ and $\O_\d$ as given in \eqref{Nd-Ed-Od}.

\begin{proposition}\label{centralizer-sl2-triple-sl-n-H}
 	Let $X\in \s\l_n(\H) $ be a	non-zero nilpotent element, and $ \{X,H,Y\} $ be a $ \s\l_{2}(\R)$-triple in $\s\l_n(\H) $. Let the nilpotent orbit $ \OC_X$ corresponds to the partition $ \d\in \PC(n) $. Then
 	$$
 	\ZC_{{\rm SL}_n(\H)}(X,H,Y) \simeq  S\Big( \prod_{d \in \N_\d} {\rm GL}_{t_d}(\H)_\Delta^{d}\Big)	\,. 
 	$$
 \end{proposition}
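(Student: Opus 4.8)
The plan is to analyze the centralizer first inside ${\rm GL}(V)$, where $V := \H^n$ is the right $\H$-vector space of column vectors, and only afterwards impose the reduced-norm condition that cuts out ${\rm SL}_n(\H)$. Regard $V$ as a module over $\text{Span}_\R\{X,H,Y\} \simeq \s\l_2(\R)$ and decompose it into its isotypical components $V = \bigoplus_{d \in \N_\d} M(d-1)$ as in \eqref{isotypicalcomp}. As an $\s\l_2(\R)$-module each $M(d-1)$ is a sum of $t_d$ copies of the irreducible of $\R$-dimension $d$, and $L(d-1) = V_{Y,0} \cap M(d-1)$ is the $\H$-space of lowest weight vectors, with $\dim_\H L(d-1) = t_d$.

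First I would record, using \cite[Lemma 4.4]{BCM} together with elementary $\s\l_2(\R)$-representation theory, that the restriction map $g \mapsto (g|_{L(d-1)})_{d \in \N_\d}$ is an isomorphism from $\ZC_{{\rm GL}(V)}(X,H,Y)$ onto $\prod_{d \in \N_\d} {\rm GL}(L(d-1))$; its inverse sends a tuple $(A_d)_d$ to the centralizing element determined by $g(X^l v) := X^l A_d v$ for $v \in L(d-1)$ and $0 \le l \le d-1$. In particular $\ZC_{{\rm GL}(V)}(X,H,Y) \simeq \prod_{d \in \N_\d} {\rm GL}_{t_d}(\H)$. Concretely, with respect to the ordered basis $\BC$ of \eqref{old-ordered-basis}, the relation $g X^l v = X^l g v$ shows that $g$ acts on each layer $X^l L(d-1)$ by the \emph{same} matrix $A_d$, so $[g]_\BC = \Db_{{\rm SL}(V)}(A_{d_1}, \ldots, A_{d_s})$ with $\Db_{{\rm SL}(V)}$ as in \eqref{map-D-SL}; that is, the block of $[g]_\BC$ acting on $M(d-1)$ equals $(A_d)_\blacktriangle^{d}$.

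Next I would impose the condition ${\rm Nrd}_{{\rm M}_n(\H)}(g) = 1$ defining ${\rm SL}_n(\H)$. Using multiplicativity of the reduced norm over block-diagonal matrices together with the shape of $[g]_\BC$, one obtains
$$
{\rm Nrd}_{{\rm M}_n(\H)}(g) \,=\, \prod_{d \in \N_\d} {\rm Nrd}_{{\rm M}_{t_d}(\H)}(A_d)^{d}.
$$
Thus the centralizer inside ${\rm SL}_n(\H)$ consists of precisely those $(A_d)_d$ satisfying $\prod_{d} {\rm Nrd}(A_d)^{d} = 1$. Finally, identifying each ${\rm GL}_{t_d}(\H)$ with its diagonal copy ${\rm GL}_{t_d}(\H)^d_\Delta \subset {\rm GL}_{t_d}(\H)^d$ and unwinding the definitions of $S(\cdot)$ and $\mathfrak{d}_V = {\rm Nrd}$, this last condition is exactly the one defining $S\big(\prod_{d \in \N_\d} {\rm GL}_{t_d}(\H)^d_\Delta\big)$, because along the diagonal the product of the $d$ reduced norms of the copies of $A_d$ is ${\rm Nrd}(A_d)^{d}$. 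This yields the asserted isomorphism.

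The step I expect to be the main obstacle is the reduced-norm bookkeeping: one must confirm that ${\rm Nrd}$ is the correct invariant in the noncommutative setting (rather than a naive determinant), verify its multiplicativity on the block form $(A_d)_\blacktriangle^{d}$ so as to produce the exponent $d$, and then match this exponent against the $d$-fold diagonal embedding built into the $S(\cdot)$ notation. The underlying representation-theoretic input is routine once \cite[Lemma 4.4]{BCM} is invoked.
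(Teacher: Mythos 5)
Your proposal is correct and follows essentially the same route as the paper: the paper's entire proof is the one-line citation of \cite[Lemma 4.4(2)]{BCM}, and your argument simply unpacks that lemma --- the $\s\l_2(\R)$-isotypical decomposition identifying $\ZC_{{\rm GL}(V)}(X,H,Y)$ with $\prod_{d \in \N_\d} {\rm GL}(L(d-1))$, followed by the reduced-norm condition $\prod_d {\rm Nrd}(A_d)^d = 1$ matching the $S\big(\prod_d {\rm GL}_{t_d}(\H)^d_\Delta\big)$ notation. Your bookkeeping of ${\rm Nrd}$ on the block form $(A_d)_\blacktriangle^d$ and its match with the $d$-fold diagonal embedding is exactly right, so there is no gap.
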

 
\begin{proof}
 The proof follows from \cite[Lemma 4.4(2)]{BCM}.
\end{proof}

\begin{proposition}\label{centralizer-sl2-triple-so*}
Let $X\in \s\o^*(2n) $ be a	non-zero nilpotent element, and $ \{X,\,H,\,Y\} $ be a $ \s\l_{2}(\R) $-triple in
$\s\o^*(2n) $. Let the nilpotent orbit $ \OC_X$ corresponds to the signed Young diagram of size $n$.
Let $ p_\eta$ (respectively, $ q_\eta)$ be the number of $ +1 $  (respectively, $ -1 $) in the $ 1^{\rm st} $ column of the block of size $ t_\eta \times \eta $ for $ \eta\in \E_\d$.
Then
$$
\ZC_{{\rm SO}^*(2n)}(X,H,Y) \simeq \prod_{\theta \in \O_\d} {\rm SO^*}(2t_\theta) \times  \prod_{\eta \in \E_\d} {\rm Sp}(p_\eta, q_\eta)	\,. 
$$
\end{proposition}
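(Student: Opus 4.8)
The proof hinges on the general machinery already set up in the paper, so the plan is to reduce the statement about $\s\o^*(2n)$ to the representation-theoretic description of $\ZC_G(X,H,Y)$ afforded by \cite[Lemma 4.4]{BCM}. First I would recall that, by the Jacobson--Morozov theorem (Theorem \ref{Jacobson-Morozov-alg}) together with \cite[Lemma 3.7.3, p.~50]{CoM}, the centralizer $\ZC_{{\rm SO}^*(2n)}(X,H,Y)$ is a reductive part of the centralizer of $X$, and that \cite[Lemma 4.4]{BCM} identifies it as the group of $\D$-linear automorphisms of $V=\H^n$ that commute with the whole $\s\l_2(\R)$-triple and preserve the skew-Hermitian form $\<>$. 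The standard move is to decompose $V$ into isotypical components $V=\bigoplus_{d\in\N_\d}M(d-1)$ as in \eqref{isotypicalcomp}; an element commuting with the triple is determined by its restriction to the lowest-weight spaces $L(d-1)$, and it preserves $\<>$ if and only if the induced map preserves the form $(\cdot,\cdot)_d$ of \eqref{new-form} on each $L(d-1)$.

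The crux is then to identify the signature/type of the form $(\cdot,\cdot)_d$ on $L(d-1)$ for each $d\in\N_\d$. Here I would invoke \cite[Remark A.7, Proposition A.6]{BCM}: for the skew-Hermitian ambient form on $\H^n$, the induced form $(v,u)_d=\langle v,X^{d-1}u\rangle$ on $L(d-1)$ is skew-Hermitian when $d$ is \emph{odd} and Hermitian when $d$ is \emph{even}. (The parity swap relative to, say, the $\s\p(n,\C)$ case comes from the factor $X^{d-1}$ interacting with the $\epsilon=-1$ of the skew-Hermitian form.) Consequently the centralizer factor attached to an odd part $\theta\in\O_\d$ is the full isometry group of a nondegenerate skew-Hermitian $\H$-form on the $t_\theta$-dimensional space $L(\theta-1)$, which by our conventions is ${\rm SO}^*(2t_\theta)$; and the factor attached to an even part $\eta\in\E_\d$ is the isometry group of a nondegenerate Hermitian $\H$-form on $L(\eta-1)$, namely some ${\rm Sp}(p_\eta,q_\eta)$.

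What remains is to pin down the integers $(p_\eta,q_\eta)$ in terms of the signed Young diagram data. The plan is to read off the signature of the Hermitian form $(\cdot,\cdot)_\eta$ on $L(\eta-1)$ directly from the first column of the $t_\eta\times\eta$ block: the entry $m^\eta_{i1}=+1$ (respectively $-1$) records whether $(v^\eta_i,v^\eta_i)_\eta$ is positive (respectively negative), so $p_\eta$ and $q_\eta$ count the $+1$'s and $-1$'s in that column, exactly as stated. This is the same bookkeeping used to define $\sgn_{\OC_X}$ in the $\s\p(p,q)$ subsection, and it is justified by the construction of the adapted basis $(v^\eta_1,\dots,v^\eta_{t_\eta})$ satisfying the normalization in the spirit of \eqref{orthonormal-basis-theta-sp-pq}. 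Finally, because the product of the restricted automorphisms need not individually have reduced norm $1$ but the triple-centralizer sits inside ${\rm SL}_n(\H)$, I would check that no global determinant-type constraint survives: over $\H$ the reduced norm of any element of ${\rm SO}^*$ or ${\rm Sp}(p,q)$ is automatically $1$, so the ``$S(\cdots)$'' condition is vacuous here and the centralizer is the honest direct product $\prod_{\theta\in\O_\d}{\rm SO}^*(2t_\theta)\times\prod_{\eta\in\E_\d}{\rm Sp}(p_\eta,q_\eta)$. The main obstacle I anticipate is the clean verification of the parity rule for the induced form and the signature count, since the noncommutativity of $\H$ obstructs a direct transfer from the real and complex cases; this is precisely the technical point the appendix exists to settle, and I would handle it by tracing through \cite[Proposition A.6, Remark A.7, Remark A.8(3)]{BCM} rather than by re-deriving the $\s\l_2$-representation theory from scratch.
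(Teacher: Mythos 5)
Your proposal is correct and follows essentially the same route as the paper: the paper's proof likewise records that the induced form $(\cdot,\cdot)_d$ of \eqref{new-form} is Hermitian for $d$ even and skew-Hermitian for $d$ odd, identifies the signature of $(\cdot,\cdot)_\eta$ with $(p_\eta,q_\eta)$ read off the signed Young diagram, and then cites \cite[Lemma 4.4(4)]{BCM} for the product decomposition. Your additional observations (the reduction to lowest-weight spaces behind that lemma, and the fact that reduced norms are automatically $1$ so no $S(\cdots)$-type constraint survives) are accurate elaborations of what the cited lemma already encapsulates.
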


\begin{proof}
Recall that for $d \in \N_\d$, the form $(\cdot,\cdot)_d$ as in \eqref{new-form} is Hermitian or skew-Hermitian according as $d$ is even or odd.
For the nilpotent element $X$, the signature of $(\cdot,\cdot)_\eta$ on $L(\eta-1)$ is $(p_\eta,q_\eta)$ for $\eta \in \E_\d$. Now the proof follows from \cite[Lemma 4.4(4)]{BCM}.
\end{proof}
 
\begin{proposition}\label{centralizer-sl2-triple-sp-pq}
Let $X\in \s\p(p,q) $ be a	non-zero nilpotent element, and $ \{X,H,Y\} $ be a $ \s\l_{2} (\R)$-triple in $\s\p(p,q) $. Let the nilpotent orbit $ \OC_X$ corresponds to the signed Young diagram of signature $(p,\,q)$, where
$ p_\theta$ (respectively, $ q_\theta)$ denotes the number of $ +1 $ (respectively, $ -1 $) in the $ 1^{\rm st} $ column of the block of size $t_\theta \times \theta $ for $ \theta\in \O_\d$. Then
	$$
	\ZC_{{\rm Sp}(p,q)}(X,H,Y) \simeq \prod_{\eta \in \E_\d} {\rm SO^*}(2t_\eta) \times  \prod_{\theta \in \O_\d} {\rm Sp}(p_\theta, q_\theta)	\,. 
	$$
\end{proposition}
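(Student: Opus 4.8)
The plan is to mirror the proofs of Propositions \ref{centralizer-sl2-triple-sl-n-H} and \ref{centralizer-sl2-triple-so*} and reduce the statement to \cite[Lemma 4.4]{BCM}, which packages the relevant $\s\l_2(\R)$-representation theory for matrix algebras over $\H$. First I would fix $V = \H^n$ equipped with the Hermitian form $\<>$ of signature $(p,q)$, and regard $V$ as a $\s\l_2(\R)$-module via the triple $\{X,\,H,\,Y\}$, so that $V = \bigoplus_{d\in\N_\d} M(d-1)$ with highest-weight spaces $L(d-1) = V_{Y,0}\cap M(d-1)$. Any $g\in\ZC_{{\rm Sp}(p,q)}(X,\,H,\,Y)$ commutes with $H$ and $Y$, hence preserves every $M(d-1)$ and every $L(d-1)$; and since $g$ commutes with $X$ it is determined by the restrictions $g|_{L(d-1)}$ through $g(X^l v) = X^l(g v)$. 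This already exhibits $\ZC_{{\rm Sp}(p,q)}(X,\,H,\,Y)$ as a subgroup of $\prod_{d\in\N_\d} {\rm GL}(L(d-1))$.

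Next I would translate the isometry condition. Because $X\in\s\p(p,q)$ satisfies $\langle Xv,\,u\rangle + \langle v,\,Xu\rangle = 0$, the pairing of $X^l L(d-1)$ against $X^{d-1-l}L(d-1)$ under $\<>$ is controlled, up to sign, by the form $(\cdot,\,\cdot)_d$ of \eqref{new-form}; see Remark \ref{structure-of-centralizer-sppq}. The upshot, which is exactly the content of \cite[Lemma 4.4]{BCM}, is that $g$ preserves $\<>$ on all of $V$ if and only if each $g|_{L(d-1)}$ preserves $(\cdot,\,\cdot)_d$ on $L(d-1)$. Hence
\[
\ZC_{{\rm Sp}(p,q)}(X,\,H,\,Y)\;\simeq\;\prod_{d\in\N_\d}{\rm U}\big(L(d-1),\,(\cdot,\,\cdot)_d\big).
\]

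It then remains to identify the factors using the parity of $d$. For $\theta\in\O_\d$ the form $(\cdot,\,\cdot)_\theta$ is Hermitian over $\H$ on the $t_\theta$-dimensional space $L(\theta-1)$, and by \eqref{orthonormal-basis-theta-sp-pq} together with the defining signs $m^\theta_{i1}$ its signature is $(p_\theta,\,q_\theta)$; its isometry group is therefore ${\rm Sp}(p_\theta,\,q_\theta)$. For $\eta\in\E_\d$ the form $(\cdot,\,\cdot)_\eta$ is skew-Hermitian over $\H$ on the $t_\eta$-dimensional space $L(\eta-1)$, so its isometry group is ${\rm SO}^*(2t_\eta)$ by the definition of ${\rm SO}^*$. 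Since $\N_\d = \E_\d \cup \O_\d$ splits by parity, collecting the factors yields the asserted isomorphism. Note that, in contrast to the $\s\l_n(\H)$ case, no reduced-norm condition survives, because each quaternionic isometry factor already lies in ${\rm SL}_n(\H)$.

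The step I expect to be the real obstacle is the translation in the second paragraph: verifying that invariance of the single ambient Hermitian form decouples into invariance of the forms $(\cdot,\,\cdot)_d$ on the individual highest-weight spaces, with the correct Hermitian/skew-Hermitian parity in each degree and the right placement of the conjugation $\sigma_c$. This is precisely where the noncommutativity of $\H$ forces the careful sign- and conjugation-tracking carried out in \cite[Lemma 4.4]{BCM}; once that lemma is invoked, the remaining identifications of classical isometry groups are routine.
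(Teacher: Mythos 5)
Your proposal is correct and follows essentially the same route as the paper: the paper's proof likewise just recalls that $(\cdot,\cdot)_d$ from \eqref{new-form} is Hermitian for $d\in\O_\d$ and skew-Hermitian for $d\in\E_\d$, notes that the signature of $(\cdot,\cdot)_\theta$ on $L(\theta-1)$ is $(p_\theta,q_\theta)$, and invokes \cite[Lemma 4.4(4)]{BCM}. Your write-up merely unpacks what that lemma encapsulates (the decoupling of the ambient isometry condition into isometry conditions on the spaces $L(d-1)$), which the paper leaves implicit.
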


\begin{proof} Recall that for $d \in \N_\d$, the form $(\cdot,\cdot)_d$ as in \eqref{new-form} is Hermitian or skew-Hermitian according as $d$ is odd or even.
 Now the proof follows from the fact that the signature of $(\cdot,\cdot)_\theta$ on $L(\theta-1)$ is $(p_\theta,q_\theta)$ for $\theta \in \O_\d$ and \cite[Lemma 4.4(4)]{BCM}.
\end{proof}
 
\section*{Acknowledgements}
Indranil Biswas is supported by a J. C. Bose Fellowship.
Pralay Chatterjee acknowledges support from the SERB-DST MATRICS project: MTR/2020/000007.
Chandan Maity is supported by an NBHM PDF during the course of this work.

\end{document}